\newtheorem{thm}{Theorem}
\newtheorem*{thm*}{Theorem}
\newtheorem*{question*}{Question}
\newtheorem{prop}{Proposition}[section]
\newtheorem{lem}[prop]{Lemma}
\newtheorem{cor}[prop]{Corollary}
\newtheorem{rem}[prop]{Remark}
\newtheorem{exe}{Example}[section]
\numberwithin{equation}{section}
\newcommand{\thistheoremname}{}
\newtheorem*{genericthm*}{\thistheoremname}
\newenvironment{namedthm*}[1]
  {\renewcommand{\thistheoremname}{#1}%
   \begin{genericthm*}}
  {\end{genericthm*}}
\DeclareMathOperator{\bD}{\mathbb{D}}
\DeclareMathOperator{\bM}{\mathbb{M}}
\DeclareMathOperator{\bR}{\mathbb{R}}
\DeclareMathOperator{\bW}{\mathbb{W}}
\DeclareMathOperator{\bZ}{\mathbb{Z}}
\DeclareMathOperator{\bfa}{\mathbf{a}}
\DeclareMathOperator{\bfk}{\mathbf{k}}
\DeclareMathOperator{\bfn}{\mathbf{n}}
\DeclareMathOperator{\bfp}{\mathbf{p}}
\DeclareMathOperator{\bfs}{\mathbf{s}}
\DeclareMathOperator{\bfw}{\mathbf{w}}
\DeclareMathOperator{\cA}{\mathcal{A}}
\DeclareMathOperator{\cB}{\mathcal{B}}
\DeclareMathOperator{\cC}{\mathcal{C}}
\DeclareMathOperator{\cD}{\mathcal{D}}
\DeclareMathOperator{\cE}{\mathcal{E}}
\DeclareMathOperator{\cG}{\mathcal{G}}
\DeclareMathOperator{\cH}{\mathcal{H}}
\DeclareMathOperator{\cI}{\mathcal{I}}
\DeclareMathOperator{\cJ}{\mathcal{J}}
\DeclareMathOperator{\cK}{\mathcal{K}}
\DeclareMathOperator{\cL}{\mathcal{L}}
\DeclareMathOperator{\cM}{\mathcal{M}}
\DeclareMathOperator{\cO}{\mathcal{O}}
\DeclareMathOperator{\cP}{\mathcal{P}}
\DeclareMathOperator{\cQ}{\mathcal{Q}}
\DeclareMathOperator{\cR}{\mathcal{R}}
\DeclareMathOperator{\cS}{\mathcal{S}}
\DeclareMathOperator{\cT}{\mathcal{T}}
\DeclareMathOperator{\cW}{\mathcal{W}}
\DeclareMathOperator{\cX}{\mathcal{X}}
\DeclareMathOperator{\cY}{\mathcal{Y}}
\DeclareMathOperator{\cZ}{\mathcal{Z}}
\title[High-dimensional families and three-dimensional energy surfaces]{High-dimensional families of holomorphic curves and three-dimensional energy surfaces}
\author{Rohil Prasad}
\email{rrprasad@berkeley.edu}
\begin{document}

\begin{abstract}
Let $H: \mathbb{R}^4 \to \bR$ be any smooth function. This article introduces some arguments for extracting dynamical information about the Hamiltonian flow of $H$ from high-dimensional families of closed holomorphic curves. We work in a very general setting, without imposing convexity or contact-type assumptions. 

For any compact regular level set $Y$, we prove that the Hamiltonian flow admits an infinite family of pairwise distinct, proper, compact invariant subsets whose union is dense in $Y$. This is a generalization of the Fish--Hofer theorem, which showed that $Y$ has at least one proper compact invariant subset. We then establish a global Le Calvez--Yoccoz property for almost every compact regular level set $Y$: any compact invariant subset containing all closed orbits is either equal to $Y$ or is not locally maximal. Next, we prove quantitative versions, in four dimensions, of the celebrated almost-existence theorem for Hamiltonian systems; such questions have been open for general Hamiltonians since the late $1980$s. We prove that almost every compact regular level set of $H$ contains at least two closed orbits, a sharp lower bound. Under explicit and $C^\infty$-generic conditions on $H$, we prove almost-existence of infinitely many closed orbits. 
\end{abstract}

\maketitle

\setcounter{tocdepth}{1}
\tableofcontents

\section{Introduction}\label{sec:intro}

\subsection{Background and statement of main results}\label{subsec:results}

Hamilton's equation 
\begin{equation}\label{eq:hamilton}\Omega(X_H, -) = -dH\end{equation}
associates to any smooth function $H: \bR^{2n} \to \bR$ a so-called Hamiltonian vector field $X_H$. The flow of $X_H$ preserves the symplectic form $\Omega$. The dynamical behavior of Hamiltonian flows has been profitably studied over many years from many different perspectives. This article studies the invariant sets and closed orbits of Hamiltonians $H: \bR^4 \to \bR$ from the perspective of symplectic geometry. Most results require $H$ to satisfy convexity or ``contact-type'' assumptions. We will not make any such assumptions. This presents several difficulties. 

The most fundamental issue is that arbitrary Hamiltonians appear to be much worse-behaved than contact-type Hamiltonians. For example, Viterbo \cite{Viterbo87} proved that any contact-type level set of a Hamiltonian carries a closed orbit, but non-contact-type examples due to Ginzburg \cite{Ginzburg95}, Ginzburg--G\"urel \cite{GG03}, and Herman \cite{Herman99} have no closed orbits. Another issue is that few tools exist to study the dynamics of arbitrary Hamiltonians. Symplectic field theory (SFT) and its variants are not well-defined for non-contact-type Hamiltonians. Floer theory, e.g. symplectic homology, can be defined, but appears to be weaker without a contact-type assumption\footnote{Let $Y \subset \mathbb{R}^{2n}$ be a level set bounding a compact domain $U$. The Floer--Hofer symplectic homology of $U$ \cite{FH94} is only known to detect dynamical features of $Y$, e.g. closed orbits, when $Y$ is contact-type. Other variants are not well-defined if $Y$ is not contact-type.}. 

One of our results, Theorem~\ref{thm:main}, is a very general existence result for proper, compact, $X_H$-invariant subsets in level sets of $H$. Another result, Theorem~\ref{thm:main3}, gives a sharp quantitative refinement, in four dimensions, of the celebrated almost-existence theorem for closed orbits. To compensate for the absence of SFT and Floer theory, we develop some new arguments to extract dynamical information from simpler invariants: moduli spaces of closed $J$-holomorphic curves. We now state and discuss our results and their background in detail. Afterwards, we will give sketches of the proofs. 

\subsubsection{Dense existence of compact invariant sets} It follows from \eqref{eq:hamilton} that $dH(X_H) \equiv 0$, so the function $H$ is invariant under the flow of $X_H$. Therefore, each level set of $H$ is invariant under the flow of $X_H$. Herman asked at the $1998$ ICM \cite{Herman98} whether the level sets themselves contain compact $X_H$-invariant subsets. Here is a paraphrased version of his question. 

\begin{namedthm*}{Herman's Question}
Fix a smooth function $H: \bR^{2n} \to \bR$ and a compact regular level set $Y$. Does there exist a proper, compact, $X_H$-invariant subset $\Lambda \subset Y$?  
\end{namedthm*}

Herman's question is elegant but very difficult, since it is posed for arbitrary Hamiltonians without any contact-type assumption. A groundbreaking work by Fish--Hofer \cite{FH23} resolved Herman's question in the case $n = 2$.

\begin{namedthm*}{Fish--Hofer Theorem}[{\cite[Theorem $1$]{FH23}}]
Let $H: \bR^4 \to \bR$ be a smooth function and let $Y$ be a compact regular level set of $H$. Then there exists a proper, compact, $X_H$-invariant subset $\Lambda \subset Y$. 
\end{namedthm*}

Our first main theorem is a generalization of the Fish--Hofer theorem. 

\begin{thm}\label{thm:main}
Let $H: \bR^4 \to \bR$ be a smooth function and let $Y$ be a compact, connected, regular level set of $H$. Then $Y$ contains an infinite family of pairwise distinct, proper, compact $X_H$-invariant subsets whose union is dense in $Y$.
\end{thm}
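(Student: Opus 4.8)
The plan is to bootstrap the Fish--Hofer theorem using a localization argument. The key observation is that the Fish--Hofer theorem, while stated for level sets of Hamiltonians on $\bR^4$, is really a statement about the germ of the symplectic structure near a compact hypersurface of contact-type-free character; more precisely, one should extract from \cite{FH23} a \emph{local} version: given a compact connected piece $Z$ of a regular energy surface with nonempty boundary, sitting inside a symplectic $4$-manifold, and given suitable control at the boundary (e.g. that the Hamiltonian flow near $\partial Z$ is ``outward'' or that $Z$ is a ``thickened'' region foliated by nearby level sets), the flow restricted to a slightly shrunk subregion still admits a proper compact invariant set, \emph{or} all of $Z$ minus a neighborhood of the boundary is itself invariant (which can only happen in degenerate ways one can exclude). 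The technology behind this should be the feral holomorphic curves of Fish--Hofer, whose compactness and area bounds are local in nature once one works inside a symplectization-type collar built from a tubular neighborhood of $Y$ in $\bR^4$.

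Granting such a local statement, here is the construction. Let $Y$ be our compact connected regular level set, say $Y = H^{-1}(0)$ with $dH \neq 0$ on $Y$. Fix a point $y \in Y$ and a small open ball $B \subset \bR^4$ around $y$ such that $Y \cap B$ is a disk transverse to $X_H$; pushing along the flow, $\bigcup_{|t| < \epsilon} \phi^t_H(Y \cap B)$ is an open neighborhood $N$ of a flow segment. The complement $Y \setminus \overline{N}$, or rather a connected component $Z$ of a suitable compact subsurface of it obtained by thickening in the energy direction, is a compact piece of energy surface to which the local Fish--Hofer statement applies, producing a proper compact $X_H$-invariant $\Lambda \subset Z \subsetneq Y$. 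Since $y \notin \Lambda$, the family of all such $\Lambda$ obtained as $y$ ranges over $Y$ certainly has dense union in $Y$ only if it separates points — but that is not quite what we want; rather, to force the \emph{union} to be dense and the family to be infinite, I would instead argue by contradiction: suppose only finitely many distinct proper compact invariant sets $\Lambda_1, \dots, \Lambda_k$ occur, with $K := \Lambda_1 \cup \cdots \cup \Lambda_k$. If $K \neq Y$, pick $y \in Y \setminus K$ and a small invariant-saturated neighborhood $U$ of $y$ disjoint from $K$; applying the local theorem to $Y \setminus \overline{U}$ (which still contains $\Lambda_1, \dots, \Lambda_k$, so is nonempty and ``large'') yields a new proper compact invariant set contained in $Y \setminus \overline{U}$, hence among the $\Lambda_i$ — no contradiction yet. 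The real leverage comes from varying the \emph{size} and \emph{location} of the removed region so as to produce an invariant set \emph{avoiding} a prescribed $\Lambda_i$: applying the local theorem inside a small invariant-saturated neighborhood $V$ of a point $z$ lying in some $\Lambda_i$ but not in the others, one gets a proper compact invariant subset of (an energy-thickening of) $V$, which must be some $\Lambda_j$ with $\Lambda_j \subset V$; choosing $V$ small enough that $\Lambda_j \not\supseteq \Lambda_i$ and iterating, one produces infinitely many distinct invariant sets clustering near every point of $Y$, and hence with dense union.

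To make the last paragraph rigorous, the cleanest route is the following dichotomy, run inside shrinking neighborhoods. For each $y \in Y$ and each small $r > 0$, let $W_{y,r}$ be the flow-saturation inside $Y$ of a radius-$r$ transverse disk at $y$; form its closure, which is a compact piece of energy surface with boundary. Either $\overline{W_{y,r}}$ contains a proper compact $X_H$-invariant subset $\Lambda_{y,r}$ \emph{strictly smaller than $\overline{W_{y,r}}$}, or it does not; in the latter case a separate (soft, dynamical) argument — e.g. that $\overline{W_{y,r}}$ would then be a minimal-like set with no internal structure, contradicting that its boundary is a nonempty flow-transverse surface across which orbits escape — should be ruled out, so the former always holds for $r$ small. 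Letting $r \to 0$, the sets $\Lambda_{y,r}$ shrink toward $y$ (their diameters $\to 0$ since they are trapped in $\overline{W_{y,r}}$), so they cannot all coincide; extracting a subsequence of distinct ones and then varying $y$ over a countable dense subset of $Y$ produces an infinite family with dense union. The \textbf{main obstacle} is the first step: isolating from \cite{FH23} a local/relative version of the Fish--Hofer existence theorem valid on a compact piece of energy surface with boundary, together with the boundary control needed to guarantee the invariant set produced lies in the \emph{interior} and is \emph{proper}; this requires re-running the feral curve compactness argument in a collar model and verifying the area and genus bounds survive localization. Everything after that — the combinatorial bootstrapping to ``infinitely many with dense union'' — is soft point-set and dynamical bookkeeping.
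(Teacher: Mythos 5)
There is a genuine gap, and it sits exactly where you placed your ``main obstacle'', but it is deeper than you suggest, and there is a second, purely dynamical flaw in the bootstrapping. First, the hypothesized local/relative Fish--Hofer theorem for a compact piece of energy surface with boundary is not something one can ``extract'' from \cite{FH23}: the feral-curve construction needs a closed separating hypersurface inside the ambient symplectic manifold, because the input is a closed holomorphic curve (coming from the compactification of $\bR^4$) crossing the hypersurface, together with neck-stretching along the whole level set; for a bordered piece $Z$ there is no way to produce such crossing curves or to control them at $\partial Z$, and nothing in the exponential area bounds or the quantization results is ``local along $Y$'' in the sense you need. Indeed, what you are asking for --- an invariant set inside a prescribed small region --- is strictly stronger than anything the paper proves, and the paper deliberately does \emph{not} localize the existence theorem; instead it makes high-degree closed curves pass through prescribed point constraints $\{0\}\times\mathbf{p}$ at many neck heights, and extracts nearly-invariant ``almost cylinders'' from a stretched limit set, with properness coming from an intersection-theory argument against degree-one spheres. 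That is the mechanism by which invariant sets are forced near arbitrary points, and it has no analogue in your scheme.

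Second, even granting your local theorem, the bookkeeping fails: the region $W_{y,r}$ must be flow-saturated for it to be able to contain an invariant set, but the flow-saturation of an arbitrarily small transverse disk need not be small --- if the orbit of $y$ is dense in $Y$ (which happens for almost every $y$ in Katok's examples from Remark~\ref{rem:katok}, where the volume measure is ergodic), then $\overline{W_{y,r}} = Y$ for every $r>0$. Consequently the claim that the $\Lambda_{y,r}$ are ``trapped'' in sets of diameter tending to $0$ is false, the dichotomy step has no content (the second alternative cannot be excluded by softness: $\overline{W_{y,r}}$ is simply $Y$), and the contradiction/iteration argument producing infinitely many distinct invariant sets with dense union never gets off the ground. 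This tension --- invariant sets are unions of whole orbits, and orbits may be dense --- is precisely why the paper works with non-invariant data (slices of stretching holomorphic curves through point constraints) and only recovers invariance in the $d\to\infty$ limit, rather than trying to localize an existence theorem for invariant sets directly.
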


The connectedness assumption can be removed; see Remark~\ref{rem:main}. In the context of symplectic dynamics, Theorem~\ref{thm:main} is a substantial and possibly unexpected generalization. To illustrate this by way of analogy, we review detection results for closed orbits in contact-type level sets in $\bR^4$. For contact-type $Y$, it has been known since the $1980$s, after Weinstein \cite{Weinstein78}, Rabinowitz \cite{Rabinowitz78}, and Viterbo \cite{Viterbo87} that $Y$ contains a closed orbit of $X_H$. This was improved to two closed orbits by Cristofaro-Gardiner--Hutchings \cite{CGH16}\footnote{See \cite{GHHM13} for an alternate approach to this result, for star-shaped $Y$, using contact homology.}. It is now known, after Hofer--Wysocki--Zehnder \cite{HWZ98, HWZ03}, Cristofaro-Gardiner--Hutchings--Pomerleano \cite{CGHP19}, and a recent tour-de-force by Cristofaro-Gardiner--Hryniewicz--Hutchings--Liu \cite{CGHHL23}, that any star-shaped $Y$ has either two or infinitely many closed orbits. The works \cite{CGHP19, CGHHL23} extend to the contact-type case given a torsion assumption on the Chern class; work of Colin--Dehornoy--Rechtman \cite{CDR23} drops the torsion assumption but requires the Hamiltonian flow to be nondegenerate. Irie \cite{Irie15} proved that for $C^\infty$-generic $Y$, the closed orbits are dense. This represents over $30$ years of work, with accelerated progress in the last decade, to go from one closed orbit to infinitely many closed orbits or to dense closed orbits. Theorem~\ref{thm:main} proceeds straight from one proper, compact, invariant subset in an arbitrary level set $Y$ to a simultaneously infinite and dense family of proper, compact, invariant subsets. 

In \S\ref{subsec:proof_outlines}, we outline the new ideas behind Theorem~\ref{thm:main} that are not present in \cite{FH23} or other previous works. Before stating our other main results, we make some additional remarks.

\begin{rem}\label{rem:main}
\normalfont
Theorem~\ref{thm:main} generalizes to disconnected level sets. Fix $H: \bR^4 \to \bR$ and a compact regular level set $Y$. Then, each connected component $Y_* \subset Y$ contains an infinite family of pairwise distinct, compact, proper $X_H$-invariant subsets with dense union in $Y_*$. Here is a proof. Since $Y_*$ is compact, orientable, and null-homologous, there exists a smooth function $H_*: \bR^4 \to \bR$ for which $0$ is a regular value and $Y_* = H_*^{-1}(0)$. The Hamiltonian vector fields $X_H$ and $X_{H_*}$ coincide on $Y_*$ after rescaling the former by a nowhere zero smooth function. Thus, their flows have the same invariant sets. Apply Theorem~\ref{thm:main} to $H_*$. 
\end{rem}

\begin{rem}\label{rem:katok}
\normalfont
A celebrated construction by Katok \cite{Katok73} shows that Hamiltonian flows on $\bR^{2n}$ are remarkably flexible. His results imply that the conclusions of Theorem~\ref{thm:main} fail to hold when invariant subsets are replaced by other natural dynamical objects: closed orbits, minimal subsets\footnote{A compact invariant subset in which every orbit is dense.}, and ergodic measures\footnote{An invariant probability measure that assigns each invariant subset a probability of $0$ or $1$.}. Katok constructed a star-shaped level set in $\bR^4$ whose Hamiltonian flow has exactly two closed orbits and exactly three ergodic invariant measures: the Dirac measures on the closed orbits and the volume measure. The two closed orbits are the only minimal subsets of the flow. Thus, closed orbits, minimal subsets, and ergodic measures can be quite simple. Theorem~\ref{thm:main} shows that the compact invariant subsets are always quite complex and spread out throughout the level set.
\end{rem}

\begin{rem}
\normalfont
The conclusions of Theorem~\ref{thm:main}, the so-called ``dense existence of compact invariant sets'', appear to be an emergent phenomenon in symplectic dynamics, at least in low dimensions. Earlier this year, in joint work with Cristofaro-Gardiner \cite{CGP24}, an analogue of Theorem~\ref{thm:main} for area-preserving surface diffeomorphisms and three-dimensional Reeb flows was proved. Theorem~\ref{thm:main} was announced in that article. The tools and arguments in \cite{CGP24} are quite different from the present work. We defer to \S\ref{subsec:proof_outlines} for a more detailed comparison. 
\end{rem}

\subsubsection{A global Le Calvez--Yoccoz property} A remarkable series of works in the late $1990$s and early $2000$s by Le Calvez--Yoccoz \cite{LCY97}, Franks \cite{Franks99}, and Salazar \cite{Salazar06} established the following result for invariant sets of homeomorphisms of the $2$-sphere. Recall that a compact invariant set $\Lambda$ of a homeomorphism or flow is \emph{locally maximal} if any sufficiently Hausdorff-close invariant set $\Lambda'$ must be contained in $\Lambda$. For any area-preserving homeomorphism $\phi$ of $\mathbb{S}^2$, it was proved that any compact $\phi$-invariant set $\Lambda$ containing all closed orbits is either equal to $\mathbb{S}^2$ or is not locally maximal. We call such a result a ``global Le Calvez--Yoccoz property'', since it produces invariant subsets near any invariant subset containing all closed orbits, which could occupy a significant part of $\mathbb{S}^2$. These works rely on fixed point theory; it is unclear to us how to extend their arguments to flows. 

Our next theorem is a global Le Calvez--Yoccoz property for Hamiltonian flows on $\bR^4$. To state the result, we need to fix some notation. For any smooth function $H: \bR^4 \to \bR$, let $\cR_c(H)$ denote the set of regular values $s \in \bR$ such that $H^{-1}(s)$ is compact. For any $s \in \cR_c(H)$, let $\cP(s) \subseteq H^{-1}(s)$ denote the union of the closed orbits of $X_H$ lying in $H^{-1}(s)$. 

\begin{thm}\label{thm:main2}
Let $H: \bR^4 \to \bR$ be a smooth function. Then there exists a subset $\cQ \subseteq \cR_c(H)$ of full measure such that the following holds for any $s \in \cQ$. Any compact $X_H$-invariant subset $\Lambda \subseteq H^{-1}(s)$ containing $\cP(s)$ is either equal to $H^{-1}(s)$ or is not locally maximal in $H^{-1}(s)$. 
\end{thm}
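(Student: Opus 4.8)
The plan is to derive this as a consequence of Theorem~\ref{thm:main}, combined with a measure-theoretic refinement of the family produced there. The key observation is that local maximality is a very rigid property: if $\Lambda \subseteq H^{-1}(s)$ is locally maximal and contains $\cP(s)$, then there is a Hausdorff neighborhood of $\Lambda$ inside which $\Lambda$ is the unique invariant subset. We want to rule this out for almost every $s$ by exhibiting, near $\Lambda$, many distinct proper compact invariant subsets — so many that they cannot all be swallowed into $\Lambda$. The difficulty is that Theorem~\ref{thm:main} produces a \emph{dense} family of proper invariant subsets, but density alone does not immediately contradict local maximality: the dense family could consist of tiny invariant sets sitting close to $\Lambda$ (indeed contained in a small neighborhood of $\Lambda$) without being contained in $\Lambda$ itself. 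So the argument has to be quantitative, and this is where the ``full measure'' restriction on $s$ enters.

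First I would set up the foliation picture: for a smooth $H$, use the fact that $\cR_c(H)$ decomposes into countably many open intervals, on each of which the compact regular level sets fit into a smooth family $\{H^{-1}(s)\}$, and apply the area/volume function $s \mapsto \mathrm{vol}(H^{-1}(s))$ or a Liouville-type quantity along this family. The point is to run a Fubini/Sard-type argument: the set of ``bad'' parameters $s$ — those where $H^{-1}(s)$ carries a locally maximal invariant set containing $\cP(s)$ that is not the whole level set — should have measure zero because at a bad parameter one can extract a definite amount of ``room'' between $\Lambda$ and $H^{-1}(s)$, and these rooms cannot overlap across a positive-measure set of parameters. Concretely, if $\Lambda \subsetneq H^{-1}(s)$ is locally maximal, then (since $H^{-1}(s)\setminus\Lambda$ is open and nonempty) it contains a nonempty open set $U_s$; invariant subsets of $H^{-1}(s)$ that meet $U_s$ and are close to $\Lambda$ cannot exist. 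I would then appeal to the construction underlying Theorem~\ref{thm:main}, which builds proper invariant subsets from high-dimensional families of holomorphic curves whose supports can be localized. The mechanism should deliver, for each $s$ in a full-measure set, a sequence of proper compact invariant subsets $\Lambda_k \subseteq H^{-1}(s)$ with $\Lambda_k \to H^{-1}(s)$ in the Hausdorff sense (this is essentially the ``dense union'' conclusion upgraded to Hausdorff convergence of a subsequence, using compactness of the space of closed subsets). Such $\Lambda_k$ eventually contains points of $U_s$, while also (after intersecting with a neighborhood of $\Lambda$, or by a connectedness/linking argument) approaching $\Lambda$; combined with $\cP(s) \subseteq \Lambda$, this is incompatible with local maximality of $\Lambda$.

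The main obstacle, as I see it, is exactly the passage from ``dense union of proper invariant subsets'' to ``a single invariant subset that is Hausdorff-close to $\Lambda$ yet not contained in it.'' Theorem~\ref{thm:main} as stated gives a family $\{\Lambda_i\}$ with $\overline{\bigcup_i \Lambda_i} = Y$, but to contradict local maximality of a specific $\Lambda$ I need an invariant set $\Lambda'$ with $d_H(\Lambda',\Lambda)$ small and $\Lambda' \not\subseteq \Lambda$. The natural candidate is $\Lambda \cup \Lambda_i$ for a well-chosen $i$: this is invariant, it contains $\cP(s)$, and if $\Lambda_i$ is chosen to have a point very close to $\Lambda$ but also a point in the open set $H^{-1}(s)\setminus\Lambda$ bounded away from $\Lambda$, then $\Lambda\cup\Lambda_i$ is Hausdorff-close to $\Lambda$ only if $\Lambda_i$ is \emph{entirely} close to $\Lambda$ — which fails once $\Lambda_i$'s union-density forces it to reach a fixed open region disjoint from a neighborhood of $\Lambda$. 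Making this dichotomy precise requires that the $\Lambda_i$ be not just dense-in-union but individually ``large enough'' to escape any fixed neighborhood of a proper $\Lambda$; I expect this to hold for a.e.\ $s$ because the holomorphic-curve families controlling the $\Lambda_i$ vary measurably in $s$ and the parameters where they degenerate (i.e.\ where all extracted invariant sets collapse near a fixed proper subset) form a null set, by a Sard-type or Fubini-type argument applied to the relevant action/area spectrum. I would isolate this degeneration set, call it the complement of $\cQ$, show $|\cR_c(H)\setminus\cQ| = 0$, and conclude.

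Finally, I would assemble the pieces: fix $s \in \cQ$ and a compact invariant $\Lambda \subseteq H^{-1}(s)$ with $\cP(s)\subseteq\Lambda$ and $\Lambda \neq H^{-1}(s)$; pick an open $U \subseteq H^{-1}(s)\setminus\Lambda$; using the a.e.-valid refinement of Theorem~\ref{thm:main}, produce a proper compact invariant $\Lambda'$ with $\Lambda' \cap U \neq \emptyset$ and $d_H(\Lambda',\Lambda) < \epsilon$ for $\epsilon$ smaller than the distance from $U$ to $\Lambda$ (either directly, or as $\Lambda\cup\Lambda_i$); since $\Lambda' \not\subseteq \Lambda$, this shows $\Lambda$ is not locally maximal, completing the proof.
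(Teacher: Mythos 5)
There is a genuine gap, and it sits exactly at the point you flag as ``the main obstacle.'' To contradict local maximality of a given proper $\Lambda \supseteq \cP(s)$ you must exhibit an invariant set that is Hausdorff-close to $\Lambda$ (so in particular contained in a small neighborhood of $\Lambda$) and yet not contained in $\Lambda$. Theorem~\ref{thm:main} cannot supply this: its family of proper invariant sets is only dense in union, and nothing prevents every member from either lying inside $\Lambda$ or sticking a definite distance out of every small neighborhood of $\Lambda$. Your closing assembly is in fact internally inconsistent: if $\Lambda'$ meets an open set $U$ whose distance to $\Lambda$ exceeds $\epsilon$, then $d_H(\Lambda',\Lambda) > \epsilon$, so one cannot also have $d_H(\Lambda',\Lambda) < \epsilon$; the same problem persists for the candidate $\Lambda \cup \Lambda_i$, whose Hausdorff distance to $\Lambda$ is exactly how far $\Lambda_i$ protrudes. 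The vague Sard/Fubini step (``the degeneration parameters form a null set'') does not address this, because the issue is not a measure-zero degeneracy of the families from Theorem~\ref{thm:main}, but that those families carry no information about the location of individual invariant sets relative to an arbitrary prescribed $\Lambda$. Note also that the logical relationship runs the other way: the paper observes that the global Le Calvez--Yoccoz property \emph{implies} dense existence of invariant sets, so Theorem~\ref{thm:main2} is strictly stronger at the levels where it applies and cannot be recovered from Theorem~\ref{thm:main} plus soft measure theory.

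What the proof actually requires, and what the paper builds, is a \emph{connected} subset of $\cK(Y)$ consisting of compact invariant sets which contains both $Y$ and some element lying inside $\overline{\cP(s)}$. This comes from a different construction: adiabatic neck stretching (stretching simultaneously along all nearby level sets), an adiabatic limit set, and a detection result (Proposition~\ref{prop:closed_orbits_adiabatic}) showing that at levels where the $\widehat\lambda$-integrals do not blow up and no action accumulates, the limit set contains a cylinder over a finite union of closed orbits of $R^s$; the full-measure restriction on $s$ arises from counting/measure bounds on the accumulation and blowup sets (Lemmas~\ref{lem:pigeonhole_adiabatic}, \ref{lem:pigeonhole_blowup}), not from a Sard-type argument on an action spectrum. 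Your union trick $\Lambda' \mapsto \Lambda \cup \Lambda'$ is indeed the right final move (it is Lemma~\ref{lem:union} in the paper), but it only produces sets near $\Lambda$ if the connected family contains an element with $\Lambda \cup \Lambda' = \Lambda$, i.e.\ an invariant set $\Lambda' \subseteq \overline{\cP(s)} \subseteq \Lambda$; once the family contains both $\Lambda$ and $Y$ and is connected, $\Lambda$ cannot be isolated in it, which yields invariant sets containing $\Lambda$, distinct from it, and arbitrarily Hausdorff-close, contradicting local maximality. Producing that element inside $\overline{\cP(s)}$, together with the connected family joining it to $Y$ for almost every $s$, is the missing idea in your proposal, and it is precisely the content of Proposition~\ref{prop:main2}.
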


The global Le Calvez--Yoccoz property is quite powerful. By an elementary topological argument\footnote{See the arguments in \cite[\S$2.4$]{CGP24}.}, the global Le Calvez--Yoccoz property implies the dense existence of compact invariant sets. So, Theorem~\ref{thm:main2} can be regarded as a refinement of Theorem~\ref{thm:main}, but only for almost every compact regular level set. It is unclear to us whether every compact regular level set satisfies the global Le Calvez--Yoccoz property. Theorem~\ref{thm:main2} also has an interesting application towards detecting closed orbits; see Theorem~\ref{thm:main4} below. 

\begin{rem} 
\normalfont
Global Le Calvez--Yoccoz properties for monotone area-preserving surface diffeomorphisms and Reeb flows on torsion contact $3$-manifolds are proved in \cite{CGP24}. Ginzburg--G\"urel \cite{GG18} proved a related result. They showed that, for any Hamiltonian diffeomorphism of $\mathbb{CP}^{n}$ with finitely many periodic points, no periodic point is locally maximal. Cineli--Ginzburg--G\"urel--Mazzucchelli \cite{CGGM23} recently proved a contact analogue of \cite{GG18}. They showed that, for any nondegenerate and dynamically convex star-shaped hypersurface $Y \subset \bR^{2n}$, no closed orbit is locally maximal in $Y$. 
\end{rem}

\subsubsection{Quantitative almost-existence} In the late $1980$s, Hofer--Zehnder \cite{HZ87} discovered the existence of closed orbits near any compact regular level set of a Hamiltonian. After some refinements by Rabinowitz \cite{Rabinowitz87} and Struwe \cite{Struwe90}, this became known as the ``almost-existence'' theorem: 

\begin{namedthm*}{Almost-existence theorem}[\cite{HZ87, Rabinowitz87, Struwe90}] 
Let $H: \bR^{2n} \to \bR$ be a smooth function. Then there exists a subset $\cQ \subseteq \cR_c(H)$ of full measure such that for any $s \in \cQ$, the level set $H^{-1}(s)$ contains a closed orbit of $X_H$. 
\end{namedthm*}

The almost-existence theorem holds for any Hamiltonian $H$ without convexity or contact-type assumptions. Since its introduction, the almost-existence theorem has been generalized to many other symplectic manifolds, sometimes with restrictions on the Hamiltonian. There have been many significant contributions from many authors. Symplectic methods such as Floer homology and symplectic capacities have played a key role. We mention, in no particular order, works of Hofer--Viterbo \cite{HV92}, Cieliebak--Ginzburg--Kerman \cite{CGK04}, Ginzburg--G\"urel \cite{GG04}, Biran--Polterovich--Salamon \cite{BPS03}, McDuff--Slimowitz \cite{MS01}, Macarini \cite{Macarini04}, Schlenk \cite{Schlenk06}, Macarini--Schlenk \cite{MS05}, Frauenfelder--Schlenk \cite{FS07}, Lu \cite{Lu98, Lu98correction}, and Fish--Hofer \cite{FH22}. This list should be regarded only as a sample of interesting works in this area; see \cite{Ginzburg05, GG09} for more thorough surveys. 

Beyond proving that closed orbits exist, establishing the optimal \emph{multiplicity} of closed orbits (one, two, infinitely many, etc.) of a Hamiltonian flow is a central problem in Hamiltonian dynamics. The multiplicity problem has seen an enormous amount of interest and progress in several different directions. First, a long-standing conjecture asserts that there are at least $n$ closed orbits in any convex level set in $\bR^{2n}$. We refer to \cite{DLLQW24} for a comprehensive survey of results on this question. Second, as mentioned above near Theorem~\ref{thm:main}, the multiplicity problem for contact-type level sets in $\bR^4$ has been of great interest since pioneering work of Hofer--Wysocki--Zehnder \cite{HWZ98, HWZ03}. Third, very strong multiplicity results have been proved for level sets near extrema of Hamiltonians. The celebrated Weinstein--Moser theorem \cite{Weinstein73, Moser76} shows that any level set of $H: \bR^{2n} \to \bR$ that is near a nondegenerate minimum of $H$ contains at least $n$ closed orbits. Generalizations to other types of extrema were proved Kerman \cite{Kerman99}. A very general Weinstein--Moser theorem, replacing $\bR^{2n}$ by other symplectic manifolds, was proved by Ginzburg--G\"urel \cite{GG09}; some further contributions were made by Usher \cite{Usher09}. Finally, there are several multiplicity results for kinetic Hamiltonians on magnetic cotangent bundles. See the works of Arnold \cite{Arnold88}, Ginzburg \cite{Ginzburg87, Ginzburg96}, Contreras \cite{Contreras06}, Abbondandolo--Macarini--Paternain \cite{AMP15}, Asselle--Benedetti \cite{AB16}, and Abbondandolo--Macarini--Mazzucchelli--Paternain \cite{AMMP17}.

Thus, there is a significant gap between the generality of the almost-existence theorem and the known multiplicity results for Hamiltonian flows. The almost-existence theorem holds for almost every compact regular level set of any $H: \bR^{2n} \to \bR$; more generally the domain $\bR^{2n}$ can be replaced by any of a large family of symplectic manifolds. On the other hand, all known multiplicity results require that either (i) the level set is convex or contact-type, (ii) the level set is near some kind of extremum, or (iii) the Hamiltonian is of a specific form, e.g. the kinetic Hamiltonian. This gap has yet to be bridged. For example, the lower bound of one closed orbit in the original almost-existence theorem has held without any improvements since the late $1980$s. 

Our next theorem makes some progress in this direction. We prove an almost-existence theorem with the optimal multiplicity for any Hamiltonian $H: \bR^4 \to \bR$. 

\begin{thm}\label{thm:main3}
Let $H: \bR^4 \to \bR$ be a smooth function. Then there exists a subset $\cQ \subseteq \cR_c(H)$ of full measure such that for any $s \in \cQ$, the level set $H^{-1}(s)$ contains at least two closed orbits of $X_H$. 
\end{thm}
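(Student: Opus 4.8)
\emph{Proof proposal.}
The strategy is to transplant the holomorphic-curve machinery behind Theorem~\ref{thm:main} into a ``quantitative almost-existence'' argument of Hofer--Zehnder type. As in Remark~\ref{rem:main}, it suffices to treat a single connected component, so after replacing $H$ we may assume the level sets in question are connected. Suppose, for contradiction, that the set $S \subseteq \cR_c(H)$ of values $s$ for which $H^{-1}(s)$ carries at most one closed orbit of $X_H$ has positive measure. By the classical almost-existence theorem, after discarding a null set we may assume $H^{-1}(s)$ carries \emph{exactly one} closed orbit $\gamma_s$ --- a simple orbit, counted without multiplicity --- for every $s \in S$, so that $\cP(s)$ is the image of $\gamma_s$. (Note that the global Le Calvez--Yoccoz property of Theorem~\ref{thm:main2} produces many invariant subsets near $\cP(s)$, but not obviously any periodic ones, so it does not by itself close the argument; a curve-theoretic input seems unavoidable.)

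\emph{A ``second capacity'' from families of closed curves.} For a bounded domain $U \subset \bR^4$ with smooth boundary, the plan is to define a numerical invariant $c_2(U) \in (0,\infty)$ out of high-dimensional moduli spaces of closed $J$-holomorphic curves: after rescaling, embed $U$ symplectically into a closed symplectic four-manifold $(M,\Omega)$ (a large $\mathbb{CP}^2$, or a blow-up of it, will do), choose tame $J$ adapted along a collar of $\partial U$ to the Hamiltonian structure $\Omega|_{\partial U}$ together with an auxiliary framing, and consider the moduli spaces $\cM$ of closed $J$-curves in prescribed homology classes cut down by point constraints placed on both sides of $\partial U$. Positivity of intersections and Gromov compactness in $(M,\Omega,J)$ make these moduli spaces nonempty and of arbitrarily large dimension --- this abundance is exactly the engine supplied by the proof of Theorem~\ref{thm:main} --- and $c_2(U)$ is a suitably normalized ``energy threshold'' read off from the second such family; its finiteness for every bounded $U$ is built in.

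\emph{From $c_2(U) < \infty$ to two orbits.} Now one runs the Hofer--Zehnder argument with $c_2$ in place of $c_{HZ}$: were a positive-measure set $S$ of level values to carry at most one closed orbit, one could build, as in \cite{HZ87, Struwe90}, a Hamiltonian adapted to $U$ whose dynamics are ``too slow'' on those levels; degenerating the forced curve families by stretching the neck along the corresponding hypersurfaces $Y = H^{-1}(s)$, the Fish--Hofer compactness theory for non-contact-type hypersurfaces shows the curves converge to holomorphic buildings whose matching data is recorded by the closed orbits in $Y$ --- or, in the feral case, by compact invariant subsets on which the $\lambda$-energy concentrates, which under our hypothesis again forces a closed orbit. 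Since $\cP(s) = \mathrm{im}(\gamma_s)$, all matching occurs along covers of the single orbit $\gamma_s$; but the ends asymptotic to covers of one orbit are rigid, while each point constraint lowers the dimension of the relevant space of buildings by two, so for $\cM$ of large enough dimension the space of admissible limit buildings is too small to be nonempty --- contradicting Gromov compactness, which keeps $\cM$ nonempty throughout the stretching. Hence $c_2(U) = \infty$ on such a $U$, contradicting the previous paragraph; so $S$ is null and $\cQ := \cR_c(H) \setminus S$ is the desired full-measure set. Katok's example (Remark~\ref{rem:katok}) shows the bound two is sharp.

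\emph{Main obstacle.} The difficulty is concentrated in the last two steps. First, one needs a clean compactness-and-asymptotics statement for neck-stretched closed curves across a hypersurface carrying \emph{no} contact-type structure: this is precisely the feral regime, where SFT compactness fails and only Fish--Hofer-type control is available, and extracting usable ``matching data'' (closed orbits, or invariant sets) from such limits is delicate. Second, one must turn ``only one orbit'' into a genuine numerical contradiction --- that is, prove the admissible limit buildings really do span a space of bounded dimension, rigidly enough to be beaten by a high-dimensional family. Along the way one must arrange transversality and the point constraints for the closed curves even though $\partial U$ is not generic, and control multiply covered components of the limit buildings.
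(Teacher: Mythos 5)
Your proposal correctly identifies the contradiction hypothesis (a positive-measure set of levels with exactly one closed orbit, after invoking almost-existence) and correctly senses that a curve-theoretic input beyond Theorem~\ref{thm:main2} is needed. But the core of your argument has a genuine gap: the step ``degenerating the forced curve families by stretching the neck along $Y=H^{-1}(s)$, the curves converge to holomorphic buildings whose matching data is recorded by the closed orbits in $Y$'' is precisely what is false without a contact-type assumption. There is no SFT building, no asymptotic closed orbits, no Fredholm index for the limit objects, and hence no way to run your rigidity/dimension count (``ends asymptotic to covers of one orbit are rigid, each point constraint lowers the dimension by two''), which also presupposes transversality and control of multiple covers that are unavailable here. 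Your fallback --- that in the feral regime the limiting invariant sets ``again force a closed orbit'' --- is unjustified: the hypothesis bounds the number of closed orbits but says nothing about whether limit invariant sets contain any. You flag this as the ``main obstacle,'' but it is not an obstacle to be smoothed over; it is the theorem. Likewise the ``second capacity'' $c_2(U)$ is only asserted to be finite, and there is no known Hofer--Zehnder-type variational mechanism converting finiteness of any capacity into a \emph{second} geometrically distinct orbit.

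The paper closes this gap by an entirely different, non-SFT mechanism. It stretches adiabatically at \emph{all} nearby levels simultaneously (\S\ref{subsec:geom_setup_adiabatic}), so that slices of one high-degree curve at different heights probe different level sets $H^{-1}(s)$, and it never attempts a building or index count. Instead: (i) on a compact positive-measure set of bad levels it uses Lusin's theorem to make the period and action functions $e_\lambda$, $e_\nu$ of the unique orbit continuous, and Stokes' theorem (exactness of $\Omega$ away from the divisor) to make $e_\nu$ bounded below (Lemmas~\ref{lem:bad_subset}, \ref{lem:nu_lower_bound}); (ii) it shows via a quantized jump of the $\nu$-integral across a level where action accumulates that only $O(d)$ bad levels of Lebesgue density one can carry accumulating action (Lemma~\ref{lem:finite_accumulation}), and that without action accumulation the $\lambda$-integrals cannot blow up (Lemma~\ref{lem:blowup_implies_accumulation}); (iii) at such levels, zero-action compactness for holomorphic currents forces the height-two slices to converge to a trivial cylinder over an orbit set of the unique orbit (Proposition~\ref{prop:closed_orbits_quantitative}). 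The contradiction is then a cardinality pigeonhole, not an index computation: place $d^2$ point constraints on $d^2$ distinct bad levels, each off the unique closed orbit; since only $O(d)$ of these levels are exceptional, some constrained slice must limit onto the unique orbit while passing through a point not on it. None of these ingredients appear in your proposal, and without them (or a substitute for SFT compactness in the non-contact regime, which does not exist) the argument does not go through.
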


The multiplicity in Theorem~\ref{thm:main3} is seen to be optimal by example. There exist smooth four-dimensional Hamiltonians for which every regular level set contains exactly two closed orbits\footnote{The convex four-dimensional Hamiltonian $H(x) = (|x_1|^2 + |x_2|^2)/a + (|x_3|^2 + |x_4|^2)/b$, where $a$ and $b$ are positive and rationally independent, is one such example.}. Our proof of Theorem~\ref{thm:main3} combines the ideas behind Theorems~\ref{thm:main} and \ref{thm:main2} with a careful quantitative argument; we will give a sketch in \S\ref{subsec:proof_outlines}.

\begin{rem}
\normalfont
As a corollary of Theorem~\ref{thm:main3}, any contact-type level set in $\bR^4$ contains at least two closed orbits. This has been known since work of Cristofaro-Gardiner--Hutchings \cite{CGH16} showing that any Reeb flow on a closed $3$-manifold has at least two closed orbits. Thus, Theorem~\ref{thm:main3} gives a partial generalization of \cite{CGH16} beyond the contact case. 
\end{rem}

Our last main theorem asserts that, under some additional conditions on $H$, one has almost-existence of infinitely many closed orbits. 

\begin{thm}\label{thm:main4}
Let $H: \bR^4 \to \bR$ be any smooth function such that for almost every $s \in \cR_c(H)$, any closed orbit $\gamma \subset H^{-1}(s)$ is either (i) hyperbolic or (ii) elliptic and Moser stable. Then, there exists a full measure subset $\cQ \subseteq \cR_c(H)$ such that for any $s \in \cQ$, the level set $H^{-1}(s)$ contains infinitely many closed orbits of $X_H$.
\end{thm}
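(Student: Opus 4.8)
The plan is to argue by contradiction from Theorem~\ref{thm:main2}, after first ruling out elliptic orbits via a Poincar\'e--Birkhoff argument. Let $\cQ \subseteq \cR_c(H)$ be the intersection of the full-measure sets furnished by Theorems~\ref{thm:main2} and \ref{thm:main3} with the full-measure set on which every closed orbit of $X_H$ in $H^{-1}(s)$ is hyperbolic or elliptic and Moser stable; being a finite intersection of full-measure sets, $\cQ$ is of full measure. Fix $s \in \cQ$ and put $Y := H^{-1}(s)$. As $s$ is a regular value, $X_H$ is nowhere zero on $Y$, so every closed orbit of $X_H$ in $Y$ is an embedded circle. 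Suppose, toward a contradiction, that $Y$ carries only finitely many closed orbits $\gamma_1,\dots,\gamma_N$; thus $\cP(s) = \gamma_1 \cup \dots \cup \gamma_N$, and $N \geq 2$ by Theorem~\ref{thm:main3}.

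\emph{Step 1: no $\gamma_i$ is elliptic.} Suppose some $\gamma := \gamma_i$ were elliptic and Moser stable. Fix a transversal disk $D \subset Y$ through a point of $\gamma$; the associated Poincar\'e return map $P \colon D \to D$ is area preserving and has an elliptic fixed point at $D \cap \gamma$. The relevant consequence of Moser stability is that this fixed point is accumulated by a nested family of $P$-invariant circles whose rotation numbers are \emph{not all equal} --- this is exactly what Moser's twist theorem provides at a non-degenerate elliptic fixed point, and should be immediate from the definition of Moser stability. Write $\rho_- < \rho_+$ for the infimum and supremum of these rotation numbers. For each rational $p/q \in (\rho_-,\rho_+)$, choose invariant circles $C, C'$ from the family whose rotation numbers lie on opposite sides of $p/q$; the closed annulus they cobound is $P$-invariant, and the Poincar\'e--Birkhoff theorem in the form due to Franks --- which needs only that the two boundary rotation numbers differ, with no monotone twist hypothesis --- yields a periodic point of $P$ of period $q$ and rotation number $p/q$ inside it. Since $(\rho_-,\rho_+)$ contains infinitely many rationals, this produces infinitely many periodic points of $P$ with pairwise distinct rotation numbers, hence infinitely many distinct closed orbits of $X_H$ in $Y$ (distinguished by how they wind around $\gamma$), contradicting finiteness. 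So every $\gamma_i$ is hyperbolic.

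\emph{Step 2: contradiction with Theorem~\ref{thm:main2}.} A hyperbolic closed orbit of a flow on a $3$-manifold is an isolated invariant set: by hyperbolicity of the linearized return map, each $\gamma_i$ admits a compact neighborhood $U_i \subset Y$ with $\gamma_i \subset \mathrm{int}\,U_i$ whose maximal $X_H$-invariant subset is exactly $\gamma_i$. Taking the $U_i$ pairwise disjoint and setting $U := \bigcup_i U_i$ (so that each $U_i$ is a connected component of $U$), we see that the maximal invariant subset of $U$ is $\cP(s)$. Hence $\cP(s)$ is locally maximal in $Y$: if $\varepsilon > 0$ is chosen so that the $\varepsilon$-neighborhood of $\cP(s)$ in $Y$ is contained in $U$, then any $X_H$-invariant $\Lambda' \subseteq Y$ with $d_{\mathrm{Haus}}(\Lambda',\cP(s)) < \varepsilon$ satisfies $\Lambda' \subseteq U$, hence $\Lambda' \subseteq \cP(s)$. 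But $\cP(s)$, a finite union of circles, is a proper subset of the $3$-manifold $Y$. Therefore $\cP(s)$ is a compact $X_H$-invariant subset of $H^{-1}(s)$ that contains $\cP(s)$, is locally maximal, and is not equal to $H^{-1}(s)$ --- contradicting Theorem~\ref{thm:main2}. Consequently $Y$ must contain infinitely many closed orbits, and $\cQ$ is the desired full-measure set.

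The main obstacle is Step 1: one must extract from the hypothesis ``elliptic and Moser stable'' precisely the input required by the Poincar\'e--Birkhoff theorem --- namely that $\gamma_i$ is accumulated by invariant tori whose rotation numbers are not all equal --- and then check that the periodic points so obtained represent genuinely distinct closed orbits. Step 2 is routine, granting the standard fact that hyperbolic closed orbits of flows on $3$-manifolds are isolated invariant sets.
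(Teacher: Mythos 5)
Your proof is correct and follows essentially the same route as the paper: the elliptic, Moser-stable case is dispatched by Moser's invariant-circle theorem plus Poincar\'e--Birkhoff (precisely the argument the paper delegates to Moser's theorem and its Appendix~\ref{sec:generic} discussion), and the all-hyperbolic case is ruled out because a finite union of hyperbolic closed orbits would be a proper, compact, locally maximal invariant set containing $\cP(s)$, contradicting Theorem~\ref{thm:main2}. The only cosmetic differences are your contradiction framing versus the paper's case split over the sets $\cE$ and $\cH$, and your (harmless, strictly unnecessary) appeal to Theorem~\ref{thm:main3} to guarantee $\cP(s)\neq\emptyset$.
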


We define hyperbolic, elliptic, and Moser stable closed orbits in \S\ref{sec:periodic_orbits}. Theorem~\ref{thm:main4} follows from an elementary argument using Theorem~\ref{thm:main2}. As we will explain in Appendix~\ref{sec:generic}, the set of $H$ satisfying the conditions of Theorem~\ref{thm:main4} is Baire-generic in $C^\infty(\bR^4)$. So, we have the following corollary.

\begin{cor}\label{cor:main4}
There exists a Baire-generic subset $\cG \subseteq C^\infty(\bR^4)$ with the following property. Any $H \in \cG$ admits a full measure subset $\cQ \subseteq \cR_c(H)$ such that for any $s \in \cQ$, the level set $H^{-1}(s)$ contains infinitely many closed orbits of $X_H$.
\end{cor}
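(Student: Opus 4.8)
The plan is to obtain the corollary by combining Theorem~\ref{thm:main4} with a $C^\infty$-genericity statement for the hypothesis of that theorem. Let $\cG \subseteq C^\infty(\bR^4)$ denote the set of smooth functions $H$ such that, for almost every $s \in \cR_c(H)$, every closed orbit $\gamma \subset H^{-1}(s)$ is hyperbolic or is elliptic and Moser stable. By Theorem~\ref{thm:main4}, every $H \in \cG$ has the property asserted in the corollary, so it suffices to prove that $\cG$ contains a residual subset of $C^\infty(\bR^4)$ in the usual Fr\'echet topology. This is the content of Appendix~\ref{sec:generic}; I sketch the argument here.

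The first step is to reduce to a parametric Kupka--Smale statement. On the two-dimensional symplectic transversal to a closed orbit in a regular energy surface, the linearized return map is a $2\times 2$ symplectic matrix, and the orbit is nondegenerate (no eigenvalue equal to $1$) precisely when it is hyperbolic or elliptic; for a nondegenerate elliptic orbit with rotation number $\theta$, Moser stability follows from Moser's invariant curve theorem once one knows the finitely many \emph{elementarity} conditions $e^{ik\theta}\neq 1$ for $k \in \{1,2,3,4\}$ together with the \emph{twist} condition that the first Birkhoff invariant of the return map is nonzero. I would therefore introduce the class $\cG' \subseteq C^\infty(\bR^4)$ of Hamiltonians $H$ whose induced one-parameter family of flows $\{X_H|_{H^{-1}(s)}\}_{s \in \cR_c(H)}$ is generic in the sense of Robinson's theory of conservative systems: all closed orbits in all regular energy surfaces are nondegenerate outside a locally finite set of energy values, and along each (locally isolated) one-parameter family of elliptic orbits the elementarity and twist conditions fail only at a locally finite set of energies. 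Since there are only countably many such orbit families, for every $H \in \cG'$ the set of $s \in \cR_c(H)$ admitting a degenerate, non-elementary, or non-twisting elliptic orbit is a countable union of locally finite sets, hence of measure zero; thus $\cG' \subseteq \cG$.

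The second step is to show that $\cG'$ is residual. Exhaust $\bR^4$ by compact sets and $\bR$ by compact intervals; for each finite window of energies and periods, express the ``bad'' locus of triples $(H,s,\gamma)$ — those for which $\gamma$ is a degenerate, or non-elementary, or non-twisting elliptic closed orbit in $H^{-1}(s)$ — as the common zero set of finitely many transversality conditions in an appropriate jet space. Density of $\cG'$ then follows from a parametrized Sard--Smale argument: given $H$ and a bad orbit $\gamma \subset H^{-1}(s_0)$, a $C^\infty$-small perturbation of $H$ supported in a tubular neighborhood of $\gamma$ can prescribe the linear part and the low-order Birkhoff coefficients of the return map along the local energy family, moving $(H,s)$ off the bad locus, which has positive codimension; the parameter $s$ is exactly where the phrase ``almost every $s$'' originates. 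Openness within each finite window comes from compactness of the relevant orbit space together with continuity of the relevant jets in $H$, and a standard diagonal argument over the countably many windows exhibits $\cG'$ as a countable intersection of open dense sets.

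The step I expect to be the main obstacle is controlling Moser stability parametrically. Unlike bare nondegeneracy, Moser stability is a KAM-type condition, and securing it requires the finite list of elementarity and twist conditions to hold simultaneously with all conditions already arranged for shorter orbits and at nearby energies; one must verify that the perturbations realizing one condition can be chosen with supports disjoint from — or small enough not to disturb — the others, which calls for an inductive, orbit-by-orbit perturbation scheme with careful bookkeeping of the dependence on the energy parameter. Granting this, $\cG'$ is residual, $\cG' \subseteq \cG$, and Theorem~\ref{thm:main4} applied to any $H \in \cG'$ yields the corollary.
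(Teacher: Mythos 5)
Your top-level reduction is the same as the paper's: Corollary~\ref{cor:main4} is Theorem~\ref{thm:main4} applied to a Baire-generic set of Hamiltonians for which, on almost every compact regular level, every closed orbit is hyperbolic or Moser stable elliptic --- i.e.\ Lemma~\ref{lem:generic} of Appendix~\ref{sec:generic}. The difference lies in how that genericity is obtained, and this is where your argument has a genuine gap. The paper does not run a bespoke parametric Kupka--Smale scheme: it encodes ``degenerate, or elliptic but non-elementary or non-twist'' as a countable family of invariant real-analytic subvarieties $Q_i$ of the jet space $J^2(2)$ of symplectic germs, and then invokes Takens' jet transversality theorem for closed orbits of Hamiltonian flows (Lemma~\ref{lem:takens}, \cite{Takens70}), which states exactly that for generic $H$ the one-parameter families of return-map jets meet any finite union $\bigcup_{i\leq N} Q_i$ in a locally finite set of orbits; intersecting over $N$ then gives a full-measure set of energies. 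Your sketch instead proposes to prove this transversality from scratch, and the step you yourself flag as the main obstacle --- realizing prescribed linearizations \emph{and} low-order Birkhoff coefficients of the Poincar\'e return map by $C^\infty$-small Hamiltonian perturbations supported near an orbit, transversally in the energy parameter and consistently across the countably many orbit families --- is precisely the content of Takens' theorem (generalizing Robinson's $1$-jet case \cite{Robinson70}). As written, your proof says ``granting this''; that grant is the entire difficulty, and it is a substantial published theorem rather than a routine Sard--Smale exercise, since the perturbations must remain Hamiltonian, must respect the energy foliation, and must control $2$-jets rather than just linearizations. The repair is simply to quote Takens, as the paper does, rather than attempt the orbit-by-orbit induction.

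A smaller inaccuracy: ``nondegenerate precisely when hyperbolic or elliptic'' is not quite right for $2\times 2$ symplectic return maps --- an eigenvalue $-1$ (and, with the paper's definition of elliptic, any root of unity on the unit circle) gives a nondegenerate orbit that is neither hyperbolic nor elliptic in the required sense. This is harmless for the overall scheme, since such orbits also correspond to an invariant subvariety of jets and so occur only at a locally finite set of energies for generic $H$, but it should be added to the list of excluded conditions rather than deduced from nondegeneracy alone.
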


\subsection{Comments on the proofs}\label{subsec:proof_outlines}

We outline the proofs of the main theorems. The outline of Theorem~\ref{thm:main} is a bit long, but provides the necessary context to give much more concise summaries of the other main results. 

\subsubsection{Dense existence of compact invariant sets} We start with Theorem~\ref{thm:main}. Fix a smooth function $H: \bR^4 \to \bR$ and a compact regular level set $Y$. To explain our method and highlight some challenges that we overcome, we review previous work on detecting closed orbits or invariant sets in $Y$. First, assume that $Y$ is contact-type (e.g. convex or star-shaped). Compactify $\bR^4$ to $\mathbb{CP}^2$ by adding a divisor at infinity. For any tame almost-complex structure $J$ and any pair of points $w_\pm \in \mathbb{CP}^2$, there exists a degree $1$ $J$-holomorphic sphere in $\mathbb{CP}^2$ passing through $w_\pm$. Place $w_+$ and $w_-$ on opposite sides so that the sphere crosses the hypersurface $Y$. Then, we perform a neck stretching procedure around $Y$. Since $Y$ is contact-type, the crossing sphere satisfies uniform energy bounds as it is strrethed. By the SFT compactness theorem \cite{BEHWZ03}, the sphere breaks into a holomorphic building. Thus, $Y$ must contain a closed orbit, since each building level is asymptotic to a non-empty union of closed orbits. This proof is illustrated in Figure~\ref{fig:sft_stretching}.

\begin{figure}[h]
\centering
\includegraphics[width=0.5\textwidth]{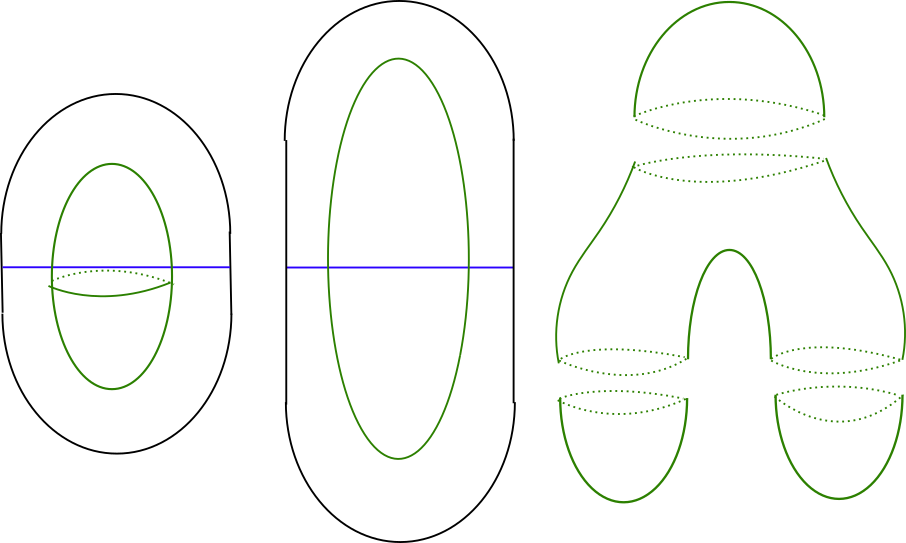}
\caption{A $J$-holomorphic sphere stretching along a contact-type hypersurface and breaking into a building (right).}
\label{fig:sft_stretching}
\end{figure}

Next, we drop the contact-type assumption on $Y$. We outline a minor variant of Fish--Hofer's proof from \cite{FH23} that $Y$ contains a proper, compact, $X_H$-invariant subset. We begin the same way, by stretching a $J$-holomorphic sphere that crosses $Y$. Without the contact-type assumption, however, we have no a priori energy bounds. The sphere exhibits wild behavior in the neck and the SFT compactness theorem fails. Fish--Hofer proved that a relatively small part of the sphere, near an end of the neck, can be controlled and limits to a single holomorphic curve in $\bR \times Y$. This holomorphic curve lives in a new class of infinite energy curves that they call \emph{feral curves}; they show that the ends of feral curves limit to $X_H$-invariant subsets of $Y$. Some additional arguments show that this particular feral curve produced by neck-stretching limits to a \emph{proper} (i.e. not equal to $Y$) compact $X_H$-invariant set $\Lambda$. This proof is illustrated in Figure~\ref{fig:fh_stretching}.

\begin{figure}[h]
\centering
\includegraphics[width=0.5\textwidth]{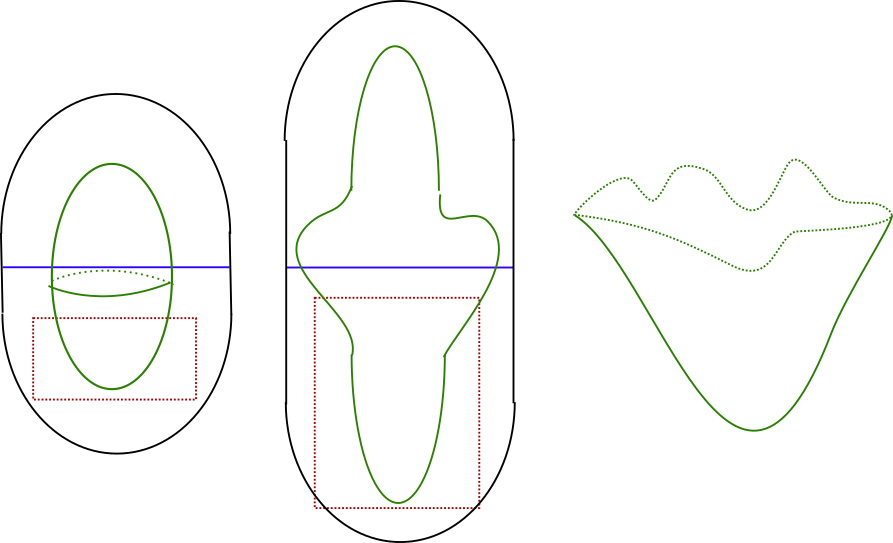}
\caption{A $J$-holomorphic sphere stretching wildly along a non contact-type hypersurface. A carefully selected sequence of controlled, but successively longer, parts (in the boxes) limits to a feral curve (right).}
\label{fig:fh_stretching}
\end{figure}

Our approach to Theorem~\ref{thm:main} relies on two observations. First, the manifold $\mathbb{CP}^2$ has holomorphic curves of every degree $d$ crossing $Y$, not just degree $1$ spheres. Indeed, the Fish--Hofer argument outlined above can be adapted to higher degree curves\footnote{We essentially do this in Proposition~\ref{prop:intersection_argument} on the way to proving Theorem~\ref{thm:main}.}. We obtain a proper compact invariant subset $\Lambda_d$ for each $d$. However, it seems difficult to tell from this viewpoint whether the subsets $\Lambda_d$ are distinct for different values of $d$, and moreover one cannot control their location in $Y$. So, a different approach is required to extract new data from higher degree curves. 

Our second observation is that the Fish--Hofer approach, by necessity, disregards a lot of information. A limiting feral curve can only be constructed by restricting to very specific pieces of a stretching holomorphic curve. Our solution, which we consider to be our main conceptual contribution, is to not attempt to extract limiting holomorphic curves at all. We introduce a topological alternative to the SFT compactification, called the ``stretched limit set'', which remedies issues with both the SFT approach and the Fish--Hofer approach. Unlike the SFT compactification, the stretched limit set exists outside of the contact-type setting. Unlike the Fish--Hofer procedure, the stretched limit set contains information from all parts of a stretching holomorphic curve, not just the parts near the ends of the neck. Informally, the stretched limit set is a collection of pairs $(\Xi, s)$, where $\Xi \subseteq (-1, 1) \times Y$ is a subsequential Hausdorff limit of height-two slices of the stretching curves, and $s \in \bR$ tracks the vertical positions of these slices. See Figure~\ref{fig:rp_stretching} for an illustration and \S\ref{subsubsec:stretched_limit_set} for a formal definition. We emphasize that $\Xi$, in general, is not a holomorphic curve. It could be a much wilder closed subset of $(-1, 1) \times Y$, such as a fractal set, a subset with non-empty interior, or even $(-1, 1) \times Y$ itself. 

\begin{figure}[h]
\centering
\includegraphics[width=0.5\textwidth]{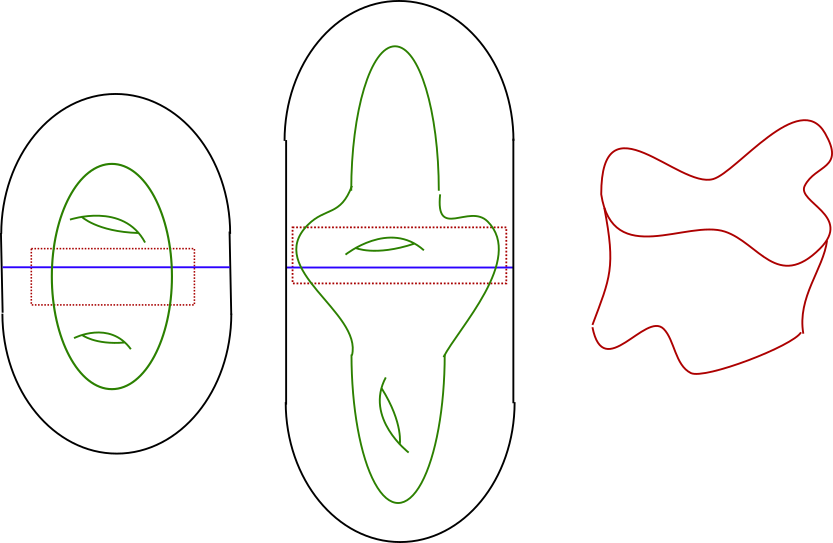}
\caption{A $J$-holomorphic curve, possibly with genus, stretching wildly along a non contact-type hypersurface. A Hausdorff limit of any sequence of slices (in the boxes) yields an element of the stretched limit set (right). This is usually not a $J$-holomorphic curve.}
\label{fig:rp_stretching}
\end{figure}

For each $d$, we apply our construction to a stretching degree $d$ curve; let $\cX_d$ denote the resulting stretched limit set. The set $\cX_d$ could be large and complicated, but with some careful analysis we extract a connected subset $\cZ_d \subseteq \cX_d$ satisfying several properties (see Proposition~\ref{prop:stretched_limit}). The most notable property is that $\cZ_d$ consists of nearly-invariant sets. For every element $(\Xi, s) \in \cZ_d$, there exists a closed $X_H$-invariant set $\Lambda \subseteq Y$ such that $\Xi$ is $\delta$-close to $(-1, 1) \times \Lambda$, where $\delta \to 0$ as $d \to \infty$. Another limiting procedure, this time taking $d \to \infty$, gives after additional arguments an infinite family of proper, compact, $X_H$-invariant subsets with dense union. The analysis of the stretched limit set has several different parts: delicate analysis of stretching holomorphic curves, estimates for holomorphic curves from \cite{FH23, CGP24}, quantitative properties of the degree $d$ moduli spaces, and at one point the intersection theory of holomorphic curves. The last two parts are what require us to work in four dimensions; we elaborate in \S\ref{subsec:further_questions}. 

\subsubsection{Comparisons to other works} Before moving on to other results, we compare the ideas sketched here to \cite{FH23, CGP24}. The only significant overlap is the use of estimates from \cite{FH23, CGP24} and an intersection theory argument inspired by \cite{FH23}\footnote{See the proof of Proposition~\ref{prop:intersection_argument}.}. The main dynamical results in \cite{CGP24} also use holomorphic curves, but they use holomorphic curves arising from the $U$-map in embedded contact homology or periodic Floer homology, and deep properties of these curves derived from the relationship of ECH and PFH with Seiberg--Witten theory. ECH and PFH are not available in our setting. Certain other kinds of ``limit sets'' appeared in \cite{FH23, CGP24}, but they are very different from the stretched limit set. The limit set in \cite{FH23} is a single compact invariant set representing the end of a feral holomorphic curve. The limit set in \cite{CGP24} is a family of invariant sets produced from a sequence of $U$-map curves with action going to $0$ and bounded topology. In contrast, the stretched limit set $\cX_d$ has many non-invariant sets for any fixed $d$. It is only by selecting a subset of the stretched limit set and passing to the $d \to \infty$ limit that we are eventually able to produce enough invariant sets to prove Theorem~\ref{thm:main}. 

\subsubsection{Global Le Calvez--Yoccoz property} To prove Theorem~\ref{thm:main2}, we use a different neck stretching procedure, inspired by the ``adiabatic neck stretching'' procedure introduced in \cite{FH22}. Instead of neck-stretching at $Y$, we simultaneously neck-stretch at every level set near $Y$. For each degree $d$, we introduce an analogue, the so-called ``adiabatic limit set'' $\widehat{\cX}_d$, of the stretched limit set $\cX_d$. We prove that almost every $s \in \cR_c(H)$ and every $d \geq 1$ there exists a well-behaved connected subset $\cZ^s_d$ (see Proposition~\ref{prop:adiabatic_limit}), where as above the elements become closer to being $X_H$-invariant as $d \to \infty$. Passing to the $d \to \infty$ limit produces a connected family $\cY^s$ of compact invariant sets in $H^{-1}(s)$ such that (i) some $\Lambda \in \cY^s$ is contained in $\overline{\cP(s)}$ and (ii) $Y \in \cY^s$. It follows that any compact invariant set containing $\overline{\cP(s)}$ is either equal to $Y$ or is not locally maximal. 

\subsubsection{Quantitative almost-existence} The proof of Theorem~\ref{thm:main3} also uses adiabatic limit sets. Assume for the sake of contradiction that there exists a positive measure subset $\cB \subseteq \cR_c(H)$ such that $H^{-1}(s)$ contains at most one closed orbit for each $s \in \cB$. Then, we prove that $\widehat\cX_d$ satisfies impossibly strong restrictions for sufficiently large $d$; see Lemma~\ref{lem:finite_accumulation} and Lemma~\ref{lem:blowup_implies_accumulation}. Here is an informal description of what is proved. After a slight refinement of $\cB$ (see Lemma~\ref{lem:bad_subset}), we prove that for all but $O(d)$ levels $s \in \cB$, there exists $(\Xi, s) \in \widehat\cX_d$ such that $\Xi = (-1, 1) \times \Lambda$, where $\Lambda \subset H^{-1}(s)$ is a closed orbit. On the other hand, there exist $d^2$ levels $s \in \cB$ on which $\Xi$ can be assumed to pass through an arbitrary point constraint. Thus, if $d$ is sufficiently large, there must exist $s \in \cB$ such that $H^{-1}(s)$ contains a closed orbit passing through an arbitrary point. This contradicts the assumption that $H^{-1}(s)$ has at most one closed orbit. Theorem~\ref{thm:main4} is proved using Theorem~\ref{thm:main2} and the fact that existence of a Moser stable closed orbit implies existence of infinitely many closed orbits. 

\subsection{Remarks and questions}\label{subsec:further_questions} We collect some remarks and follow-up questions. 

\subsubsection{Other $4$-manifolds} Our results extend with only minor modifications to other symplectic $4$-manifolds besides $\bR^4$. We explain in \S\ref{subsec:other_manifolds} how to extend Theorem~\ref{thm:main} to Hamiltonians $H: T^*\mathbb{S}^2 \to \bR$ and $H: T^*\mathbb{T}^2 \to \bR$. We explain in \S\ref{subsec:other_manifolds_adiabatic} how to extend Theorem~\ref{thm:main2} to Hamiltonians $H: \mathbb{M} \to \bR$, where $\mathbb{M}$ is a smooth symplectic $4$-manifold that symplectically embeds into a certain closed symplectic $4$-manifold $\bW$. We require that $\bW$ has $b^+ = 1$ and that the symplectic form has rational cohomology class. Examples of such $\bW$ include $\mathbb{CP}^2$ and its symplectic blowups, as well as products $\mathbb{S}^2 \times \Sigma$ where $\Sigma$ is any closed and orientable surface. We explain in \S\ref{subsec:other_manifolds_periodic} that Theorem~\ref{thm:main4} also holds for such $\mathbb{M}$, and that Theorem~\ref{thm:main3} holds for such $\mathbb{M}$ provided that (i) the symplectic form on $\mathbb{M}$ is exact and (ii) for any $s \in \cR_c(H)$, at least one component of $\mathbb{M}\,\setminus\,H^{-1}(s)$ has compact closure. 

\subsubsection{Higher dimensions} It would be of exceptional interest to extend the results in this paper to higher dimensions. However, all of our results rely on quantitative properties of moduli spaces of holomorphic curves in four dimensions that do not obviously hold in higher dimensions. We make essential use of the fact that, for any $d$, there exists a moduli space of degree $d$ curves in $\mathbb{CP}^2$ whose index is much larger than $d$, approximately $d^2$. In $\mathbb{CP}^n$ for $n > 2$, the analogue of degree is symplectic area. There exist holomorphic curves of symplectic area $d$ for each $d$, but the index of the moduli space grows linearly instead of quadratically in $d$. Also, our proof of Theorem~\ref{thm:main} uses the intersection theory of $J$-holomorphic curves, which is not available in higher dimensions. 

\subsubsection{Almost-existence of two or infinitely many closed orbits} As mentioned above, it is now known, after \cite{HWZ98, HWZ03, CGHP19, CGHHL23}, that any star-shaped regular level set of a Hamiltonian $H: \bR^4 \to \bR$ has either two closed orbits or has infinitely many closed orbits. Inspired by Theorem~\ref{thm:main3}, we ask if this too generalizes beyond the contact-type case.

\begin{namedthm*}{Question} Fix any smooth function $H: \bR^4 \to \bR$. Then, does there exist a full measure subset $\cQ \subseteq \cR_c(H)$ such that for any $s \in \cQ$, the level set $H^{-1}(s)$ either has two or infinitely many closed orbits? 
\end{namedthm*}

\subsubsection{Ergodic invariant measures} Although they are not the main focus of this work, one could pursue similar results for invariant probability measures. An invariant measure analogue of the Fish--Hofer theorem is known in any dimensiongreater than $2$. Ginzburg--Niche \cite{GN15} proved that for any smooth function $H: \bR^{2n} \to \bR$, where $n \geq 2$, each compact regular energy level $Y$ carries at least two ergodic $X_H$-invariant probability measures. Their argument is short; it combines McDuff's contact-type criterion \cite{McDuff87} with Viterbo's closed orbit theorem for contact-type hypersurfaces \cite{Viterbo87}. A holomorphic curve-based proof of their result can be found in \cite{Prasad23a, Prasad23b}. Taubes \cite{Taubes09} proved a similar statement for exact volume-preserving flows on closed $3$-manifolds. 

In four dimensions, the lower bound of two ergodic measures is almost sharp. As discussed in Remark~\ref{rem:katok} above, there exist star-shaped level sets in $\bR^4$ that carry exactly three ergodic $X_H$-invariant probability measures. By Theorem~\ref{thm:main3}, almost every compact regular level set of a Hamiltonian $H: \bR^4 \to \bR$ carries at least three ergodic $X_H$-invariant probability measures. It would be interesting to obtain the sharp lower bound for all compact regular level sets. 

\begin{namedthm*}{Question} Fix any smooth function $H: \bR^4 \to \bR$. Then, does any compact regular level set $Y$ carry at least three ergodic $X_H$-invariant probability measures? 
\end{namedthm*}

\subsection{Outline of article}\label{subsec:roadmap}

Several preliminary definitions and results required for our arguments are collected in \S\ref{sec:prelim}. Theorem~\ref{thm:main} is proved in \S\ref{sec:invariant_sets}. Theorem~\ref{thm:main2} is proved in \S\ref{sec:lcy}. Theorem~\ref{thm:main3} and \ref{thm:main4} are proved in \S\ref{sec:periodic_orbits}. We warn the reader that \S\ref{sec:periodic_orbits} is not self-contained. It makes free use of notation and results from \S\ref{sec:lcy}. Appendix~\ref{sec:closed_curves} discusses existence results for closed holomorphic curves in symplectic $4$-manifolds. Appendix~\ref{sec:generic} explains why the conditions in Theorem~\ref{thm:main4} are $C^\infty$-generic. 

\subsection{Acknowledgements}\label{subsec:acknowledgements}

I would like to thank Dan Cristofaro-Gardiner for useful discussions and for an enriching collaboration on \cite{CGP24}. I also thank Joel Fish, Viktor Ginzburg, Basak G\"urel, and Helmut Hofer for useful discussions and for their comments on earlier versions of this work. This research was supported by the Miller Institute at the University of California Berkeley.

\section{Preliminaries}\label{sec:prelim}

This section contains definitions and results required for the proofs of the main theorems. In \S\ref{subsec:geometric_prelim}--\ref{subsec:j_curves_symplectization}, we discuss the holomorphic curve framework that we will use. In \S\ref{subsec:estimates}, we collect some estimates for holomorphic curves from \cite{FH23, CGP24}. In \S\ref{subsec:hausdorff}, we review some facts about the Hausdorff topology for closed subsets of topological spaces. In \S\ref{subsec:neck_stretching_general}, we present a general neck stretching procedure for level sets of Hamiltonians on symplectic manifolds.  

\subsection{Geometric structures} \label{subsec:geometric_prelim} Fix a smooth, closed, oriented manifold $Y$ of dimension $2n - 1 \geq 3$.  

\subsubsection{Framed Hamiltonian structures} A \emph{framed Hamiltonian structure} on $Y$ is a pair $\eta = (\lambda, \omega)$ of a $1$-form $\lambda$ and a $2$-form $\omega$ such that
$$d\omega = 0,\quad \lambda\wedge\omega^{n-1} > 0.$$

The bundle $\xi := \ker(\lambda)$ is a $(2n-2)$-plane bundle on $Y$. The two-form $\omega$ restricts to a symplectic form on $\xi$. The \emph{Hamiltonian vector field} $R_\eta$ is defined implicitly by the equations
$$\lambda(R_\eta) \equiv 1,\quad \omega(R_\eta, -) \equiv 0.$$

\begin{exe}
If $\omega = d\lambda$, then $\lambda$ is a contact form and $R_\eta$ is equal to its Reeb vector field. 
\end{exe}

Let $\cI \subseteq \bR$ be either an open or closed interval. Write $a: \cI \times Y \to \cI$ for the projection onto the real coordinate. An almost-complex structure $J$ on $\cI \times Y$ is \emph{$\eta$-adapted} if 
\begin{enumerate}[(\roman*)]
\item $J$ is translation-invariant;
\item $J(\partial_a) = R_\eta$;
\item $J$ preserves the bundle $\xi$ and restricts to a $\omega$-compatible complex structure on $\xi$. 
\end{enumerate}

Let $\cD(Y)$ be the space of pairs $(\eta, J)$, where $\eta$ is a framed Hamiltonian structure and $J$ is an $\eta$-adapted almost-complex structure on $\bR \times Y$. Give $\cD(Y)$ the topology of uniform $C^\infty$-convergence. Given a choice of $(\eta, J) \in \cD(Y)$, we define a Riemannian metric 
$$g := da \otimes da + \lambda \otimes \lambda + \omega(-, J-).$$

\subsubsection{Realized Hamiltonian homotopies} Let $\cI \subseteq \bR$ be any closed and connected interval. A \emph{realized Hamiltonian homotopy} on $\cI \times Y$ is a pair $\widehat\eta = (\widehat\lambda, \widehat\omega)$ of a $1$-form $\widehat\lambda$ and $2$-form $\widehat\omega$ such that the following holds:
\begin{enumerate}[(\roman*)]
\item $\widehat{\lambda}(\partial_a) \equiv 0$ and $\widehat\omega(\partial_a, -) \equiv 0$;
\item $d\widehat\omega|_{\{s\} \times Y} \equiv 0$ for each $s \in \cI$;
\item $da \wedge \widehat\lambda \wedge \widehat{\omega}^{n-1} > 0$;
\item $\widehat{\lambda}$ is invariant under the flow of $\partial_a$;
\item If $\cI$ is unbounded, then there exists a compact subset $\cK \subset \cI$ such that $\widehat\omega$ is invariant under the flow of $\partial_a$ outside of $\cK \times Y$. 
\end{enumerate}

The bundle $\widehat\xi := \ker(da) \cap \ker(\widehat\lambda)$ is a $(2n-2)$-plane bundle on $\cI \times Y$. The two-form $\widehat\omega$ restricts to a symplectic form on $\widehat\xi$. The \emph{Hamiltonian vector field} $\widehat{R}_\eta$ is defined implicitly by the equations
$$da(\widehat{R}_\eta) \equiv 0, \quad \widehat\lambda(\widehat{R}_\eta) \equiv 1, \quad \widehat\omega(\widehat{R}_\eta, -).$$

\begin{exe}
A framed Hamiltonian structure $\eta = (\lambda, \omega)$ on $Y$ defines a realized Hamiltonian homotopy $\widehat\eta = (\widehat\lambda, \widehat\omega)$ on $\cI \times Y$. Let $\widehat\lambda$ and $\widehat\omega$ to be the unique $1$-form and $2$-form such that $\widehat\lambda(\partial_a) \equiv 0$, $\widehat\omega(\partial_a, -) \equiv 0$, and 
$$\widehat\lambda|_{\{s\} \times Y} = \lambda,\quad \widehat\omega|_{\{s\} \times Y} = \omega$$
for each $s \in \cI$. 
\end{exe}

A realized Hamiltonian homotopy can be regarded as a $1$-parameter family of framed Hamiltonian structures. Fix any $s \in \cI$. Define a $1$-form $\lambda^s$ and a $2$-form $\omega^s$ on $Y$ as the pullbacks of $\widehat\lambda$ and $\widehat\omega$, respectively, by the map $y \mapsto (s, y)$. Write $R^s$ for the pullback of $\widehat{R}_\eta$. The pair $\eta^s = (\lambda^s, \omega^s)$ is a framed Hamiltonian structure on $Y$ and $R^s$ is its Hamiltonian vector field. 

An almost-complex structure $\widehat{J}$ on $\cI \times Y$ is \emph{$\widehat\eta$-adapted} if 
\begin{enumerate}[(\roman*)]
\item $\widehat{J}(\partial_a) = \widehat{R}$;
\item $\widehat{J}$ preserves the bundle $\widehat\xi$ and restricts to a $\widehat\omega$-compatible complex structure on $\widehat\xi$;
\item If $\cI$ is unbounded, then there exists a compact subset $\cK \subset \cI$ such that $\widehat{J}$ is invariant under the flow of $\partial_a$ outside of $\cK \times Y$. 
\end{enumerate}

For each $s \in \cI$, write $J^s$ for the unique translation-invariant almost-complex structure on $\bR \times Y$ which coincides with $\widehat{J}$ on $\{s\} \times Y$. Observe that $J^s$ is $\eta^s$-adapted. 

Write $\cD(\cI \times Y)$ for the space of pairs $(\widehat\eta, \widehat{J})$, where $\widehat\eta$ is a Hamiltonian homotopy and $\widehat{J}$ is a $\widehat\eta$-adapted almost-complex structure on $\cI \times Y$. Equip $\cD(\cI \times Y)$ with the topology of $C^\infty$-convergence. Given a choice of pair $(\widehat\eta, \widehat{J}) \in \cD(\cI \times Y)$, define a Riemannian metric 
$$\widehat{g} := da \otimes da + \widehat\lambda \otimes \widehat\lambda + \widehat\omega(-, J-).$$

\subsection{Holomorphic curve basics}\label{subsec:holomorphic_curves}

\subsubsection{Riemann surfaces} The Riemann surfaces in this paper are allowed to have nodal singularities. As such, define a \emph{Riemann surface} to be a pair $(C, \bfn)$ where $C$ is a surface $C$ with smooth boundary $\partial C$, equipped with an integrable almost-complex structure $j$, and $\bfn$ is the set of \emph{nodal points}. This is a discrete set of mutually disjoint pairs of points $\{(\zeta_i^+, \zeta_i^-)\}$ in $C\,\setminus\,\partial C$. We often omit the nodal points $\bfn$ from the notation. A \emph{normalization} $\widetilde{C}$ of a Riemann surface $(C, \bfn)$ is a smooth surface obtained by blowing up each point $\zeta_i^{\pm}$ to a circle $\Gamma_i^{\pm}$, and then gluing $\Gamma_i^+$ to $\Gamma_i^-$ for each $i$. The diffeomorphism type of the resulting surface is independent of how $\Gamma_i^+$ and $\Gamma_i^-$ are glued together. The Riemann surface $(C, \bfn)$ is \emph{irreducible} if the underlying surface $C$ is connected and \emph{connected} if the normalization $\widetilde{C}$ is connected. 

The \emph{genus}, denoted by $G(C)$, of a compact Riemann surface $C$ is the genus of the closed surface obtained by capping off the boundary components with disks. The \emph{arithmetic genus}, denoted by $G_a(C)$, of a compact Riemann surface $C$ is the genus of any normalization $\widetilde{C}$. 

\subsubsection{Holomorphic curves in almost-complex manifolds} Fix an almost-complex manifold $(\mathbb{W}, J)$ and a Riemann surface $(C, \bfn)$. A \emph{$J$-holomorphic curve} with domain $C$ is a smooth map $u: C \to W$ such that $u(\zeta^+) = u(\zeta^-)$ for each pair $(\zeta^+, \zeta^-) \in \bfn$ and such that it solves the non-linear Cauchy--Riemann equation
$$J \circ Du = Du \circ j.$$ 

A $J$-holomorphic curve $u: C \to \mathbb{W}$ is \emph{compact}, \emph{irreducible}, or \emph{connected} if the domain is compact, irreducible, or connected, respectively. It is \emph{proper} if the preimage of any compact set is compact and \emph{boundary immersed} if $u$ immerses the boundary $\partial C$. We always assume that $J$-holomorphic curves are proper and boundary immersed. 

\subsection{Holomorphic curves in realized Hamiltonian homotopies} \label{subsec:j_curves_symplectization} Fix a closed, smooth, oriented manifold $Y$ of dimension $2n - 1 \geq 3$. Fix a closed, connected interval $\cI \subseteq \bR$ and a pair $(\widehat\eta, \widehat{J}) \in \cD(\cI \times Y)$. Let $u: C \to \cI \times Y$ be a $\widehat{J}$-holomorphic curve. 

\subsubsection{Action of a holomorphic curve}

The integral 
$$\int_C u^*\widehat\omega$$
is called the \emph{action} of $u$. Because $\widehat{J}$ is $\widehat\eta$-adapted, the $2$-form $u^*\widehat\omega$ is always non-negative on the tangent planes of $C$. Also, $u^*\widehat\omega$ vanishes at $\zeta \in C$ if and only if either $\zeta$ is a critical point of $u$ or $Du(T_\zeta C) = \operatorname{Span}(\partial_a, \widehat{R}_\eta)$. Thus, we have the following lemma.

\begin{lem}\label{lem:zero_action}
Fix a closed, connected interval $\cI \subseteq \bR$ and a pair $(\widehat\eta, \widehat{J}) \in \cD(\cI \times Y)$. Let $u: C \to \cI \times Y$ be a connected $\widehat{J}$-holomorphic curve. Then 
$$\int_C u^*\widehat\omega \geq 0$$
and is equal to $0$ if and only if there exists an orbit $\gamma \subset Y$ of $\widehat{R}_\eta$ such that $u(C) \subset \bR \times \gamma$. 
\end{lem}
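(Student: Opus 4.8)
The plan is to analyze the pointwise structure of the integrand $u^*\widehat\omega$ and then promote a pointwise degeneracy condition into a global statement about the image of $u$. The key tool is the observation already recorded in the paragraph preceding the lemma: since $\widehat J$ is $\widehat\eta$-adapted, for any tangent plane $P \subset T_\zeta C$ one has $u^*\widehat\omega(P) \geq 0$, with equality precisely when $\zeta$ is a critical point of $u$ or $Du(T_\zeta C) = \operatorname{Span}(\partial_a, \widehat R_\eta)$. The nonnegativity gives $\int_C u^*\widehat\omega \geq 0$ immediately, so the content is the equality case.

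First I would assume $\int_C u^*\widehat\omega = 0$. Since the integrand is pointwise nonnegative and continuous, it must vanish identically on $C$. Hence at every point $\zeta \in C$ either $du_\zeta = 0$ or the image of $du_\zeta$ is exactly the two-plane $\operatorname{Span}(\partial_a, \widehat R_\eta)$ at $u(\zeta)$. Next I would argue that the set of critical points is nowhere dense: a nonconstant $\widehat J$-holomorphic curve has isolated critical points by the standard local structure theory of pseudoholomorphic maps (they look like branched covers of a disk), and if $u$ were constant the statement is immediate since a constant curve maps into a single point, which lies on an orbit of $\widehat R_\eta$ after noting the point is then $\partial_a$-degenerate — more cleanly, one reduces to the nonconstant irreducible case and then handles constants separately. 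So on the open dense set $C' = C \setminus \operatorname{Crit}(u)$, the differential $du_\zeta$ is an isomorphism onto $\operatorname{Span}(\partial_a, \widehat R_\eta)$.

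Now I would use this to control the image. Consider the distribution $\mathcal{V} = \operatorname{Span}(\partial_a, \widehat R_\eta)$ on $\cI \times Y$. I claim it is integrable, with integral leaves of the form $\cI \times \gamma$ where $\gamma$ is an orbit of $\widehat R_\eta$: indeed $\partial_a$ commutes with $\widehat R_\eta$ — this uses property (iv) in the definition of a realized Hamiltonian homotopy, namely that $\widehat\lambda$ (and hence, one checks, $\widehat R_\eta$) is invariant under the flow of $\partial_a$ — so the Lie bracket $[\partial_a, \widehat R_\eta] = 0 \in \mathcal V$, and the Frobenius theorem applies. Since $du$ has image in $\mathcal V$ everywhere on $C'$ (and $C'$ is dense), $u(C')$ is locally contained in a single leaf; by connectedness of $C$ (note $u$ restricted to the normalization $\widetilde C$ is continuous and $\widetilde C$ is connected, and nodal identifications only glue image points) the entire image $u(C)$ lies in one leaf $\cI \times \gamma$ — and then by continuity and density, $u(C) \subset \overline{\cI \times \gamma} = \cI \times \gamma$ (taking $\gamma$ closed, i.e. either a periodic orbit or the closure of a trajectory; if one wants $\gamma$ to literally be an orbit, note the image of a single nonconstant holomorphic curve is contained in the closure of one trajectory, which by invariance under $\partial_a$ and the holomorphic equation forces $\gamma$ to be periodic — this is where a small extra argument is needed). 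Conversely, if $u(C) \subset \bR \times \gamma$ for an orbit $\gamma$, then $u^*\widehat\omega$ vanishes identically because $\widehat\omega(\partial_a,-) \equiv 0$ and $\widehat\omega(\widehat R_\eta, -) \equiv 0$, so the action is zero; this direction is a direct computation.

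The main obstacle I anticipate is the step identifying the leaf $\cI \times \gamma$ with $\gamma$ an honest \emph{orbit} (as opposed to just a two-dimensional integral submanifold swept out by a non-closed trajectory): one must rule out, or correctly interpret, the case where the trajectory of $\widehat R_\eta$ through $u(\zeta)$ is not periodic. The resolution is that the domain $C$ (or its relevant irreducible component) is a surface, and a nonconstant proper holomorphic curve whose image sits in a two-plane distribution spanned by $\partial_a$ and a vector field must, by examining the induced complex structure and the fact that $a \circ u$ is harmonic, force the $Y$-component trajectory to close up — equivalently, the image is a cylinder $\bR \times \gamma$ over a periodic $\gamma$ only when $C$ has the corresponding topology; for general $C$ one invokes the classification of finite-energy-type behavior or simply notes that the statement "$u(C) \subset \bR \times \gamma$" in the lemma is read with $\gamma$ allowed to be a closed orbit. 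I would handle this carefully but expect it to follow from the local normal form of holomorphic curves together with property (iv).
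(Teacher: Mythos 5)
Your nonnegativity argument and your reduction of the equality case to the pointwise vanishing criterion are exactly what the paper does (the paper in fact offers nothing beyond the sentence preceding the lemma, so all the detail is yours to supply). However, the detail you supply contains a genuine error at the key structural step. You claim that $[\partial_a,\widehat R_\eta]=0$, deducing invariance of $\widehat R_\eta$ under the flow of $\partial_a$ from property (iv) of a realized Hamiltonian homotopy. Property (iv) only makes $\widehat\lambda$ translation-invariant; the vector field $\widehat R_\eta$ is cut out by the conditions $da(\widehat R_\eta)=0$, $\widehat\lambda(\widehat R_\eta)=1$, $\widehat\omega(\widehat R_\eta,-)=0$, and $\widehat\omega$ is \emph{not} required to be $\partial_a$-invariant (only outside a compact set, and only when $\cI$ is unbounded). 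Since the kernel direction of $\widehat\omega$ on the slices $\{s\}\times Y$ varies with $s$ in general, so does $\widehat R_\eta$; what one can actually show from (iv) is only that $[\partial_a,\widehat R_\eta]$ is a section of $\widehat\xi$, which is generically nonzero. Consequently the distribution $\operatorname{Span}(\partial_a,\widehat R_\eta)$ need not be integrable, and your Frobenius step, as well as the description of its leaves as $\cI\times\gamma$, does not go through as stated.

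The argument is repairable without any integrability of the ambient distribution: once $u^*\widehat\omega\equiv 0$, at each immersed point the image of $du$ is exactly $\operatorname{Span}(\partial_a,\widehat R_\eta)$; tangency to $\partial_a$ makes the image locally invariant under vertical translation, hence locally of the form $\cJ\times\gamma$ for an arc $\gamma\subset Y$, and tangency to $\widehat R_\eta$ then forces $\gamma$ to be (an arc of) an orbit; isolatedness of critical points together with connectedness of the normalization globalizes this to $u(C)\subset\bR\times\gamma$. (Note this simultaneously shows that zero-action curves can only sit over orbits $\gamma$ along which the slicewise Hamiltonian directions agree, which is consistent with the lemma.) Finally, your last paragraph addresses a non-problem: the lemma asserts that $\gamma$ is an \emph{orbit}, not a closed orbit, so no closing-up argument is needed — and the claim that the trajectory must close up is false in general, since $C$ is allowed to be compact with boundary and its image can be a compact piece of $\bR\times\gamma$ over a non-periodic trajectory. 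That paragraph should be deleted rather than "handled carefully."
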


Lemma~\ref{lem:zero_action} gives geometric meaning to the action. Holomorphic curves of low action, in some sense, approximate the orbits of $\widehat{R}_\eta$. 

\subsubsection{Area of a holomorphic curve} The pullback metric $u^*\widehat{g}$ is defined at any immersed point in $C$. The volume form $\operatorname{dvol}_{u^*\widehat{g}}$ is equal to the $2$-form $u^*(da \wedge \widehat\lambda + \widehat\omega)$. For any Borel subset $U \subset C$, write 
$$\operatorname{Area}_{u^*\widehat{g}}(U) := \int_{U\,\setminus\,\operatorname{Crit}(u)} \operatorname{dvol}_{u^*\widehat{g}} = \int_{U} u^*(da \wedge \widehat\lambda + \widehat\omega).$$

The equality on the right follows because $u^*(da \wedge \widehat\lambda + \widehat\omega)$ vanishes at any critical point of $u$. 

\subsection{Area and action bounds}\label{subsec:estimates} We collect some area and action bounds for holomorphic curves in realized Hamiltonian homotopies. The first two results, Proposition~\ref{prop:fh_area_bound} and \ref{prop:fh_quantization}, are from \cite{FH23}. The third result, Proposition~\ref{prop:local_area_bound}, is a slight variant of a recent result from \cite{CGP24}.

\subsubsection{Stable constants} Many results in this paper involve constants depending on a choice of $(\widehat\eta, \widehat J) \in \cD(\cI \times Y)$. In any result, such a constant is called \emph{stable} if the conclusions of the result hold with the same constant for data in a neighborhood of $(\widehat\eta, \widehat{J})$. 

\subsubsection{Exponential area bound} The following result provides a priori bounds for $\widehat\lambda$-integrals of the level sets and for the area of a $\widehat{J}$-holomorphic curve. The bounds are expressed in terms of the action and of the $\widehat{\lambda}$-integrals of the top and bottom boundaries. 

\begin{prop}[{\cite[Theorem $8$]{FH23}}] \label{prop:fh_area_bound}
Fix $(\widehat\eta, \widehat{J}) \in \cD([-8,8] \times Y)$. Fix any constants $a_+ > a_- \in [-8,8]$. Let $u: C \to [-8,8]\times Y$ be a compact $\widehat{J}$-holomorphic curve. Suppose that the following conditions are satisfied:
\begin{enumerate}[(\roman*)]
\item $(a \circ u)(C) \subset [a_-, a_+]$;
\item $(a \circ u)(\partial C) \cap (a_-, a_+) = \emptyset$; 
\item $a_+$ and $a_-$ are regular values of $a \circ u$. 
\end{enumerate}

Then there exists a stable constant $c_3 = c_3(\widehat\eta, \widehat{J}) \geq 1$ such that the following two bounds hold. First, for any $a_0 \in [a_+, a_-]$ that is a regular value of $a \circ u$, we have 
\begin{equation} \label{eq:fh_area_bound1} \int_{(a \circ u)^{-1}(a_0)} u^*\widehat\lambda \leq \Big(c_3\int_{C} u^*\widehat\omega + \min\big\{\int_{(a \circ u)^{-1}(a_+)} u^*\widehat\lambda, \int_{(a \circ u)^{-1}(a_-)} u^*\widehat\lambda\big\}\Big)e^{c_3(a_+ - a_-)}.\end{equation}

Second, we have the following area bound. 
\begin{equation} \label{eq:fh_area_bound2} \operatorname{Area}_{u^*\widehat{g}}(C) \leq \Big(c_3\min\big\{\int_{(a \circ u)^{-1}(a_+)} u^*\widehat\lambda, \int_{(a \circ u)^{-1}(a_-)} u^*\widehat\lambda\big\} + \int_C u^*\widehat\omega\Big)(e^{c_3(a_+ - a_-)} - 1) + \int_C u^*\widehat\omega.\end{equation}
\end{prop}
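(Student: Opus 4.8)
\textbf{Proof strategy for Proposition~\ref{prop:fh_area_bound}.} The plan is to reduce both estimates to a single differential inequality for the function
$$
A(r) := \int_{(a\circ u)^{-1}([a_-, r])} u^*(da\wedge\widehat\lambda + \widehat\omega),
$$
the area of the part of the curve below level $r$, defined for regular values $r$ of $a\circ u$. First I would observe that, on a neighborhood of $Y$ in $\cI\times Y$, the $1$-form $\widehat\lambda$ and the $2$-form $\widehat\omega$ can be combined with $da$ to express everything in terms of the geometry along the slices $\{r\}\times Y$; in particular the coarea formula applied to the proper function $a\circ u$ gives
$$
A'(r) = \int_{(a\circ u)^{-1}(r)} \frac{u^*(da\wedge\widehat\lambda + \widehat\omega)}{|\nabla(a\circ u)|}
$$
for almost every $r$. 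The key pointwise input is that, because $\widehat J$ is $\widehat\eta$-adapted, at an immersed point the tangent plane $Du(T_\zeta C)$ is a complex line, so the three quantities $u^*da\wedge\widehat\lambda$, $u^*\widehat\omega$, and the part of $|\nabla(a\circ u)|^2\,u^*\widehat\lambda$ restricted to a slice are comparable up to a constant depending only on $(\widehat\eta,\widehat J)$ near $Y$. This is exactly the kind of linear-algebra lemma that underlies Fish--Hofer's monotonicity arguments, and it yields a constant $c_3$ with
$$
A'(r) \ \le\ c_3\Big(\int_{(a\circ u)^{-1}(r)} u^*\widehat\lambda\Big)
\qquad\text{and}\qquad
\frac{d}{dr}\int_{(a\circ u)^{-1}(r)} u^*\widehat\lambda \ \le\ c_3 \int_{(a\circ u)^{-1}(r)} u^*\widehat\lambda + (\text{action density}),
$$
where the action density integrates to $\int_C u^*\widehat\omega$. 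Stability of $c_3$ is automatic since the comparison constants vary continuously in the $C^\infty$ topology on $\cD$.

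The second step is to integrate the Gr\"onwall-type inequality for $f(r) := \int_{(a\circ u)^{-1}(r)} u^*\widehat\lambda$. Writing $g(r) := \int_{(a\circ u)^{-1}([a_-,r])} u^*\widehat\omega$, we get $f'(r) \le c_3 f(r) + g'(r)$ with $g' \ge 0$ and $g(a_+) \le \int_C u^*\widehat\omega$ (here one uses that $u^*\widehat\omega \ge 0$ pointwise, Lemma~\ref{lem:zero_action}, and that boundary components lie outside $(a_-,a_+)$ by hypothesis (ii), so they contribute nothing to the slice integrals in the interior). Solving this gives, for any regular value $a_0 \in [a_-, a_+]$,
$$
f(a_0) \ \le\ \Big(f(a_-) + \int_C u^*\widehat\omega\Big) e^{c_3(a_0 - a_-)} \ \le\ \Big(f(a_-) + \int_C u^*\widehat\omega\Big) e^{c_3(a_+ - a_-)}.
$$
Running the same argument from the top boundary downward (replacing $a$ by $-a$, using hypothesis (iii) that $a_\pm$ are regular values) gives the analogous bound with $f(a_+)$ in place of $f(a_-)$; taking the smaller of the two yields \eqref{eq:fh_area_bound1}. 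For \eqref{eq:fh_area_bound2}, I would integrate $A'(r) \le c_3 f(r)$ over $[a_-, a_+]$ and substitute the bound on $f(r)$ just obtained, then add back $\int_C u^*\widehat\omega$ to account for the $u^*\widehat\omega$-part of the area form; carrying out the elementary integral $\int_{a_-}^{a_+} e^{c_3(r-a_-)}\,dr = (e^{c_3(a_+-a_-)}-1)/c_3$ and absorbing the $1/c_3$ into $c_3 \ge 1$ produces the stated expression.

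\textbf{Main obstacle.} The routine part is the Gr\"onwall integration; the substantive step is the pointwise comparison estimate relating the slice integral $\int_{(a\circ u)^{-1}(r)} u^*\widehat\lambda$ to the derivative of the area $A(r)$, i.e.\ controlling how much area can hide in a thin vertical band over a single level $\{r\}\times Y$. One must rule out the curve running nearly vertically (parallel to $\operatorname{Span}(\partial_a,\widehat R_\eta)$) while accumulating large $\widehat\lambda$-length, which is where $\widehat J$-adaptedness and the coarea formula are genuinely used: a nearly-vertical tangent plane has small $u^*\widehat\omega$ but then the $\widehat\lambda$-component of its image is also controlled by $da$, keeping the ratio bounded. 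Making this precise near $Y$ — and checking that the comparison constant is uniform over a $C^\infty$-neighborhood of $(\widehat\eta,\widehat J)$, hence stable — is the heart of the matter, and is precisely the content imported from \cite[Theorem 8]{FH23}.
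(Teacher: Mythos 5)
First, a point of comparison: the paper does not prove this proposition at all --- it is imported verbatim from \cite[Theorem 8]{FH23} and used as a black box --- so there is no in-paper argument to match. Your sketch is an attempt to reprove the Fish--Hofer estimate, and its skeleton (a Gr\"onwall inequality for the slice function $f(r)=\int_{(a\circ u)^{-1}(r)}u^*\widehat\lambda$, run from both ends and minimized, followed by integrating in $r$ and adding the action to get the area bound) does have the same shape as the argument in \cite{FH23}.

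However, the one substantive pointwise claim you use to drive the Gr\"onwall inequality is incorrect as stated, and it misidentifies the mechanism. The forms $u^*(da\wedge\widehat\lambda)$ and $u^*\widehat\omega$ are \emph{not} comparable on $\widehat J$-complex tangent planes: a plane equal to $\operatorname{Span}(\partial_a,\widehat R_\eta)$ has $u^*\widehat\omega=0$ but $u^*(da\wedge\widehat\lambda)\neq 0$, so no two-sided comparability can hold, and it cannot by itself produce $f'\le c_3 f+(\text{action density})$. The actual source of the inequality is Stokes' theorem applied to $\widehat\lambda$ on the slab $(a\circ u)^{-1}([a_-,a_0])$ (legitimate because hypotheses (ii)--(iii) keep $\partial C$ out of the open slab and make the slices $1$-manifolds), which gives $f(a_0)-f(a_-)=\int_{(a\circ u)^{-1}([a_-,a_0])}u^*d\widehat\lambda$; one then uses the one-sided bound $u^*d\widehat\lambda\le c\,u^*(da\wedge\widehat\lambda+\widehat\omega)$, which is essentially automatic since $da\wedge\widehat\lambda+\widehat\omega$ restricts to the area form on $\widehat J$-invariant planes and so dominates any fixed $2$-form, together with the exact coarea identity $\int_{(a\circ u)^{-1}([a_-,a_0])}u^*(da\wedge\widehat\lambda)=\int_{a_-}^{a_0}f(t)\,dt$. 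In other words, it is the non-closedness of $\widehat\lambda$ that creates the exponential factor (if $d\widehat\lambda=0$ the slice integrals would be constant), not a vertical-versus-horizontal comparison; and the "nearly vertical" scenario you single out in your final paragraph is handled exactly by the coarea identity, with no estimate needed. Relatedly, you should work with this integral (Stokes $+$ coarea) formulation rather than with $f'(r)$ and $A'(r)$: $f$ is defined only at regular values and need not be continuous, and your coarea display (a $2$-form divided by $|\nabla(a\circ u)|$ integrated over a level set) is not meaningful as written. With these repairs the rest of your outline --- Gr\"onwall from either end, take the minimum, then $\operatorname{Area}_{u^*\widehat g}(C)=\int_{a_-}^{a_+}f(t)\,dt+\int_C u^*\widehat\omega$ and absorb the $1/c_3$ using $c_3\ge 1$ --- does reproduce both \eqref{eq:fh_area_bound1} and \eqref{eq:fh_area_bound2}, with stability of $c_3$ following from continuity of the constants in the $C^\infty$-topology on $\cD([-8,8]\times Y)$ as you say.
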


\subsubsection{Action quantization} The next result shows that a holomorphic curve with an interior maximum/minimum height has a positive lower bound on its action.

\begin{prop}[{\cite[Theorem $4$]{FH23}}] \label{prop:fh_quantization}
Fix $(\widehat\eta, \widehat J)\in\cD([-8,8] \times Y)$. For any $r > 0$, there exists a stable constant $\hbar = \hbar(\widehat\eta, \widehat{J}, r) > 0$ such that, for any compact, irreducible $\widehat{J}$-holomorphic curve $u: C \to [-8,8] \times Y$, we have
$$\int_C u^*\widehat\omega \geq \hbar > 0$$
provided that the following properties are satisfied for some $a_0 \in (-8 + r, 8 - r)$:
\begin{enumerate}[(\roman*)]
\item Either $\inf_{\zeta \in C}(a \circ u)(\zeta)$ or $\sup_{\zeta \in C} (a \circ u)(\zeta)$ is equal to $a_0$; 
\item $(a \circ u)(\partial C) \cap [a_0 - r, a_0 + r] = \emptyset$.
\end{enumerate}
\end{prop}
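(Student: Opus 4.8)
The plan is to prove a slightly stronger, self-contained statement: near any interior local extremum of $a\circ u$ that is separated from $\partial C$, the curve accumulates a definite amount of action. I would pit two estimates against each other: a lower bound on the \emph{area} of the curve near the extremum, coming from the monotonicity lemma, against an upper bound on that same area \emph{in terms of the action}, coming from a Gronwall-type differential inequality. By symmetry I may assume $\sup_{\zeta\in C}(a\circ u)(\zeta)=a_0$ (the case of the infimum is handled the same way, working with $\{a\circ u<a_0+\rho_0\}$ and balls above). Pick $\zeta_0\in C$ with $(a\circ u)(\zeta_0)=a_0$, set $p:=u(\zeta_0)$, let $\iota_0=\iota_0(\widehat\eta,\widehat J)>0$ be a lower bound for the injectivity radius of $\widehat g$ on a neighborhood of $[-8+r/2,8-r/2]\times Y$, and put $\rho_*:=\min\{r/8,\iota_0\}$. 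By Sard's theorem choose a regular value $\rho_0\in(\rho_*/2,\rho_*)$ of $a\circ u$, and let $\Omega\subseteq C$ be the connected component of $\zeta_0$ in $\{a\circ u>a_0-\rho_0\}$. Since $a_0=\sup(a\circ u)$, hypothesis (ii) forces $(a\circ u)(\partial C)\subseteq[-8,a_0-r)$, which is disjoint from $\{a\circ u>a_0-\rho_0\}$; hence $\overline\Omega$ is a compact subsurface-with-boundary of $C\setminus\partial C$ with $\partial\Omega\subseteq(a\circ u)^{-1}(a_0-\rho_0)$.

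Next I would run the Gronwall estimate. For a regular value $s\in(a_0-\rho_0,a_0)$ set $\Sigma_s:=\{a\circ u\geq s\}\cap\Omega$, a compact subsurface with $\partial\Sigma_s=(a\circ u)^{-1}(s)\cap\Omega$, and let $G(s):=\operatorname{Area}_{u^*\widehat g}(\Sigma_s)$; extended to all of $(a_0-\rho_0,a_0)$ it is non-increasing and left-continuous. Using $\operatorname{dvol}_{u^*\widehat g}=u^*(da\wedge\widehat\lambda+\widehat\omega)$ together with $u^*\widehat\omega\geq 0$, the coarea identity $\int_{\Sigma_s}u^*(da\wedge\widehat\lambda)=\pm\int_s^{a_0}\bigl(\int_{(a\circ u)^{-1}(t)\cap\Omega}u^*\widehat\lambda\bigr)\,dt$, Stokes' theorem $\int_{(a\circ u)^{-1}(t)\cap\Omega}u^*\widehat\lambda=\pm\int_{\Sigma_t}u^*(d\widehat\lambda)$ (valid for regular $t$, using that $\partial C$ lies below $a_0-\rho_0$), and the pointwise bound $|u^*(d\widehat\lambda)|\leq C_1\,u^*(da\wedge\widehat\lambda+\widehat\omega)$ with $C_1:=\|d\widehat\lambda\|_{C^0([-8,8]\times Y)}$, one obtains
$$G(s)\ \leq\ \int_C u^*\widehat\omega\ +\ C_1\int_s^{a_0}G(t)\,dt .$$
Gronwall's inequality (and left-continuity, to pass from regular to all $s$) then gives $G(s)\leq\bigl(\int_C u^*\widehat\omega\bigr)e^{C_1(a_0-s)}$ for all $s\in(a_0-\rho_0,a_0)$. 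Letting $s\downarrow a_0-\rho_0$ and noting that $\{a\circ u\geq s\}\cap\Omega\uparrow\Omega$ (since $a\circ u>a_0-\rho_0$ on $\Omega$), this yields
$$\operatorname{Area}_{u^*\widehat g}(\Omega)\ \leq\ \Bigl(\int_C u^*\widehat\omega\Bigr)\,e^{C_1\rho_0}\ \leq\ \Bigl(\int_C u^*\widehat\omega\Bigr)\,e^{C_1\rho_*}.$$

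Finally I would bring in the monotonicity lemma. Since $a$ is $1$-Lipschitz for $\widehat g$, the set $\partial\Omega$ (at height $a_0-\rho_0$) lies at distance $\geq\rho_0$ from $p$, hence outside the open metric ball $B_{\rho_0}(p)$, while $u(\Omega)$ passes through $p$ and attains height $a_0-\rho_0$, hence exits $B_{\rho_0}(p)$; moreover $B_{\rho_0}(p)$ avoids $\{\pm8\}\times Y$ because $a_0\in(-8+r,8-r)$ and $\rho_0<r$. The standard monotonicity lemma for $\widehat J$-holomorphic curves then produces a stable constant $c_{\mathrm{mono}}=c_{\mathrm{mono}}(\widehat\eta,\widehat J)>0$ with $\operatorname{Area}_{u^*\widehat g}(\Omega)\geq\operatorname{Area}_{u^*\widehat g}\bigl(\Omega\cap u^{-1}(B_{\rho_0}(p))\bigr)\geq c_{\mathrm{mono}}\,\rho_0^2\geq c_{\mathrm{mono}}(\rho_*/2)^2$. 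Combining this with the Gronwall bound gives $\int_C u^*\widehat\omega\geq c_{\mathrm{mono}}(\rho_*/2)^2 e^{-C_1\rho_*}=:\hbar>0$, depending only on $(\widehat\eta,\widehat J,r)$.

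I expect the real content to lie in the monotonicity step together with the requirement that \emph{every} constant in sight ($\iota_0$, $c_{\mathrm{mono}}$, $C_1$) be stable, i.e.\ uniform over a $C^\infty$-neighborhood of $(\widehat\eta,\widehat J)$; this is exactly what makes the estimate usable in the compactness arguments later, and it forces one to recall precisely which geometric data the monotonicity constant depends on (a lower bound for the injectivity radius and upper bounds for the norms of $\widehat g$ and $\widehat J$ on the relevant compact set), all of which are locally uniform. A secondary but unavoidable nuisance is the careful choice of regular values (via Sard) so that $\Omega$, the $\Sigma_s$, and the level sets are honest manifolds-with-boundary and Stokes' theorem is legitimate; note that the extremum set $(a\circ u)^{-1}(a_0)$ may a priori have nonempty interior, but since the Gronwall bound is evaluated at $s=a_0-\rho_0<a_0$ rather than at $a_0$, this causes no trouble. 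As a variant, the Gronwall step could be replaced by applying the exponential area bound of Proposition~\ref{prop:fh_area_bound} to the restriction of $u$ to a thin slab just below $a_0$; the route sketched here has the advantage of being self-contained, and it also shows the argument localizes, so that irreducibility of $C$ is used only in the sense that one may as well assume $C$ connected.
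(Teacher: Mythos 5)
Your argument is correct in substance, but it takes a genuinely different route from the one the paper relies on. The paper does not reprove this statement: it invokes \cite[Theorem 4]{FH23}, whose proof is a soft contradiction argument — a sequence of curves with action tending to zero, an a priori area bound from the exponential estimate (Proposition~\ref{prop:fh_area_bound}), and then target-local Gromov compactness \cite{Fish11} (respectively, compactness of $J$-holomorphic currents, as in \cite{Prasad23a}, to drop the genus hypothesis) to extract a zero-action limit contained in $\bR\times\gamma$, which cannot have an interior extremum of $a$ away from its boundary. You instead argue directly and quantitatively: Gronwall on the superlevel sets of $a\circ u$ inside the component $\Omega$ (which is really a boundary-free special case of Proposition~\ref{prop:fh_area_bound}) bounds $\operatorname{Area}_{u^*\widehat g}(\Omega)$ by $e^{C_1\rho_*}\int_C u^*\widehat\omega$, while monotonicity at the extremal point bounds the same area below by $c_{\mathrm{mono}}(\rho_*/2)^2$. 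What your route buys is an elementary, compactness-free proof with an explicit $\hbar$, and — importantly for this paper — it needs no genus or Euler characteristic control at all, so the strengthened, genus-free version stated here comes for free rather than via the currents-compactness upgrade. What it costs is that the monotonicity lemma you invoke must be the almost-Hermitian version (the taming $2$-form $da\wedge\widehat\lambda+\widehat\omega$ is compatible with $\widehat J$ but not closed), e.g. the estimates of \cite{Fish11}, whose constants are indeed stable; you gesture at this correctly but it is the one external input you cannot replace by the symplectic Gromov monotonicity statement. Two small caveats, neither fatal: your step ``$u(\Omega)$ attains height $a_0-\rho_0$'' uses connectedness of $C$ together with $\partial C\neq\emptyset$; when $\partial C=\emptyset$ hypothesis (ii) is vacuous and the statement is degenerate anyway (constant curves), and nonconstant closed components can be handled by the standard fact that sufficiently small metric balls contain no nonconstant closed $\widehat J$-curves, after which your two bounds apply verbatim.
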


The original statement of \cite[Theorem $4$]{FH23} assumes an a priori bound on $G_a(C)$, and the constant $c_3$ depends on this bound. The original proof of Proposition~\ref{prop:fh_quantization} is proved uses Proposition~\ref{prop:fh_area_bound} and target-local Gromov compactness \cite{Fish11}; the genus bound is required to apply the latter. Our version does not require any a priori genus bound. This requirement can be removed by replacing target-local Gromov compactness with the compactness theorem for $J$-holomorphic currents; see \cite[Remark $5.20$]{Prasad23a}.

\subsubsection{Connected-local area bound for low-action holomorphic curves} 

The following technical area bound is a variant of \cite[Proposition $3.8$]{CGP24}. It was proved in the special case of annular curves in \cite[Theorem $5$]{FH23}. 

\begin{prop}[{\cite[Proposition $3.8$]{CGP24}}] \label{prop:local_area_bound}
Fix $(\widehat\eta, \widehat J) \in \cD([-8,8] \times Y)$. There exists stable constants $\epsilon_2 = \epsilon_2(\widehat\eta, \widehat{J}) >0$ and $\epsilon_3 = \epsilon_3(\widehat\eta, \widehat{J}) > 0$ with the following property. Let $u: C \to [-8,8]\times Y$ be a compact, connected $\widehat{J}$-holomorphic curve such that 
\begin{enumerate}[(\roman*)]
\item $\int_C u^*\widehat\omega \leq \epsilon_2$;
\item $(a \circ u)(\partial C) \cap [-4,4]= \emptyset$.
\end{enumerate}

Then for any $\zeta \in C$ such that $(a \circ u)(\zeta) \in (-2,2)$, we have the bound
$$\operatorname{Area}_{u^*\widehat{g}}(S_{\epsilon_3}(\zeta)) \leq \epsilon_3^{-1}(\chi(C)^2 + 1).$$
\end{prop}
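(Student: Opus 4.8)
The plan is to adapt the proof of \cite[Proposition~3.8]{CGP24}, whose structure I outline. Throughout, $B_r(p)$ denotes the $\widehat{g}$-metric ball, and ``stable'' constants are understood as in \S\ref{subsec:estimates}. \emph{Choice of constants and reductions.} Fix $u$ and $\zeta$ as in the statement and set $a_\zeta := (a\circ u)(\zeta) \in (-2,2)$. First I would choose $\epsilon_3 < 1/4$ small enough that the monotonicity inequality for $\widehat{J}$-holomorphic curves holds at scale $\epsilon_3$ over $[-7,7]\times Y$ with a uniform stable constant, and — shrinking further via Sard's theorem — so that $\epsilon_3$ is a regular value of $\zeta' \mapsto \operatorname{dist}_{\widehat{g}}(u(\zeta'), u(\zeta))$. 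Then $S := S_{\epsilon_3}(\zeta)$, the connected component of $u^{-1}(B_{\epsilon_3}(u(\zeta)))$ containing $\zeta$, is a compact subsurface of $C$ with smooth boundary; $(a\circ u)(S) \subset (a_\zeta - \epsilon_3, a_\zeta + \epsilon_3) \subset (-3,3)$; and $\partial S \subset u^{-1}(\partial B_{\epsilon_3}(u(\zeta)))$, which by hypothesis (ii) is disjoint from $\partial C$. Set $\epsilon_2 := \min\{\epsilon_3,\ \hbar(\widehat\eta,\widehat{J},1)\}$, with $\hbar$ from Proposition~\ref{prop:fh_quantization}; both $\epsilon_2$ and $\epsilon_3$ will be shrunk once more at the end to absorb constants.

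\emph{Splitting the area and passing to slices.} Since $\operatorname{dvol}_{u^*\widehat{g}} = u^*(da\wedge\widehat\lambda) + u^*\widehat\omega$, and by $\widehat{J}$-adaptedness both summands are non-negative $2$-forms on $C$, one has $\operatorname{Area}_{u^*\widehat{g}}(S) = \int_S u^*(da\wedge\widehat\lambda) + \int_S u^*\widehat\omega$, and the second term is at most $\int_C u^*\widehat\omega \le \epsilon_2 \le \epsilon_3^{-1}$. So it suffices to bound the ``horizontal'' part $H := \int_S u^*(da\wedge\widehat\lambda)$ by $C_*(\chi(C)^2+1)$ for a stable constant $C_*$. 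Writing $u^*(da\wedge\widehat\lambda) = d(a\circ u)\wedge u^*\widehat\lambda$, the coarea formula gives $H = \int_{a_\zeta-\epsilon_3}^{a_\zeta+\epsilon_3}\big(\int_{\Sigma_t} u^*\widehat\lambda\big)\,dt$, where for a.e. $t$ the slice $\Sigma_t := (a\circ u|_S)^{-1}(t)$ is a compact $1$-manifold with $\partial\Sigma_t \subset \partial S$; orienting $\Sigma_t$ as the boundary of the sublevel set $\{a\circ u \le t\}$, the non-negativity of $u^*(da\wedge\widehat\lambda)$ forces $u^*\widehat\lambda \ge 0$ along $\Sigma_t$. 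As the $t$-interval has length $< 1$, it is enough to bound $\int_{\Sigma_t} u^*\widehat\lambda$ by $C_*(\chi(C)^2+1)$ for a.e.\ $t \in (a_\zeta-\epsilon_3, a_\zeta+\epsilon_3)$.

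\emph{The analytic core.} Next I would choose regular values $a_- \in (-4,-3)$ and $a_+ \in (3,4)$ of $a\circ u$ and set $C' := (a\circ u)^{-1}([a_-,a_+])$, a compact $\widehat{J}$-holomorphic curve into $[a_-,a_+]\times Y$ with $\partial C' = (a\circ u)^{-1}(\{a_-,a_+\})$, disjoint from $\partial C$ by hypothesis (ii); thus $u|_{C'}$ satisfies the hypotheses of Proposition~\ref{prop:fh_area_bound} on the slab $[a_-,a_+]$. Since $S \subset C'$ and $u^*\widehat\lambda \ge 0$ on slices (consistently oriented), $\int_{\Sigma_t} u^*\widehat\lambda \le \int_{(a\circ u|_{C'})^{-1}(t)} u^*\widehat\lambda$, and Proposition~\ref{prop:fh_area_bound} bounds the right-hand side by $\big(c_3\,\epsilon_2 + M\big)e^{8c_3}$, where $M$ denotes the $\widehat\lambda$-integral of the slice of $C'$ at whichever of the heights $a_\pm$ is smaller. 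It remains to bound $M$ by $O(\chi(C)^2 + 1)$. Here I would decompose this boundary slice into its connected components (the boundary circles of $C'$ at that height) and sort them: (a) loops bounding a disk in $C'$ across which $a\circ u$ attains an interior extremum at height distance $\ge 1$ from the loop — each such disk carries $u^*\widehat\omega$-action $\ge \hbar(\widehat\eta,\widehat{J},1) = \epsilon_2$ by Proposition~\ref{prop:fh_quantization}, so (the innermost such disks being disjoint) there are only $O(1)$ of them; (b) loops bounding a disk confined to a slab of height $\le 2$, controlled by the annular estimate \cite[Theorem~5]{FH23} together with the monotonicity inequality, contributing $O(1)$; (c) the remaining, topologically essential, loops, which number $O(G_a(C') + \#\pi_0(\partial C')) = O(|\chi(C)|+1)$. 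Bounding the $\widehat\lambda$-integral of each essential strand by a further $O(|\chi(C)|+1)$ — via the exponential area bound propagated outward toward $\partial C$, using $\#\partial C,\, G_a(C) = O(|\chi(C)|+1)$ — and summing yields $M = O(\chi(C)^2+1)$. Finally, shrinking $\epsilon_2$ and $\epsilon_3$ so that all of the above inequalities collapse into a single stable constant $C_*$ gives the claim.

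The main obstacle is the last step: bounding the $\widehat\lambda$-integral $M$ of a boundary slice of $C'$. This is the delicate part, and is exactly where both the topology of $C$ and the smallness of the action must be combined: one needs the action-quantization bound to discard the ``tall bump'' pieces, the annular/monotonicity estimates for the ``short'' pieces, and a careful count — following the bookkeeping of \cite[Proposition~3.8]{CGP24} — of the essential vertical strands and of their individual $\widehat\lambda$-integrals, which is what produces the quadratic dependence $\chi(C)^2$ rather than a linear one. Everything preceding it (the regular-value choice of $\epsilon_3$, the area splitting, the coarea reduction, the sign $u^*\widehat\lambda \ge 0$ on slices, and the reduction to $M$ via Proposition~\ref{prop:fh_area_bound}) is routine.
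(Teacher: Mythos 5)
Your proposal takes a different route from the paper: the paper does not reprove the estimate at all, but defers to \cite[Proposition $3.8$]{CGP24} and only addresses two discrepancies — replacing framed Hamiltonian structures by realized Hamiltonian homotopies (by noting that \cite[Theorem $9$]{FH23}, the exponential bound for tame perturbations, generalizes) and replacing the proper punctured domain by a compact domain with $\partial C$ at vertical distance at least $1$ from $\zeta$ (by re-examining the construction of the local subsurface $\widetilde{C}$ in \cite[Proposition $3.9$]{CGP24}). Attempting to rederive the estimate from Propositions~\ref{prop:fh_area_bound} and \ref{prop:fh_quantization} is legitimate in principle, but your ``analytic core'' contains a step that is false, not merely unproved.

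The fatal step is the reduction of the local quantity to global slab slices: you majorize $\int_{\Sigma_t\cap S}u^*\widehat\lambda$ by the slice integral of all of $C'=(a\circ u)^{-1}([a_-,a_+])$ and then claim the boundary-slice integral $M$ of $C'$ is $O(\chi(C)^2+1)$. This is not true under hypotheses (i)--(ii). Take $(\widehat\eta,\widehat J)$ translation-invariant, $\gamma\subset Y$ a closed orbit, and $u$ the $N$-fold cover of the trivial cylinder $[-8,8]\times\gamma$: it is compact, connected, $\widehat J$-holomorphic, has zero action, $\chi(C)=0$, and $(a\circ u)(\partial C)=\{\pm 8\}$, yet every slice (in particular the one at $a_\pm$) has $\widehat\lambda$-integral $N\int_\gamma\lambda$, which is arbitrarily large. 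The conclusion of Proposition~\ref{prop:local_area_bound} still holds because $S_{\epsilon_3}(\zeta)$ is a single small sheet of the cover — the locality of the component through $\zeta$ is exactly what your first inequality discards, and after that the target becomes unprovable. The same example shows that your strand-counting cannot be repaired by better bookkeeping: the obstruction is not the \emph{number} of essential slice components (which is indeed $O(|\chi(C)|+1)$) but their individual $\widehat\lambda$-lengths, which topology does not control; and ``propagating the exponential bound outward toward $\partial C$'' gives nothing, since hypothesis (ii) provides no quantitative information at $\partial C$. The genuine proof is local: one works with the subsurface $\widetilde{C}\ni\zeta$ built from $(a\circ u)^{-1}([a_0-\epsilon,a_0+\epsilon])$ capped by compact components, with $\chi(\widetilde C)\le\chi(C)$ and $\zeta$ at definite vertical distance from $\partial\widetilde C$, and applies the tame-perturbation exponential bound of \cite[Theorem $9$]{FH23} there; that is where the $\chi(C)^2$ arises. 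A lesser issue: choosing $\epsilon_3$ via Sard's theorem to be a regular value of the distance function is a per-curve choice, whereas $\epsilon_3$ must be a stable constant fixed independently of $u$.
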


\begin{proof}
There are two differences between Proposition~\ref{prop:local_area_bound} and \cite[Proposition $3.8$]{CGP24}. We explain how to address these differences and defer to \cite{CGP24} for the rest of the proof. The first difference is that \cite[Proposition $3.8$]{CGP24} is stated for framed Hamiltonian structures, while Proposition~\ref{prop:local_area_bound} is stated for realized Hamiltonian homotopies.  The former result is stated for framed Hamiltonian structures in order to make direct use of \cite[Theorem $9$]{FH23}. This result, an essential technical ingredient, is a exponential area bound like Proposition~\ref{prop:fh_area_bound}, but for a class of $C^2$-small ``tame perturbations'' of $J$-holomorphic curves. The result is stated for framed Hamiltonian structures. The proof, however, directly generalizes to tame perturbations of $J$-holomorphic curves in realized Hamiltonian homotopies. 

The second difference is that \cite[Proposition $3.8$]{CGP24} assumes that $u : C \to \bR \times Y$ is a proper map from a finitely punctured closed Riemann surface, while Proposition~\ref{prop:local_area_bound} assumes that $u$ is a map from a compact Riemann surface to $[-8,8] \times Y$. The proof of \cite[Proposition $3.8$]{CGP24}, however, extends to the case where the domain is compact, as long as $\zeta$ has vertical distance at least $1$ from $\partial C$. 

The non-compact domain is only used in the following argument. It is proved that there exists some compact surface $\widetilde{C} \subset C$ such that (i) $\zeta \in \widetilde{C}$, (ii) the vertical distance from $\zeta$ to $\partial \widetilde{C}$ is bounded away from $0$ by a stable constant in $(0, 1)$, (iii) $\chi(\widetilde{C}) \leq \chi(C)$, and (iv) some a priori geometric bounds, described in \cite[Proposition $3.9$]{CGP24}, are satisfied. The surface $\widetilde{C} \subset C$ is constructed by taking the surface $(a  \circ u)^{-1}([a_0 - \epsilon, a_0 + \epsilon])$, where $a_0 := (a \circ u)(\zeta)$ and $\epsilon > 0$ is a small stable constant, and then attaching all compact components of $C\,\setminus\,\operatorname{Int}(\widetilde{C})$ whose boundary is contained in $\partial\widetilde{C}$. Property (ii) is the only property that makes essential use of the fact that $C$ is a finitely punctured closed Riemann surface. In the case where $C$ is compact, (ii) can be still be proved if the distance between $\zeta$ and $\partial C$ is at least $1$. 
\end{proof}

\subsection{The Hausdorff topology}\label{subsec:hausdorff}

Let $Z$ denote any separable, locally compact, and metrizable space. For example, any second countable topological manifold satisfies these conditions. Let $\cK(Z)$ denote the space of closed subsets of $Z$, equipped with the topology of Hausdorff convergence. Recall that $\cK(Z)$ is compact and metrizable \cite[Corollary $2.2$]{McMullen96}. We review the definition of Hausdorff convergence and then state some basic lemmas. 

\subsubsection{Hausdorff convergence} Fix any sequence $\{\Lambda_k\}$ in $\cK(Z)$. Write $\liminf \Lambda_k \in \cK(Z)$ for the set of all $z \in Z$ such that each neighborhood intersects all but finitely many $\Lambda_k$. Write $\limsup \Lambda_k \in \cK(Z)$ for the set of all $z \in Z$ such that each neighborhood intersects infinitely many $\Lambda_k$. Observe that $\liminf \Lambda_k \subseteq \limsup\Lambda_k$. Convergence $\Lambda_k \to \Lambda$ in the Hausdorff topology occurs if and only if $\liminf \Lambda_k = \Lambda = \limsup\Lambda_k$.

\subsubsection{Sets of subsequential limit points} The following lemma discusses the topology of the set of subsequential limit points for a sequence in $\cK(Z)$. 

\begin{lem}\label{lem:accumulation_points}
Let $Z$ be a separable, locally compact, and metrizable space. Let $\{\cZ_k\}$ denote a sequence of connected subsets of $\cK(Z)$ and let $\cZ \subseteq \cK(Z)$ denote their set of subsequential limit points. Assume that there exists $\Lambda_k \in \cZ_k$ for each $k$ such that the sequence $\{\Lambda_k\}$ converges in the Hausdorff topology. Then, $\cZ$ is closed and connected. 
\end{lem}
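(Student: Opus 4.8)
The plan is to prove the two conclusions of Lemma~\ref{lem:accumulation_points} separately, both working directly from the $\liminf/\limsup$ characterization of Hausdorff convergence in the compact metrizable space $\cK(Z)$. Since $\cK(Z)$ is compact and metrizable, $\cZ$ being the set of subsequential limit points of the (nonempty, thanks to the hypothesized convergent sequence $\Lambda_k \in \cZ_k$) set of points lying in the $\cZ_k$ is naturally a candidate for a closed set, but one must be careful that the points of $\cZ_k$ need not themselves converge; what is fixed is that $\cZ := \bigcap_{N} \overline{\bigcup_{k \geq N} \cZ_k}$, and I would take this as the working definition. Closedness is then immediate: $\cZ$ is an intersection of closed subsets of $\cK(Z)$.

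The substantive part is connectedness, and this is where I expect the main obstacle. The difficulty is that connectedness of each $\cZ_k$ does not pass to the limit for free — a sequence of connected sets can limit to a disconnected set (e.g. two shrinking arcs approaching two disjoint limit points). The role of the hypothesis is precisely to rule this out: the existence of $\Lambda_k \in \cZ_k$ with $\Lambda_k \to \Lambda_\infty \in \cZ$ provides an ``anchor'' that ties all the $\cZ_k$ together near a common point. The standard way to exploit this: suppose for contradiction $\cZ = \cA \sqcup \cB$ with $\cA, \cB$ nonempty, closed, disjoint. Since $\cK(Z)$ is compact metrizable, $\cA$ and $\cB$ are compact and have positive distance $2\rho > 0$ in the metric $d_{\cK}$ on $\cK(Z)$; let $\cU_\cA, \cU_\cB$ be their $\rho$-neighborhoods, so $\cU_\cA, \cU_\cB$ are disjoint open sets covering $\cZ$. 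The key claim is then that for all sufficiently large $k$, the connected set $\cZ_k$ is entirely contained in $\cU_\cA \cup \cU_\cB$; granting this, connectedness of $\cZ_k$ forces $\cZ_k \subseteq \cU_\cA$ or $\cZ_k \subseteq \cU_\cB$ for each large $k$, and the anchor point $\Lambda_k$ (which lies close to $\Lambda_\infty$, say $\Lambda_\infty \in \cA$ WLOG) forces $\cZ_k \subseteq \cU_\cA$ for all large $k$; but then no point of $\cU_\cB$ can be a subsequential limit of points of the $\cZ_k$, contradicting $\cB \subseteq \cZ$ and $\cB \neq \emptyset$.

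So the crux reduces to the claim: $\cZ_k \subseteq \cU_\cA \cup \cU_\cB$ for all large $k$. This is where I would need to argue carefully. If it fails, there is a subsequence and points $\Xi_{k_j} \in \cZ_{k_j}$ with $\Xi_{k_j} \notin \cU_\cA \cup \cU_\cB$, i.e. $d_{\cK}(\Xi_{k_j}, \cZ) \geq \rho$ (since $\cZ \subseteq \cU_\cA \cup \cU_\cB$ and the complement of $\cU_\cA \cup \cU_\cB$ is at distance $\geq \rho$ from... — actually one needs $\cZ$ itself, not just $\cA \cup \cB$: any point outside $\cU_\cA \cup \cU_\cB$ has $d_\cK(\cdot, \cA) \geq \rho$ and $d_\cK(\cdot,\cB)\geq \rho$, hence $d_\cK(\cdot, \cZ) \geq \rho$). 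By compactness of $\cK(Z)$, pass to a further subsequence so $\Xi_{k_j} \to \Xi_\infty$; then $\Xi_\infty$ is by definition a subsequential limit point of points lying in the $\cZ_k$, so $\Xi_\infty \in \cZ$, yet $d_\cK(\Xi_\infty, \cZ) \geq \rho > 0$ — contradiction. This closes the argument. The one point requiring care throughout is keeping straight that ``subsequential limit point'' is witnessed by choosing \emph{one} element from infinitely many of the $\cZ_k$, which is exactly what the compactness of $\cK(Z)$ supplies at each stage; no diagonal argument beyond a single extraction is needed. I do not anticipate needing anything beyond compactness and metrizability of $\cK(Z)$ (McMullen, as cited) and elementary point-set topology.
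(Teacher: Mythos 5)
Your proof is correct and complete: the identification of $\cZ$ with $\bigcap_N \overline{\bigcup_{k\geq N}\cZ_k}$ gives closedness, and the separation-by-$\rho$-neighborhoods argument, with the convergent sequence $\{\Lambda_k\}$ serving as the anchor and compactness of $\cK(Z)$ supplying the contradiction at both key steps, is exactly the standard route. The paper itself gives no in-text argument (it defers to \cite[Lemma 5.3]{CGP24}), and your write-up is essentially that same argument, so nothing further is needed.
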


\begin{proof}
See the proof of \cite[Lemma $5.3$]{CGP24}. 
\end{proof}

\subsubsection{Continuity lemmas} We present two lemmas about continuity of maps with respect to the Hausdorff topology. The proofs are elementary, so we omit them. The first lemma claims that taking a union with a closed set is always Hausdorff continuous. 

\begin{lem}\label{lem:union}
Let $Z$ be a separable, locally compact, and metrizable space. Then for any $\Lambda' \in \cK(Z)$, the map $\Lambda \mapsto \Lambda \cup \Lambda'$ is a continuous map $\cK(Z) \to \cK(Z)$. 
\end{lem}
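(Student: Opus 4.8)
\textbf{Proof proposal for Lemma~\ref{lem:union}.} The claim is that for fixed $\Lambda' \in \cK(Z)$, the map $F\colon \cK(Z) \to \cK(Z)$ given by $F(\Lambda) = \Lambda \cup \Lambda'$ is continuous. Since $\cK(Z)$ is metrizable, it suffices to check sequential continuity: given $\Lambda_k \to \Lambda$ in the Hausdorff topology, I must show $\Lambda_k \cup \Lambda' \to \Lambda \cup \Lambda'$. By the characterization of Hausdorff convergence recalled just above in \S\ref{subsec:hausdorff}, this amounts to the two set equalities $\liminf(\Lambda_k \cup \Lambda') = \Lambda \cup \Lambda'$ and $\limsup(\Lambda_k \cup \Lambda') = \Lambda \cup \Lambda'$, using that $\Lambda_k \to \Lambda$ gives $\liminf \Lambda_k = \limsup \Lambda_k = \Lambda$.

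The plan is to prove the elementary identities $\liminf(\Lambda_k \cup \Lambda') = (\liminf \Lambda_k) \cup \Lambda'$ and $\limsup(\Lambda_k \cup \Lambda') = (\limsup \Lambda_k) \cup \Lambda'$, from which the lemma follows immediately by substituting $\liminf\Lambda_k = \limsup\Lambda_k = \Lambda$. For the $\limsup$ identity: if $z \in \Lambda'$, then every neighborhood of $z$ meets $\Lambda_k \cup \Lambda'$ (it meets $\Lambda'$) for all $k$, so $z \in \limsup(\Lambda_k \cup \Lambda')$; if $z \in \limsup \Lambda_k$, then every neighborhood of $z$ meets infinitely many $\Lambda_k$, hence infinitely many $\Lambda_k \cup \Lambda'$. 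This gives $\supseteq$. Conversely, if $z \notin (\limsup \Lambda_k) \cup \Lambda'$, then $z \notin \Lambda'$ and, since $\Lambda'$ is closed, there is a neighborhood $U$ of $z$ disjoint from $\Lambda'$; shrinking $U$ further using $z \notin \limsup \Lambda_k$, we may also assume $U$ meets only finitely many $\Lambda_k$; then $U$ meets only finitely many $\Lambda_k \cup \Lambda'$, so $z \notin \limsup(\Lambda_k \cup \Lambda')$. The argument for $\liminf$ is identical with ``infinitely many'' replaced by ``all but finitely many''; the only place closedness of $\Lambda'$ is used is to produce the neighborhood $U$ disjoint from $\Lambda'$.

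There is essentially no obstacle here: the proof is a routine unwinding of the definitions of $\liminf$ and $\limsup$ for sequences in $\cK(Z)$, and the single nontrivial input is that $\Lambda'$ being closed lets one separate a point outside $\Lambda'$ from $\Lambda'$ by an open set, which is automatic in a metrizable space. Since the paper explicitly says the proofs of Lemma~\ref{lem:union} and the following continuity lemma are omitted for being elementary, the intended treatment is exactly this short verification, and I would either present the two-line identity argument above or simply cite it as immediate from the definitions of Hausdorff $\liminf$/$\limsup$.
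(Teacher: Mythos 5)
Your proof is correct, and since the paper explicitly omits the proofs of Lemma~\ref{lem:union} as elementary, your unwinding of the $\liminf$/$\limsup$ characterization of Hausdorff convergence (using metrizability of $\cK(Z)$ to reduce to sequences and closedness of $\Lambda'$ for the reverse inclusions) is exactly the intended routine verification. No gaps.
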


Unlike the above, taking an intersection with a closed set is usually not continuous. The second lemma asserts that, however, this operation is continuous in a specific setting that comes up in our arguments. 

\begin{lem}\label{lem:intersection}
Let $Y$ be a separable, locally compact, and metrizable space. For any sequence $\{\Lambda_k\}$ of non-empty compact subsets of $Y$ such that $(-1, 1) \times \Lambda_k \to (-1, 1) \times \Lambda$ in $\cK( (-1,1) \times Y)$, we have $\Lambda_k \to \Lambda$ in $\cK(Y)$.
\end{lem}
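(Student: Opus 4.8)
The plan is to prove the two Hausdorff inclusions $\Lambda \subseteq \liminf \Lambda_k$ and $\limsup \Lambda_k \subseteq \Lambda$ separately, using the hypothesis that the ``thickened'' sets $(-1,1)\times\Lambda_k$ converge to $(-1,1)\times\Lambda$ in $\cK((-1,1)\times Y)$, i.e.\ that $\liminf\big((-1,1)\times\Lambda_k\big) = (-1,1)\times\Lambda = \limsup\big((-1,1)\times\Lambda_k\big)$. Throughout I will fix the point $0 \in (-1,1)$ and compare neighborhoods of $y \in Y$ with neighborhoods of $(0,y) \in (-1,1)\times Y$, using that a basis of neighborhoods of $(0,y)$ is given by products $I \times V$ with $I$ an interval around $0$ and $V$ a neighborhood of $y$.

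\emph{First inclusion ($\Lambda \subseteq \liminf\Lambda_k$).} Let $y \in \Lambda$ and let $V$ be any neighborhood of $y$ in $Y$. Then $(0,y) \in (-1,1)\times\Lambda = \liminf\big((-1,1)\times\Lambda_k\big)$, so the neighborhood $(-\tfrac12,\tfrac12)\times V$ of $(0,y)$ meets all but finitely many $(-1,1)\times\Lambda_k$. But $\big((-1,1)\times\Lambda_k\big) \cap \big((-\tfrac12,\tfrac12)\times V\big) \neq \emptyset$ forces $\Lambda_k \cap V \neq \emptyset$. Hence $V$ meets all but finitely many $\Lambda_k$; since $V$ was arbitrary, $y \in \liminf\Lambda_k$.

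\emph{Second inclusion ($\limsup\Lambda_k \subseteq \Lambda$).} Suppose $y \in \limsup\Lambda_k$, so every neighborhood $V$ of $y$ meets infinitely many $\Lambda_k$. For such a $V$, whenever $\Lambda_k \cap V \neq \emptyset$ we get $\big((-1,1)\times\Lambda_k\big)\cap\big((-1,1)\times V\big)\neq\emptyset$, and in particular the neighborhood $(-1,1)\times V$ of $(0,y)$ meets infinitely many $(-1,1)\times\Lambda_k$. Therefore $(0,y) \in \limsup\big((-1,1)\times\Lambda_k\big) = (-1,1)\times\Lambda$, which gives $y \in \Lambda$.

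Combining the two inclusions with the trivial $\liminf\Lambda_k \subseteq \limsup\Lambda_k$ yields $\liminf\Lambda_k = \Lambda = \limsup\Lambda_k$, i.e.\ $\Lambda_k \to \Lambda$ in $\cK(Y)$. I should also note, to justify that this convergence statement is meaningful, that each $\Lambda_k$ is assumed non-empty and that $\Lambda$ is non-empty: indeed $(-1,1)\times\Lambda$ is the Hausdorff limit of the non-empty sets $(-1,1)\times\Lambda_k$ in the compact metrizable space $\cK((-1,1)\times Y)$, so it is non-empty, hence so is $\Lambda$. (This also shows the statement is well-posed as stated, with $\Lambda$ ranging over non-empty compact subsets.) There is no real obstacle here — the only point requiring a moment's care is the systematic translation between neighborhoods of $y$ in $Y$ and neighborhoods of $(0,y)$ in the product, and the observation that intersecting a product set with a product neighborhood is controlled coordinatewise; this is exactly why the lemma is "elementary" in the sense flagged before its statement.
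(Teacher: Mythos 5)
Your argument is correct, and it is exactly the elementary two-inclusion argument the paper has in mind: the paper states Lemma~\ref{lem:intersection} without proof (``The proofs are elementary, so we omit them''), and your proof via $\Lambda \subseteq \liminf \Lambda_k$ and $\limsup \Lambda_k \subseteq \Lambda$, using the $\liminf/\limsup$ characterization of Hausdorff convergence recalled in \S\ref{subsec:hausdorff}, fills that omission in the intended way. Two small remarks. In the second inclusion you should, strictly speaking, verify that \emph{every} basic neighborhood $I \times V$ of $(0,y)$ meets infinitely many $(-1,1)\times\Lambda_k$, not only the neighborhoods $(-1,1)\times V$; this follows at once from your own coordinatewise observation (pick $z \in \Lambda_k \cap V$ and any $t \in I$), so it is a one-line repair rather than a gap. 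Also, your parenthetical justification that $\Lambda$ is non-empty is not valid as stated: compactness of $\cK((-1,1)\times Y)$ does not prevent non-empty closed sets from Hausdorff-converging to $\emptyset$ when the ambient space is non-compact (e.g.\ sets escaping every compact subset of $Y$). This does not affect the lemma, since if $\Lambda = \emptyset$ the conclusion $\Lambda_k \to \emptyset$ follows from the same two inclusions; in the paper's applications $\Lambda$ is in any case a non-empty compact invariant set.
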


\subsection{Neck stretching}\label{subsec:neck_stretching_general} Let $(\bW, \Omega)$ denote a symplectic manifold and let $J_*$ be a fixed $\Omega$-compatible almost-complex structure. Let $H: \bW \to \bR$ be any smooth function such that $0 \in \cR_c(H)$. Write $Y := H^{-1}(0)$. As we will explain below, these choices induce a framed Hamiltonian structure $\eta$ on $Y$ such that $R_\eta = X_H|_Y$. Broadly speaking, neck stretching is the modification of $J_*$ near $Y$ so that it is diffeomorphic to a model $\eta$-adapted almost-complex structure $J$ on $\cI \times Y$, where $\cI$ is a large compact interval. 

Neck stretching constructions are well-understood when $Y$ is contact-type (see \cite{BEHWZ03}) but they are surprisingly subtle otherwise. One must find some way to interpolate between the model almost-complex structure near $Y$ and the almost-complex structure $J_*$. This interpolation must be done very carefully, since otherwise it is quite easy to lose control of the behavior of holomorphic curves in the interpolation region. A neck stretching construction that is sufficient for the proof of Theorem~\ref{thm:main} was written down in \cite{Prasad23b}. However, it is quite different from the adiabatic neck stretching construction used to prove Theorem~\ref{thm:main2}. We take an alternate approach that gives a unified treatment of standard neck stretching and adiabatic neck stretching. 

\subsubsection{Framed Hamiltonian structure}\label{subsubsec:framed_hamiltonian} Let $g_* := \Omega(-, J_*-)$ denote the $J_*$-invariant Riemannian metric associated to $\Omega$ and $J_*$. We define a framed Hamiltonian structure $\eta = (\lambda, \omega)$ on $Y$ such that the vector field $R := R_\eta$ is equal to $X_H$. Let $\omega$ be the restriction of $\Omega$ to $Y$. Let $\lambda$ be the unique smooth one-form such that $\lambda(X_H) \equiv 1$ and $\ker(\lambda) = \xi := TY \cap J_{\operatorname{int}}(TY)$. Let $J$ be the unique $\eta$-adapted almost-complex structure on $\bR \times Y$ that coincides with $J_*$ on the bundle $\xi$. 

\subsubsection{Realized Hamiltonian homotopy}\label{subsubsec:realized_hamiltonian} For any $\delta > 0$, let $U_\delta \subseteq \bW$ denote the open set $H^{-1}( (-\delta, \delta) )$. Choose some $\delta_0$ such that $U_{\delta_0}$ does not contain any critical points of $H$. Now, let $V_H := \nabla H/|\nabla H|_{g_*}^2$ dneote the normalized gradient of $H$ with respect to $g_*$. For any $s \in (-\delta_0, \delta_0)$, the time-$s$ flow of $V_H$ restricts to a diffeomorphism $f_s: H^{-1}(s) \to Y$. 

Let $\widecheck\lambda$ denote the unique $1$-form on $U_{\delta_0}$ such that (i) $\widecheck\lambda(V_H) \equiv 0$ and (ii) for any $s \in (-\delta_0, \delta_0)$, the restriction of $\widecheck\lambda$ to $H^{-1}(s)$ is equal to $f_s^*\lambda$. Let $\widecheck\omega$ be the unique $2$-form on $U_{\delta_0}$ such that (i) $\widecheck\omega(V_H, -) \equiv 0$ and (ii) for any $s \in (-\delta_0, \delta_0)$, the restrictions of $\widecheck\omega$ and $\Omega$ to the hypersurface $H^{-1}(s)$ coincide. Observe that by definition, $\widecheck\lambda(X_H) = 1$ at any point in $Y$, so there exists some $\delta_1 \in (0, \delta_0)$ such that $\widecheck\lambda(X_H) > 0$ at any point in $U_{\delta_1}$. Define a vector field $\widecheck{X} := X_H/\widecheck\lambda(X_H)$. Define a $2$-plane bundle $\widecheck\xi := \ker(dH) \cap \ker(\widecheck\lambda)$ on $U_{\delta_1}$. Define a diffeomorphism
\begin{gather*}
\iota: (-\delta_1, \delta_1) \times Y \to U_{\delta_1}, \\
(s, y) \mapsto f_s^{-1}(y).
\end{gather*}

The map $\iota$ restricts to the identity map $(0, y) \mapsto y$ on $\{0\} \times Y$ and identifies $\{s\} \times Y$ with $H^{-1}(s)$ for every $s \in (-\delta_1, \delta_1)$. Observe that $H \circ \iota$ coincides with the coordinate projection $s: (-\delta_1, \delta_1) \times Y \to (-\delta_1, \delta_1)$ and that $\iota^*V_H = \partial_s$. Write $\widehat\lambda := \iota^*\widecheck\lambda$ and $\widehat\omega := \iota^*\widecheck\omega$. The pair $\widehat\eta = (\widehat\lambda, \widehat\omega)$ is a realized Hamiltonian homotopy on $(-\delta_1, \delta_1) \times Y$. This is straightforward to verify. Write $\widehat{X} := \iota^*\widecheck{X}$. Observe that $\widehat{X} = \widehat{R}_\eta$. For any $s \in (-\delta_1, \delta_1)$, write $\eta^s = (\lambda^s, \omega^s) \in \cD(Y)$ for the pullback of $\widehat\eta$ by the map $y \mapsto (s, y)$. Let $R^s$ denote the Hamiltonian vector field of $\eta^s$. Note that $\iota$ identifies $R^s$ with a reparameterization of $X_H$ on $H^{-1}(s)$. The following lemma computes j$\Omega$ in terms of $H$, $\widecheck\lambda$, and $\widecheck\omega$ at any point in $Y$. 

\begin{lem}\label{lem:omega_computation}
At any point in $Y$, we have $\Omega = dH \wedge \widecheck\lambda + \widecheck\omega$. 
\end{lem}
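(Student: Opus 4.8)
The plan is to verify the identity $\Omega = dH \wedge \widecheck\lambda + \widecheck\omega$ pointwise along $Y$ by decomposing tangent vectors according to a frame adapted to the splitting that defines $\widecheck\lambda$ and $\widecheck\omega$. At a point $p \in Y$, recall that $\widecheck\lambda$ and $\widecheck\omega$ were both defined to annihilate the normalized gradient $V_H = \nabla H/|\nabla H|_{g_*}^2$, that $\widecheck\lambda$ restricts to $\lambda = f_0^*\lambda$ on $Y = H^{-1}(0)$, and that $\widecheck\omega$ restricts to $\Omega|_Y$. Note that $dH(V_H) \equiv 1$ and, by the definition of the framed Hamiltonian structure in \S\ref{subsubsec:framed_hamiltonian}, $\lambda(X_H) \equiv 1$ with $\ker(\lambda) = \xi = TY \cap J_*(TY)$, while $\omega = \Omega|_Y$ has $\omega(X_H, -) \equiv 0$ on $\xi$ and is nondegenerate there. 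First I would fix the pointwise splitting $T_p\bW = \bR V_H \oplus \bR X_H \oplus \xi_p$; this is a direct sum because $dH(V_H) = 1 \neq 0 = dH(X_H)$ (using $dH(X_H) = \Omega(X_H,X_H)\cdot(\cdots) = 0$ from Hamilton's equation) and because $\xi_p \subset T_pY = \ker(dH|_p)$ while $X_H \notin \xi_p$ (as $\lambda(X_H) = 1$).

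Next I would evaluate both sides of the claimed identity on every pair of basis vectors drawn from $\{V_H, X_H\} \cup \{e_1,\dots,e_{2n-2}\}$ where $e_i$ spans $\xi_p$. On the pair $(V_H, X_H)$: the right side gives $dH(V_H)\widecheck\lambda(X_H) - dH(X_H)\widecheck\lambda(V_H) + \widecheck\omega(V_H, X_H) = 1\cdot 1 - 0 + 0 = 1$, while the left side gives $\Omega(V_H, X_H) = -\Omega(X_H, V_H) = dH(V_H) = 1$ by Hamilton's equation \eqref{eq:hamilton}. On a pair $(V_H, e_i)$: the right side gives $dH(V_H)\widecheck\lambda(e_i) - dH(e_i)\widecheck\lambda(V_H) + \widecheck\omega(V_H, e_i) = \widecheck\lambda(e_i) - 0 + 0$; here $\widecheck\lambda(e_i) = \lambda(e_i) = 0$ since $e_i \in \xi = \ker\lambda$, so the right side is $0$, while the left side is $\Omega(V_H, e_i) = -\Omega(e_i, V_H)$, and $\Omega(e_i, V_H) = dH(e_i) = 0$ because $e_i \in T_pY$. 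On a pair $(X_H, e_i)$: the right side gives $dH(X_H)\widecheck\lambda(e_i) - dH(e_i)\widecheck\lambda(X_H) + \widecheck\omega(X_H, e_i) = 0 - 0 + \omega(X_H, e_i) = 0$ since $\omega(X_H, -) \equiv 0$, while the left side is $\Omega(X_H, e_i) = \omega(X_H, e_i) = 0$. Finally on a pair $(e_i, e_j)$: the right side gives $\widecheck\omega(e_i, e_j) = \omega(e_i, e_j) = \Omega(e_i, e_j)$, matching the left side. Since both sides are alternating bilinear forms agreeing on all pairs of basis vectors, they are equal at $p$, and $p \in Y$ was arbitrary.

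I do not expect any serious obstacle here; this is a bookkeeping verification once the right frame is chosen. The one point requiring a little care is confirming that the four cases above genuinely exhaust all basis pairs and that the splitting $T_p\bW = \bR V_H \oplus \bR X_H \oplus \xi_p$ is valid — in particular that $V_H$, $X_H$ and $\xi_p$ are linearly independent, which follows from the evaluations $dH(V_H)=1$, $dH(X_H)=0$, $\lambda(X_H)=1$, $\lambda|_{\xi}=0$ recorded above. It is also worth remarking that only the restrictions of $\widecheck\lambda$ and $\widecheck\omega$ to $Y$ (not their values elsewhere in $U_{\delta_1}$) enter the computation, which is why the statement is pointwise along $Y$ and not an identity of forms on a neighborhood.
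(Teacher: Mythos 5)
Your overall strategy---decomposing $T_p\bW = \bR V_H \oplus \bR X_H \oplus \xi_p$ at $p \in Y$ and checking both alternating forms on all pairs of basis vectors---is exactly the paper's proof: the paper expands arbitrary $v_1, v_2$ in the splitting $T\bW = \operatorname{Span}(V_H, X_H) \oplus \widecheck\xi$, using $dH(V_H)=1$ and $\widecheck\lambda(X_H)=1$ at $Y$ to identify the coefficients, and compares term by term, which amounts to the same case check. Three of your four cases are fine. The problem is the case $(V_H, e_i)$: you justify $\Omega(V_H, e_i) = 0$ by asserting $\Omega(e_i, V_H) = dH(e_i)$ ``because $e_i \in T_pY$''. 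That identity is false: Hamilton's equation gives $\Omega(\,\cdot\,, X_H) = dH$, not $\Omega(\,\cdot\,, V_H) = dH$. Indeed, taking $e = X_H \in T_pY$ one has $\Omega(X_H, V_H) = -dH(V_H) = -1 \neq 0 = dH(X_H)$, so membership in $T_pY$ alone does not force $\Omega(e, V_H)$ to vanish; the vanishing you need is not a formal consequence of $dH(e_i)=0$.

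The conclusion itself, $\Omega(V_H, e_i) = 0$ for $e_i \in \xi_p$, is true, but it uses the specific definition $\xi = TY \cap J_*(TY)$ together with the $\Omega$-compatibility of $J_*$: since $g_* = \Omega(-, J_*-)$, for any $w$ one has $\Omega(\nabla H, w) = g_*(\nabla H, -J_* w) = -dH(J_* w)$, and for $e_i \in \xi$ the vector $J_* e_i$ again lies in $\xi \subset TY = \ker(dH)$, so $\Omega(V_H, e_i) = |\nabla H|_{g_*}^{-2}\, \Omega(\nabla H, e_i) = 0$. (This is precisely the identity $\Omega(V_H, v') = 0$ for $v' \in \widecheck\xi$ that the paper invokes, stated there without derivation.) With that one-line repair your case check is complete and coincides with the paper's argument.
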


\begin{proof}
Fix any $y \in Y$. Our goal is to prove
\begin{equation}\label{eq:omega_computation}\Omega(v_1, v_2) = (dH \wedge \widecheck\lambda + \widecheck\omega)(v_1, v_2)\end{equation}
for any pair $v_1, v_2 \in T_y\mathbb{W}$. The vector $v_1$ splits as a sum $v_1 = dH(v_1)V_H + \widecheck\lambda(v_1)X_H + v_1'$ where $v_1' \in \widecheck{\xi}$. The existence of the splitting follows from the splitting $T\bW = \operatorname{Span}(V_H, X_H) \oplus \widecheck{\xi}$. The identification of the $V_H$- and $X_H$-coefficients of $v_1$ follows because $dH(V_H) = 1$ and $\widecheck\lambda(X_H) = 1$ at $y$. Similarly, $v_2 = dH(v_2)V_H + \widecheck\lambda(v_2)X_H + v_2'$, where $v_2' \in \widecheck\xi$. 

The identity \eqref{eq:omega_computation} follows from expanding the left-hand side and making several simplifications:
\begin{equation}\label{eq:omega_computation1}
\begin{split}
\Omega(v_1, v_2) &= (dH(v_1)\widecheck\lambda(v_2) - dH(v_2)\widecheck\lambda(v_1)) + \Omega(v_1', v_2') \\
&\qquad+ \Omega(v_1', dH(v_2)V_H + \widecheck\lambda(v_2)X_H) + \Omega(dH(v_1)V_H+ \widecheck\lambda(v_1), v_2') \\
&= (dH(v_1)\widecheck\lambda(v_2) - dH(v_2)\widecheck\lambda(v_1)) + \Omega(v_1', v_2') \\
&= (dH(v_1)\widecheck\lambda(v_2) - dH(v_2)\widecheck\lambda(v_1)) + \widecheck\omega(v_1', v_2') \\
&= (dH \wedge \lambda + \widecheck\omega)(v_1, v_2).
\end{split}
\end{equation}

The first line uses the identity $\Omega(V_H, X_H) = 1$. The second line uses the identities $\Omega(X_H, v') = 0$, $\Omega(V_H, v') = 0$, where $v'$ is any vector in $\widecheck\xi$. The third line uses the fact that $\Omega$ and $\widecheck\omega$ coincide on $TY$. The fourth line uses the following elementary identities. For any vector $v' \in \widecheck{\xi}$, we have 
\begin{equation}\label{eq:omega_computation2}
\widecheck\omega(V_H, v') = 0,\quad \widecheck\omega(X_H, v') = 0.
\end{equation} 
\end{proof}

\subsubsection{Base almost-complex structure}\label{subsubsec:base_acs} Write $\widecheck\xi := \ker(dH) \cap \ker(\widecheck\lambda)$. Choose a complex structure $\widecheck{J}_\xi$ on $\widecheck\xi$ which (i) coincides with the restriction of $J_*$ to $\widecheck{\xi}$ at any point in $y$ and (ii) is compatible with the restriction of $\widecheck\omega$ to $\widecheck\xi$. There exists an almost-complex structure $\widecheck{J}$ on $\bW$ satisfying the following properties:
\begin{enumerate}[(a)]
\item $\widecheck{J}$ coincides with $J_*$ outside $U_{\delta_0}$. 
\item There exists some $\delta_2 \in (0, \delta_1)$ such that, at any point in $U_{\delta_2}$, the following properties are satisfied:
\begin{itemize}
\item $\widecheck{J}$ coincides with $\widecheck{J}_\xi$ on $\widecheck{\xi}$. 
\item $\widecheck{J}(V_H) = \widecheck{X}$.
\end{itemize}
\item $\widecheck{J}$ is $\Omega$-tame at every point, i.e. $\Omega(v, \widecheck{J}v) > 0$ for any nonzero tangent vector $v$. 
\end{enumerate}

Such an almost-complex structure is constructed as follows. Choose an almost-complex structure $\widecheck{I}$ on $U_{\delta_1}$ such that (i) $\widecheck{I}$ coincides with $\widecheck{J}_\xi$ on $\widecheck\xi$ and (ii) $\widecheck{I}(V_H) = \widecheck{H}$. By Lemma~\ref{lem:omega_computation}, $\widecheck{I}$ is $\Omega$-tame on $U_{\delta_2}$ for some $\delta_2 \in (0, \delta_1)$. Let $\widecheck{J}$ be any $\Omega$-tame almost-complex structure that is equal to $J_*$ outside $U_{\delta_0}$ and is equal to $\widecheck{I}$ on $U_{\delta_2}$. Write $\widehat{J} := \iota^*\widecheck{J}$ for the pullback of $\widecheck{J}$. Note that by Property (b), $\widehat{J}$ is $\widehat\eta$-adapted on $(-\delta_2, \delta_2) \times Y$. For each $s \in (-\delta_2, \delta_2)$, write $J^s$ for the unique translation-invariant almost-complex structure on $\bR \times Y$ agreeing with the restriction of $\widehat{J}$ to $\{s\} \times Y$. The almost-complex structure $J^s$ is $\eta^s$-adapted. 

\subsubsection{Deformed almost-complex structure}\label{subsubsec:deformed_acs} Fix any smooth and positive function $\phi: \bW \to (0, 1]$ such that the function $1 - \phi$ is supported in $U_{\delta_2}$. Let $\widecheck{J}_\phi$ be the unique almost-complex structure that is (i) equal $\widecheck{J}$ outside $U_{\delta_2}$, (ii) equal to $\widecheck{J}$ on the bundle $\widecheck{\xi}$, and (iii) satisfies the identity $\widecheck{J}_\phi(V_H) = \phi^{-1}\widecheck{X}$ at any point in $U_{\delta_2}$. A useful fact, first observed in \cite{FH22}, is that if $1 - \phi$ is supported in a sufficiently small neighborhood of $Y$, then $\widecheck{J}_\phi$ is $\Omega$-tame and moreove one has a quantitative tameness estimate. 

This small neighborhood is defined as follows. Fix a constant $c_1 = c_1(H, g_*) \geq 1$ such that the bound
\begin{equation}\label{eq:tame1} \|X_H\|_{g_*} = \|V_H\|_{g_*} \leq c_1\end{equation}
is satisfied. Since $\widehat{J}$ is $\Omega$-tame, there exists a constant $\epsilon_1 = \epsilon_1(\Omega, \widecheck{J}, g_*) \in (0, 1)$ such that 
\begin{equation}\label{eq:tame2} \Omega(v, \widecheck{J}v) \geq \epsilon_1|v|_{g_*}^2 \end{equation}
for any tangent vector $v$. Define $\epsilon_2 := \min(1/4, 2^{-8}c_1\epsilon_1^{1/2})$. 
Such a constant $\delta_3$ exists Since $\widecheck\lambda(X_H) = 1$ along $Y$ and, by Lemma~\ref{lem:omega_computation}, $\Omega = dH \wedge \widecheck\lambda + \widecheck\omega$ along $Y$, there exists $\delta_3 \in (0, \delta_2)$ such that 
\begin{equation}\label{eq:tame3}
|\widecheck\lambda(X_H) - 1| \leq \epsilon_2, \quad |\Omega - (dH \wedge \widecheck\lambda + \widecheck\omega)|_{g_*} \leq \epsilon_2
\end{equation}
at any point in $U_{\delta_3}$. Here is the promised tameness lemma. 

\begin{lem}\label{lem:j_tame}
Fix any smooth and positive function $\phi: \bW \to (0, 1]$ such that $1 - \phi$ is supported in $U_{\delta_3}$. Then the almost-complex structure $\widecheck{J}_\phi$ is $\Omega$-tame. Also, for any tangent vector $v$ with base in $U_{\delta_3}$, we have the bound
\begin{equation}\label{eq:j_tame}2\Omega(v, \widecheck{J}_\phi v) \geq (dH \wedge \widecheck\lambda + \widecheck\omega)(v, \widecheck{J}_\phi v).\end{equation}
\end{lem}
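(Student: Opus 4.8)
The plan is to reduce both assertions to a single pointwise inequality on $U_{\delta_3}$, obtained by comparing $\Omega$ with the model two-form $\widecheck\Omega := dH\wedge\widecheck\lambda + \widecheck\omega$, which agrees with $\Omega$ along $Y$ by Lemma~\ref{lem:omega_computation} and which, crucially, is tamed by $\widecheck{J}_\phi$ with a completely explicit positive lower bound. Since $1-\phi$ is supported in $U_{\delta_3}$, we have $\phi\equiv 1$ and hence $\widecheck{J}_\phi = \widecheck{J}$ on $\bW\setminus U_{\delta_3}$, where $\widecheck{J}$ is $\Omega$-tame; so it suffices to treat tangent vectors $v$ based in $U_{\delta_3}$.

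Fix such a $v$ and decompose $v = \alpha V_H + \beta\widecheck{X} + v'$ along $T\bW = \operatorname{Span}(V_H, X_H)\oplus\widecheck\xi$, with $\alpha = dH(v)$, $\beta = \widecheck\lambda(v)$, $v'\in\widecheck\xi$ (using $dH(V_H) = 1 = \widecheck\lambda(\widecheck X)$, $\widecheck\lambda(V_H) = dH(\widecheck X) = 0$, and that $dH$, $\widecheck\lambda$ annihilate $\widecheck\xi$). By the construction of $\widecheck{J}_\phi$ in \S\ref{subsubsec:deformed_acs}, $\widecheck{J}_\phi v = -\phi\beta V_H + \phi^{-1}\alpha\widecheck{X} + \widecheck{J}v'$ with $\widecheck{J}v'\in\widecheck\xi$. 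A direct computation --- using $\widecheck\omega(V_H,-)\equiv 0$ and $\widecheck\omega(X_H, w) = 0$ for $w\in\ker(dH)$ (both valid on all of $U_{\delta_0}$, since $\widecheck\omega$ restricts to $\Omega$ on each level set of $H$ and $X_H$ is tangent to those level sets) --- yields
$$\widecheck\Omega(v, \widecheck{J}_\phi v) = \phi^{-1}\alpha^2 + \phi\beta^2 + P, \qquad P := \widecheck\omega(v', \widecheck{J}v')\geq 0,$$
which is positive for $v\neq 0$ since $P = 0$ forces $v' = 0$ ($\widecheck{J}_\xi$ being $\widecheck\omega$-compatible on $\widecheck\xi$). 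Setting $\Psi := \Omega - \widecheck\Omega$, so that $|\Psi|_{g_*}\leq\epsilon_2$ on $U_{\delta_3}$ by \eqref{eq:tame3}, both claims of the lemma will follow once we establish $|\Psi(v, \widecheck{J}_\phi v)|\leq \tfrac12\widecheck\Omega(v, \widecheck{J}_\phi v)$: indeed then $\Omega(v,\widecheck{J}_\phi v) = \widecheck\Omega(v,\widecheck{J}_\phi v) + \Psi(v,\widecheck{J}_\phi v)\geq\tfrac12\widecheck\Omega(v,\widecheck{J}_\phi v) > 0$ for $v\neq 0$, and $2\Omega(v,\widecheck{J}_\phi v)\geq 2\widecheck\Omega(v,\widecheck{J}_\phi v) - \widecheck\Omega(v,\widecheck{J}_\phi v) = \widecheck\Omega(v,\widecheck{J}_\phi v)$.

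To prove the estimate I would expand $\Psi(v, \widecheck{J}_\phi v)$ bilinearly in the splitting and discard the terms killed by antisymmetry. The key point is that the pairings $\Psi(\widecheck X, v')$ and $\Psi(\widecheck X, \widecheck{J}v')$ vanish identically --- because $\Omega(\widecheck X, -)$ and $\widecheck\Omega(\widecheck X, -)$ both annihilate $\ker(dH)\supseteq\widecheck\xi$, using $\Omega(X_H, -) = -dH$ and $\widecheck\omega(X_H, w) = 0$ for $w\in\ker(dH)$ --- so that the only surviving term carrying the dangerous $\phi^{-1}$ factor is $(\phi^{-1}\alpha^2 + \phi\beta^2)\,\Psi(V_H, \widecheck X)$. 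Here $\Psi(V_H, \widecheck X) = \widecheck\lambda(X_H)^{-1} - 1$ (since $\Omega(V_H, X_H) = g_*(V_H, \nabla H) = 1$ and $\widecheck\Omega(V_H, \widecheck X) = \widecheck\lambda(\widecheck X) = 1$), so by \eqref{eq:tame3} $|\Psi(V_H,\widecheck X)|\leq\epsilon_2/(1-\epsilon_2)\leq 2\epsilon_2$, and since $\phi^{-1}\alpha^2 + \phi\beta^2\leq\widecheck\Omega(v,\widecheck{J}_\phi v)$ this term is $\leq 2\epsilon_2\widecheck\Omega(v,\widecheck{J}_\phi v)$. The three remaining terms are $\alpha\Psi(V_H,\widecheck{J}v')$, $\phi\beta\Psi(V_H, v')$, $\Psi(v',\widecheck{J}v')$; bounding each pairing $|\Psi(a,b)|$ by $\epsilon_2|a|_{g_*}|b|_{g_*}$ and using $|V_H|_{g_*}\leq c_1$, the inequalities $\alpha^2\leq\phi^{-1}\alpha^2\leq\widecheck\Omega(v,\widecheck{J}_\phi v)$, $\phi\beta^2\leq\widecheck\Omega(v,\widecheck{J}_\phi v)$, $P\leq\widecheck\Omega(v,\widecheck{J}_\phi v)$, the bound $\phi\leq 1$ in the Cauchy--Schwarz steps, and the fact that $\Omega$-tameness of $\widecheck{J}$ (\eqref{eq:tame2}) together with $|\Psi|_{g_*}\leq\epsilon_2$ forces $|v'|_{g_*}^2, |\widecheck{J}v'|_{g_*}^2\leq C'P$ for a fixed $C'$, one bounds each of the three by a constant multiple of $\epsilon_2\widecheck\Omega(v,\widecheck{J}_\phi v)$. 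Summing, $|\Psi(v,\widecheck{J}_\phi v)|\leq C\epsilon_2\widecheck\Omega(v,\widecheck{J}_\phi v)$ with $C$ depending only on $\epsilon_1$ and $c_1$, and the choice of $\epsilon_2$ (small relative to $\epsilon_1$ and $c_1$) is exactly what makes $C\epsilon_2\leq\tfrac12$.

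The main obstacle is precisely that $\widecheck{J}_\phi$ stretches the $V_H$-direction by the unbounded factor $\phi^{-1}$, so $|\widecheck{J}_\phi v|_{g_*}$ is not controlled by $|v|_{g_*}$ and one cannot deduce tameness of $\widecheck{J}_\phi$ from the mere $C^0$-closeness of $\Omega$ and $\widecheck\Omega$. What makes the argument go through is the algebraic cancellation above: the $\phi^{-1}$-enhancement only ever multiplies the diagonal quantity $\Psi(V_H,\widecheck X) = O(\epsilon_2)$, which is paired against $\phi^{-1}\alpha^2\leq\widecheck\Omega(v,\widecheck{J}_\phi v)$, while every genuinely $\phi^{-1}$-unbounded pairing with $\widecheck\xi$ vanishes exactly, thanks to the structure of $\widecheck\omega$ and $X_H$.
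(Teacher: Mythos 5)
Your argument is, in essence, the paper's: the same splitting $v = dH(v)V_H + \widecheck\lambda(v)\widecheck{X} + v'$, the same formula $\widecheck{J}_\phi v = \phi^{-1}dH(v)\widecheck{X} - \phi\widecheck\lambda(v)V_H + \widecheck{J}v'$, the same key vanishing of the $\phi^{-1}$-weighted pairings of $\widecheck{X}$ with $\widecheck\xi$, and the same Cauchy--Schwarz absorption against $\phi^{-1}dH(v)^2 + \phi\widecheck\lambda(v)^2$ and $\widecheck\omega(v,\widecheck{J}_\phi v)$; the only organizational difference is that you bound the difference form $\Psi = \Omega - (dH\wedge\widecheck\lambda + \widecheck\omega)$ against the model form, whereas the paper expands $\Omega(v,\widecheck{J}_\phi v)$ directly and converts only the dangerous cross terms into $\Psi$-pairings. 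Your target inequality $\Omega(v,\widecheck{J}_\phi v)\geq\tfrac12(dH\wedge\widecheck\lambda+\widecheck\omega)(v,\widecheck{J}_\phi v)$ is exactly the paper's conclusion.

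There is one place where your constant bookkeeping does not close as written. You bound $|\Psi(v',\widecheck{J}v')|\leq\epsilon_2|v'|_{g_*}|\widecheck{J}v'|_{g_*}\leq\epsilon_2\epsilon_1^{-1}P$ (and you invoke $|\Psi|_{g_*}\leq\epsilon_2$ again in proving $|v'|^2_{g_*},|\widecheck{J}v'|^2_{g_*}\leq C'P$), so your final constant $C$ contains a factor $\epsilon_1^{-1}$ and your argument needs $\epsilon_2\lesssim\epsilon_1 c_1^{-1}$. But $\epsilon_2$ and $\delta_3$ are fixed before the lemma, with $\epsilon_2\leq 2^{-8}c_1^{-1}\epsilon_1^{1/2}$; when $\epsilon_1$ is small, $\epsilon_2\epsilon_1^{-1}$ need not be $\leq\tfrac12$, so "the choice of $\epsilon_2$ is exactly what makes $C\epsilon_2\leq\tfrac12$" fails for the constants actually in play. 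The repair is immediate and is what the paper does: on $U_{\delta_2}$ both $v'$ and $\widecheck{J}v'$ lie in $\widecheck\xi\subset\ker(dH)$, and $\Omega$ agrees with $\widecheck\omega$ on vectors tangent to a level set while $dH\wedge\widecheck\lambda$ vanishes on such pairs, so $\Psi(v',\widecheck{J}v')=0$ identically and $\Omega(v',\widecheck{J}v')=\widecheck\omega(v',\widecheck{J}v')=P$ exactly; this also gives $|v'|^2_{g_*},|\widecheck{J}v'|^2_{g_*}\leq\epsilon_1^{-1}P$ directly from \eqref{eq:tame2} without any $\Psi$-absorption. With that term removed, the surviving errors are the diagonal term (controlled by $|\widecheck\lambda(X_H)^{-1}-1|\leq 2\epsilon_2$, as you say) and the two $V_H$-cross terms, each of size at most $c_1\epsilon_1^{-1/2}\epsilon_2$ times $(dH\wedge\widecheck\lambda+\widecheck\omega)(v,\widecheck{J}_\phi v)$, and the paper's choice $\epsilon_2\leq 2^{-8}c_1^{-1}\epsilon_1^{1/2}$ then suffices.
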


\begin{proof}
Both assertions of the lemma proved nearly simultaneously via some elementary analysis. Choose a tangent vector $v$ with base in $U_{\delta_3}$. Then $v$ decomposes as a sum $v = dH(v)V_H + \widecheck{\lambda}(v)\widecheck{X} + v'$ where $v'$ lies in $\widecheck{\xi}$. We compute $\widecheck{J}_\phi v = \phi^{-1}dH(v)\widecheck{X} - \phi\widecheck{\lambda}(v)V_H + \widecheck{J}v'$. Expand the left-hand side of \eqref{eq:j_tame} with respect to these splittings: 
\begin{equation}\label{eq:j_tame1}
\begin{split}
\Omega(v, \widecheck{J}_\phi v) &= (\phi^{-1}dH(v)^2 + \phi\widecheck{\lambda}(v)^2)\Omega(V_H, \widecheck{X}) + \Omega(v', \widecheck{J}v') \\
&\quad + \Omega(v', \phi^{-1}dH(v)\widecheck{X} - \phi\widecheck{\lambda}(v)V_H) + \Omega(dH(v)V_H + \widecheck{\lambda}(v)\widecheck{X}, \widecheck{J}v') \\
&= (\phi^{-1}dH(v)^2 + \phi\widecheck{\lambda}(v)^2)\widecheck{\lambda}(X_H)^{-1} + \widecheck\omega(v, \widecheck{J}_\phi v) \\
&\quad + \Omega(v', \phi^{-1}dH(v)\widecheck{X} - \phi\widecheck{\lambda}(v)V_H) + \Omega(dH(v)V_H + \widecheck{\lambda}(v)\widecheck{X}, \widecheck{J}v'). \\
&= (\phi^{-1}dH(v)^2 + \phi\widecheck{\lambda}(v)^2)\widecheck{\lambda}(X_H)^{-1} + \widecheck\omega(v, \widecheck{J}_\phi v) \\
&\quad + \Omega(v', -\phi\widecheck{\lambda}(v)V_H) + \Omega(dH(v)V_H, \widecheck{J}v').
\end{split}
\end{equation}

The second line is a consequence of the following simplifications. First, observe that $\Omega(V_H, X_H) = 1$ and therefore that $\Omega(V_H, \widecheck{X}) = \widecheck{\lambda}(X_H)^{-1}$. Second, observe that $\widehat\omega$ and $\Omega$ restrict to the same $2$-form on any level set, so $\Omega(v', \widecheck{J}v') = \widecheck\omega(v', \widecheck{J}v')$, and then apply the identities in \eqref{eq:omega_computation2} to show $\widecheck\omega(v', \widecheck{J}v') = \widecheck\omega(v, \widecheck{J}_\phi v)$.  The third line follows from the similar fact that $\Omega(\widecheck{X}, -)$ is proportional to $dH$, so pairs to $0$ with $v'$ and $\widecheck{J}v'$. For any vector $v'' \in \operatorname{Span}(V_H, X_H)$, we have 
\begin{equation}\label{eq:j_tame2}
(dH \wedge \widecheck\lambda + \widecheck\omega)(v', v'') = (dH \wedge \widecheck\lambda + \widecheck\omega)(\widecheck{J}v', v'') = 0.
\end{equation}

The identities \eqref{eq:j_tame2} follow from \eqref{eq:omega_computation2} above. Now, set $\tau = \Omega - (dH \wedge \widecheck\lambda + \widecheck\omega)$. We use \eqref{eq:j_tame2} to estimate the cross-terms at the end of \eqref{eq:j_tame1}: 

\begin{equation}\label{eq:j_tame3}
\begin{split}
&\Omega(v', -\phi\widecheck{\lambda}(v)V_H) + \Omega(dH(v)V_H, \widecheck{J}v') \\
&\quad = \tau(v', -\phi\widecheck{\lambda}(v)V_H) + \tau(dH(v)V_H, \widecheck{J}v') \\
&\quad \geq -c_1|\tau|_{g_*}(\phi|\widecheck{\lambda}(v)||v'|_{g_*} + |dH(v)||\widecheck{J}v'|_{g_*}) \\
&\quad \geq -c_1\epsilon_2(\phi|\widecheck{\lambda}(v)||v'|_{g_*} + |dH(v)||\widecheck{J}v'|_{g_*}) \\
&\quad \geq -c_1\epsilon_1^{-1/2}\epsilon_2(\phi|\widecheck{\lambda}(v)| + |dH(v)|)\widecheck\omega(v, \widecheck{J}_\phi v)^{1/2} \\
&\quad \geq -(\phi^{-1}dH(v)^2 + \phi\widecheck{\lambda}(v)^2)/4 - \widecheck\omega(v, \widecheck{J}_\phi v)/2. 
\end{split}
\end{equation}

The second line uses the bound \eqref{eq:tame1}. The third line uses \eqref{eq:tame3} to control $|\tau|_{g_*}$. The fourth line uses \eqref{eq:tame2} and the identity $\Omega(v', \widecheck{J}v') = \widecheck\omega(v', \widecheck{J}_\phi v')$. The fifth line uses the Cauchy--Schwarz inequality, the fact that $\phi^{-1} \geq 1$, and the bound $\epsilon_2 \leq 2^{-8}c_1^{-1}\epsilon_1^{1/2}$. Plug in \eqref{eq:j_tame3} into \eqref{eq:j_tame1} and use the upper bound on $\widecheck\lambda(X_H)$ from \eqref{eq:tame3} to get
\begin{equation}\label{eq:j_tame4}
2\Omega(v, \widecheck{J}_\phi v) \geq (\phi^{-1}dH(v)^2 + \phi\widecheck\lambda(v)^2) + \widecheck\omega(v, \widecheck{J}_\phi v).
\end{equation}

The right-hand side of \eqref{eq:j_tame4} is equal to $(dH \wedge \widecheck\lambda + \widecheck\omega)(v, \widecheck{J}_\phi v)$, so \eqref{eq:j_tame} follows from \eqref{eq:j_tame4}. It remains to show that $\widecheck{J}_\phi$ is $\Omega$-tame. The right-hand side of \eqref{eq:j_tame4} is positive, so $\widecheck{J}_\phi$ is $\Omega$-tame on $U_{\delta_3}$. By definition, $\widecheck{J}_\phi = \widecheck{J}$ on the complement of $U_{\delta_3}$, so $\widecheck{J}_\phi$ is $\Omega$-tame. 
\end{proof}

\subsubsection{Stretched manifolds}\label{subsubsec:stretched_manifolds} Fix a smooth function $\phi: \bW \to (0, 1]$ satisfying the following properties:
\begin{itemize}
\item The function $1 - \phi$ is supported on $U_{\delta_3}$. 
\item For any $s \in [-\delta_3, \delta_3]$, $\phi$ is equal to a constant $\phi(s)$ on the hypersurface $H^{-1}(s)$.
\item For any $s \in [-\delta_3, \delta_3]$, we have $\phi(s) = \phi(-s)$. 
\end{itemize}

Define a smooth manifold $\bW_\phi$ and a diffeomorphism $f_\phi: \bW \to \bW_\phi$ as follows. Let $L_\phi := \int_{-\delta_3}^0 \phi(s)^{-1} ds$. Define a smooth function $\Phi: [-\delta_3, \delta_3] \to [-L_\phi, L_\phi]$ by setting $\Phi(s) := -L_\phi + \int_{-\delta_3}^s \phi(s)^{-1}$. Now, write $\bW_+ := H^{-1}( [\delta_3, \infty) )$ and $\bW_- := H^{-1}( (-\infty, -\delta_3])$ and set $\bW_\phi := \bW_+ \cup_Y [-L_\phi, L_\phi] \times Y \cup_Y \bW_-$. We use the letter $a$ to denote the $\bR$-coordinate on the neck. The diffeomorphism $f_\phi: \bW \to \bW_\phi$ is defined to be the identity on $\bW_+$ and $\bW_-$. It is defined on the neighborhood $U_{\delta_3}$ by setting
$$f_\phi \circ \iota: (-\delta_3, \delta_3) \times Y \to (-L_\phi, L_\phi) \times Y \subset \bW_\phi$$ to be the map $(s, y) \mapsto (\Phi(s), y)$. Write $\Omega_\phi := (f_\phi)_*\Omega$ and write $\widehat{J}_\phi := (f_\phi)_*\widecheck{J}_\phi$. 

The almost-complex structures $\widehat{J}_\phi$ could be very degenerate, since $\widecheck{J}_\phi(\widecheck{X}) = \phi V_H$ is very small when $\phi$ is close to $0$. The pushforward by $f_\phi$ undoes this degeneracy. Define a $1$-form $\widehat\lambda_\phi := (f_\phi)_*\widecheck\lambda$ and a $2$-form $\widehat\omega_\phi := (f_\phi)_*\widecheck\omega$ on the neck $[-L_\phi, L_\phi] \times Y$. Then the pair $\widehat\eta_\phi = (\widehat\lambda_\phi, \widehat\phi_\phi)$ is a realized Hamiltonian homotopy on $[-L_\phi, L_\phi] \times Y$. Write $\widehat{R}_\phi = (f_\phi)_*(\widecheck{X})$ for its Hamiltonian vector field and define $\widehat\xi_\phi := \ker(da) \cap \ker(\widehat\lambda_\phi)$. Then, we make the following claim. 

\begin{lem}\label{lem:phi_adapted} $\widehat{J}_\phi$ is $\widehat{\eta}_\phi$-adapted. \end{lem}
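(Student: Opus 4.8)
The plan is to verify the three defining conditions of an $\widehat\eta_\phi$-adapted almost-complex structure on the neck $[-L_\phi, L_\phi] \times Y$ directly from the definitions, pushing everything forward by $f_\phi$. First I would recall that $\widehat J_\phi = (f_\phi)_* \widecheck J_\phi$, $\widehat R_\phi = (f_\phi)_* \widecheck X$, and $\widehat\xi_\phi = \ker(da) \cap \ker(\widehat\lambda_\phi)$, so it suffices to understand $\widecheck J_\phi$ on $U_{\delta_3}$ relative to the splitting $T\bW = \operatorname{Span}(V_H, X_H) \oplus \widecheck\xi$. By construction (\S\ref{subsubsec:deformed_acs}), $\widecheck J_\phi$ preserves $\widecheck\xi$ and restricts there to $\widecheck J_\xi$, which is $\widecheck\omega$-compatible; and $\widecheck J_\phi(V_H) = \phi^{-1}\widecheck X$, equivalently $\widecheck J_\phi(\phi V_H) = \widecheck X$, so $\widecheck J_\phi(\widecheck X) = -\phi V_H$ (using $\widecheck J_\phi^2 = -\operatorname{id}$).

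Next I would translate these facts through $f_\phi \circ \iota : (s,y) \mapsto (\Phi(s), y)$. Since $\Phi'(s) = \phi(s)^{-1}$ and $\iota^* V_H = \partial_s$, the pushforward sends $V_H$ to $\phi^{-1}\partial_a$ near the neck, hence $(f_\phi)_*(\phi V_H) = \partial_a$. Combined with $\widecheck J_\phi(\phi V_H) = \widecheck X$, this gives $\widehat J_\phi(\partial_a) = (f_\phi)_*\widecheck X = \widehat R_\phi$, which is condition (i). For condition (ii), the pushforward identifies $\widecheck\xi = \ker(dH)\cap\ker(\widecheck\lambda)$ with $\ker(da)\cap\ker(\widehat\lambda_\phi) = \widehat\xi_\phi$ (because $H\circ\iota = s$ pulls back $dH$ to $ds$, which $f_\phi$ sends to a positive multiple of $da$, and $\widehat\lambda_\phi = (f_\phi)_*\widecheck\lambda$ by definition); since $\widecheck J_\phi$ preserves $\widecheck\xi$ and is $\widecheck\omega$-compatible there, and $\widehat\omega_\phi = (f_\phi)_*\widecheck\omega$, it follows that $\widehat J_\phi$ preserves $\widehat\xi_\phi$ and restricts to a $\widehat\omega_\phi$-compatible complex structure. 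For condition (iii), I would note that $\widehat\eta_\phi$ has already been declared a realized Hamiltonian homotopy, and $1 - \phi$ is compactly supported in the neck, so outside a compact subset $\widecheck J_\phi = \widecheck J$ is translation-invariant (it pushes forward to the translation-invariant model coming from $J_*$, or rather from $\widecheck J$ via the framing data); hence $\widehat J_\phi$ is $\partial_a$-invariant outside a compact set. Finally I would confirm that $\widehat J_\phi^2 = -\operatorname{id}$, which is automatic since $\widecheck J_\phi$ is an almost-complex structure and $f_\phi$ is a diffeomorphism.

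The only genuinely substantive point — the main obstacle, though a mild one — is checking that $\widehat J_\phi(\partial_a) = \widehat R_\phi$ with the correct normalization, i.e. that the reparametrization $\Phi$ with $\Phi'(s) = \phi(s)^{-1}$ is exactly the one that converts the ``small'' vector $\widecheck J_\phi(\widecheck X) = -\phi V_H$ into $-\partial_a$ and the degenerate-looking $\widecheck J_\phi(V_H) = \phi^{-1}\widecheck X$ into $\widehat R_\phi$. This is precisely the bookkeeping that motivates the definition of $L_\phi$ and $\Phi$ in \S\ref{subsubsec:stretched_manifolds}, so I would present it as a short explicit computation of the differential of $f_\phi \circ \iota$. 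Everything else is a direct unwinding of the pushforward definitions, using only that $f_\phi$ is a diffeomorphism and that the relevant forms and vector fields on the neck are defined as $f_\phi$-pushforwards of their checked counterparts.
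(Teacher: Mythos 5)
Your proof is correct and follows essentially the same route as the paper: compatibility of $\widehat J_\phi$ with $\widehat\omega_\phi$ on $\widehat\xi_\phi$ by pushing forward the identification $\widecheck\xi \simeq \widehat\xi_\phi$, together with the explicit computation $(f_\phi)_*(\phi V_H) = \partial_a$ from $\Phi'(s) = \phi(s)^{-1}$, giving $\widehat J_\phi(\partial_a) = (f_\phi)_*\widecheck J_\phi(\phi V_H) = (f_\phi)_*\widecheck X = \widehat R_\phi$. The only remark: condition (iii) of adaptedness is vacuous here because $[-L_\phi, L_\phi]$ is a bounded interval, so your translation-invariance argument for it is unnecessary (and as stated it is slightly off, since $\widehat J$ need not be $s$-independent where $\phi = 1$), but this does not affect the proof.
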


\begin{proof}
Recall the realized Hamiltonian homotopy $\widehat\eta$ and $\hat\eta$-adapted almost-complex structure $\widehat{J}$ on $[-\delta_3, \delta_3] \times Y$ that we defined above. Note that $\widehat\eta_\phi = (f_\phi \circ \iota)_*\widehat\eta$ and $\widehat{J}_\phi = (f_\phi \circ \iota)_*\widehat{J}$. Since $\widehat{J}$ is $\widehat\omega$-compatible on $\widehat\xi = \ker(ds) \cap \ker(\widehat\lambda)$, it follows from pushing forward by $f_\phi \circ \iota$ that $\widehat{J}_\phi$ is $\widehat\omega_\phi$-compatible on $\widehat\xi_\phi := \ker(da) \cap \ker(\widehat\lambda_\phi)$.

It remains to compute the action of $\widehat{J}_\phi$ on $\partial_a$:
\begin{equation}\label{eq:phi_adapted}
\begin{split}
\widehat{J}_\phi(\partial_a) &= (f_\phi)_*(\widecheck{J}_\phi(f_\phi^*\partial_a) ) = (f_\phi)_*\widecheck{J}_\phi(\phi(s) \cdot V_H) = (f_\phi)_*(\widecheck{X}) = \widehat{R}_\phi.
\end{split}
\end{equation}
\end{proof}

\subsubsection{Sequence of degenerating functions}\label{subsubsec:neck_stretching_general} Choose a sequence of smooth functions $\{\phi_k: \bW \to (0, 1]\}$ that $C^\infty$-converges to some $\phi: \bW \to [0,1]$ and satisfies the following properties: 
\begin{itemize}
\item The function $1 - \phi_k$ is supported on $U_{\delta_3}$ for each $k$. 
\item For each $k$ and each $s \in (-\delta_3, \delta_3)$, $\phi_k$ is equal to a constant $\phi_k(s)$ on the hypersurface $H^{-1}(s)$.
\item For each $k$ and each $s \in (-\delta_3, \delta_3)$, we have $\phi_k(s) = \phi_k(-s)$.
\item For each $k$, the integral $L_{\phi_k} = \int_{-\delta_3}^0 \phi_k(t)^{-1} dt$ is at least $16k$. 
\end{itemize}

\subsubsection{Convergence of geometric objects}\label{subsubsec:geometric_convergence} 

We examine the limiting behavior as $k \to \infty$ of the geometric objects associated to the sequence $\{\phi_k\}$. We simplify the notation for these objects by replacing any ``$\phi_k$'' subscripts with $k$ and removing most of the accents. Write $\widecheck{J}_k := \widecheck{J}_{\phi_k}$. Write $L_k := L_{\phi_k} = \int_{-\delta_3}^0 \phi_k(t) dt$ and define a function $\Phi_k(s) := \int_{-\delta_3}^s \phi_k(t)^{-1} dt - L_k$. Then, write $\bW_k := \bW_{\phi_k}$ for the stretched manifolds and $f_k := f_{\phi_k}$ for the diffeomorphisms $\bW \to \bW_k$. Write $\Omega_k := \Omega_{\phi_k}$ and $J_k := \widehat{J}_{\phi_k}$. Write $\omega_k := \widehat{\omega}_{\phi_k}$, $\lambda_k := \widehat{\lambda}_{\phi_k}$, $\eta_k = (\lambda_k, \omega_k)$, $R_k := \widehat{R}_{\phi_k}$, and $\xi_k := \widehat{\xi}_{\phi_k}$.

Fix any $k$ and any $a$ such that $[a - 8, a + 8] \subseteq [-L_k, L_k]$. Then, define $(\eta_k^a, J_k^a) \in \cD( [-8, 8] \times Y)$ to be the pair defined by restriction of $(\eta_k, J_k)$ to $[a - 8, a + 8] \times Y$ and then translation by $-a$. The following lemma asserts that the family $\{(\eta_k^a, J_k^a)\} \subset \cD([-8, 8] \times Y)$ has compact closure. 

\begin{lem}\label{lem:geometric_convergence}
Fix any sequence $\{a_k\}$ such that (i) $[a_k - 8, a_k + 8] \subseteq [-L_k, L_k]$ for every $k$ and (ii) the sequence $\{\Phi_k^{-1}(a_k)\}$ converges to some $\bar{s}(0) \in [-\delta_3, \delta_3]$. Then, the sequence $\{(\eta_k^{a_k}, J_k^{a_k})\}$ in $\cD([-8, 8] \times Y)$ is convergent. 
\end{lem}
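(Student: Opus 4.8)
The plan is to funnel all of the $k$-dependence of $(\eta_k^{a_k},J_k^{a_k})$ through a single scalar reparametrization and then quote continuity. By construction everything on the neck $[-L_k,L_k]\times Y$ is the pushforward of the fixed pair $(\widehat\eta,\widehat J)\in\cD((-\delta_3,\delta_3)\times Y)$ along $f_{\phi_k}\circ\iota\colon(s,y)\mapsto(\Phi_k(s),y)$, so the only moving parts are the diffeomorphism $\Phi_k$ of the $a$-coordinate and the base point $\Phi_k^{-1}(a_k)$. Accordingly I would introduce $\bar s_k\colon[-8,8]\to[-\delta_3,\delta_3]$, $\bar s_k(t):=\Phi_k^{-1}(a_k+t)$ (well-defined by hypothesis (i)), and note that, since $\Phi_k'(s)=\phi_k(s)^{-1}$, it solves the scalar ODE $\bar s_k'(t)=\phi_k(\bar s_k(t))$ with $\bar s_k(0)=\Phi_k^{-1}(a_k)$, where $\phi_k$ is read as a function of $s$ via its constant restriction to $H^{-1}(s)$.

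The first substantive step is to show $\bar s_k\to\bar s$ in $C^\infty([-8,8])$, where $\bar s$ solves $\bar s'=\phi(\bar s)$ with $\bar s(0)$ equal to the limit in hypothesis (ii) (consistently with the notation there). This is a routine continuous-dependence argument: the initial conditions converge by (ii); the right-hand sides $\phi_k$ converge in $C^\infty$ to $\phi$, which, being a $C^0$-limit of functions constant on level sets, is again constant on level sets and hence smooth in $s$; and $0<\phi_k\le 1$ confines each $\bar s_k$ to the compact interval $[-\delta_3,\delta_3]$ with $|\bar s_k'|\le 1$, so Arzel\`a--Ascoli plus uniqueness of ODE solutions gives $C^0$ convergence, which bootstraps through the ODE to $C^\infty$ convergence.

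The second, more delicate, step is to rewrite $(\eta_k^{a_k},J_k^{a_k})$ using $\bar s_k$ and the smooth families attached to the base data: the family $\{\eta^s=(\lambda,\omega^s)\}$ of framed Hamiltonian structures on $Y$ from \S\ref{subsubsec:realized_hamiltonian}, with Hamiltonian vector fields $R^s$, and the smooth family $\{I^s\}$ of $\omega^s$-compatible complex structures on $\xi=\ker\lambda$ obtained by restricting the adapted structures $J^s$ to $\xi$. The facts I would establish by unwinding the pushforwards are: (a) $\widehat\lambda$ is the pullback of $\lambda$ from $Y$ and is untouched by the $a$-reparametrization, so the one-form part of $\eta_k^{a_k}$ is this fixed pullback and $\widehat\xi_{\eta_k^{a_k}}=\xi$ independently of $k$ and $t$; (b) $\widehat\omega$ annihilates $\partial_s$, so pushing forward by $(s,y)\mapsto(\Phi_k(s),y)$ and then restricting and translating leaves the two-form part of $\eta_k^{a_k}$ equal to $\omega^{\bar s_k(t)}$ on $\{t\}\times Y$, i.e. $\eta_k^{a_k}=\psi_k^*\widehat\eta$ with $\psi_k(t,y)=(\bar s_k(t),y)$; and (c) $J_k^{a_k}$ is the unique $\eta_k^{a_k}$-adapted almost-complex structure whose restriction to $\widehat\xi=\xi$ at $(t,y)$ equals $I^{\bar s_k(t)}$. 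For (c) the subtle point is that the Hamiltonian vector field of a realized homotopy is always normalized by $\widehat\lambda(\widehat R)\equiv 1$, so the pushforward rescales $\partial_a$ to make $J_k^{a_k}(\partial_a)=R^{\bar s_k(t)}$ (not a multiple of it); thus $J_k^{a_k}$ is a ``renormalized pullback'' of $\widehat J$ rather than literally $\psi_k^*\widehat J$, but it is still determined pointwise by the single number $\bar s_k(t)$, since an $\widehat\eta$-adapted almost-complex structure with $\widehat\lambda$ a pullback of $\lambda$ is determined $C^\infty$-continuously by the pair (its two-form, a compatible complex structure on the fixed bundle $\xi$) through the pointwise splitting $T([-8,8]\times Y)=\bR\,\partial_a\oplus\bR\,\widehat R_\eta\oplus\xi$.

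Granting (a)--(c), the conclusion is formal: $\bar s_k\to\bar s$ in $C^\infty$ and smoothness of $s\mapsto(\omega^s,R^s,I^s)$ give $\omega^{\bar s_k(\cdot)}\to\omega^{\bar s(\cdot)}$ and $I^{\bar s_k(\cdot)}\to I^{\bar s(\cdot)}$ in $C^\infty([-8,8]\times Y)$, and feeding these into the continuous assembly of (c) yields $(\eta_k^{a_k},J_k^{a_k})\to(\widehat\eta^{\bar s},\widehat J^{\bar s})$ in $\cD([-8,8]\times Y)$, where $\widehat\eta^{\bar s}$ restricts to $(\lambda,\omega^{\bar s(t)})$ on $\{t\}\times Y$ and $\widehat J^{\bar s}$ is its adapted structure restricting to $I^{\bar s(t)}$ on $\xi$; this limit does lie in $\cD([-8,8]\times Y)$ because each $\eta^s$ is a framed Hamiltonian structure and $\bar s$ is smooth. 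I expect the main obstacle to be exactly the bookkeeping in the second step: chasing the definitions of $f_{\phi_k}$, $\widehat\eta_{\phi_k}$ and $\widehat J_{\phi_k}$ from \S\ref{subsubsec:realized_hamiltonian}--\S\ref{subsubsec:stretched_manifolds} to confirm that all $k$-dependence collapses onto $\bar s_k$, and in particular keeping straight the $\widehat\lambda(\widehat R)\equiv 1$ normalization; the ODE step and the final continuity argument are routine.
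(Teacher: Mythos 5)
Your proposal follows essentially the same route as the paper's proof: the same ODE/continuous-dependence argument giving $\bar s_k\to\bar s$ in $C^\infty$, the same reduction of the pair $(\eta_k^{a_k},J_k^{a_k})$ to slice-wise data $(\lambda^{\bar s_k(t)},\omega^{\bar s_k(t)},J^{\bar s_k(t)})$ precomposed with $\bar s_k$, and the same final continuity assembly (the paper carries this out via the explicit splitting $v=da(v)\partial_a+\bar\lambda(v)\bar X+v'$, which is exactly your "adapted structure determined by the two-form and the complex structure on $\xi$" argument made concrete). Your extra observations — that $\lambda^s\equiv\lambda$ is constant in $s$ by construction, and that the pushforward renormalizes $J_k^{a_k}(\partial_a)$ to the Hamiltonian vector field rather than a multiple of it — are correct and consistent with the paper's treatment.
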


\begin{proof}[Proof of Lemma~\ref{lem:geometric_convergence}]
The proof will take $4$ steps.

\noindent\textbf{Step $1$:} This step proves the following elementary claim. Consider the sequence of smooth functions $s_k: [-8, 8] \to [-\delta_3, \delta_3]$ defined by $s_k(a) = \Phi_k^{-1}(a_k + a)$. We claim that the sequence $\{s_k\}$ converges in the $C^\infty$ topology to a smooth function $\bar{s}: [-8, 8] \to [-\delta_3, \delta_3]$. Observe that $s_k$ solves the ODE $s_k'(a) = \phi_k(s_k(a))$. By assumption, the sequence of initial conditions $\{s_k(0)\}$ converges and the coefficients $\{\phi_k\}$ converge in $C^\infty$. The existence and uniqueness of solutions to ODEs then implies that $\{s_k\}$ converges. 

\noindent\textbf{Step $2$:} To simplify our notation, we write $\bar\lambda_k := \lambda_k^{a_k}$, $\bar\omega_k := \omega_k^{a_k}$, $\bar\eta_k := \eta_k^{a_k}$, and $\bar{J}_k := J_k^{a_k}$. This step establishes a necessary and sufficient criterion for the convergence of $\{\bar{\eta}_k\}$. Fix any differential form $\beta$ on $[-8, 8] \times Y$. Let $\beta^*$ denote a smooth function on $[-8, 8]$, valued in forms on $Y$, sending $a$ to the restriction $\beta|_{\{a\} \times Y}$. Then, at any given point $(a, y) \in [-8, 8] \times Y$, $\beta$ expands as a sum 
\begin{equation}\label{eq:geometric_convergence}\beta_{(a, y)} = \beta^*(a)_y + da \wedge \beta_{(a, y)}(\partial_a, -).\end{equation}

For any realized Hamiltonian homotopy $\bar\eta = (\bar\lambda, \bar\omega)$ on $[-8, 8] \times Y$, write $\bar\eta^*$ for the smooth function $a \mapsto (\bar\lambda^*(a), \bar\omega^*(a))$. Now, observe that $\bar\lambda_k(\partial_a) \equiv 0$ and $\bar\omega_k(\partial_a, -) \equiv 0$. It follows from the expansion \eqref{eq:geometric_convergence} that $\bar\eta_k \to \bar\eta = (\bar\lambda, \bar\omega)$ if and only if $\bar\lambda^*_k \to \bar\lambda^*$ and $\bar\omega^*_k \to \bar\omega^*$ in the topology of smooth form-valued functions on $[-8, 8]$. 

\noindent\textbf{Step $3$:} This step constructs a pair $\bar\eta$ such that $\bar\eta_k \to \bar\eta$. We defined a realized Hamiltonian homotopy $\widehat\eta$ on $[-\delta_3, \delta_3] \times Y$ in \S\ref{subsubsec:realized_hamiltonian} and a $\widehat\eta$-adapted almost-complex structure $\widehat{J}$ in \S\ref{subsubsec:base_acs}. For each $s \in [-\delta_3, \delta_3] \times Y$, let $\eta^s$ denote the framed Hamiltonian structure on $Y$ defined by pullback by the map $y \mapsto (s, y)$. Let $J^s$ denote the unique translation-invariant almost-complex structure on $\bR \times Y$ that coincides with $\widehat{J}$ on $\{s\} \times Y$. 

Let $\bar{s}$ denote the limit of the sequence $\{s_k\}$ from Step $1$. Define a $1$-form $\bar\lambda$ on $[-8, 8] \times Y$ by defining $\bar\lambda(\partial_a) \equiv 0$ and, for each $a \in [-8, 8]$, defining $\bar\lambda^*(a) := \lambda^{\bar{s}(a)}$. Define a $2$-form $\bar\omega$ on $[-8, 8] \times Y$ by defining $\bar\omega(\partial_a, -) \equiv 0$ and, for each $a \in [-8, 8]$, defining $\bar\omega^*(a) := \omega^{\bar{s}(a)}$. 
By the criterion of Step $2$, the convergence $\bar\eta_k \to \bar\eta$ is equivalent to $C^\infty$-convergence of the form-valued functions $\bar\lambda_k^* \to \bar\lambda^*$ and $\bar\omega_k^* \to \bar\omega$. 

For any $k$, we have 
$$\bar\lambda_k = (\iota^{-1} \circ f_k^{-1} \circ \tau_{-a_k})^*\widehat{\lambda},\quad \bar\omega_k = (\iota^{-1} \circ f_k^{-1} \circ \tau_{-a_k})^*\widehat\omega.$$

By definition, the map $f_k^{-1} \circ \tau_{-a_k}$ on the cylinder $[-8, 8] \times Y$ is given by the map 
$$(a, y) \mapsto (\Phi_k^{-1}(a_k + a), y) = (s_k(a), y).$$ 

It follows that
$$\bar\lambda_k^*(a) = \lambda^{s_k(a)} = \lambda^* \circ s_k,\quad \bar\omega_k^*(a) = \omega^{s_k(a)} = \omega^* \circ s_k$$
for each $a \in [-8, 8]$. Since $s_k \to \bar{s}$ in the $C^\infty$ topology, it follows that $\bar\lambda_k^*$ converges to $\lambda^* \circ \bar{s} = \bar\lambda^*$ and that $\bar\omega_k^*$ converges to $\omega^* \circ \bar{s} = \bar\omega^*$ as desired. 

\noindent\textbf{Step $4$:} This step defines a $\bar\eta$-adapted almost-complex structure $\bar{J}$ and proves that $\bar{J}_k \to \bar{J}$, completing the proof of the lemma.  Write $\bar\xi := \ker(da) \cap \ker(\bar\lambda)$. The bundle $\bar\xi$ has the following form. For each $s$, let $\xi^s = \ker(da) \cap \ker(\lambda^s)$ denote the translation-invariant $2$-plane bundle on $\bR \times Y$ associated to the framed Hamiltonian structure $\eta^s$. Then, for any $a \in [-8, 8] \times Y$, the restriction of $\bar\xi$ to $\{a\} \times Y$ coincides with the restriction of $\xi^{\bar{s}(a)}$. This assertion follows from the fact that the restriction of $\bar\lambda$ to $\{a\} \times Y$ is equal to $\lambda^{\bar{s}(a)}$. 

Recall the $\widehat\eta$-adapted almost-complex structure $\widehat{J}$ that we defined in \S\ref{subsubsec:base_acs}. For each $s \in [-\delta_3, \delta_3]$, we defined $J^s$ to be the unique translation-invariant almost-complex structure on $\bR \times Y$ whose restriction to $\{s\} \times Y$ coincides with $J$, and observed that $J^s$ is $\eta^s$-adapted, and therefore restricts to an $\omega^s$-compatible complex structure on the bundle $\xi^s$. Define $\bar{J}$ to be the unique $\bar\eta$-adapted almost-complex structure such that for any $a \in [-8, 8]$, when restricted to the hypersurface $\{a\} \times Y$, the action of $\bar{J}$ on $\bar{\xi}$ is identified with the action of $J^{\bar{s}(a)}$ on $\xi^{\bar{s}(a)}$. 

Now, we will prove that $\bar{J}_k \to \bar{J}$. The bundle $\xi_k := \ker(da) \cap \ker(\bar\lambda_k)$ coincides on $\{a\} \times Y$ with the bundle $\xi^{s_k(a)}$. When restricted to the hypersurface $\{a\} \times Y$, the action of $\bar{J}_k$ on $\xi_k$ is identified with the action of $J^{s_k(a)}$ on $\xi^{s_k(a)}$. The convergence $\bar{J}_k \to \bar{J}$ will follow from showing that, for any smooth vector field $v$ on $[-8, 8] \times Y$, we have $\bar{J}_k(v) \to \bar{J}(v)$ in the $C^\infty$ topology. The vector field $v$ splits uniquely as $v = da(v)\partial_a + \bar\lambda(v)\bar{X} + v'$, where $\bar{X}$ denotes the Hamiltonian vector field of $\bar\eta$ and $v' \in \bar\xi$. For each $k$, it also splits as $v = da(v)\partial_a + \bar\lambda_k(v)\bar{X}_k + v'_k$, where $\bar{X}_k$ denotes the Hamiltonian vector field of $\bar\eta_k$ and $v'_k \in \bar\xi_k$. We compute
$$\bar{J}_k(v) = da(v)\bar{X}_k - \bar\lambda_k(v)\partial_a + \bar{J}_k(v'_k),\quad \bar{J}(v) = da(v)\bar{X} - \bar\lambda(v)\partial_aa + \bar{J}(v').$$

By Step $3$, we have $\bar\eta_k \to \bar\eta$, so 
\begin{equation}\label{eq:geometric_convergence2} \bar{X}_k \to \bar{X},\quad \bar\lambda_k(v) \to \bar\lambda(v) \end{equation}
and therefore
\begin{equation}\label{eq:geometric_convergence3} v'_k = v - da(v)\partial_a - \bar\lambda_k(v)\bar{X}_k \to v - da(v)\partial_a - \bar\lambda(v)\bar{X} = v' \end{equation}
in the $C^\infty$ topology. By expanding $\bar{J}'_k(v'_k)$ and $\bar{J}(v')$ as functions of $(a, y) \in [-8, 8] \times Y$, we have
\begin{equation}\label{eq:geometric_convergence4} \bar{J}_k(v'_k(a, y)) = J^{s_k(a)}(v'_k(a,y)), \quad \bar{J}(v'(a,y)) = J^{\bar{s}(a)}(v'(a,y)). \end{equation}

It follows from \eqref{eq:geometric_convergence3}, \eqref{eq:geometric_convergence4}, and the $C^\infty$-convergence $s_k \to \bar{s}$ that
\begin{equation}\label{eq:geometric_convergence5} \bar{J}_k(v'_k) \to \bar{J}(v') \end{equation}
in the $C^\infty$ topology. The convergence $\bar{J}_k(v) \to \bar{J}(v)$ follows from \eqref{eq:geometric_convergence2} and \eqref{eq:geometric_convergence5}. 
\end{proof}

\begin{rem}\label{rem:stable_constants}
\normalfont

Many of our results involve stable constants depending on a choice of $(\widehat\eta, \widehat{J}) \in \cD([-8,8] \times Y)$. These constants can be replaced by constants independent of $(\widehat\eta, \widehat{J})$, such that the conclusions of the results hold for $(\widehat\eta, \widehat{J}) = (\eta_k^a, J_k^a)$ for any $a$ and $k$. This is a consequence of the following general principle. By stability, for any precompact subset $\cD_* \subseteq \cD([-8, 8] \times Y)$, stable constants can be replaced constants that do not vary on the family $\cD_*$. Lemma~\ref{lem:geometric_convergence} shows that the family $\{(\eta_k^a, J_k^a)\}$ is precompact.
\end{rem}

\section{Dense existence of compact invariant sets}\label{sec:invariant_sets}

In this section, we will prove Theorem~\ref{thm:main}. For the remainder of the section, we fix a smooth function $H: \bR^4 \to \bR$. Fix $s_0 \in \cR_c(H)$ such that $H^{-1}(s_0)$ is connected. Assume without loss of generality that $s_0 = 0$; we reduce to this case by replacing $H$ with $H - s_0$, since adding a constant to $H$ will not change the Hamiltonian vector field. Set $Y := H^{-1}(0)$. 

\subsection{An existence result for almost cylinders}\label{subsec:almost_cyl} As we discussed in \S\ref{subsec:proof_outlines}, we construct by neck stretching holomorphic curves of high degree. We begin by introducing ``$\delta$-almost cylinders'', a convenient formal notion of nearly-invariant set, and proving some basic lemmas. We then state our main existence result for almost cylinders. 

\subsubsection{Definition of almost cylinders}

Fix any $(\hat\eta, \hat{J}) \in \cD([-1,1] \times Y)$ and any $\delta > 0$. A closed subset $\Xi \subseteq (-1,1) \times Y$ is a \emph{$\delta$-almost cylinder with respect to $(\widehat\eta, \widehat J)$} if it is non-empty and the following two bounds hold for any point $z = (t, y) \in \Xi$:
\begin{equation}\label{eq:almost_cyl_bounds} \sup_{\tau \in (-1,1)} \operatorname{dist}_{\widehat{g}}( (\tau, y), \Xi) \leq \delta,\qquad \sup_{\tau \in (-1,1)} \operatorname{dist}_{\widehat{g}}( (t, \phi^\tau(y)), \Xi) \leq \delta.\end{equation}

We will omit $(\widehat\eta, \widehat{J})$ from the notation whenever there is no risk of ambiguity.

\subsubsection{Properties of almost cylinders}

Our first lemma shows that non-empty Hausdorff limits of $\delta$-almost cylinders, for $\delta \in (0,1/2)$, are themselves $\delta$-almost cylinders. 

\begin{lem}\label{lem:almost_cyl_limits}
Fix any $\delta \in (0, 1/2)$. Let $\{(\widehat\eta_k, \widehat{J}_k)\}$ be a sequence in $\cD([-1,1] \times Y)$ converging to $(\widehat\eta, \widehat{J})$. Fix a sequence $\{\Xi_k\}$ in $\cK( (-1,1) \times Y)$ such that $\Xi_k$ is a $\delta$-almost cylinder with respect to $(\widehat\eta_k, \widehat{J}_k)$ for each $k$. Then, the set $\Xi = \limsup \Xi_k$ is a $\delta$-almost cylinder with respect to $(\widehat\eta, \widehat{J})$. 
\end{lem}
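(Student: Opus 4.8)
The plan is to show that the lim-sup set $\Xi$ satisfies the two defining bounds \eqref{eq:almost_cyl_bounds} with respect to the limiting data $(\widehat\eta,\widehat J)$. The first point to dispose of is that $\Xi$ is nonempty: this is automatic from the definition of $\limsup$ if the $\Xi_k$ are nonempty and contained in a compact piece of the (locally compact) space $(-1,1)\times Y$ — actually one should be slightly careful here, since $(-1,1)\times Y$ is noncompact in the $t$-direction, so the cleanest argument is to pick $z_k=(t_k,y_k)\in\Xi_k$, note that by the first almost-cylinder bound applied at $z_k$ there is a point of $\Xi_k$ within $\delta<1/2$ of $(0,y_k)$, extract a convergent subsequence of the $y_k$ in the compact manifold $Y$, and conclude that a limit point lands in $\Xi$. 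So nonemptiness comes for free from the $\delta$-bound itself plus compactness of $Y$.

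Next, fix $z=(t,y)\in\Xi$. By definition of $\limsup$ there is a subsequence $k_j$ and points $z_j=(t_j,y_j)\in\Xi_{k_j}$ with $z_j\to z$. I would verify the first bound $\sup_{\tau}\operatorname{dist}_{\widehat g}((\tau,y),\Xi)\le\delta$. Fix $\tau\in(-1,1)$. For each $j$, the almost-cylinder property of $\Xi_{k_j}$ at $z_j$ gives a point $w_j\in\Xi_{k_j}$ with $\operatorname{dist}_{\widehat g_{k_j}}((\tau,y_j),w_j)\le\delta$. Since $(\widehat\eta_{k_j},\widehat J_{k_j})\to(\widehat\eta,\widehat J)$ in $C^\infty$, the associated metrics $\widehat g_{k_j}$ converge to $\widehat g$ locally uniformly, and $(\tau,y_j)\to(\tau,y)$; hence the $w_j$ stay in a compact subset of $(-1,1)\times Y$ and, after passing to a further subsequence, $w_j\to w$ with $w\in\limsup\Xi_k=\Xi$ and $\operatorname{dist}_{\widehat g}((\tau,y),w)\le\delta$ by continuity of the distance function jointly in the metric and the points. (Here one uses that $\delta<1/2$ so that $(\tau,y_j)$ and its $\delta$-neighborhood stay in a fixed compact slab of $(-1,1)\times Y$, keeping all distance comparisons within a region of locally uniform metric convergence — this is where the hypothesis $\delta\in(0,1/2)$ is actually used.) This gives $\operatorname{dist}_{\widehat g}((\tau,y),\Xi)\le\delta$, and taking the supremum over $\tau$ yields the first bound.

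The second bound is handled the same way, with the extra ingredient that the flows $\phi^\tau_{k_j}$ of the Hamiltonian vector fields $R_{\widehat\eta_{k_j}}$ converge to the flow $\phi^\tau$ of $R_{\widehat\eta}$, locally uniformly in $\tau$ and the base point — which follows from $C^\infty$-convergence of the framed Hamiltonian structures $\widehat\eta_{k_j}\to\widehat\eta$ and continuous dependence of ODE solutions on parameters. So given $\tau$, the almost-cylinder bound for $\Xi_{k_j}$ at $z_j$ produces $w_j\in\Xi_{k_j}$ with $\operatorname{dist}_{\widehat g_{k_j}}((t_j,\phi^\tau_{k_j}(y_j)),w_j)\le\delta$; since $(t_j,\phi^\tau_{k_j}(y_j))\to(t,\phi^\tau(y))$, extracting a convergent subsequence $w_j\to w\in\Xi$ gives $\operatorname{dist}_{\widehat g}((t,\phi^\tau(y)),w)\le\delta$, and taking $\sup_\tau$ finishes it.

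I expect the main obstacle to be purely bookkeeping around the noncompactness of $(-1,1)\times Y$: one must make sure that the auxiliary points $w_j$ (and the points $(\tau,y_j)$, $(t_j,\phi^\tau_{k_j}(y_j))$) do not escape toward the boundary $\{\pm1\}\times Y$ or wander outside the region where $\widehat g_{k_j}\to\widehat g$ uniformly, so that subsequential Hausdorff-limit extraction and joint continuity of $\operatorname{dist}$ in $(\text{metric},\text{pair of points})$ are legitimate. The constraint $\delta<1/2$ is exactly what confines everything: from any $z=(t,y)\in\Xi$ one already knows (first bound, in the limit — or rather in the $\Xi_k$) that $\Xi_k$ reaches within $\delta$ of the central slice $\{0\}\times Y$, which pins the relevant $t$-coordinates to a bounded interval, and $\delta$-balls around slices or flowed points stay inside a fixed compact slab. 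Once that slab is fixed, everything reduces to standard facts: $C^\infty$-convergence in $\cD([-1,1]\times Y)$ implies locally uniform metric and flow convergence, $\operatorname{dist}$ is continuous, and $\cK$ of a locally compact metrizable space is sequentially compact (Lemma~\ref{lem:accumulation_points} and the cited \cite{McMullen96} give the ambient topological facts). No genuinely new estimate is needed; the content is organizing the diagonal/subsequence arguments correctly.
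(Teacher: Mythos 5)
Your argument is correct and is essentially the paper's proof: nonemptiness via the $\delta<1/2$ bound pinning a point of each $\Xi_k$ into the compact slab $[-1/2,1/2]\times Y$, and then, for each $z\in\Xi$ and $\tau$, using the almost-cylinder bound for $\Xi_{k_j}$ at approximating points together with $\widehat g_{k_j}\to\widehat g$ (and, for the second bound, convergence of the flows) to pass the estimate to the limit. The only cosmetic difference is that you extract a convergent subsequence of the auxiliary points $w_j$ and invoke continuity of the distance, whereas the paper phrases the same step as a triangle inequality using $\limsup_k\operatorname{dist}_{\widehat g_k}(z_k',\Xi)=0$; your explicit remark about flow convergence fills in a detail the paper leaves as ``similar.''
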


\begin{proof}
The proof of the lemma will take $2$ steps. 

\noindent\textbf{Step $1$:} This step shows that $\Xi$ is non-empty. Since $\delta \in (0, 1/2)$, it follows that for each $k$, there exists some point $z_k = (t_k, y_k) \in \Xi_k$ with $t_k \in [-1/2,1/2]$. Since $[-1/2,1/2] \times Y$ is compact, the points $z_k$ have a subsequential limit point and therefore $\Xi$ is non-empty. 

\noindent\textbf{Step $2$:} This step shows that $\Xi$ satisfies both bounds in \eqref{eq:almost_cyl_bounds}. The proofs of both bounds are similar, so we only give a full proof of the first bound. Fix any point $z = (t, y) \in \Xi$. Then, after passing to a subsequence, there exist points $z_k = (t_k, y_k) \in \Xi_k$ such that $z_k \to z$. Fix any $\tau \in (-1, 1)$.  For each $k$, there exists $z_k' \in \Xi_k$ such that $\operatorname{dist}_{\widehat{g}_k}( (\tau, y_k), z_k') \leq \delta$, where $\widehat{g}_k$ denotes the Riemannian metric induced by $(\widehat{\eta}_k, \widehat{J}_k)$. Note that $\limsup_{k \to \infty} \operatorname{dist}_{\widehat{g}_k}(z_k', \Xi) = 0$. Then, using the triangle inequality, we obtain the following bound:
\begin{equation*}
\begin{split}
\operatorname{dist}_{\widehat{g}}( (\tau, y), \Xi) &\leq \limsup_{k \to \infty} \operatorname{dist}_{\widehat{g}}( (\tau, y_k), \Xi) \\
&\leq \limsup_{k \to \infty} (\operatorname{dist}_{\widehat{g}}( (\tau, y_k), z_k') + \operatorname{dist}_{\widehat{g}}(z_k', \Xi) ) \\
&= \limsup_{k \to \infty} (\operatorname{dist}_{\widehat{g}_k}( (\tau, y_k), z_k') + \operatorname{dist}_{\widehat{g}_k}(z_k', \Xi) ) \leq \delta.
\end{split}
\end{equation*}

The third line uses the convergence $\widehat{g}_k \to \widehat{g}$. 
\end{proof}

The next lemma confirms the expected fact that $\delta$-almost cylinders become cylinders over compact invariant sets as $\delta \to 0$. 

\begin{lem}\label{lem:almost_cyl_to_cyl}
Fix some $(\widehat\eta, \widehat{J}) \in \cD([-1,1] \times Y)$. Assume that $\Xi$ is a $\delta$-almost cylinder with respect to $(\widehat\eta, \widehat{J})$ for every $\delta > 0$. Then $\Xi = (-1, 1) \times \Lambda$, where $\Lambda \in \cK(Y)$ is a non-empty, compact, $X_H$-invariant subset of $Y$. 
\end{lem}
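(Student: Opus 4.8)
The plan is to unwind the definition of $\delta$-almost cylinder in the limit $\delta \to 0$. Since $\Xi$ is closed and non-empty, the first order of business is to show it has the product form $(-1,1)\times\Lambda$ for some $\Lambda\subseteq Y$. Define $\Lambda := \{y \in Y : (t,y) \in \Xi \text{ for some } t \in (-1,1)\}$, the image of $\Xi$ under the projection to $Y$. The first bound in \eqref{eq:almost_cyl_bounds}, applied with $\delta$ arbitrarily small, says exactly that for any $(t,y)\in\Xi$ and any $\tau\in(-1,1)$ the point $(\tau,y)$ has zero distance to $\Xi$, hence lies in $\Xi$ because $\Xi$ is closed. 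Therefore $(t,y)\in\Xi$ implies $\{y\}\times(-1,1)\subseteq\Xi$, which gives $\Xi = (-1,1)\times\Lambda$. Since $\Xi$ is closed in $(-1,1)\times Y$ and has product form, $\Lambda$ is closed in $Y$; as $Y$ is compact, $\Lambda$ is compact. Non-emptiness of $\Lambda$ is immediate from non-emptiness of $\Xi$.

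Next I would extract $X_H$-invariance from the second bound in \eqref{eq:almost_cyl_bounds}. Recall from \S\ref{subsubsec:framed_hamiltonian} that $R_\eta = X_H|_Y$, and $\phi^\tau$ denotes the time-$\tau$ flow of this vector field (this is the flow appearing in the almost-cylinder definition). Fix $y\in\Lambda$, so $(t,y)\in\Xi$ for some $t$. The second bound with $\delta$ arbitrarily small forces $\operatorname{dist}_{\widehat g}((t,\phi^\tau(y)),\Xi)=0$ for every $\tau\in(-1,1)$, hence $(t,\phi^\tau(y))\in\Xi$ since $\Xi$ is closed, hence $\phi^\tau(y)\in\Lambda$ for all $\tau\in(-1,1)$. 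This shows $\Lambda$ is invariant under the time-$\tau$ flow of $X_H$ for all $|\tau|<1$. Since these flow maps generate the full flow (every real time is a finite composition of times in $(-1,1)$, using the group law $\phi^{\tau_1}\circ\phi^{\tau_2}=\phi^{\tau_1+\tau_2}$), $\Lambda$ is $X_H$-invariant. Note the flow $\phi^\tau$ is globally defined on $Y$ because $Y$ is compact and $X_H$ is tangent to $Y$.

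One small technical point to handle carefully: the bounds in \eqref{eq:almost_cyl_bounds} are stated with the metric $\widehat g$ induced by the chosen $(\widehat\eta,\widehat J)\in\cD([-1,1]\times Y)$, and I should make sure I am taking ``$\delta$-almost cylinder for every $\delta>0$'' with respect to a \emph{single fixed} pair $(\widehat\eta,\widehat J)$ — which is exactly the hypothesis. Then the distance function is fixed, and ``zero distance to a closed set implies membership'' is valid. I expect this to be a short and essentially routine argument; the only mild subtlety, and thus the main thing to get right, is the passage from ``invariant under $\phi^\tau$ for $|\tau|<1$'' to ``fully invariant,'' which uses the one-parameter group property, together with confirming that $\phi^\tau$ is the flow of precisely the vector field $R_\eta = X_H|_Y$ from the almost-cylinder definition in \S\ref{subsec:almost_cyl}.
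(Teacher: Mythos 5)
Your proposal is correct and follows essentially the same route as the paper: the paper's proof is the one-line observation that taking $\delta \to 0$ in \eqref{eq:almost_cyl_bounds} forces $(\tau, y)$ and $(t, \phi^\tau(y))$ to lie in the closed set $\Xi$, which is exactly your argument. Your extra care in spelling out the product structure, the closedness of $\Lambda$, and the extension from $|\tau|<1$ to full $X_H$-invariance via the group law only makes explicit what the paper leaves implicit.
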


\begin{proof}
For any $z = (t, y) \in \Xi$, taking $\delta \to 0$ in \eqref{eq:almost_cyl_bounds} implies that $(\tau, y),\,(t, \phi^\tau(y)) \in \Xi$. 
\end{proof}

\subsubsection{Existence of almost cylinders} We now state our main existence result for almost cylinders. 

\begin{prop}\label{prop:main}
There exists a pair $(\eta, J) \in \cD(Y)$ such that $R_\eta = X_H$ and such that the following holds. Fix a finite set of points $\mathbf{p} \subset Y$ and a positive integer $n > 2$. Then there exists a connected subset $\cZ_{\mathbf{p}, n} \subseteq \cK((-1,1)\times Y)$ with the following properties:
\begin{enumerate}[(a)]
\item There exists $\Xi \in \cZ_{\mathbf{p}, n}$ such that $\{0\} \times \mathbf{p} \subset \Xi$. 
\item There exists $\Xi \in \cZ_{\mathbf{p}, n}$ equal to $(-1, 1) \times \Lambda$, where $\Lambda$ is a proper, compact, $X_H$-invariant subset of $Y$. 
\item Each $\Xi \in \cZ_{\mathbf{p}, n}$ is a $1/n$-almost cylinder with respect to $(\eta, J)$. 
\end{enumerate}
\end{prop}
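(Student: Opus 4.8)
\textbf{Proof strategy for Proposition~\ref{prop:main}.}

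The plan is to run a neck-stretching argument on $\mathbb{CP}^2$ (obtained by compactifying $\bR^4$) along the hypersurface $Y$ using the framework of \S\ref{subsec:neck_stretching_general}, and to extract the set $\cZ_{\mathbf{p},n}$ from the ``stretched limit set'' of a sequence of degree-$d$ holomorphic curves for a suitably chosen large $d = d(n)$. First I would fix $(\eta,J) \in \cD(Y)$ with $R_\eta = X_H$ as produced by \S\ref{subsubsec:framed_hamiltonian}, and set up, for a sequence of degenerating functions $\{\phi_k\}$ as in \S\ref{subsubsec:neck_stretching_general}, the stretched manifolds $\bW_k$ with their tamed almost-complex structures $J_k$. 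On $\mathbb{CP}^2$ there exists, for every $k$ and every tuple of $\binom{d+2}{2}-1$ point constraints in general position, a degree-$d$ $J_k$-holomorphic curve (this is the existence input from Appendix~\ref{sec:closed_curves}); I would choose the constraints so that the curve is forced to cross the neck — e.g. placing some constraints on either side of $Y$ in $\bW_k$ — and so that it passes through the prescribed points $\mathbf{p}$ once they are pushed into a slice of the neck. The index count $\approx d^2$ is what allows simultaneously imposing the point constraint $\{0\}\times\mathbf{p}$ and having room to move, which is the reason the argument is four-dimensional.

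Next I would define, for each $k$, the family of height-two slices: for each admissible neck height $a$ with $[a-1,a+1]\subseteq[-L_k,L_k]$, restrict the curve to $(a\circ u)^{-1}([a-1,a+1])$, translate by $-a$ to land in $(-1,1)\times Y$, and take the image in $Y$-coordinates via $f_k\circ\iota$; record the pair $(\Xi, s)$ where $\Xi$ is this sliced image and $s = \Phi_k^{-1}(a)$ is the physical level. The stretched limit set $\cX$ is the set of subsequential Hausdorff limits of such slices (using compactness of $\cK((-1,1)\times Y)$ and of $[-\delta_3,\delta_3]$, plus Lemma~\ref{lem:geometric_convergence} so the ambient data $(\eta_k^{a_k},J_k^{a_k})$ also converges and Remark~\ref{rem:stable_constants} so the stable constants are uniform). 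The core of the argument is to show that, after choosing $d$ large enough depending on $n$, every slice of the stretching curve whose height is bounded away from the neck ends is a $1/n$-almost cylinder with respect to the limiting data: one bound in \eqref{eq:almost_cyl_bounds} (the $\tau$-translation bound) comes from the $\widehat\lambda$-integral estimate \eqref{eq:fh_area_bound1} of Proposition~\ref{prop:fh_area_bound} together with the action-quantization Proposition~\ref{prop:fh_quantization} — a curve crossing a long neck must have small action per unit neck length, hence its slices are forced to be nearly vertical — and the other (the flow-direction bound) from the connected-local area bound Proposition~\ref{prop:local_area_bound} applied after passing to low-action pieces, combined with Lemma~\ref{lem:zero_action} which says low-action curves hug the orbits of $R_\eta$. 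Passing to the limit, Lemma~\ref{lem:almost_cyl_limits} ensures the limiting slices $\Xi$ are genuine $1/n$-almost cylinders, giving property (c).

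To get properties (a) and (b) and connectedness I would proceed as follows. Connectedness: the set of admissible heights $a$ is a connected interval, the slicing map $a \mapsto \Xi_a^{(k)}$ is continuous into $\cK((-1,1)\times Y)$ (slices vary continuously), so each $\cZ_d^{(k)}$ is connected, and Lemma~\ref{lem:accumulation_points} (whose hypothesis is met by taking, say, the slice through the fixed point constraint $\mathbf{p}$, which survives in the limit) shows the set $\cZ_{\mathbf{p},n}$ of subsequential limits is closed and connected. Property (a): at the height where the curve meets $\{0\}\times\mathbf{p}$, the corresponding slice contains $\{0\}\times\mathbf{p}$, and this passes to a limit $\Xi \in \cZ_{\mathbf{p},n}$ with $\{0\}\times\mathbf{p}\subset\Xi$. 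Property (b): near the top end of the neck the curve has small area in a full neighborhood (by Proposition~\ref{prop:fh_area_bound} applied from the capped-off top, exactly as in the Fish--Hofer argument), so the corresponding slice limits — after a further diagonal extraction over a subsequence of heights marching toward the end — to $(-1,1)\times\Lambda$ for a compact $X_H$-invariant $\Lambda$; properness of $\Lambda$ (i.e. $\Lambda \neq Y$) follows from an intersection-positivity argument à la \cite{FH23}: the degree-$d$ curve has bounded homological intersection with a reference curve, so it cannot project onto all of $Y$, hence the limiting invariant set must miss an open subset of $Y$.

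I expect the main obstacle to be the quantitative control in the middle step: making precise that a degree-$d$ curve crossing a neck of length $\geq 16k$ has, for the overwhelming majority of heights, slices that are $1/n$-almost cylinders, with the almost-cylinder constant tied to $d$ rather than to $k$. The subtlety is that the curve can behave wildly in the neck (no SFT compactness), so one cannot extract a limiting holomorphic curve; instead one must argue purely at the level of areas, $\widehat\lambda$-integrals, and Hausdorff geometry, carefully using that the \emph{total} action is controlled by the symplectic area of a degree-$d$ curve (hence $O(d)$), so that on all but $O(d)$ unit-length sub-necks the action is below the quantization threshold $\hbar$ and Proposition~\ref{prop:local_area_bound} applies; then Lemma~\ref{lem:zero_action}-type reasoning forces those slices to be almost-cylinders. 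Getting the bookkeeping right — which heights are ``bad'', how many, and that enough ``good'' heights remain to also carry the point constraint $\mathbf{p}$ and to connect to the invariant-set slice at the neck end — is where the real work lies.
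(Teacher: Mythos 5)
Your overall frame (compactify to $\mathbb{CP}^2$, stretch the neck, take Hausdorff limits of height-two slices of high-degree curves, show most slices are $1/n$-almost cylinders via the action budget $O(d)$ together with Propositions~\ref{prop:fh_quantization} and \ref{prop:local_area_bound}) matches the paper. But the place you yourself flag as "bookkeeping" is where your route genuinely breaks, and the paper resolves it by a different mechanism. Your plan gets property (b) from slices near the end of the neck, Fish--Hofer style, and then connects that slice to the slice through the point constraint. Any connected family of limits containing both must sweep through all intermediate heights, and among those there are up to $O(d)$ (in fact, with the paper's thresholds, up to $\sim d^2/2$) levels where action or topology accumulates; the slices there are not $1/n$-almost cylinders, so property (c) fails for your $\cZ_{\mathbf{p},n}$. (There is also a secondary issue: in this neck model the far ends of the neck sit over the levels $H^{-1}(\pm\delta_3)$, not over $Y$, so a literal end-of-neck limit would produce an invariant set of the wrong hypersurface; only the plateau region $(-k,k)$, where $\phi_k=k^{-2}$, converges to $Y$.) Relatedly, you impose the constraint $\mathbf{p}$ at a single neck height. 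Since the bad levels depend on the curves, which depend on the constraints, you cannot arrange a priori that this one height avoids the accumulation sets, so even properties (a) and (c) cannot be guaranteed simultaneously on a connected family.

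The paper's fix is quantitative and local rather than global: it forces the degree-$4md$ curves through $\{0\}\times\mathbf{p}$ at $2d^2+1$ equally spaced heights (Corollary~\ref{cor:closed_curves}), proves the accumulation sets $\bfs_\omega(d,\epsilon_d;\bfk)\cup\bfs_\chi(d,b_*;\bfk)$ have at most $d^2/2$ elements (Lemmas~\ref{lem:pigeonhole_omega}, \ref{lem:pigeonhole_chi}), and then picks a closed interval $\cJ$ of length $2d^{-2}$ avoiding all bad levels; by pigeonhole $\cJ$ automatically contains a constraint height, giving (a) and (c) on the connected set $\pi_{\bR}^{-1}(\cJ)$ (Lemma~\ref{lem:lim_set_conn}). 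Property (b) is then obtained \emph{inside} $\cJ$, not at the neck end: since the full action-accumulation set $\bfs_\omega(d;\bfk)$ is countable (Lemma~\ref{lem:pigeonhole_omega2}), $\cJ$ contains a level with no action accumulation at all, where Lemma~\ref{lem:almost_cyl_to_cyl} gives a cylinder $(-1,1)\times\Lambda$ over an invariant set, and properness of $\Lambda$ comes from the intersection argument of Proposition~\ref{prop:intersection_argument}: if $\Lambda=Y$, one produces $8md$ positive local intersections with a degree-$1$ sphere from $\cM_{k}$ whose minimum height is placed at that level, contradicting the homological count $4md$. Your "bounded intersection with a reference curve, so it cannot cover $Y$" is the right spirit, but as stated it is not an argument; the positivity-of-intersection count against a sphere positioned at the given level, plus the tubular-neighborhood/vertical-foliation setup, is the actual content. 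Without the multi-height constraints, the short interval $\cJ$, and the in-interval properness argument, the three properties cannot be made to coexist on one connected family.
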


\subsection{Proof of Theorem~\ref{thm:main}}\label{subsec:main_proof} We defer the proof of Proposition~\ref{prop:main} to \S\ref{subsec:almost_cyl_proof} and first explain how to use it to prove Theorem~\ref{thm:main}. 

\begin{proof}
The proof will take $3$ steps.

\noindent\textbf{Step $1$:} This step uses Proposition~\ref{prop:main} to construct a connected family of invariant subsets of $Y$ satisfying several properties. Fix a finite set of points $\mathbf{p} \subset Y$. Let $\{\cZ_{\mathbf{p}, n}\}$ be the sequence of subsets of $\cK( (-1,1) \times Y)$ from Proposition~\ref{prop:main}. After passing to a subsequence in $n$, we may assume that there exists $\Xi_n \in \cZ_{\mathbf{p}, n}$ such that the sequence $\{\Xi_n\}$ converges.

Let $\cZ_{\bfp} \subset \cK( (-1,1)\times Y)$ be the set of all subsequential limit points of the sequence $\{\cZ_{\mathbf{p}, n}\}$ in $\cK( (-1,1)\times Y)$. That is, $\Xi\in \cZ_{\mathbf{p}}$ if and only if there exists a subsequence $n_j$ and elements $\Xi_{j} \in \cZ_{\mathbf{p}, n_j}$ such that $\lim_{j \to \infty} \Xi_{j} = \Xi$. Since each $\cZ_{\mathbf{p}, n}$ is connected, $\cZ_{\mathbf{p}}$ is connected by Lemma~\ref{lem:accumulation_points}. We claim that $\cZ_{\mathbf{p}}$ satisfies the following properties:
\begin{enumerate}[(a)]
\item There exist some $\Xi \in \cZ_{\mathbf{p}}$ such that $\{0\} \times \mathbf{p} \subset \Xi$.
\item There exists a compact $X_H$-invariant set $\Lambda \in \cK(Y)$ such that
\begin{enumerate}[(\roman*)]
\item The set $(-1,1) \times \Lambda$ is an element of $\cZ_{\mathbf{p}}$;
\item There exists a convergent sequence $\Lambda_j \to \Lambda$ in $\cK(Y)$ such that $\Lambda_j$ is a proper, compact, $X_H$-invariant set for each $j$. 
\end{enumerate}
\item Each $\Xi \in \cZ_{\mathbf{p}}$ is equal to $(-1,1) \times \Lambda$, where $\Lambda \in \cK(Y)$ is a non-empty, compact, $X_H$-invariant set. 
\end{enumerate}

We explain how these properties follow from Proposition~\ref{prop:main}(a--c) and the properties of almost cylinders discussed above. Property (a) is a direct consequence of Proposition~\ref{prop:main}(a). To prove Property (b), we observe that by Proposition~\ref{prop:main}(b), there exists for each $n$ a proper compact $X_H$-invariant set $\Lambda_{n}$ such that $(-1,1) \times \Lambda_n \in \cZ_{\mathbf{p}, n}$. There exists a subsequence $\{n_j\}$ such that $(-1,1) \times \Lambda_{n_j} \to (-1, 1) \times \Lambda \in \cZ_{\mathbf{p}}$. Then apply Lemma~\ref{lem:intersection}. To prove Property (c), we observe that, by Proposition~\ref{prop:main}(c) and Lemma~\ref{lem:almost_cyl_limits}, each element $\Xi \in \cZ_{\mathbf{p}}$ must be a $1/n$-almost cylinder for each $n$. Then, apply Lemma~\ref{lem:almost_cyl_to_cyl}. 

\noindent\textbf{Step $2$:} This step uses another limit construction to construct a large connected family $\cY$ of compact $X_H$-invariant subsets. Fix a sequence of finite subsets $\mathbf{p}_\ell \subset Y$ that converge to $Y$ in the Hausdorff topology. More simply put, $\mathbf{p}_\ell$ becomes increasingly dense in $Y$ as $\ell \to \infty$. For each $\ell$, there exists a connected subset $\cZ_{\bfp_\ell} \subseteq \cK( (-1, 1) \times Y)$ satisfying Properties (a--c) from Step $1$. After passing to a subsequence, we may assume that there exists $\Xi_\ell \in \cZ_{\bfp_\ell}$ such that the sequence $\{\Xi_\ell\}$ converges. Let $\cZ$ denote the set of subsequential limit points of the sequence $\{\cZ_{\bfp_\ell}\}$ as $\ell \to \infty$. Then, let $\cY \subseteq \cK(Y)$ denote the image of $\cZ$ under the map $\Xi \mapsto \{0\} \times \Xi$. 

Note that $\cZ$ is connected by Lemma~\ref{lem:accumulation_points} and that $\cY$ is connected by Lemma~\ref{lem:intersection} and Property (c) from Step $1$. The following properties of $\cY$ are deduced from Properties (a--c) from Step $1$:
\begin{enumerate}[(a')]
\item $Y \in \cY$.
\item There exists a convergent sequence $\Lambda_j \to \Lambda$ in $\cK(Y)$ such that $\Lambda \in \cY$ and each $\Lambda_j$ is a proper, compact, $X_H$-invariant set.
\item Each $\Lambda \in \cY$ is a non-empty, compact, $X_H$-invariant set. 
\end{enumerate}

\noindent\textbf{Step $3$:} This step finishes the proof by considering two opposite cases and resolving each one separately. First, assume that $\cY$ consists of a single element. By Property (a') from Step $2$, we have $\cY = \{Y\}$. By Property (b'), $Y$ is the limit of a sequence $\{\Lambda_j\}$ of proper, compact, $X_H$-invariant sets. The union of such a sequence is dense, and such a sequence must have infinitely many elements, so the theorem is proved in this case. Second, assume that $\cY$ does not consist of a single element. By Property (a'), we have $Y \in \cY$. Since $\cY$ is connected, $Y$ is not an isolated point in $\cY$. Therefore, $Y$ is a Hausdorff limit of a sequence $\{\Lambda_j\}$, where each $\Lambda_j \in \cK(Y)$ is a proper compact subset. By Property (c'), each $\Lambda_j$ is $X_H$-invariant. As in the first case, this suffices to prove the theorem. 
\end{proof}

\subsection{Geometric setup}\label{subsec:geom_setup} We have proved Theorem~\ref{thm:main} assuming that Proposition~\ref{prop:main} is true. To set the stage, we embed $Y$ into $\mathbb{CP}^2$, and then stretch the neck around $Y$ via the procedure introduced in \S\ref{subsec:neck_stretching_general}. Then, we introduce the key new object in our method: the ``stretched limit set''. 

\subsubsection{Compactification}\label{subsubsec:compactification} Choose $B > 0$ such that $Y$ lies inside the open ball of symplectic volume $B$ centered at the origin. Denote this ball by $\mathbb{B}$. Let $\bW$ denote the complex projective space $\mathbb{CP}^2$ and let $\Omega$ denote the Fubini--Study symplectic form, normalized so that $(\bW, \Omega)$ has symplectic volume $1$. The ball $\mathbb{B}$ is symplectomorphic to $\bW\,\setminus\,\bD$, where $\bD$ is a complex line such that $[\bD] \in H_2(\bW; \bZ)$ is Poincar\'e dual to $B^{-1}[\Omega] \in H^2(\bW; \bZ)$. Passing through the symplectomorphism $\mathbb{B} \simeq \bW\,\setminus\,\bD$, we regard $Y$ as a hypersurface in $\bW$ that is disjoint from $\bD$. After modifying $H$ outside of a neighborhood of $Y$, we may assume without loss of generality that it extends to a smooth function on $\bW$, also denoted by $H$, such that $0 \in \cR_c(H)$, $Y = H^{-1}(0)$, and $H > 0$ on $\bD$. For simplicity, we assume that $B = 1$. Rescaling $\Omega$ to $B^{-1}\Omega$ rescales $X_H$ by a constant, which does not change its invariant subsets. 

\subsubsection{Recollections from \S\ref{subsec:neck_stretching_general}} Let $J_*$ be an $\Omega$-compatible almost-complex structure on $\bW$ such that $\bD$ is $J_*$-holomorphic. Choose $\delta_0 > 0$ such that $(-\delta_0, \delta_0) \subseteq \cR_c(H)$ and such that $U_{\delta_0} = H^{-1}( (-\delta_0, \delta_0))$ is disjoint from $\bD$. We repeat the setup from \S\ref{subsubsec:framed_hamiltonian}--\ref{subsubsec:stretched_manifolds}. 
\begin{itemize}
\item In \S\ref{subsubsec:framed_hamiltonian} we defined a framed Hamiltonian structure $\eta = (\lambda, \omega)$ on $Y$. 
\item In \S\ref{subsubsec:realized_hamiltonian} we extended $\eta$ to a pair $\widecheck\eta = (\widecheck\lambda, \widecheck\omega)$ on $U_{\delta_0}$, fixed collar coordinates $\iota: (-\delta_1, \delta_1) \times Y \to U_{\delta_1}$, and defined a realized Hamiltonian homotopy $\widehat\eta = (\widehat\lambda, \widehat\omega)$ to be the pullback $\iota^*\widecheck\eta$. We defined $\widehat\Omega = \iota^*\Omega$. 
\item In \S\ref{subsubsec:base_acs} we defined an $\Omega$-tame almost-complex structure $\widecheck{J}$ on $\bW$ and fixed $\widehat{J} = \iota^*\widecheck{J}$; recall that $\widehat{J}$ is $\widehat\eta$-adapted on $(-\delta_2, \delta_2) \times Y$. 
\item In \S\ref{subsubsec:deformed_acs} we defined deformations $\widecheck{J}_\phi$, agreeing with $\widecheck{J}$ on the bundle $\widecheck{\xi} = \ker(ds) \cap \ker(\widecheck{\lambda})$, which by Lemma~\ref{lem:j_tame} are tame if $1 - \phi$ is supported in $U_{\delta_3}$. 
\item In \S\ref{subsubsec:stretched_manifolds}, we defined stretched manifolds $\bW_\phi$, containing long necks $[-L_\phi, L_\phi] \times Y$, and diffeomorphisms $f_\phi: \bW \to \bW_\phi$. Let $\widehat\Omega_\phi$, $\widehat\eta_\phi = (\widehat\lambda_\phi, \widehat\omega_\phi)$, $\widecheck{\xi}_\phi$, $\widehat{J}_\phi$ denote the pushforwards by $f_\phi$ of $\Omega$, $\widecheck\eta = (\widecheck\lambda, \widecheck\omega)$, $\widecheck\xi$, $\widecheck{J}$, respectively.
\end{itemize} 

\subsubsection{Neck stretching}\label{subsubsec:neck_stretching} 

We repeat the construction in \S\ref{subsubsec:neck_stretching_general} with some extra conditions. Choose a constant $\delta_4 \in (0, \delta_3/2)$ and, for each $k > 4\delta_4^{-1}$, a smooth function $\phi_k: \bW \to (0 ,1]$ with the following properties:
\begin{itemize}
\item The function $1 - \phi_k$ is supported on $U_{\delta_4}$. 
\item For any $s \in (-\delta_3, \delta_3)$, $\phi_k$ is equal to a constant $\phi_k(s)$ on the hypersurface $H^{-1}(s)$.
\item For any $s \in (-\delta_3, \delta_3)$, we have $\phi_k(s) = \phi_k(-s)$.
\item The integral $L_{\phi_k} = \int_{-\delta_3}^0 \phi_k(t)^{-1} dt$ is at least $16k$. 
\item $\phi_k(s) = k^{-2}$ for every $s \in (-k^{-1}, k^{-1})$. 
\end{itemize}

We require the sequence $\{\phi_k\}$ to converge as $k \to \infty$ to some smooth function $\phi: \bW \to [0, 1]$. Fix any $k$. Define $\widecheck{J}_k$, $L_k$, $\Phi_k$, $\bW_k$, $f_k$, $\Omega_k$, $J_k$, $\lambda_k$, $\eta_k$, $R_k$, and $\xi_k$ as in \S\ref{subsubsec:geometric_convergence}. Recall the pairs $(\eta_k^a, J_k^a) \in \cD([-8,8] \times Y)$. The following convergence result is a consequence of Lemma~\ref{lem:geometric_convergence}. 

\begin{cor}\label{cor:geometric_convergence}
Fix any sequence $\{a_k\}$ such that $a_k \in (-k, k)$ for each $k$. Then, we have $(\eta_k^{a_k}, J_k^{a_k}) \to (\eta, J)$ in $\cD([-8, 8] \times Y)$. 
\end{cor}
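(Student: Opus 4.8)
The plan is to deduce the corollary from Lemma~\ref{lem:geometric_convergence}: I would check that the sequence $\{a_k\}$ satisfies the two hypotheses of that lemma, and then identify the limit pair it produces with $(\eta, J)$, regarded inside $\cD([-8,8]\times Y)$ via the canonical embedding $\cD(Y) \hookrightarrow \cD([-8,8]\times Y)$ of the Example in \S\ref{subsubsec:realized_hamiltonian}.

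Hypothesis (i) holds trivially: since $|a_k| < k$ and $L_k = L_{\phi_k} \geq 16k$, we have $[a_k - 8, a_k + 8] \subseteq [-L_k, L_k]$. For hypothesis (ii) I would invoke the extra normalization built into the sequence $\{\phi_k\}$ in \S\ref{subsubsec:neck_stretching}, namely $\phi_k \equiv k^{-2}$ on $(-k^{-1}, k^{-1})$. Combined with $\Phi_k(0) = 0$ (immediate from $\Phi_k(s) = \int_{-\delta_3}^s \phi_k(t)^{-1}\,dt - L_{\phi_k}$), this gives $\Phi_k(s) = k^2 s$ for $|s| < k^{-1}$, hence $\Phi_k^{-1}(a) = k^{-2} a$ for $|a| < k$. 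In particular $\Phi_k^{-1}(a_k) = k^{-2} a_k$, which has absolute value $< k^{-1}$ and hence converges to $0$. So hypothesis (ii) is satisfied with limiting value $0$, and Lemma~\ref{lem:geometric_convergence} gives convergence of $\{(\eta_k^{a_k}, J_k^{a_k})\}$ in $\cD([-8,8]\times Y)$ to the pair $(\bar\eta, \bar J)$ constructed in its proof from the $C^\infty$-limit $\bar s$ of the functions $s_k(a) := \Phi_k^{-1}(a_k + a)$.

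It then remains to verify that $(\bar\eta, \bar J)$ is the translation-invariant pair $(\eta, J)$. Here I would use that, by Step $1$ of the proof of Lemma~\ref{lem:geometric_convergence}, $\bar s$ is the solution of the ODE $\bar s'(a) = \phi(\bar s(a))$ with $\bar s(0) = \lim_k \Phi_k^{-1}(a_k) = 0$, where $\phi = \lim_k \phi_k$; since $\phi_k(0) = k^{-2}$ forces $\phi(0) = 0$, the constant function $0$ solves this initial value problem, and smoothness of $\phi$ gives uniqueness, so $\bar s \equiv 0$ on $[-8,8]$. Plugging $\bar s \equiv 0$ into the formulas of Steps $3$--$4$ of that proof, $\bar\eta$ restricts on each slice $\{a\}\times Y$ to $\eta^0$ and $\bar J$ to $J^0$; and by the constructions of \S\ref{subsubsec:realized_hamiltonian}--\S\ref{subsubsec:base_acs} these level-$0$ restrictions of the common data $(\widecheck\eta, \widecheck J)$ are precisely $\eta$ and $J$, which completes the proof. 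I do not expect a serious obstacle here: the corollary is essentially a specialization of Lemma~\ref{lem:geometric_convergence}, and the only point requiring attention is that its hypothesis (ii) concerns the single number $\Phi_k^{-1}(a_k)$, so one must remember to use the normalization $\phi_k \equiv k^{-2}$ near $0$ to control it; the identification of the limit via ODE uniqueness is then routine.
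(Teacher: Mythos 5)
Your proposal is correct and follows essentially the same route as the paper: verify the two hypotheses of Lemma~\ref{lem:geometric_convergence} using the normalization $\phi_k \equiv k^{-2}$ near $0$ (so $\Phi_k^{-1}(a_k) = k^{-2}a_k \to 0$), conclude $\bar s \equiv 0$, and read off the limiting pair $(\eta^0, J^0) = (\eta, J)$ from Steps $3$--$4$ of that lemma's proof. Your identification of $\bar s \equiv 0$ via ODE uniqueness is just a slightly more explicit version of the paper's one-line assertion, so there is nothing further to add.
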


\begin{proof}
Recall the functions $s_k$ from the proof of Lemma~\ref{lem:geometric_convergence}. The functions $\Phi_k$ restrict to diffeomorphisms from $(-k, k)$ to $(-k^{-1}, k^{-1})$. It follows that the sequence $\{s_k\}$ converges in $C^\infty$ to the function $\bar{s} \equiv 0$. Now apply Lemma~\ref{lem:geometric_convergence}. Step $3$ of its proof shows that the limiting pair is $(\eta, J)$. 
\end{proof}

\subsubsection{Stretched limit set}\label{subsubsec:stretched_limit_set} We introduce the stretched limit set of a sequence of $J_k$-holomorphic curves. Given $a_0 \in \bR$, let $\tau_{a_0}$ denote the shift map $(a, y) \mapsto (a - a_0, y)$ on $\bR \times Y$. 

For any sequence $\{k_j\}$, fix a closed, connected Riemann surface $C_j$ and a $J_{k_j}$-holomorphic curve $u_j: C_j \to \bW_{k_j}$. The \emph{stretched limit set} $\cX \subseteq \cK( (-1,1) \times Y) \times (-1, 1)$ is the collection of pairs $(\Xi, s)$ for which there exists a sequence $a_j \in (-k_j, k_j)$ such that: 
\begin{enumerate}[(\roman*)]
\item $k_j^{-1}a_j \to s$;
\item A subsequence of the slices
$$\tau_{a_j} \cdot \Big( u_j(C_j) \cap (a_j - 1, a_j + 1) \times Y\Big) \subseteq (-1, 1) \times Y$$
converge in $\cK( (-1,1) \times Y)$ to $\Xi$. 
\end{enumerate}

Write $\pi_{\cK}$ and $\pi_{\mathbb{R}}$ for the projections of $\cK( (-1, 1) \times Y) \times (-1, 1)$ onto its factors. The following lemma asserts that a subsequence $\{k_j\}$ can always be chosen such that the stretched limit set $\cX$ is well-connected. 

\begin{lem}\label{lem:lim_set_conn}
Fix a sequence $\{k_j\}$ and a sequence $u_j: C_j \to \bW_{k_j}$ of $J_{k_j}$-holomorphic curves. Then there exists a subsequence $\{u_{j_\ell}\}$ whose stretched limit set $\cX$ has the following property. For any closed, connected interval $\cJ \subseteq \Xi$ of positive length, the subset $\pi_{\bR}^{-1}(\cJ) \subseteq \cX$ is connected. 
\end{lem}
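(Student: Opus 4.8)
I would realize $\cX$ as a Hausdorff limit of \emph{arcs} that are monotone over the $\bR$-coordinate, and exploit this monotonicity to control preimages of subintervals. (I read the hypothesis on $\cJ$ as: $\cJ$ is a closed subinterval of $\pi_\bR(\cX)$ of positive length; the conclusion is that $\pi_\bR^{-1}(\cJ)\cap\cX$ is connected.) Nothing about holomorphicity of the $u_j$ is used: the argument is purely topological, about Hausdorff limits of the image sets $u_j(C_j)$.

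\textbf{Step 1: the sliding-window arcs.} For each $j$ and each $a$ with $[a-1,a+1]\subseteq[-L_{k_j},L_{k_j}]$, set $S_j(a):=\tau_a\cdot\big(u_j(C_j)\cap((a-1,a+1)\times Y)\big)\in\cK((-1,1)\times Y)$ and $F_j(a):=(S_j(a),\,a/k_j)\in\cK((-1,1)\times Y)\times[-1,1]$. The map $a\mapsto S_j(a)$ is continuous: it factors as the (continuous) translation action $a\mapsto\tau_a K_j$ on $\cK(\bR\times Y)$, where $K_j:=u_j(C_j)\cap([-L_{k_j},L_{k_j}]\times Y)$ is compact, followed by the map $A\mapsto A\cap((-1,1)\times Y)$, which is continuous from $\cK(\bR\times Y)$ to $\cK((-1,1)\times Y)$ precisely because one intersects with an \emph{open} set. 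Since $\pi_\bR\circ F_j(a)=a/k_j$ is strictly increasing, $F_j$ is injective, so $\bar\cZ_j:=F_j([-k_j,k_j])$ is an arc, and for any closed interval $I$ the set $\pi_\bR^{-1}(I)\cap\bar\cZ_j$ is the continuous image of a (possibly empty) subinterval of $[-k_j,k_j]$, hence connected.

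\textbf{Step 2: passing to the limit.} Since $\cK((-1,1)\times Y)\times[-1,1]$ is compact metrizable, so is $\cK$ of it; I would pass to a subsequence of $\{u_j\}$ so that $\bar\cZ_j$ converges in the Hausdorff topology to a set $\widehat\cX$. As a Hausdorff limit of nonempty connected sets, $\widehat\cX$ is compact, nonempty and connected, so $\pi_\bR(\widehat\cX)$ is a closed interval; moreover every point of $\widehat\cX$ is the limit of a sequence $F_j(a_j)\in\bar\cZ_j$ defined for \emph{all} $j$. Comparing with the definition of the stretched limit set gives $\cX=\widehat\cX\cap\big(\cK((-1,1)\times Y)\times(-1,1)\big)$ — the inclusion ``$\subseteq$'' because a realizing sequence for a point of $\cX$ lies in $\bar\cZ_j$, and ``$\supseteq$'' because a full-sequence realization $F_j(a_j)$ of a point of $\widehat\cX$ with $\bR$-coordinate in $(-1,1)$ has $a_j\in(-k_j,k_j)$ for all large $j$. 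Hence for a closed $\cJ\subseteq\pi_\bR(\cX)\subseteq(-1,1)$ one has $\pi_\bR^{-1}(\cJ)\cap\cX=\pi_\bR^{-1}(\cJ)\cap\widehat\cX$, and it suffices to show the latter is connected.

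\textbf{Step 3: connectedness of the slice.} Write $\cJ=[c,d]$ with $c<d$; then $(c,d)\subseteq\pi_\bR(\widehat\cX)$. Fix $z_0\in\widehat\cX$ with $s_0:=\pi_\bR(z_0)\in(c,d)$ and a full-sequence realization $F_j(a_j^0)\to z_0$; since $a_j^0/k_j\to s_0\in(c,d)$, eventually $F_j(a_j^0)\in\cW_j:=\pi_\bR^{-1}(\cJ)\cap\bar\cZ_j=F_j([ck_j,dk_j])$, so $z_0\in\liminf\cW_j$. The $\cW_j$ are connected (Step 1), so by (the proof of) Lemma~\ref{lem:accumulation_points} the set $\limsup\cW_j$ is a connected subset of $\widehat\cX\cap\pi_\bR^{-1}(\cJ)$ containing $z_0$ and every point of $\widehat\cX$ with $\bR$-coordinate in the open interval $(c,d)$. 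To absorb a point $z^*$ with, say, $\pi_\bR(z^*)=c$: take a full-sequence realization $F_j(a_j^*)\to z^*$, $a_j^*/k_j\to c$; if $a_j^*\geq ck_j$ infinitely often then $z^*\in\limsup\cW_j$, and otherwise $a_j^*<ck_j$ for all large $j$ and the enlarged arcs $\widetilde\cW_j:=F_j([a_j^*,dk_j])\supseteq\cW_j$ are connected, have $\pi_\bR$-image $[a_j^*/k_j,d]$ shrinking into $[c,d]$, and (for large $j$) contain both $F_j(a_j^0)$ and $F_j(a_j^*)$, so $\limsup\widetilde\cW_j$ is a connected subset of $\widehat\cX\cap\pi_\bR^{-1}(\cJ)$ containing both $z_0$ and $z^*$; the endpoint $\pi_\bR(z^*)=d$ is symmetric, enlarging above $dk_j$. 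Thus $\widehat\cX\cap\pi_\bR^{-1}(\cJ)$ is a union of connected sets all through $z_0$, hence connected.

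\textbf{Expected main obstacle.} The topological inputs — continuity of the sliding-window map, and connectedness of a $\limsup$ of connected sets with nonempty $\liminf$ — are routine. The delicate point is the behavior at the two endpoints of $\cJ$: the positions $a_j$ realizing a boundary point of $\widehat\cX$ may approach $ck_j$ (or $dk_j$) at an uncontrolled rate, so $F_j(a_j)$ need not eventually lie in $\cW_j$, and $\pi_\bR^{-1}(\cJ)\cap\widehat\cX$ can strictly contain $\limsup\cW_j$. The remedy above — enlarging $\cW_j$ to $\widetilde\cW_j$ and covering by connected pieces through a common anchor $z_0$ — handles this uniformly, so that the single Hausdorff-convergent subsequence chosen in Step 2 serves all $\cJ$ at once with no diagonalization over $\cJ$; verifying that no such diagonalization is needed is the point I would be most careful about when writing the details.
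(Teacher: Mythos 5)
Your proposal is correct and takes essentially the same route as the paper: both arguments rest on continuity of the sliding-slice map, connectedness of its images of parameter intervals, and the limit-of-connected-sets fact (Lemma~\ref{lem:accumulation_points}), assembled around an anchor point so that no diagonalization over $\cJ$ is needed. The only substantive difference is the subsequence-selection device — you extract Hausdorff convergence of the full graphs $F_j([-k_j,k_j])$ in $\cK\big(\cK((-1,1)\times Y)\times[-1,1]\big)$ and then slice the limit, whereas the paper arranges convergence of the slices at a countable dense set of levels $\bfs\subset(-1,1)$ and anchors each $\cJ$ at a point of $\bfs\cap\cJ$, connecting any other element via images of shrinking parameter intervals; your explicit endpoint patch plays the role of the paper's observation that the limits of its shrinking intervals $\cJ_\ell$ stay inside the closed interval $\cJ$.
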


\begin{proof}
For each $j$, define a map 
\begin{equation*}
\begin{gathered}
\cS_j: (-1,1) \to \cK( (-1,1) \times Y), \\
s \mapsto \tau_{k_j s} \cdot u_{k_j}^{-1}( (k_j s - 1, k_j s + 1) \times Y).
\end{gathered}
\end{equation*}

An elementary argument shows that $\cS_j$ is continuous; see \cite[Lemma $5.2$]{CGP24}. Choose a subsequence $\{u_{j_\ell}\}$ as follows. Fix a countable and dense subset $\bfs \subset (-1,1)$. For each $s \in \mathbf{s}$, we require the sequence of sets $\{\cS_{j_\ell}(s)\}$ to converge. 

Let $\cX$ denote the stretched limit set of the sequence $\{u_{j_\ell}\}$ and let $\cJ$ be any closed, connected interval of positive length. The interval $\cJ$ contains some $s \in \bfs$. Set $\cX' := \pi_{\bR}^{-1}(\cJ)$. The sequence $\{\cS_{j_\ell}(s)\}$ converges to some $\Xi$; it follows that $(\Xi, s) \in \cX'$. 

We claim that $\cX'$ is connected. We prove this claim by showing that for any element $(\Xi', s') \in \cX'$, there exists a connected subset $\cX'' \subseteq \cX'$ containing both $(\Xi, s)$ and $(\Xi', s')$. By definition of $\cX'$, there exists a further subsequence $\{j_\ell'\}$ and a sequence $\{a_\ell\}$ such that $(j_\ell')^{-1}a_\ell \to s'$ and $\cS_{j_\ell'}((j_\ell')^{-1}a_\ell) \to \Xi'$ in the Hausdorff topology. 

For each large $\ell$, define $\cJ_\ell \subseteq \cJ$ to be the closed interval with endpoints $(j_\ell')^{-1}a_\ell$ and $s$ for each $i$. Then, set $\cZ_\ell := \cS_{j_\ell'}(\cJ_\ell) \subseteq \cK( (-1,1) \times Y)$. Since $\cS_{j_\ell'}$ is continuous and $\cJ_\ell$ is connected, it follows that $\cZ_\ell$ is connected for each $i$. Now, define $\cX''$ to be the set of subsequential limit points of the sequence $\{\cZ_\ell\}$. The set $\cX''$ contains both $(\Xi, s)$ and $(\Xi', s')$, and it is connected by Lemma~\ref{lem:accumulation_points}. To finish the proof of the claim, we only need to verify that $\cX'' \subseteq \cX'$. The set of subsequential limit points of the sequence $\{\cJ_\ell\}$ is the closed interval $\cJ''$ with endpoints at $s$ and $s'$. Since $\cJ$ is closed and $\cJ_\ell \subseteq \cJ$ for each $\ell$, we have $\cJ'' \subseteq \cJ$, which implies that $\cX'' \subseteq \cX'$. 
\end{proof}

\subsection{Proof of Proposition~\ref{prop:main}}\label{subsec:almost_cyl_proof} Fix a finite set of points $\mathbf{p} \subset Y$ and an integer $n \geq 1$ as in the statement of the proposition. Let $m := \#\mathbf{p}$ denote the cardinality of $\mathbf{p}$. 

\subsubsection{Closed holomorphic curves with point constraints} 

For any integer $d \geq 1$ and any integer $k \geq 1$, define a collection of points $\mathbf{w}_{d,k} \subset \bW_k$ as follows. Let $\bfa_{d,k} := \{-ikd^{-2}\,|\,i\in\mathbb{Z} \cap [-d^2, d^2]\}$ denote a finite set of $2d^2 + 1$ equally spaced points in $[-k, k]$. Choose points $w_+ \in \mathbb{W}_+$ and $w_- \in \mathbb{W}_-$. Then, set $\mathbf{w}_{d,k} := (\bfa_{d,k} \times \bfp)\,\cup\,\{w_+, w_-\}$. The set $\bfa_{d,k} \times \mathbf{p}$ is the set of points $(a, p) \in [-k,k] \times Y \subset \bW_k$ such that $a \in \bfa_{d,k}$ and $p \in \mathbf{p}$. 

We construct holomorphic curves $u_{d,k}$ passing through $\mathbf{w}_{d,k}$ using the following well-known existence result. To state it, we define for any integer $e \geq 1$ a pair of integers $I(e) := (e^2 + 3e)/2$ and $g(e) := (e-1)(e-2)/2$. 

\begin{prop}\label{prop:closed_curves}
Let $A \in H_2(\bW; \bZ)$ denote the Poincar\'e dual of $\Omega$. Fix any integer $e \geq 1$ and any finite subset $\mathbf{w} \subset \bW$ of size at most $I(e)/2$. Then, for any $\Omega$-tame almost-complex structure $\bar{J}$, there exists a closed, connected Riemann surface $C$ and a $\bar{J}$-holomorphic curve $u: C \to \bW$ such that (i) $G_a(C) = g(e)$, (ii) $u_*[C] = eA$ and (iii) $\mathbf{w} \subset u(C)$. 
\end{prop}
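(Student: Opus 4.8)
\textbf{Proof proposal for Proposition~\ref{prop:closed_curves}.}

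The plan is to prove this by an automatic transversality plus Gromov compactness argument in the manifold $\bW = \mathbb{CP}^2$, following the standard template for producing $\bar{J}$-holomorphic curves through point constraints in a rational surface. First I would recall the relevant numerology. A connected curve in class $eA$ has symplectic area $e$ (times the normalization of $\Omega$), adjunction genus $g(e) = (e-1)(e-2)/2$ by the adjunction formula in $\mathbb{CP}^2$ since $A^2 = 1$ and $c_1(A) = 3$, and the moduli space of (possibly nodal, genus $g(e)$) degree $e$ curves through a generic configuration of $I(e) = (e^2+3e)/2 = \tfrac12(c_1(eA)\cdot eA + (eA)^2) - g(e) + \dots$ — more precisely, the expected dimension of the moduli space of genus $g(e)$ degree $e$ curves is $2(c_1(eA) + g(e) - 1) = 2(3e + (e-1)(e-2)/2 - 1)$, and imposing a point constraint cuts the dimension by $2$, so $I(e)$ points is exactly the number needed to rigidify. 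Since we only impose $\#\mathbf{w} \le I(e)/2$ constraints, the relevant moduli space has positive expected dimension, which gives us room.

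The core of the argument is: for a \emph{generic} $\Omega$-tame $\bar{J}$, the moduli space $\mathcal{M}(eA, g(e))$ of somewhere-injective $\bar{J}$-holomorphic curves in class $eA$ of genus $g(e)$ is a smooth manifold of the expected dimension, and by a Gromov--Witten-type nonvanishing statement (the class $eA$ with $g(e)$ genus and $I(e)/2$ point constraints has nonzero count, or more elementarily, for the standard integrable $J_0$ one exhibits explicit degree $e$ curves of genus $g(e)$ through any configuration of that many points — e.g. by a dimension count on the linear system $|\mathcal{O}(e)|$, which has projective dimension $I(e)$, so $I(e)/2$ point conditions are satisfiable and leave a positive-dimensional family), the moduli space through the constraints is nonempty for generic $\bar{J}$. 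Then for an arbitrary (non-generic) $\Omega$-tame $\bar{J}$, take a sequence $\bar{J}_\nu \to \bar{J}$ of generic almost-complex structures, take curves $u_\nu$ through $\mathbf{w}$, and apply Gromov compactness. The limit is a stable nodal $\bar{J}$-holomorphic curve of total class $eA$ and total arithmetic genus at most $g(e)$, still passing through $\mathbf{w}$; after discarding multiply-covered or ghost components and recombining, or simply by taking the domain to be the (connected, nodal) stable curve underlying the limit, one gets the desired $u: C \to \bW$ with $G_a(C) = g(e)$ — here one uses that in $\mathbb{CP}^2$ a degree $e$ curve passing through the constraints cannot bubble off area without the remaining component dropping degree, and the arithmetic genus of the limiting nodal configuration in class $eA$ is bounded by $g(e)$ with equality forced when we ask for a connected domain of that arithmetic genus. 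I should be slightly careful to phrase (i) as $G_a(C) = g(e)$ and note this is the maximal arithmetic genus of a connected curve in class $eA$, achieved by the limit.

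I expect the main obstacle to be the bookkeeping in the Gromov compactness step: ensuring the limiting nodal curve can be taken connected with arithmetic genus \emph{exactly} $g(e)$ rather than strictly less, and that it still genuinely passes through all of $\mathbf{w}$ (constraints cannot ``escape to infinity'' since $\bW$ is closed, but they could in principle land on nodes or on bubble components, which is fine for the statement as written since we only need $\mathbf{w} \subset u(C)$). The cleanest route is probably to not insist on a smooth domain at all — the proposition allows nodal Riemann surfaces — and to invoke a known existence statement for $\bar{J}$-holomorphic curves in $\mathbb{CP}^2$ in every degree through the maximal number of generic points (this is classical, going back to the theory of $J$-holomorphic curves in rational surfaces and Gromov's original work, and is exactly the kind of input the paper flags as available only in dimension four). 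If such a reference is cited, the proposition becomes essentially a restatement; the only genuine content to verify is the arithmetic relation $G_a = g(e)$ via adjunction and the constraint-count inequality $\#\mathbf{w} \le I(e)/2$, both of which are elementary.
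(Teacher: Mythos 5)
Your overall architecture --- produce curves through $\mathbf{w}$ for a sequence of generic tame structures $\bar{J}_\nu \to \bar{J}$ and pass to a Gromov limit, allowing nodal domains --- matches the paper's final step: Corollary~\ref{cor:gr_existence_improved} is exactly ``generic existence plus Gromov compactness,'' and since the paper's Riemann surfaces are allowed to be nodal, the limit of stable maps from genus-$g(e)$ domains automatically has a connected nodal domain of arithmetic genus $g(e)$, class $eA$, and passes through $\mathbf{w}$; the bookkeeping you flag as the main obstacle is in fact the easy part. The genuine gap is upstream, at the step you leave as a gesture: why does a genus-$g(e)$ curve in class $eA$ through the constraints exist for \emph{generic} tame $\bar{J}$ in the first place? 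Your ``more elementary'' argument --- exhibiting degree-$e$ curves for the integrable $J_0$ by a dimension count on the linear system $\mathcal{O}(e)$ --- only gives existence for that single $J_0$. Existence does not transport from $J_0$ to generic or nearby $J$ without a deformation-invariant count that is (a) nonzero and (b) actually computed by the $J_0$-curves, which in turn requires a regularity/transversality statement for the genus-$g(e)$ problem at $J_0$ (or an invariant defined for all tame $J$). This is precisely the nontrivial content of the proposition; it is not in Gromov's original work (which handles low-degree spheres), and for arbitrary degree, positive genus, and arbitrary tame $J$ it needs either genus-$g$ Gromov--Witten/Severi-degree nonvanishing plus transversality, or Taubes' theory.

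The paper supplies this input via Taubes' Gromov invariant: it checks $eA \in \cP$ and $\operatorname{Gr}(\mathbb{CP}^2, eA) \neq 0$ (computed for an integrable structure), and then uses Taubes' structure theorem (Lemma~\ref{lem:current_restrictions}) to upgrade the holomorphic cycles counted by $\operatorname{Gr}$ to a single embedded curve of genus exactly $g(e)$ through the constraints for generic data (Proposition~\ref{prop:gr_existence_improved}), before running the same compactness step you describe. If you replace your elementary step by a citation to the nonvanishing of the maximal-genus GW count of $\mathbb{CP}^2$ (the Severi degree, which equals $1$ for genus $g(e)$ and $I(e)$ generic points) together with the relevant regularity statement, your route would close and would be a legitimate $\mathbb{CP}^2$-specific alternative to the Taubes machinery; note, however, that the paper's choice also yields the generalization to other closed symplectic $4$-manifolds with $b^+ = 1$ (Proposition~\ref{prop:closed_curves_general}), which the classical enumerative input would not.
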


\begin{proof}
The proposition is proved in \S\ref{subsec:closed_curves_proof}. 
\end{proof}

Observe that $\#\mathbf{w}_{d,k} = 2(md^2 + m + 2) \leq I(4md)/2$. Recall also that $\widecheck{J}_k$ is $\Omega_k$-tame for each $k$ by Lemma~\ref{lem:j_tame}. Apply Proposition~\ref{prop:closed_curves} with $e = 4md$, $\mathbf{w} = f_k^{-1}(\mathbf{w}_{d,k})$, and $\bar{J} = \widecheck{J}_k$. Composing the resulting $\widecheck{J}_k$-holomorphic curve with $f_k$, we deduce the following corollary.

\begin{cor}\label{cor:closed_curves}
Fix any $d \geq 1$ and any large $k$. Let $A_k \in H_2(\bW_k; \bZ)$ denote the Poincar\'e dual of $[\Omega_k]$. Then there exists a closed, connected Riemann surface $C_{d,k}$ and a $J_k$-holomorphic curve $u_{d,k}: C_{d,k} \to \bW_k$ such that (i) $G_a(C) = g(4md)$, (ii) $(u_{d,k})_*[C_{d,k}] = 4md A_k$ and (iii) $\mathbf{w}_{d,k} \subset u_{d,k}(C_{d,k})$. 
\end{cor}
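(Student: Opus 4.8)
The plan is to obtain Corollary~\ref{cor:closed_curves} from Proposition~\ref{prop:closed_curves} by pulling the constraint points back through the diffeomorphism $f_k$, applying the existence result on $(\bW,\Omega)$, and pushing the resulting curve forward.

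First I would check the elementary numerics. By construction $\bfa_{d,k}$ has $2d^2+1$ elements and $\#\bfp = m$, so $\#\mathbf{w}_{d,k} = m(2d^2+1)+2 = 2md^2+m+2$. Since $I(e) = (e^2+3e)/2$, one computes $I(4md)/2 = 4m^2d^2 + 3md$, and the bound $2md^2 + m + 2 \le 4m^2d^2 + 3md$ holds for all integers $m,d \ge 1$ (for instance $4m^2d^2 + 3md - 2md^2 - m - 2 = 2md^2(2m-1) + m(3d-1) - 2 \ge 2 + 2 - 2 > 0$), hence $\#\big(f_k^{-1}(\mathbf{w}_{d,k})\big) \le I(4md)/2$. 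Next, recall from \S\ref{subsubsec:deformed_acs} that $\widecheck{J}_k = \widecheck{J}_{\phi_k}$ and that $1-\phi_k$ is supported in $U_{\delta_4}\subseteq U_{\delta_3}$, so Lemma~\ref{lem:j_tame} applies and $\widecheck{J}_k$ is $\Omega$-tame on $\bW$. Finally, with the normalization $B=1$ fixed in \S\ref{subsubsec:compactification}, $[\Omega]\in H^2(\bW;\bZ)$ is the hyperplane class, so its Poincar\'e dual is the line class $A$ appearing in Proposition~\ref{prop:closed_curves}; I would double-check this normalization, since it is exactly what pins down the homology class of the curve.

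Then I would apply Proposition~\ref{prop:closed_curves} with $e := 4md$, with almost-complex structure $\bar J := \widecheck{J}_k$, and with constraint set $\mathbf{w} := f_k^{-1}(\mathbf{w}_{d,k})$. This produces a closed, connected Riemann surface $C_{d,k}$ and a $\widecheck{J}_k$-holomorphic curve $v\colon C_{d,k}\to\bW$ with $G_a(C_{d,k}) = g(4md)$, $v_*[C_{d,k}] = 4md\,A$, and $f_k^{-1}(\mathbf{w}_{d,k})\subset v(C_{d,k})$. Setting $u_{d,k} := f_k\circ v$ finishes the argument: $f_k$ is a diffeomorphism with $(f_k)_*\widecheck{J}_k = J_k$ by the construction in \S\ref{subsubsec:stretched_manifolds}, so $u_{d,k}$ is $J_k$-holomorphic on the same domain, whence $G_a(C_{d,k}) = g(4md)$; moreover $(u_{d,k})_*[C_{d,k}] = (f_k)_*(4md\,A) = 4md\,A_k$, because $(f_k)_*$ carries the Poincar\'e dual of $[\Omega]$ to the Poincar\'e dual of $[\Omega_k] = (f_k)_*[\Omega]$; and $\mathbf{w}_{d,k} = f_k\big(f_k^{-1}(\mathbf{w}_{d,k})\big)\subset f_k(v(C_{d,k})) = u_{d,k}(C_{d,k})$. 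This establishes (i)--(iii).

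There is no analytic obstacle here: all the content is in Proposition~\ref{prop:closed_curves}, and the corollary is merely its transport to the stretched manifold $\bW_k$. The only two points requiring care are the constraint-count inequality above and the normalization identifying $A$ (equivalently $A_k$) with the line class of $\mathbb{CP}^2$.
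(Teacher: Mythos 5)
Your proposal is correct and follows essentially the same route as the paper: verify $\#\mathbf{w}_{d,k}\leq I(4md)/2$, invoke Lemma~\ref{lem:j_tame} for tameness of $\widecheck{J}_k$, apply Proposition~\ref{prop:closed_curves} with $e=4md$, $\bar J=\widecheck J_k$, $\mathbf{w}=f_k^{-1}(\mathbf{w}_{d,k})$, and push forward by the diffeomorphism $f_k$. The extra details you supply (the exact point count, the homology-class transport under $(f_k)_*$, and the $B=1$ normalization identifying $A$ with the line class) are consistent with the paper's setup.
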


\subsubsection{Construction of stretched limit sets} For each fixed $d$, let $\{u_{d,k}\}$ denote the sequence of curves from Corollary~\ref{cor:closed_curves}. For any subsequence $\bfk = \{k_j\}$, let $\cX_d(\bfk)$ denote the stretched limit set of the sequence $\{u_{d,k_j}\}$.

\subsubsection{Main technical proposition} The following proposition concerns the structure of $\cX_d(\bfk)$ when $d$ is large.  

\begin{prop}\label{prop:stretched_limit}
Fix any integer $n \geq 1$. Then there exists a large integer $d \gg 1$, a sequence $\bfk$, and a closed, connected interval $\cJ \subseteq (-1,1)$ such that $\pi_{\bR}^{-1}(\cJ) \subseteq \cX_d(\bfk)$ has the following properties:
\begin{enumerate}[(a)]
\item There exists $(\Xi, s) \in \pi_{\bR}^{-1}(\cJ)$ such that $\{0\} \times \mathbf{p} \subset \Xi$;
\item There exists $(\Xi, s) \in \pi_{\bR}^{-1}(\cJ)$ such that $\Xi = (-1, 1) \times \Lambda$, where $\Lambda \subseteq Y$ is a proper, compact, $R_\eta$-invariant set. 
\item For each $(\Xi, s) \in \pi_{\bR}^{-1}(\cJ)$, the set $\Xi$ is a $1/n$-almost cylinder. 
\item $\pi_{\bR}^{-1}(\cJ)$ is connected. 
\end{enumerate}
\end{prop}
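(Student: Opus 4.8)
The plan is to extract structure from the high-degree stretched curves $u_{d,k}$ of Corollary~\ref{cor:closed_curves} by combining three inputs: (i) the area/action estimates of \S\ref{subsec:estimates}, (ii) the point-constraint data built into $\mathbf{w}_{d,k}$, and (iii) the connectedness mechanism of Lemma~\ref{lem:lim_set_conn}. First I would fix $n$ and choose $d$ large (to be pinned down along the way), and then run the following argument uniformly in $k$, using Corollary~\ref{cor:geometric_convergence} and Remark~\ref{rem:stable_constants} to replace all stable constants on the slabs $[a_k-8,a_k+8]\times Y$ with $k$-independent ones converging to the constants for $(\eta,J)$.

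\emph{Step 1 (action budget and quantization).} The curve $u_{d,k}$ has $\Omega_k$-area equal to its degree $4md$ against $[\Omega_k]$, hence its total action $\int u_{d,k}^*\widehat\omega_k$ over the neck is bounded by a constant $\Theta$ depending only on $m,d$ (the parts outside $U_{\delta_4}$ carry a fixed amount of area independent of $k$; inside, the action is $\le$ symplectic area which is controlled). By Proposition~\ref{prop:fh_quantization} applied on the height-$16$ slabs, any irreducible component of $u_{d,k}$ that has an interior local max or min of the $a$-coordinate inside a slab contributes at least $\hbar>0$ of action. Hence the number of neck-slabs of the form $[a-8,a+8]\times Y$, with $a\in(-k+8,k-8)$, on which $u_{d,k}$ has such a turning point is at most $\Theta/\hbar =: N(m,d)$, a bound independent of $k$. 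On every other slab, every component of $u_{d,k}$ restricted to the slab is ``vertically monotone'' — it crosses the slab — so Proposition~\ref{prop:fh_area_bound} gives an a priori area bound on the unit slice $\tau_{a}\cdot(u_{d,k}(C_{d,k})\cap(a-1,a+1)\times Y)$ in terms of $\Theta$ and the $\widehat\lambda$-integrals of the slice, which are in turn controlled by \eqref{eq:fh_area_bound1}. Combining with Proposition~\ref{prop:local_area_bound} and the genus bound $G_a(C_{d,k})=g(4md)$ gives a $k$-independent local area bound on such slices; this is what forces Hausdorff limits to be ``$\delta$-almost cylinders'' with $\delta=\delta(m,d)\to0$ as $d\to\infty$ (the key monotonicity/quantitative-nearly-invariant estimate, exactly as in the estimates imported from \cite{FH23,CGP24}). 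So: choosing $d$ large enough that $\delta(m,d)<1/n$, every $(\Xi,s)\in\cX_d(\bfk)$ arising from an $a_j$ that stays uniformly away (by more than $\sim 8$ in the $a_k$-coordinate, i.e. a bounded set of exceptional ``bad'' levels of size $O(N(m,d))$, which is $o(k)$) from the finitely many turning levels satisfies property (c).

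\emph{Step 2 (producing the constrained element and the invariant element, then interpolating).} Pick the index $i=0$ so that $(0,p)\in\mathbf{w}_{d,k}$ for every $p\in\mathbf{p}$: the curve $u_{d,k}$ passes through $\{0\}\times\mathbf{p}\subset\{0\}\times Y\subset\bW_k$, so the slice $\cS_{k}(0)=\tau_{0}\cdot u_{d,k}^{-1}((-1,1)\times Y)$ contains $\{0\}\times\mathbf{p}$; passing to a convergent subsequence $\bfk$ (as in Lemma~\ref{lem:lim_set_conn}, enlarging the dense set $\bfs$ to contain $0$) gives $(\Xi_0,0)\in\cX_d(\bfk)$ with $\{0\}\times\mathbf{p}\subset\Xi_0$, which is property (a). For property (b): the curve $u_{d,k}$ must cross the neck from $\mathbb{W}_+$ (where $w_+$ lives) to $\mathbb{W}_-$, and by the action budget it cannot ``waste'' more than $N(m,d)$ slabs on turning; so for $k$ large there is a sub-neck of length $\gg 1$ on which $u_{d,k}$ is vertically monotone and of controlled area — a standard diagonal/compactness argument (as in the Fish–Hofer feral-curve extraction adapted here, cf. Proposition~\ref{prop:intersection_argument}) produces, in the limit, some slab-center sequence $a_j$ with $k_j^{-1}a_j\to s_*$ and a limiting slice $\Xi=(-1,1)\times\Lambda$ with $\Lambda$ compact $R_\eta=X_H$-invariant; properness of $\Lambda$ (i.e. $\Lambda\ne Y$) follows because the total area of $u_{d,k}$ over any fixed-length sub-neck is bounded, so $\Lambda$ cannot fill $Y$ — here one invokes the intersection-theory/degree argument which is the only place four-dimensionality is essential. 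Finally, property (d): apply Lemma~\ref{lem:lim_set_conn} to the chosen subsequence $\bfk$ to get a closed interval $\cJ\subseteq(-1,1)$ of positive length, containing both $0$ and $s_*$ (shrink/choose $\cJ$ to be the closed interval with those endpoints, intersected with where the ``bad level'' estimate of Step~1 is valid — since bad levels are $o(k)$ they do not obstruct choosing such a $\cJ$), such that $\pi_{\bR}^{-1}(\cJ)\subseteq\cX_d(\bfk)$ is connected; and by construction $\pi_{\bR}^{-1}(\cJ)$ contains the elements from (a) and (b) and, by Step~1, consists entirely of $1/n$-almost cylinders, giving (c).

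\emph{Main obstacle.} The delicate point is Step~1 — turning the global action bound into a \emph{uniform-in-$k$} local area bound on unit slices and thereby into the quantitative almost-cylinder estimate with $\delta(m,d)\to 0$. This requires carefully combining Proposition~\ref{prop:fh_area_bound}, Proposition~\ref{prop:fh_quantization}, and Proposition~\ref{prop:local_area_bound} while tracking how the genus $g(4md)$ enters (it grows with $d$, so one needs $\delta$ to decay \emph{faster} than the genus grows — this is exactly where the $\sim d^2$ index versus $\sim d$ area ratio of $\mathbb{CP}^2$ is used), and controlling the geometry on the slabs via Corollary~\ref{cor:geometric_convergence}. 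Everything else — the two limiting/diagonal extractions, the connectedness via Lemma~\ref{lem:lim_set_conn} and Lemma~\ref{lem:accumulation_points}, and properness of $\Lambda$ via intersection theory — is by now standard given the estimates, but the bookkeeping of the $O(d)$-many bad levels against the available $d^2$ constrained levels must be arranged so that the good interval $\cJ$ genuinely exists.
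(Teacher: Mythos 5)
The central gap is in your Step 1: ``vertically monotone across the slab with a uniform local area bound'' does not make slices nearly invariant, so your claimed mechanism for property (c) does not work. The almost-cylinder property is an \emph{action} statement, not an area statement: Proposition~\ref{prop:local_area_bound} itself carries the low-action hypothesis $\int_C u^*\widehat\omega\le\epsilon_2$, and the actual mechanism in the paper (Proposition~\ref{prop:low_action_almost_cyl}, fed into Proposition~\ref{prop:almost_cylinders}) requires two things at the level in question: the action of the capped slice must lie below a stable constant $\epsilon_5(\eta,J,n,b_*)$, and every irreducible component of the slice must have Euler characteristic bounded below by a \emph{$d$-independent} constant $-b_*$. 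Neither is delivered by your count of turning-point slabs at the fixed threshold $\hbar$. In the paper the action smallness comes from pigeonholing the total action $4md$ over rescaled levels (Lemma~\ref{lem:pigeonhole_omega}): outside a set of at most $\sim d^2/2$ levels the local action is eventually $\le 64md^{-1}$, which drops below $\epsilon_5$ once $d\ge d_n$; the local topology bound comes from a separate pigeonhole (Lemma~\ref{lem:pigeonhole_chi}) playing the quantization bound (only $O(d)$ non-short capping components) against the genus $\sim d^2$. Your substitute, the global genus bound $g(4md)$, is useless here: the constant in Proposition~\ref{prop:local_area_bound} scales like $\chi(C)^2\sim d^4$, and in any case a uniform area bound does not prevent the Hausdorff limit from being far from invariant, or even from being all of $(-1,1)\times Y$ (which is exactly why properness in (b) needs the sphere-intersection argument rather than an area bound). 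In short, your proposal contains no mechanism forcing $\delta(m,d)\to 0$.

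The assembly of (a)--(d) is also structurally off. You realize (a) at the level $s=0$ and (b) at a separately extracted level $s_*$, and then take $\cJ$ to be the interval joining them; but such a long interval will in general contain action- or topology-bad levels, so (c) fails on $\pi_{\bR}^{-1}(\cJ)$, and your suggested fix (intersecting $\cJ$ with the good region) destroys connectedness and discards the endpoints carrying (a) and (b). The paper instead makes everything happen inside one \emph{short} good interval: since the bad set has at most $d^2/2$ points while the constraint levels $id^{-2}$ are $2d^2+1$ equally spaced, there is a closed interval $\cJ$ of length $2d^{-2}$ avoiding all bad levels which by pigeonhole contains a constraint level (giving (a)); since $\bfs_\omega(d;\bfk)$ is countable (Lemma~\ref{lem:pigeonhole_omega2}) while $\cJ$ is uncountable, $\cJ$ also contains a level where Proposition~\ref{prop:intersection_argument} applies (giving (b)); (c) then holds on all of $\pi_{\bR}^{-1}(\cJ)$ because $\cJ$ avoids the bad sets, and (d) is Lemma~\ref{lem:lim_set_conn}. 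Note also that the relevant bookkeeping compares the number of bad levels in the rescaled coordinate $s=a/k$ with the $d^{-2}$ spacing of the constraint levels; comparing against $k$ (``$o(k)$'') is the wrong normalization, and the bad set in the correct argument has size $O(d^2)$, not $O(d)$.
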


\subsubsection{Proof of Proposition~\ref{prop:main}}

We defer the proof of Proposition~\ref{prop:stretched_limit} to \S\ref{subsec:stretched_limit_proof}. We first use Proposition~\ref{prop:stretched_limit} to prove Proposition~\ref{prop:main}. 

\begin{proof}
Now, let $d$, $\bfk$, and $\cJ$ be as in Proposition~\ref{prop:stretched_limit}. Set $\mathcal{W} := \pi^{-1}(\cJ) \subseteq \cX_d(\bfk)$. The set $\mathcal{W}$ is connected by Proposition~\ref{prop:stretched_limit}(d). Define $\cZ_{\mathbf{p}, n} \subseteq \cK( (-1,1)\times Y)$ to be equal to $\pi_{\cK}(\mathcal{W})$. Since $\cW$ is connected and $\pi_{\cK}$ is continuous, $\cZ_{\mathbf{p}, n}$ is connected. Proposition~\ref{prop:main}(a--c) each follow from Proposition~\ref{prop:stretched_limit}(a--c) since $R_\eta = X_H$.
\end{proof}

\subsection{Proof of Proposition~\ref{prop:stretched_limit}}\label{subsec:stretched_limit_proof}

\subsubsection{Capped slices and accumulation sets}\label{subsubsec:capped_accumulation} We introduce several definitions and notations to prepare for the proof of Proposition~\ref{prop:stretched_limit}.

Let $\cR \subseteq [-1,1]$ denote the set of levels $t$ such that, for each $d$ and $k$, we have (i) $kt$ is a regular value of $a \circ u_{d,k}$ and (ii) the subset $(a \circ u_{d,k})^{-1}(kt)$ does not contain any nodal points. Now, given any closed interval $\mathcal{I} \subseteq (-1,1)$ with endpoints in $\cR$, we associate to it a compact sub-surface $C^{\mathcal{I}}_{d,k} \subseteq C_{d,k}$, called a \emph{capped slice}. Write $\Sigma := (a \circ u_{d,k})^{-1}(k \cdot \mathcal{I})$. The set $\Sigma$ is non-empty, because by Corollary~\ref{cor:closed_curves}, $u_{d,k}(C_{d,k})$ is connected and passes through points in both components of $\bW_k\,\setminus\,[-L_k, L_k] \times Y$. Moreover, $\Sigma$ is a smooth, compact surface because the endpoints of $\cI$ lie in $\cR$. Call an irreducible component of $Z$ of $C_{d,k}\,\setminus\,\operatorname{Int}(\Sigma)$ \emph{short} if (i) $u_{d,k}(Z) \subset (-k,k) \times Y$ and (ii) $\sup_{\zeta \in Z} (a \circ u_{d,k})(\zeta) - \inf_{\zeta \in Z} (a \circ u_{d,k})(\zeta) \leq 2$. 

Let $\Delta \subseteq C_{d,k}\,\setminus\,\operatorname{Int}(\Sigma)$ denote the union of all short connected components. Then, set $C^{\mathcal{I}}_{d,k} := \Sigma \cup \Delta$. We track the level sets where the action and topology of the curves $u_{d,k}$ accumulate as $d \to \infty$. Given $d \geq 1$, a real number $\epsilon > 0$, and a sequence $\bfk = \{k_j\}$, define a subset $\mathbf{r}_\omega(d, \epsilon; \bfk) \subset (-1,1)$ as follows. We say $s \in \mathbf{r}_\omega(d, \epsilon; \bfk)$ if and only if there exists a sequence of intervals $\mathcal{L}_j$ satisfying the following properties:
\begin{enumerate}[(\roman*)]
\item The sequence $\{\cL_j\}$ converges to $\{s\}$ in $\cK(\bR)$; 
\item We have the action bound $\limsup_{j \to \infty} \int_{C^{\mathcal{L}_j}_{d,k_j}} u_{d,k_j}^*\omega_{k_j} > \epsilon$.
\end{enumerate}

The subset $\bfs_\omega(d, \epsilon; \bfk)$ tracks the accumulation of action.  Define $\bfs_\omega(d; \bfk) := \bigcup_{\epsilon > 0} \bfs_\omega(d, \epsilon; \bfk)$. 

Next, given $d \geq 1$, an integer $b \geq 1$, and a sequence $\bfk = \{k_j\}$, define a subset $\mathbf{r}_\chi(d, b; \bfk)$ as follows. We say $s \in \mathbf{r}_\chi(d, b; \bfk)$ if it admits a sequence of intervals $\mathcal{L}_j$ satisfying the following properties:
\begin{enumerate}[(\roman*)]
\item The sequence $\{\cL_j\}$ converges to $\{s\} \in \cK(\bR)$; 
\item There exists a sequence of irreducible components $Z_{d,j} \subseteq C^{\mathcal{L}_{j}}_{d,k_j}$ such that 
$$\limsup_{j \to \infty} \chi(Z_{d,j}) < -b.$$
\end{enumerate}

Note that, for any sequence $\bfk$, any subsequence $\bfk'$, any $d$, $b$, and $\epsilon$, we have
\begin{equation}\label{eq:inclusion} \bfs_\omega(d, \epsilon;\bfk') \subseteq \bfs_\omega(d, \epsilon;bfk),\quad \bfs_\omega(d; \bfk') \subseteq \bfs_\omega(d; \bfk),\quad \bfs_\chi(d, b;\bfk') \subseteq \bfs_\chi(d,b;\bfk).\end{equation} 

We now prove bounds on the size of these subsets. The global area and topology bounds from Corollary~\ref{cor:closed_curves} are an essential ingredient in our arguments. 

\begin{lem}\label{lem:pigeonhole_omega}
For any $d \geq 1$, $\epsilon > 0$, and sequence $\bfk$, there exists a subsequence $\bfk'$ such that
\begin{equation}\label{eq:action_bounds} \#\mathbf{r}_\omega(d, \epsilon; \bfk') \leq 8m\epsilon^{-1}d.\end{equation}
\end{lem}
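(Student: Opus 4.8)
The plan is to exploit the global action bound from Corollary~\ref{cor:closed_curves}: since $(u_{d,k})_*[C_{d,k}] = 4md A_k$ and $[\omega_k]$ is (essentially) the restriction of $[\Omega_k]$, which is Poincaré dual to $A_k$, the total $\omega_k$-action of $u_{d,k}$ over the whole neck is bounded by a constant of size $O(md)$ — call it $c_0 m d$ — uniformly in $k$. The key point is that the capped slices $C^{\mathcal I}_{d,k}$ over \emph{disjoint} intervals $\mathcal I$ have essentially disjoint images in $C_{d,k}$ (the capping regions $\Delta$ consist of short components with height $\le 2$, so they can overlap slices at most boundedly), and $\int u_{d,k}^*\omega_k \ge 0$ on every piece by the adaptedness of $J_k$ (Lemma~\ref{lem:zero_action}). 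Hence the total action dominates a bounded-overlap sum of the slice actions. So if $\mathbf r_\omega(d,\epsilon;\bfk)$ had more than $N := 8m\epsilon^{-1}d$ points, we could find $N$ disjoint small intervals around them each carrying limiting action $> \epsilon$, forcing total action $> \epsilon \cdot N / (\text{overlap constant}) > c_0 m d$ once the overlap constant is absorbed into the numerology — a contradiction.

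The steps, in order, would be: (1) Fix $d$, $\epsilon$, $\bfk$. For each candidate $s \in \mathbf r_\omega(d,\epsilon;\bfk)$ there is, by definition, a sequence of intervals $\mathcal L_j^{(s)} \to \{s\}$ with $\limsup_j \int_{C^{\mathcal L_j^{(s)}}_{d,k_j}} u_{d,k_j}^*\omega_{k_j} > \epsilon$. (2) Show $\mathbf r_\omega(d,\epsilon;\bfk)$ is at most countable — actually I would first argue it is finite directly by the following pigeonhole: suppose $s_1 < \dots < s_P$ are distinct elements; choose $\rho > 0$ smaller than half the minimal gap, and for $j$ large enough the intervals $\mathcal L_j^{(s_i)}$, $i = 1,\dots,P$, are pairwise disjoint and contained in $(s_i - \rho, s_i + \rho)$. (3) Pass to a subsequence $\bfk'$ (diagonal over the finitely many $s_i$, or just over the $P$ we are testing) so that for each $i$ the liminf of the slice actions along $\bfk'$ is still $> \epsilon$ (here one uses that $\limsup$ along $\bfk$ being $> \epsilon$ lets us extract a subsequence on which the quantity exceeds $\epsilon$; do this for $i=1$, then refine for $i=2$, etc.). (4) On this subsequence, for large $j$ the slices $C^{\mathcal L_j^{(s_i)}}_{d,k_j}$ are essentially disjoint subsurfaces, so $\sum_{i=1}^P \int_{C^{\mathcal L_j^{(s_i)}}_{d,k_j}} u_{d,k_j}^*\omega_{k_j} \le C_{\mathrm{ov}} \int_{C_{d,k_j}} u_{d,k_j}^*\omega_{k_j} \le C_{\mathrm{ov}} c_0 m d$, where $C_{\mathrm{ov}}$ is the bounded-overlap constant (coming from the short capping components). (5) The left side is $> P\epsilon$ for large $j$, so $P < C_{\mathrm{ov}} c_0 m \epsilon^{-1} d$; choosing constants so that $C_{\mathrm{ov}} c_0 \le 8$ gives the claim. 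Relabel the extracted subsequence as $\bfk'$.

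The main obstacle I anticipate is step (4): controlling the overlap between capped slices over disjoint intervals and relating $\sum_i \int_{C^{\mathcal L_j^{(s_i)}}} u^*\omega$ to the \emph{global} action $\int_{C_{d,k_j}} u^*\omega$. One must check that a given point of $C_{d,k}$ lies in $C^{\mathcal I}_{d,k}$ for only boundedly many disjoint $\mathcal I$'s among $\{\mathcal L_j^{(s_i)}\}$: a point $\zeta$ with $(a\circ u)(\zeta) = t_0$ lies in $\Sigma^{\mathcal I} = (a\circ u)^{-1}(k\mathcal I)$ iff $t_0 \in \mathcal I$, which happens for at most one disjoint $\mathcal I$; it lies in the capping part $\Delta^{\mathcal I}$ only if it belongs to a short component (height $\le 2$ in the $a$-coordinate, i.e.\ $a$-height $\le 2$, which is $\le 2k^{-1}$ in the rescaled $t$-coordinate) abutting $k\mathcal I$, so for $j$ large — once the $\mathcal L_j^{(s_i)}$ are separated by more than $2k_j^{-1} \to$ a fixed gap — each short component attaches to at most one of them. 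Hence $C_{\mathrm{ov}}$ can be taken to be an absolute constant (one can even get $2$ or so), and the cap pieces contribute nonnegative action (Lemma~\ref{lem:zero_action}), so they only help. Once this disjointness-up-to-bounded-overlap is nailed down, together with the uniform global action bound $\int_{C_{d,k}} u_{d,k}^*\omega_k = O(md)$ that reads off from $(u_{d,k})_*[C_{d,k}] = 4mdA_k$, the counting is immediate and the constant $8m\epsilon^{-1}d$ follows after fixing the numerology.
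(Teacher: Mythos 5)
Your counting mechanism (nonnegativity of the action, disjointness of capped slices over well-separated rescaled intervals, and the global bound $\int_{C_{d,k}}u_{d,k}^*\Omega_k = 4md$ combined with Lemma~\ref{lem:j_tame}) is the same as the paper's, and your bounded-overlap discussion in step (4) is fine — the paper even gets exact disjointness for large $j$ because the intervals $k_j\cdot\cL_{j,i}$ separate at rate $k_j$. The genuine gap is in step (3), and it is exactly the point the lemma's phrasing is designed around. Membership $s\in\mathbf{r}_\omega(d,\epsilon;\bfk)$ is a $\limsup$ condition along the \emph{full} sequence $\bfk$; it is not preserved under passing to subsequences (cf.\ the inclusions \eqref{eq:inclusion}, which only go one way). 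So after you extract a subsequence on which the slice actions at $s_1$ exceed $\epsilon$ termwise, there is no reason the $\limsup$ at $s_2$ along that subsequence is still $>\epsilon$ — the indices witnessing $s_2$ may all have been discarded — and the iteration ``refine for $i=2$, etc.'' breaks down. In fact the stronger statement you aim for in step (2), finiteness of $\mathbf{r}_\omega(d,\epsilon;\bfk)$ for the original sequence with the bound $8m\epsilon^{-1}d$, is not a consequence of the global action bound: distinct points can each accumulate action $>\epsilon$ along pairwise disjoint infinite subsets of indices, so the accumulation set of the original sequence can a priori be much larger than the per-index capacity $\sim 8m\epsilon^{-1}d$, even infinite. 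This is precisely why the conclusion is only asserted for \emph{some} subsequence $\bfk'$.

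The paper's proof restructures the argument to avoid this: it assumes for contradiction that $\#\mathbf{r}_\omega(d,\epsilon;\bfk')\geq N$ for \emph{every} subsequence $\bfk'$, and then runs an induction in which, at each stage, the next point is drawn from the accumulation set of the \emph{current} subsequence (which is still large by the contradiction hypothesis applied to that subsequence), and one uses that a point whose slices carry action $\geq\epsilon$ termwise retains this property under all further refinements. After at most $N$ refinements one has $N$ distinct points carrying action $\geq\epsilon$ simultaneously along a single subsequence, and your steps (4)–(5) then give the contradiction $N\epsilon/2 \leq 4md < N\epsilon/2$. If you reorganize your extraction step along these lines, the rest of your argument goes through essentially verbatim.
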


\begin{proof}
Define $N := \lfloor 8m\epsilon^{-1}d \rfloor + 1$. Assume for the sake of contradiction that for any subsequence $\bfk'$, we have $\#\bfs_\omega(d, \epsilon; \bfk') \geq N$. The proof will take $3$ steps.

\noindent\textbf{Step $1$:} This step proves that, given our assumptions, there exists $\bfk' = \{k_j\} \subseteq \bfk$ and a finite subset $\{s_1, \ldots, s_N\}$ with the following property. For each $i$, there exists a sequence of intervals $\cL_{j,i}$ such that (i) $\cL_{j,i} \to s_i$ and (ii) for each $j$, the surfaces $C_{d,j,i} := C^{\cL_{j,i}}_{d,k_j}$ have action $\int_{C_{d,j,i}} u_{d,k_j}^*\omega \geq \epsilon$. 

The proof follows from repeated application of the following inductive step. For any subsequence $\bfk'$, call a point $s \in (-1, 1)$ satisfying (i) and (ii) a $(\bfk', \epsilon)$-point. Note that if $s$ is a $(\bfk', \epsilon)$-point, it is a $(\bfk'', \epsilon)$-point for any subsequence $\bfk'' \subseteq \bfk'$. Suppose a subsequence $\bfk'$ has exactly $N'$ $(\bfk', \epsilon)$-points $\{s_1, \ldots, s_{N'}\}$. Then, if $N' < N$, we claim that there exists a subsequence $\bfk'' \subseteq \bfk'$ with at least $N' + 1$ $(\bfk'', \epsilon)$-points. To prove this, it suffices to find some $\bfk''$ with a $(\bfk'', \epsilon)$-point not equal to any of the $s_i$. By our assumptions, we have $\bfs_\omega(d, \epsilon; \bfk') > N'$, so there must exist some $s \in \bfs_\omega(d, \epsilon; \bfk')$ which is not equal to any of the $s_i$. By definition of $\bfs_\omega(d, \epsilon; \bfk')$, there must exist a subsequence $\bfk''$ such that $s$ is a $(\bfk'', \epsilon)$-point. 

By applying the above inductive step at most $N$ times, we find a subsequence $\bfk'$ that has a set $\{s_1, \ldots, s_N\}$ of $N$ distinct $(\bfk', \epsilon)$-points. 

\noindent\textbf{Step $2$:} Let $\bfk' = \{k_j\}$ and $\{s_1, \ldots, s_N\}$ be the subsequence and levels from Step $1$. This step proves that, for sufficiently large $k$, the surfaces $C_{d,j,i}$ are disjoint. The surface $C_{d,j,i}$ is constructed by capping $\Sigma_{d,j,i} := (a \circ u_{d,k_j})^{-1}(k_j \cdot \cL_{j,i})$ with short connected components of $C_{d,k_j}\,\setminus\,\operatorname{Int}(\Sigma_{d,j,i})$. Thus, every point in $(a \circ u_{d,k_j})(C_{d,j,i})$ is a distance of at most $2$ from $k_j \cdot \cL_{j,i}$. The intervals $k_j \cdot \cL_{j,i}$ and $k_j \cdot \cL_{j,i'}$ are very far from each other for $i \neq i'$ and $k$ sufficiently large, so it follows that $C_{d,j,i}$ and $C_{d,j,i'}$ are disjoint. 

\noindent\textbf{Step $3$:} This step finishes the proof. By Corollary~\ref{cor:closed_curves} and Lemma~\ref{lem:j_tame}, we have 
\begin{equation}\label{eq:action_pigeonhole}4md = \int_{C_{d,k_j}} u_{d,k_j}^*\Omega_{k_j} \geq \frac{1}{2}\sum_{i=1}^N \int_{C_{d,j,i}} u_{d,k_j}^*\omega_{k_j} \geq N\epsilon/2 > 4md.\end{equation}

This is the desired contradiction. 
\end{proof}

Lemma~\ref{lem:pigeonhole_omega} implies that, outside of a countable set of levels, no action accumulates at all.

\begin{lem}\label{lem:pigeonhole_omega2}
Fix any $d \geq 1$ and any sequence $\bfk$. Then, there exists a subsequence $\bfk' \subseteq \bfk$ such that the set $\bfs_\omega(d; \bfk')$ is countable. 
\end{lem}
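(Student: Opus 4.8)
\begin{proof}
The plan is to run a diagonal subsequence argument over a sequence $\epsilon_i \downarrow 0$, using Lemma~\ref{lem:pigeonhole_omega} to kill accumulation of action above each threshold $\epsilon_i$.

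Set $\epsilon_i := 1/i$. Construct a nested sequence of subsequences $\bfk = \bfk_0 \supseteq \bfk_1 \supseteq \bfk_2 \supseteq \cdots$ as follows: given $\bfk_{i-1}$, apply Lemma~\ref{lem:pigeonhole_omega} with $\epsilon = \epsilon_i$ to obtain a subsequence $\bfk_i \subseteq \bfk_{i-1}$ with $\#\bfs_\omega(d, \epsilon_i; \bfk_i) \leq 8m\epsilon_i^{-1}d < \infty$. Now form the diagonal subsequence $\bfk' = \{k'_j\}$ in the usual way: choose $k'_j$ to be an element of $\bfk_j$ that is strictly larger than $k'_{j-1}$ (possible since each $\bfk_j$ is infinite). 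Then $\bfk'$ is a genuine subsequence of $\bfk$, and for each fixed $i$ the tail $\{k'_j : j \geq i\}$ is a subsequence of $\bfk_i$.

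It remains to check that $\bfs_\omega(d; \bfk')$ is countable. First, $\bfs_\omega(d, \epsilon; \bfk'')$ depends only on the tail of $\bfk''$, since the defining condition involves only $\limsup_{j \to \infty}$; hence for each $i$, using that $\{k'_j : j \geq i\}$ is a subsequence of $\bfk_i$ together with the inclusion \eqref{eq:inclusion}, we get
$$\bfs_\omega(d, \epsilon_i; \bfk') \subseteq \bfs_\omega(d, \epsilon_i; \bfk_i),$$
which is finite. Second, $\bfs_\omega(d, \epsilon; \bfk')$ is monotone decreasing in $\epsilon$ (a larger threshold is a stronger condition), so since $\epsilon_i \downarrow 0$,
$$\bfs_\omega(d; \bfk') = \bigcup_{\epsilon > 0} \bfs_\omega(d, \epsilon; \bfk') = \bigcup_{i \geq 1} \bfs_\omega(d, \epsilon_i; \bfk').$$
This is a countable union of finite sets, hence countable.
\end{proof}

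The only point requiring any care is the bookkeeping in the diagonal argument — namely that $\bfs_\omega$ is insensitive to deleting finitely many initial terms, so that the tail of the diagonal sequence inherits the finiteness bound from $\bfk_i$ via \eqref{eq:inclusion} — together with the elementary monotonicity in $\epsilon$ used to reduce the union over all $\epsilon > 0$ to a countable union. Everything else is immediate from Lemma~\ref{lem:pigeonhole_omega}.
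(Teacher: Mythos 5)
Your proof is correct and follows essentially the same route as the paper: a diagonal argument over thresholds $\epsilon_i = 1/i$ using Lemma~\ref{lem:pigeonhole_omega}, then writing $\bfs_\omega(d;\bfk')$ as the countable union $\bigcup_i \bfs_\omega(d,\epsilon_i;\bfk')$ of finite sets. The extra bookkeeping you supply (tail-insensitivity of the accumulation sets and the inclusion \eqref{eq:inclusion}) is exactly what the paper leaves implicit in the phrase ``by a diagonal argument.''
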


\begin{proof}
By Lemma~\ref{lem:pigeonhole_omega} and a diagonal argument, there exists a subsequence $\bfk'$ such that $\bfs_\omega(d, 1/n; \bfk')$ is finite for every $n \geq 1$. It follows that $\bfs_\omega(d; \bfk') = \bigcup_{n \geq 1} \bfs_\omega(d, 1/n; \bfk')$ is countable. 
\end{proof}

Now, we bound the size of $\bfs_\chi(d, b; \bfk)$. 

\begin{lem}\label{lem:pigeonhole_chi}
There exists a constant $c_4 \geq 1$ such that the following holds for any $d \geq 1$, $b \geq 1$, and sequence $\bfk$. There exists a subsequence $\bfk' \subseteq \bfk$ such that
\begin{equation}\label{eq:topology_bounds} \#\mathbf{r}_\chi(d, b; \bfk') \leq 4m(c_4 d + 4m b^{-1}d^2).\end{equation}
\end{lem}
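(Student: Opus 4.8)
The plan is to mirror the proof of Lemma~\ref{lem:pigeonhole_omega}, with the global genus bound $G_a(C_{d,k}) = g(4md)$ from Corollary~\ref{cor:closed_curves} playing the role that the global area bound $\int_{C_{d,k}}u_{d,k}^*\Omega_k = 4md$ played there, supplemented by an application of the action quantization Proposition~\ref{prop:fh_quantization} to control boundary circles. Fix a constant $c_4\ge 1$ to be pinned down at the end (depending only on $m$, on $H$, and on the fixed geometric data, through the quantization constant). Set $N := \lfloor 4m(c_4 d + 4mb^{-1}d^2)\rfloor + 1$ and suppose for contradiction that $\#\mathbf{r}_\chi(d,b;\bfk') \ge N$ for every subsequence $\bfk'\subseteq\bfk$. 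Running the inductive extraction of Step~1 in the proof of Lemma~\ref{lem:pigeonhole_omega} verbatim produces a subsequence $\bfk' = \{k_j\}$, distinct levels $s_1,\dots,s_N$, and for each $i$ a sequence of intervals $\cL_{j,i}\to\{s_i\}$ with an irreducible component $Z_{j,i}\subseteq C^{\cL_{j,i}}_{d,k_j}$ satisfying $\chi(Z_{j,i})\le -b-1$ for all large $j$ (integrality of $\chi$ upgrades the strict $\limsup$ inequality, and we may discard the at most two levels $s_i = \pm 1$). Exactly as in Step~2 there, for $j$ large every point of the image of a capped slice lies within vertical distance $2$ of the slab $k_j\cL_{j,i}$ and these slabs drift infinitely far apart, so the capped slices $C^{\cL_{j,i}}_{d,k_j}$, hence the surfaces $Z_{j,i}$, are pairwise disjoint in $C_{d,k_j}$.

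Fix such a large $j$ and write $Z_i := Z_{j,i}$, $C := C_{d,k_j}$, $u := u_{d,k_j}$. Since each $Z_i$ is connected, $\chi(Z_i) = 2 - 2g(Z_i) - \#\partial Z_i$, so $2g(Z_i)+\#\partial Z_i \ge b+3$; summing, $\sum_{i=1}^{N}\big(2g(Z_i)+\#\partial Z_i\big) > Nb$. The genus term is bounded by disjointness: $\sum_i g(Z_i)\le G_a(C) = g(4md) < 8m^2 d^2$, hence $\sum_i 2g(Z_i) < 16 m^2 d^2$. For the boundary term, each boundary circle of $Z_i$ lies in a level set $\{a = c\}$, with $c$ an endpoint of $k_j\cL_{j,i}$, and bounds on the far side from $Z_i$ a connected component $W$ of $C\,\setminus\,\operatorname{Int}\big((a\circ u)^{-1}(k_j\cL_{j,i})\big)$ that was not "short'' in the formation of the capped slice; being connected and disjoint from the open slab, $W$ lies entirely above level $c$ or entirely below it. For $j$ large such a $W$ necessarily has height $>2$: its boundary sits at level $\approx k_j s_i$ with $|s_i|<1$, well inside the neck, while $W$ either leaves $(-k_j,k_j)\times Y$ or already has height $>2$. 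Since $W$ is compact with boundary on a single level, its extreme value of $a\circ u$ is attained at an interior point at distance $>2$ from $\partial W$; restricting $W$ to a suitable height-$16$ window deep in the neck and applying Proposition~\ref{prop:fh_quantization} (with a fixed small $r$ and the uniform constant $\hbar>0$ afforded by Corollary~\ref{cor:geometric_convergence} and Remark~\ref{rem:stable_constants}) gives $\int_{W}u^*\omega_{k_j}\ge \hbar$. Arranging these pieces so that their contributions to the total action do not overlap, and using $\int_{C}u^*\omega_{k_j}\le\int_C u^*\Omega_{k_j} = 4md$ (Corollary~\ref{cor:closed_curves}, Lemma~\ref{lem:j_tame}), there are $O(md/\hbar)$ such pieces; converting this into a bound on the number of boundary circles of the $Z_i$ — using a bound on the number of components of each slab via the exponential area bound Proposition~\ref{prop:fh_area_bound} together with quantization — yields $\sum_i\#\partial Z_i \le c_4'' md$ for a constant $c_4''$ depending only on $\hbar$ and the data. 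Combining, $Nb < 16m^2 d^2 + c_4'' m d$, so $N < 4m\big(4mb^{-1}d^2 + (c_4''/4)d\big)$; choosing $c_4 := \max(1, c_4''/4)$ contradicts $N > 4m(c_4 d + 4mb^{-1}d^2)$.

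The step I expect to be the main obstacle is the last one: converting the disjointness of the capped slices into the estimate $\sum_i\#\partial Z_i = O(md)$. The underlying mechanism is that each boundary circle of a capped slice abuts a "long piece'' of $C$ lying between or outside the slabs, which for $j$ large must have height $>2$ and therefore absorbs a definite quantum $\hbar$ of action, so that the total action $\le 4md$ forces $O(md)$ long pieces; but making this rigorous demands careful bookkeeping of how $C_{d,k_j}$ decomposes into the slabs $(a\circ u)^{-1}(k_j\cL_{j,i})$, the short caps, and the long pieces, including a uniform control (again via Proposition~\ref{prop:fh_area_bound} and quantization) on how many components a single slab can have and on the overlaps among the long pieces associated to different slabs. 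Everything else in the argument parallels the proof of Lemma~\ref{lem:pigeonhole_omega} closely.
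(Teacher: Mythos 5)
Your extraction of the levels $s_1,\dots,s_N$, the disjointness of the capped slices, and the overall strategy (global genus bound plus action quantization) are all in line with the paper, but the step you yourself flag as the main obstacle is a genuine gap, and the mechanism you propose cannot close it. You want $\sum_i \#\partial Z_{j,i} \le c_4'' m d$, arguing that each boundary circle of $Z_{j,i}$ abuts a non-short ("long'') component $W$ of $C_{d,k_j}\setminus\operatorname{Int}(\Sigma_{d,j,i})$ which absorbs a quantum $\hbar$ of action. But the assignment of boundary circles to long pieces is many-to-one: a single non-short component $W$ can meet $Z_{j,i}$ along arbitrarily many circles (this costs genus, not action, and the genus budget is $G_a(C_{d,k_j})\approx 8m^2d^2 \gg md$), so quantization bounds the number of long pieces, not the number of circles. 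Moreover the long pieces attached to different slabs $i\neq i'$ are not disjoint (the complement of the slab at $s_i$ contains all the other slabs), so you cannot sum their actions against the global bound $4md$ in the first place. Consequently the estimate $\sum_i\#\partial Z_{j,i}=O(md)$ is unjustified and, as far as the stated constants go, false in general: a planar $Z_{j,i}$ with $\sim m^2d^2$ boundary circles all abutting one long component is consistent with everything you have established, and then your final inequality only yields a bound with a $b$-independent $O(m^2d^2)$ term, which is not the bound \eqref{eq:topology_bounds}.

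The paper's proof repairs exactly this bookkeeping by never bounding the boundary-circle count on its own. It sets $S_{d,j}:=C_{d,k_j}\setminus\bigcup_{i=1}^N Z_{d,j,i}$ and uses additivity $\chi(C_{d,k_j})=\chi(S_{d,j})+\sum_i\chi(Z_{d,j,i})$ (gluing along circles). The only components of $S_{d,j}$ with positive Euler characteristic are disks sharing a boundary circle with some $Z_{d,j,i}$; these disks are pairwise \emph{disjoint} (being distinct components of $S_{d,j}$), and each contains a non-short piece satisfying the hypotheses of Lemma~\ref{lem:quantization}, hence has $\Omega_{k_j}$-area at least $c_4^{-1}$. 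The global energy bound $4md$ then caps the number of disks by $4mc_4d$, giving $\chi(S_{d,j})\le 4mc_4 d$, and combining with $\chi(C_{d,k_j})\ge -2G_a(C_{d,k_j})\ge -16m^2d^2$ and $\sum_i\chi(Z_{d,j,i})\le -Nb$ yields the contradiction. In effect, the quantity your argument needs is $2\sum_i g(Z_{j,i})+\sum_i\#\partial Z_{j,i} = 2N-\sum_i\chi(Z_{j,i}) = 2N-\chi(C_{d,k_j})+\chi(S_{d,j})$, and it is $\chi(S_{d,j})$, not the circle count, that quantization controls at the scale $O(md)$. (A secondary point: your bound $\sum_i g(Z_{j,i})\le G_a(C_{d,k_j})$ for disjoint subsurfaces also needs an argument, e.g.\ via independent symplectic subspaces of $H_1$, and requires care at the nodal points; the paper sidesteps this by only ever using $\chi(C_{d,k_j})\ge -2G_a(C_{d,k_j})$.)
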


To prove Lemma~\ref{lem:pigeonhole_chi}, we need the following technical lemma. It claims that any $Z \subset C_{d,k}$ containing an interior point with vertical distance at least $1$ from its boundary has a lower bound on its symplectic area.

\begin{lem}\label{lem:quantization}
There exists a constant $c_4 \geq 1$ such that the following holds for any $d$ and $k$. For any compact and irreducible Riemann surface $Z \subset C_{d,k}$, we have
\begin{equation}\label{eq:quantization}\int_Z u_{d,k}^*\Omega_k \geq c_4^{-1} > 0.\end{equation}
provided that:
\begin{enumerate}[(\roman*)]
\item There exists $a_0 \in (-k, k)$ such that $u_{d,k}(\partial Z) \subset \{a_0\} \times Y$. 
\item There exists some $\zeta \in Z$ such that $u_{d,k}(\zeta) \not\in [a_0 - 2, a_0 + 2] \times Y$. 
\end{enumerate}
\end{lem}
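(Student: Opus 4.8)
The statement of Lemma~\ref{lem:quantization} is an action quantization result for pieces of the stretching curves $u_{d,k}$: any irreducible $Z \subset C_{d,k}$ whose boundary lies in a single slice $\{a_0\}\times Y$ and which reaches vertical distance $\geq 2$ from that slice must carry a definite amount of symplectic area. I want to deduce this from Proposition~\ref{prop:fh_quantization} (action quantization in a realized Hamiltonian homotopy), applied to an appropriate translate of a neighborhood of the neck. The main point is that the ambient geometry near $Z$ is, up to a bounded translate, governed by one of the pairs $(\eta_k^a, J_k^a) \in \cD([-8,8]\times Y)$, and by Remark~\ref{rem:stable_constants} the quantization constant for Proposition~\ref{prop:fh_quantization} can be taken uniform over all $k$ and $a$. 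Thus the plan is to reduce to the symplectization picture and invoke the uniform-constant principle.

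\begin{proof}
Let $c_1, \epsilon_1$ be the constants from \eqref{eq:tame1}--\eqref{eq:tame2}, so that for any tangent vector $v$ with base in $\bW_k$ one has, using Lemma~\ref{lem:j_tame} on the neck and $\Omega$-tameness of $\widecheck J_k$ off the neck,
$$\Omega_k(v, J_k v) \leq 2\,(da \wedge \lambda_k + \omega_k)(v, J_k v)$$
at every point of the neck $[-L_k, L_k]\times Y$, and $\int_Z u_{d,k}^*\Omega_k \geq \tfrac12\int_Z u_{d,k}^*\omega_k$ whenever $Z$ maps into the neck. So it suffices to bound $\int_Z u_{d,k}^*\omega_k$ below by a uniform positive constant.

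Fix $Z$, $a_0 \in (-k,k)$, and $\zeta\in Z$ as in the statement. By hypothesis (ii) there is a point of $u_{d,k}(Z)$ at vertical distance $> 2$ from $\{a_0\}\times Y$; combined with (i), which forces $u_{d,k}(\partial Z)\subset\{a_0\}\times Y$, the function $a\circ u_{d,k}$ on $Z$ attains an interior extremum at level $a_*$ with $|a_* - a_0| > 2$. We may translate so as to center the picture: pick $a_1 \in (-k, k)$ with $|a_1| \le 1$ such that the window $[a_1 - 8, a_1 + 8]$ contains a neighborhood of the relevant portion of $Z$ — concretely, since $\zeta$, $a_0$ and the extremal level $a_*$ all lie within distance $\le \max(|a_0|, |a_*|) + 2$ of the interval's center, and since the short capping components used to build capped slices reach vertical distance $\le 2$, one checks that a suitable translate places $\partial Z$ and the interior extremum of $Z$ into $[a_c - 6, a_c + 6]\times Y$ for an appropriate $a_c$; restricting $(\eta_k, J_k)$ to $[a_c - 8, a_c + 8]\times Y$ and translating by $-a_c$ yields a pair $(\widehat\eta, \widehat J) = (\eta_k^{a_c}, J_k^{a_c}) \in \cD([-8,8]\times Y)$ and a $\widehat J$-holomorphic curve $\bar u := \tau_{a_c}\circ u_{d,k}$ restricted to (a slight enlargement within $C_{d,k}$ of) $Z$. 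Taking $Z$ itself as the domain and $r := 1$, hypotheses (i) and (ii) translate exactly into the two conditions of Proposition~\ref{prop:fh_quantization}: the boundary $\bar u(\partial Z)$ avoids $[a_0' - 1, a_0' + 1]\times Y$ for the translated extremal level $a_0' = a_* - a_c$ (as $|a_* - a_0| > 2$ and $\bar u(\partial Z) \subset \{a_0 - a_c\}\times Y$), and either the infimum or supremum of $a\circ\bar u$ on $Z$ equals $a_0' \in (-8 + 1, 8 - 1)$ after possibly re-centering the window. Hence Proposition~\ref{prop:fh_quantization} gives $\int_Z \bar u^*\,\widehat\omega \geq \hbar(\widehat\eta, \widehat J, 1) > 0$, i.e. $\int_Z u_{d,k}^*\omega_k \geq \hbar(\eta_k^{a_c}, J_k^{a_c}, 1)$.

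Finally, by Lemma~\ref{lem:geometric_convergence} and Remark~\ref{rem:stable_constants}, the family $\{(\eta_k^a, J_k^a)\}$ (ranging over all $k$ and all admissible $a$) is precompact in $\cD([-8,8]\times Y)$, so the stable constant $\hbar$ may be replaced by a single constant $\hbar_0 > 0$ independent of $k$ and $a$. Setting $c_4 := \max(2\hbar_0^{-1}, 1)$ yields $\int_Z u_{d,k}^*\Omega_k \geq \tfrac12\int_Z u_{d,k}^*\omega_k \geq \tfrac12\hbar_0 \geq c_4^{-1}$, as claimed.
\end{proof}

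The step I expect to be the main obstacle is the geometric bookkeeping in the middle paragraph: verifying that one can always find a translate placing the boundary slice $\{a_0\}\times Y$, the interior extremum, and enough of $Z$ inside a fixed-width window $[-8,8]\times Y$ while keeping the extremal level at distance $> r$ from the boundary and away from the window's ends. This is where hypothesis (ii)'s quantitative "distance $> 2$" and the choice $r = 1$ must be reconciled with the width-$8$ window; the short-component capping (which only extends things by $2$ vertically) is exactly what makes this work, but the inequalities need to be tracked carefully. Everything after that is a direct appeal to Proposition~\ref{prop:fh_quantization}, Lemma~\ref{lem:j_tame}, and the precompactness principle of Remark~\ref{rem:stable_constants}.
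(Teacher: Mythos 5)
Your reduction to Proposition~\ref{prop:fh_quantization} via Lemma~\ref{lem:j_tame} and the uniform-constant principle of Remark~\ref{rem:stable_constants} is the paper's argument for one case, but there is a genuine gap: you implicitly assume that $u_{d,k}(Z)$ stays inside the neck $[-L_k, L_k]\times Y$. Hypothesis (ii) only says that $u_{d,k}(Z)$ reaches a point outside $[a_0-2,a_0+2]\times Y$; that point may lie in $\bW_\pm$, i.e.\ the curve may leave the neck entirely. In that case there need be no interior extremum of $a\circ u_{d,k}$ at a level inside the neck at all (the ``extremum'' is attained out in $\bW_\pm$, where $J_k$ is merely $\Omega_k$-tame and not of realized-Hamiltonian-homotopy form), so Proposition~\ref{prop:fh_quantization} has nothing to bite on and your argument breaks down. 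The paper treats this case separately: it pulls the curve back to $\bW$ via $f_k^{-1}$, cuts out the irreducible component of $Z\setminus v^{-1}(U_\delta)$ (for a regular $\delta\in(\delta_4,2\delta_4)$) containing the far point, and applies the monotonicity estimate for tame almost-complex structures (\cite[Proposition~3.4]{Fish11}) in the fixed compact manifold $\bW$ to get a uniform lower area bound. Some such second mechanism is unavoidable; quantization in the symplectization model alone cannot cover it.

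There is also a flaw in your bookkeeping for the in-neck case: you claim a translate places both $\partial Z$ and the interior extremum inside a window $[a_c-6,a_c+6]\times Y$, but $|a_*-a_0|$ can be of order $k$ (the neck has length $\geq 32k$), so no width-$16$ window contains both. The fix is not to keep $\partial Z$ at all: let $a_1$ be the interior extremal level (so $|a_1-a_0|>2$ by (i)--(ii)), choose $r\in(1,2)$ with $a_1\pm r$ regular values of $a\circ u_{d,k}$, and let $Z_*$ be the irreducible component of $(a\circ u_{d,k})^{-1}([a_1-r,a_1+r])\cap Z$ through the extremal point. Since $r<2<|a_1-a_0|$, the original boundary $\partial Z$ does not meet $Z_*$, so $(a\circ u_{d,k})(\partial Z_*)\subset\{a_1-r,a_1+r\}$ avoids $[a_1-1,a_1+1]$, and Proposition~\ref{prop:fh_quantization} with parameter $1$ applies in the translated window; from there your use of Lemma~\ref{lem:j_tame}, Lemma~\ref{lem:geometric_convergence}, and Remark~\ref{rem:stable_constants} to get a $k$- and $a$-independent constant is exactly the paper's Step~$1$.
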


\begin{proof}
Fix a compact and irreducible surface $Z \subset C_{d,k}$ satisfying (i) and (ii). We prove the bound \eqref{eq:quantization} in two cases. The proof will take $2$ steps. Each step focuses on one case.

\noindent\textbf{Step $1$:} This step proves \eqref{eq:quantization} in the case where $u_{d,k}(Z) \subset [-L_k, L_k] \times Y$. Fix any $a_1 \in [-L_k, L_k]$ such that $a_1$ is either equal to $\inf_{\zeta \in Z} (a \circ u_{d,k})(\zeta)$ or $\sup_{\zeta \in Z} (a \circ u_{d,k})(\zeta)$ and fix any $\zeta_*$ such that $(a \circ u_{d,k})(\zeta_*) = a_1$. By (ii), it follows that $a_1 \not\in [a_0 - 2, a_0 + 2]$. Choose some $r \in (1, 2)$ such that $a_1 \pm r$ are regular values of $a \circ u_{d,k}$ and define $Z_*$ to be the irreducible component of $(a \circ u_{d,k})^{-1}([a_1 - r, a_1 + r]) \cap Z$ containing $\zeta_*$. 

By (i), it follows that $(a \circ u_{d,k})(\partial Z_*) \cap [a_1 - 1, a_1 + 1] = \emptyset$. It follows from Proposition~\ref{prop:fh_quantization} and Remark~\ref{rem:stable_constants} that there exists a constant $c_* \geq 1$ such that $\int_{Z_*} u_{d,k}^*\omega_k \geq 2c_*^{-1}$. By Lemma~\ref{lem:j_tame}, it follows that 
$$\int_Z u_{d,k}^*\Omega_k \geq \frac{1}{2}\int_{Z_*}u_{d,k}^*\omega_k \geq c_*^{-1}.$$

\noindent\textbf{Step $2$:} This step proves \eqref{eq:quantization} in the case where $u_{d,k}(Z)$ is not contained in $[-L_k, L_k] \times Y$. Let $v: Z \to \bW$ denote the $\widecheck{J}_k$-holomorphic curve defined by the restriction of $f_k^{-1} \circ u_{d,k}$ to $Z$. Choose a point $\zeta \in Z$ such that $v(\zeta) \not\in U_{\delta_3}$. Choose some $\delta \in (\delta_4, 2\delta_4)$ which is a regular value of $H \circ v$ and define $Z_*$ to be the irreducible component of $Z\,\setminus\,v^{-1}(U_\delta)$ containing $\zeta$. Let $v_*$ be the restriction of $v$ to $Z_*$. Note that $v_*$ is $\widecheck{J}$-holomorphic, contains a point outside $U_{\delta_3}$, and has $v_*(\partial Z_*) \cap \bW\,\setminus\,U_{2\delta_4} = \emptyset$. It follows from the monotonicity bound \cite[Proposition $3.4$]{Fish11} that there exists a constant $c_* = c_*(\Omega, \widecheck{J}, \delta_3, \delta_4) > 0$ such that
$$\int_Z u_{d,k}^*\Omega_k \geq \int_{Z_*} u_{d,k}^*\Omega \geq c_*^{-1}.$$
\end{proof}

Now we prove Lemma~\ref{lem:pigeonhole_chi}. 

\begin{proof}[Proof of Lemma~\ref{lem:pigeonhole_chi}]
Let $c_4$ be the constant from Lemma~\ref{lem:quantization}. Assume for the sake of contradiction that, for any subsequence $\bfk' \subseteq \bfk$, we have $\#\bfs_\chi(d, b; \bfk') > 4mb^{-1}d(4md + c_4)$. Given this assumption, an analogous algorithm to Step $1$ of the proof of Lemma~\ref{lem:pigeonhole_omega} produces a subsequence $\bfk' = \{k_j\}$ and a finite subset $\{s_1, \ldots, s_N\}$, where $N := \lfloor 4mb^{-1}d(4md + c_4)\rfloor $, with the following property. For each $i$, there exists a sequence of intervals $\cL_{j,i}$ such that (i) $\cL_{j,i} \to \{s_i\}$ and (ii) for each $j$, the surfaces $C_{d,j,i} := C^{\cL_{j,i}}_{d,k_j}$ have an irreducible component $Z_{d,j,i}$ with $\chi(Z_{d,j,i}) \leq -b$. 

Define the surfaces $\Sigma_{d,j,i} := (a \circ u_{d,k_j})^{-1}(k_j \cdot \cL_{k_j,i})$ as in the proof of Lemma~\ref{lem:pigeonhole_omega}. Define $S_{d,j} := C_{d,k_j}\,\setminus\,\bigcup_{i=1}^N Z_{d,j,i}$ to be the complement of the surfaces $Z_{d,j,i}$. We claim that 
\begin{equation}\label{eq:top_pigeonhole}\chi(S_{d,j}) \leq 4m c_4 d.\end{equation}

Each connected component $F$ of $S_{d,j}$ is either (i) a compact surface with $\chi(F) \leq 0$ or (ii) a closed disk sharing a boundary component with one of the surfaces $Z_{d,j,i}$. Therefore, prove \eqref{eq:top_pigeonhole}, it suffices to bound the number of components of the second type. Any component $F$ of the second type contains an irreducible component $Z$ of $C_{d,j,i}\,\setminus\,\Sigma_{d,j,i}$ which is not ``short''. Such a component $Z$ satisfies the conditions of Lemma~\ref{lem:quantization}. It follows from Lemma~\ref{lem:quantization} that $\int_F u_{d,k_j}^*\Omega_{k_j} \geq c_4^{-1}$. By Corollary~\ref{cor:closed_curves}, we have $\int_{C_{d,k_j}} u_{d,k_j}^*\Omega_{k_j} = 4md$, so there are at most $4m c_4 d$ components of the second type. The bound \eqref{eq:top_pigeonhole} follows. 

By Corollary~\ref{cor:closed_curves}, we have
\begin{equation}\label{eq:top_pigeonhole_2}\chi(C_{d,k_j}) = 2\#\pi_0(C_{d,k_j}) - 2G(C_{d,k_j}) \geq - 2G_a(C_{d,k_j}) \geq -16m^2d^2.\end{equation}

The first inequality follows because arithmetic genus is bounded below by genus. By \eqref{eq:top_pigeonhole}, we have 
\begin{equation}\label{eq:top_pigeonhole_3}\chi(C_{d,k_j}) = \chi(S_{d,j}) + \sum_{i=1}^N \chi(Z_{d,j,i}) \leq 4mc_4d - Nb < -16m^2 d^2.\end{equation}

The bounds \eqref{eq:top_pigeonhole_2} and \eqref{eq:top_pigeonhole_3} cannot hold simultaneously, so we arrive at a contradiction. 
\end{proof}

\subsubsection{Controlled accumulation implies almost cylinders}

Define a constant $\epsilon_d := 128m d^{-1}$ for every $d \geq 1$ and define $b_* := 64m^2$. We prove that every element of the stretched limit set outside of the accumulation sets $\bfs_\omega(d, \epsilon_d; \bfk) \cup \bfs_\chi(d, b_*; \bfk)$ is a $\delta$-almost cylinder, where $\delta \to 0$ as $d \to \infty$. 

\begin{prop}\label{prop:almost_cylinders}
For any integer $n \geq 1$, there exists some $d_n \geq 1$ such that the following holds for all $d \geq d_n$ and any sequence $\bfk = \{k_j\}$. Fix any $s \in (-1, 1) \,\setminus\,(\mathbf{r}_\omega(d, \epsilon_d; \bfk) \cup \mathbf{r}_\chi(d, b_*; \bfk))$. Then, for any $(\Xi, s) \in \cX_d(\bfk)$, the set $\Xi$ is a $1/n$-almost cylinder. 
\end{prop}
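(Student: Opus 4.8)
The plan is to show that if $(\Xi, s) \in \cX_d(\bfk)$ with $s$ outside the two accumulation sets, then both bounds in \eqref{eq:almost_cyl_bounds} hold with $\delta = 1/n$ once $d$ is large. Fix $(\Xi, s)$ and a sequence $a_j \in (-k_j, k_j)$ with $k_j^{-1}a_j \to s$ such that the translated height-two slices $\tau_{a_j} \cdot (u_{d,k_j}(C_{d,k_j}) \cap (a_j - 1, a_j + 1)\times Y)$ Hausdorff-converge to $\Xi$. Because $s \notin \mathbf{r}_\omega(d,\epsilon_d;\bfk)$, for every sufficiently small interval $\cL$ around $k_j^{-1}a_j$ of fixed positive length the capped slices $C^{\cL}_{d,k_j}$ have action bounded by $\epsilon_d$ for all large $j$ (otherwise, choosing a shrinking sequence of such intervals witnesses $s \in \mathbf{r}_\omega(d,\epsilon_d;\bfk)$). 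Likewise, because $s \notin \mathbf{r}_\chi(d,b_*;\bfk)$, each irreducible component $Z$ of $C^{\cL}_{d,k_j}$ has $\chi(Z) \geq -b_*$ for large $j$. So on a fixed neighborhood $[a_j - 4, a_j + 4]\times Y$ (after choosing $\cL$ of length enough to capture this, using Corollary~\ref{cor:geometric_convergence} to compare to the model on $[-8,8]\times Y$), each relevant connected piece of the curve has action at most $\epsilon_d \to 0$ and Euler characteristic bounded below by $-b_* = -64m^2$, uniformly in $j$ and $d$.

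Next I would invoke the low-action estimates. With $\epsilon_d \leq \epsilon_2$ for $d$ large (the stable constant from Proposition~\ref{prop:local_area_bound}, made uniform via Remark~\ref{rem:stable_constants}), each connected piece $Z$ of the capped slice over $[a_j - 4, a_j + 4]\times Y$, translated to sit over $[-4,4]\times Y$, satisfies the hypotheses of Proposition~\ref{prop:local_area_bound} and the action-quantization Proposition~\ref{prop:fh_quantization}. The key consequences: (1) by Proposition~\ref{prop:fh_quantization}, a connected piece through a point at height in $(-2,2)$ with boundary at height $\pm 4$ and action $< \hbar(\cdot, 1)$ cannot have an interior max or min in $(-2,2)$, so it must "cross" — it extends monotonically past the slab; hence through every point $(t,y) \in \Xi$ there is a piece of curve running from height near $-1$ to height near $1$, giving, after passing to the Hausdorff limit, points of $\Xi$ at every height $\tau \in (-1,1)$ near $y$, i.e. close control on the first bound in \eqref{eq:almost_cyl_bounds}; (2) by Proposition~\ref{prop:local_area_bound}, $\operatorname{Area}_{u^*\widehat g}(S_{\epsilon_3}(\zeta)) \leq \epsilon_3^{-1}(\chi^2 + 1) \leq \epsilon_3^{-1}(b_*^2 + 1)$, a bound uniform in $d$; combined with a monotonicity/isoperimetric argument and the fact that the action $\int u^*\widehat\omega \leq \epsilon_d \to 0$, the curve piece becomes $C^0$-close to a union of $\widehat R_\eta$-orbit cylinders as $d \to \infty$ (this is exactly the content of Lemma~\ref{lem:zero_action}, quantified: small action forces the curve near the span of $\partial_a$ and $\widehat R_\eta$). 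This yields the second bound in \eqref{eq:almost_cyl_bounds}: for $z = (t,y) \in \Xi$, the point $(t, \phi^\tau(y))$ lies within $1/n$ of $\Xi$, since flowing along $\widehat R_\eta$ moves along the near-cylinder which is shadowed by the limit slices.

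Finally I would choose $d_n$ so that all the error terms — the $C^0$-deviation from a genuine orbit cylinder coming from action $\leq \epsilon_d = 128md^{-1}$, controlled via the area bound $\epsilon_3^{-1}(b_*^2+1)$ and Proposition~\ref{prop:fh_area_bound}'s exponential $\widehat\lambda$-integral estimate over the fixed-width slab — are all at most $1/n$. The exponential factor $e^{c_3 \cdot 8}$ in \eqref{eq:fh_area_bound1} is a fixed constant (again uniform by Remark~\ref{rem:stable_constants}), so the $\widehat\lambda$-length of each slice is $\lesssim \epsilon_d \to 0$, which is what converts "small action" into "$C^0$-close to $(-1,1)\times(\text{orbit piece})$". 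I expect the main obstacle to be the careful bookkeeping in step (1)–(2): one must rule out that the slice $\Xi$, though a Hausdorff limit of curves of small action, fails to be a near-cylinder because the underlying curves, while low-action, could in principle be long and thin and wander — this is precisely why the $\chi \geq -b_*$ bound (excluding $\mathbf{r}_\chi$) is needed, to keep the number of components and hence the total area under control via Proposition~\ref{prop:local_area_bound}, and why the crossing/quantization argument must be applied componentwise with the uniform constant $\hbar(\cdot,1)$. Assembling these uniform estimates into the single clean statement "$\Xi$ is a $1/n$-almost cylinder" is the delicate part; the individual estimates are all in hand from \S\ref{subsec:estimates}.
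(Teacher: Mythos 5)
Your reduction step is the same as the paper's: you pass to capped slices over windows of $a$-width about $8$ around $a_j$, use $s\notin\mathbf{r}_\omega(d,\epsilon_d;\bfk)$ to get action at most $\epsilon_d$ and $s\notin\mathbf{r}_\chi(d,b_*;\bfk)$ to get $\chi\geq -b_*$ on each irreducible component for large $j$ (with the caveat that your intervals must shrink like $k_j^{-1}$, as in the definition of the accumulation sets, not have "fixed positive length"). The problem is what comes next. The entire content of the proposition is the implication "compact connected curve with boundary outside $[-4,4]\times Y$, $\chi\geq -b_*$, and action $\leq\epsilon$ small $\Longrightarrow$ its height-two slice is a $1/n$-almost cylinder," and you assert this rather than prove it. Your two sub-arguments do not deliver it: (1) the quantization "crossing" argument (Proposition~\ref{prop:fh_quantization}) only shows a component through $(t,y)$ reaches heights near $\pm 1$; it gives no control on the horizontal drift of that component, so it does not produce points of $\Xi$ near $(\tau,y)$ for the \emph{same} $y$, which is what the first bound in \eqref{eq:almost_cyl_bounds} requires; (2) the statement "small action forces the curve $C^0$-close to orbit cylinders, this is Lemma~\ref{lem:zero_action} quantified" is exactly the missing step — Lemma~\ref{lem:zero_action} is purely qualitative (it characterizes \emph{zero} action), and no quantitative version of it exists among the estimates of \S\ref{subsec:estimates}. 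Proposition~\ref{prop:fh_area_bound} and Proposition~\ref{prop:local_area_bound} give area and $\widehat\lambda$-length bounds, but bounds of the form $\epsilon_3^{-1}(b_*^2+1)$ do not tend to $0$ with $d$, so they cannot by themselves convert action $\leq 128md^{-1}$ into a $1/n$-sized $C^0$ statement.

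The paper isolates precisely this implication as Proposition~\ref{prop:low_action_almost_cyl} (stated as "the key technical input") and proves it not by a direct quantitative estimate but by a compactness contradiction: a sequence of curves with action $\to 0$, bounded $\chi$, uniformly bounded local area via Proposition~\ref{prop:local_area_bound}, converges after target-local Gromov compactness to a zero-action limit, which by Lemma~\ref{lem:zero_action} lies in a vertical leaf; this forces the limit of the slices to be $(-2,2)\times\Lambda$ with $\Lambda$ invariant, from which both bounds of \eqref{eq:almost_cyl_bounds} follow for large indices. If you invoke that proposition (and then finish by passing to the Hausdorff limit $\Xi_j\to\Xi$ using Lemma~\ref{lem:almost_cyl_limits} together with Corollary~\ref{cor:geometric_convergence}, since the ambient data $(\eta_{k_j}^{a_j},J_{k_j}^{a_j})$ vary with $j$ — a step you also leave implicit), your argument becomes the paper's. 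As written, the proposal stops short of the one step that actually carries the proof.
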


The key technical input to Proposition~\ref{prop:almost_cylinders} is the following result, which asserts that holomorphic curves with bounded Euler characteristic and sufficiently low action are $1/n$-almost cylinders away from the boundary. We prove it using Proposition~\ref{prop:local_area_bound}. 

\begin{prop}\label{prop:low_action_almost_cyl}
Fix any $(\bar\eta, \bar{J}) \in \cD([-8,8] \times Y)$ and integers $n \geq 1$ and $b \geq 0$. Then there exists a stable constant $\epsilon_5 = \epsilon_5(\bar\eta, \bar{J}, n, b) > 0$ such that the following holds. Let $u: C \to [-8, 8] \times Y$ be a compact, irreducible $\hat{J}$-holomorphic curve such that
\begin{enumerate}[(\roman*)]
\item $(a \circ u)(\partial C) \cap [-4, 4] \neq \emptyset$;
\item $\chi(C) \geq -b$;
\item $\int_{C} u^*\bar{\omega} \leq \epsilon_5$.
\end{enumerate}

Then the set $u(C) \cap (-1, 1) \times Y \in \cK( (-1, 1) \times Y)$ is a $1/n$-almost cylinder with respect to $(\bar\eta, \bar{J})$. 
\end{prop}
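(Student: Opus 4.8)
The plan is to reduce the almost-cylinder estimate for $u(C) \cap (-1,1)\times Y$ to the local area bound of Proposition~\ref{prop:local_area_bound} via a standard "small area implies graphical/near-orbit" argument. First I would fix the stable constants $\epsilon_2, \epsilon_3$ from Proposition~\ref{prop:local_area_bound} and set $\epsilon_5 := \min\{\epsilon_2, \hbar(\bar\eta,\bar J, 1)\}$ (shrinking further as needed below), where $\hbar$ is the quantization constant from Proposition~\ref{prop:fh_quantization}. The hypothesis $(a\circ u)(\partial C)\cap[-4,4]\neq\emptyset$ combined with $\int_C u^*\bar\omega\leq\epsilon_5$ is what makes things work: by Proposition~\ref{prop:fh_area_bound} (applicable after choosing regular values $a_\pm$ just inside $[-4,4]$ containing the part of $C$ mapping into $[-4,4]\times Y$, using that the boundary lies outside $(-4,4)$ — or more precisely, I would first pass to the subsurface $(a\circ u)^{-1}([-4,4])$ and note its $\bar\lambda$-integrals on slices are controlled since its own boundary lies in $\{a=\pm4\}\times Y$), the total area of $u$ over the region $[-2,2]\times Y$ is bounded by $C_0(\chi(C)^2+1)$ for a stable constant $C_0=C_0(\bar\eta,\bar J)$ — indeed this is essentially the content of Proposition~\ref{prop:local_area_bound} applied at each relevant point, together with a covering argument.

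The core step is then the following: for each point $z=(t,y)\in u(C)\cap(-1,1)\times Y$, I must produce, for every $\tau\in(-1,1)$, points of $u(C)$ within $1/n$ of $(\tau,y)$ and of $(t,\phi^\tau(y))$. Let $\zeta\in C$ with $u(\zeta)=z$. By Proposition~\ref{prop:local_area_bound}, $\operatorname{Area}_{u^*\bar g}(S_{\epsilon_3}(\zeta))\leq \epsilon_3^{-1}(\chi(C)^2+1)\leq\epsilon_3^{-1}(b^2+1)$, a bound depending only on $b$ and the stable data. Now I invoke a monotonicity/isoperimetric argument: if the image of the $\epsilon_3$-ball $S_{\epsilon_3}(\zeta)$ failed to come within $1/n$ of the vertical segment $\{(\tau,y):\tau\in(-1,1)\}$ or of the $R_\eta$-orbit segment through $y$, then — because $u$ is $\bar J$-holomorphic and $\bar J$ maps $\partial_a\mapsto \bar R$ — one can slide $\zeta$ and produce, from a long stretch of $C$ near $\zeta$ that stays away from these segments, extra area of definite size, contradicting the bound once $d$, i.e. once $n$, forces $\epsilon_5$ small. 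More concretely: shrink $\epsilon_5$ (depending on $n,b$) so that, were the conclusion to fail, the component of $u$ through $\zeta$ restricted to a suitable sub-slice would have an interior max/min of $a\circ u$ at distance $\geq 1$ from its boundary (this uses hypothesis (i) to guarantee the boundary is far), and then Proposition~\ref{prop:fh_quantization} gives action $\geq\hbar(\bar\eta,\bar J,1)>\epsilon_5$, a contradiction. The two bounds in \eqref{eq:almost_cyl_bounds} are symmetric in the roles of the $a$-direction and the $\bar R$-direction (the metric $\bar g$ treats $da$ and $\bar\lambda$ on equal footing), so the same argument handles both.

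The main obstacle I anticipate is making the "failure produces definite extra area/action" step rigorous: one needs to quantitatively convert the geometric statement "$u(C)$ misses a neighborhood of a vertical or orbit segment near $z$" into a lower bound on $\int u^*\bar\omega$ or on a local area that beats the upper bounds above. The clean way is to argue by contradiction and compactness — if no $\epsilon_5(\bar\eta,\bar J,n,b)$ worked, take a sequence of counterexamples $u_i$ with $\int u_i^*\bar\omega\to 0$ and $\chi(C_i)\geq -b$; by the area bound (Proposition~\ref{prop:local_area_bound}) and the compactness theorem for $J$-holomorphic currents, a subsequence converges to a limit current of zero action, which by Lemma~\ref{lem:zero_action} is supported in $\bR\times\gamma$ for an $\bar R$-orbit $\gamma$, i.e. exactly a cylinder; then the almost-cylinder bounds for $u_i$ with constant $1/n$ hold for large $i$ by Hausdorff convergence, contradicting that the $u_i$ were counterexamples. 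This compactness route avoids delicate explicit estimates, at the cost of a (harmless) non-effective constant — which is fine since $\epsilon_5$ is only required to be stable, not explicit. I would present this compactness argument as the main body of the proof, citing Proposition~\ref{prop:local_area_bound} for the a priori area bound needed to invoke current compactness and Lemma~\ref{lem:zero_action} to identify the limit.
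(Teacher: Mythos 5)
Your main argument (the compactness route in your last paragraph) is the same strategy as the paper's proof: argue by contradiction with a sequence of counterexamples whose action tends to $0$, identify the limit as a cylinder over an $\bar R$-invariant set via the zero-action lemma, and conclude that the almost-cylinder bounds must eventually hold, a contradiction. However, as written there is a genuine gap in the compactness step. Taubes-type compactness for holomorphic currents (Proposition~\ref{prop:currents_compactness}) requires a uniform bound on the \emph{mass} of the currents over the slab, and no such global bound follows from hypotheses (i)--(iii). Your first paragraph claims a bound of the form $C_0(\chi(C)^2+1)$ on the area of $u$ over $[-2,2]\times Y$ via Proposition~\ref{prop:fh_area_bound} and a covering by $\epsilon_3$-balls, but Proposition~\ref{prop:fh_area_bound} only controls slice $\bar\lambda$-integrals in terms of the (uncontrolled) boundary slice integrals, and the covering argument is circular: the number of disjoint $\epsilon_3$-balls needed is itself only bounded if you already have a global area bound. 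In fact the bound is false: an $m$-fold cover of a trivial cylinder $[-8,8]\times\gamma$ over a closed orbit $\gamma$ has $\chi=0$, zero action, boundary at $a=\pm 8$, yet area of order $m$ over $(-2,2)\times Y$, unbounded as $m\to\infty$. So the global current-compactness step cannot be invoked as stated.

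The paper circumvents exactly this by localizing: it applies Proposition~\ref{prop:local_area_bound} only to the balls $S_{\epsilon_3}(\zeta_k)$ around domain points hitting $(-2,2)\times Y$ (where the area \emph{is} bounded), uses $\chi(C_k)\ge -b$ to bound the genus of these pieces, and invokes target-local Gromov compactness on each such piece to produce a zero-action local limit curve through every point of the Hausdorff limit $\Xi$ of the sets $u_k(C_k)\cap(-2,2)\times Y$. Lemma~\ref{lem:zero_action} then shows $\Xi$ is locally invariant under both translations and the flow, hence $\Xi=(-2,2)\times\Lambda$ with $\Lambda$ compact and invariant, and the almost-cylinder estimates for large $k$ follow from Hausdorff convergence, giving the contradiction. (Note also that the paper builds the stability of $\epsilon_5$ into the contradiction by letting the data $(\bar\eta_k,\bar J_k)$ vary and converge to $(\bar\eta,\bar J)$, which your sketch omits.) Your argument becomes correct if you replace the global current limit by this pointwise/local compactness, or alternatively by local current compactness on the $\epsilon_3$-balls; the direct monotonicity-plus-quantization sketch in your first two paragraphs is not needed and, as you suspected, would be hard to make rigorous.
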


\begin{proof}
Assume for the sake of contradiction that the proposition is false. Then, there exists a convergent sequence $(\bar{\eta}_k, \bar{J}_k) \to (\bar{\eta}, \bar{J})$ and compact, irreducible $\bar{J}_k$-holomorphic curves $u_k: C_k \to [-8, 8] \times Y$ satisfying (i), (ii), and the bound $\int_{C_k} u_k^*\bar{\omega}_k \leq k^{-1}$, such that the sets $u_k(C_k) \cap (-1, 1) \times Y$ are not $1/n$-almost cylinders with respect to $(\bar\eta_k, \bar{J}_k)$ for any $k$. The remainder of the proof will derive a contradiction in $2$ steps.

\noindent\textbf{Step $1$:} This step proves that, after passing to a subsequence, there exists some non-empty, compact, $\bar{R}_\eta$-invariant set $\Lambda \subseteq Y$ such that
\begin{equation}\label{eq:almost_cylinders_1} u_k(C_k) \cap (-2, 2) \times Y \to (-2, 2) \times \Lambda\end{equation}
in the Hausdorff topology on $\cK( (-2, 2) \times Y)$. We prove \eqref{eq:almost_cylinders_1} using Proposition~\ref{prop:local_area_bound}. The proof is all but identical to other recent results (see \cite[Proposition $4.47$]{FH23} or \cite[Theorem $6$]{CGP24}), so we only provide a sketch. After passing to a subsequence, the slices $u_k(C_k) \cap (-2, 2) \times Y$ converge to $\Xi \in \cK((-2,2) \times Y)$. Since each $C_k$ is irreducible, it follows from (i) that $u_k(C_k) \cap [-1,1] \times Y$ is non-eempty for each $k$. Therefore, the set $\Xi$ is non-empty. 

To prove \eqref{eq:almost_cylinders_1}, it suffices to show that for any point $z = (a, y) \in \Xi$, there exists $\delta > 0$ such that for any $\tau \in (-\delta, \delta)$, we have $(a + \tau, y) \in \Xi$ and $(a, \phi^\tau(y)) \in \Xi$. Then there is a sequence of points $\zeta_k \in C_k$ such that $z_k := u_k(\zeta_k) \in (-2,2)\times Y$ and $z_k \to z$. The surfaces $S_k := S_{\epsilon_3}(\zeta_k)$ have uniformly bounded area by Proposition~\ref{prop:local_area_bound}, and have uniformly bounded genus since $\chi(C_k) \geq -b$ for each $k$. Thus, by target-local Gromov compactness \cite{Fish11}, the restrictions $v_k := u_k|_{S_k}$, after passing to a subsequence and shrinking the surfaces $S_k$ slightly, converge to a map $v: S \to [-8,8] \times Y$ with $\int_{S} v^*\bar\omega = 0$ and $z \in v(S)$. By Lemma~\ref{lem:zero_action}, $v(S)$ lies inside $\bR \times \Gamma(\bR)$, where $\Gamma: \bR \to Y$ is the unique $R_\eta$-trajectory with $\Gamma(0) = y$, and the desired property of $z$ follows.  

\noindent\textbf{Step $2$:} Fix any sequence of points $z_k = (a_k, y_k) \in u_k(C_k) \cap (-2,2) \times Y$ and any sequence $\tau_k \in (-2, 2)$. This step completes the proof by showing 
\begin{equation}\label{eq:almost_cylinders_2} 
\begin{split}
\limsup_{k \to \infty} \operatorname{dist}_{\bar{g}_k}( (\tau_k, y_k), u_k(C_k) \cap (-2,2)\times \Lambda) = 0,\\ \limsup_{k \to \infty} \operatorname{dist}_{\bar{g}_k}( (a_k, \phi^{\tau_k}(y_k)), u_k(C_k) \cap (-2,2)\times \Lambda) = 0.
\end{split}
\end{equation}

It is sufficient to establish \eqref{eq:almost_cylinders_2} to complete the proof, because \eqref{eq:almost_cylinders_2} implies that $u_k(C_k) \cap (-1,1)\times Y$ is a $1/n$-almost cylinder for large enough $k$, which gives a contradiction. It follows from the Hausdorff convergence in \eqref{eq:almost_cylinders_1} that
$$\limsup_{k \to \infty} \operatorname{dist}_{\bar{g}_k}(z_k, [-2,2] \times \Lambda) = 0.$$

For each $k$, choose a point $z'_k = (a_k', y_k') \in [-2, 2] \times \Lambda$ such that 
$$\operatorname{dist}_{\bar{g}_k}(z_k, z'_k) = \operatorname{dist}_{\bar{g}_k}(z_k, [-1,1] \times \Lambda).$$

Thus, we have $|a_k - a_k'| \to 0$ and $\operatorname{dist}(y_k, y_k') \to 0$, where the distance on $Y$ is taken with respect to some arbitrary but fixed Riemannian metric. It follows that
\begin{equation}\label{eq:almost_cylinders_3} \limsup_{k \to \infty} \operatorname{dist}_{\bar{g}_k}( (\tau_k, y_k), (\tau_k, y_k')) = 0,\quad \limsup_{k \to \infty} \operatorname{dist}_{\bar{g}_k}( (a_k, \phi^{\tau_k}(y_k)), (a_k, \phi^{\tau_k}(y'_k)) = 0.\end{equation}

Since $(\tau_k, y_k')$ and $(a_k, \phi^{\tau_k}(y_k'))$ lie in $(-2, 2) \times \Lambda$ for each $k$, it follows from \eqref{eq:almost_cylinders_3} that
\begin{equation}\label{eq:almost_cylinders_4} \limsup_{k \to \infty} \operatorname{dist}_{\bar{g}_k}( (\tau_k, y_k), (-2, 2) \times \Lambda) = 0,\quad \limsup_{k \to \infty} \operatorname{dist}_{\bar{g}_k}( (a_k, \phi^{\tau_k}(y_k)), (-2, 2) \times \Lambda) = 0.\end{equation}

Combining \eqref{eq:almost_cylinders_4} with \eqref{eq:almost_cylinders_1} proves \eqref{eq:almost_cylinders_2}. 
\end{proof}

Now, we are ready to prove Proposition~\ref{prop:almost_cylinders}.

\begin{proof}[Proof of Proposition~\ref{prop:almost_cylinders}]
Let $d$ be sufficiently large so that $64md^{-1} \leq \epsilon_5(\eta, J, n, b_*)$, where $\epsilon_5$ denotes the stable constant from Proposition~\ref{prop:low_action_almost_cyl}. Fix any $(\Xi, s) \in \cX_d(\bfk)$, where $s \not\in \mathbf{r}_\omega(d, \epsilon_d; \bfk) \cup \mathbf{r}_\chi(d, b_*; \bfk)$. Then, there exists a sequence $a_j \in (-k_j, k_j)$ such that $k_j^{-1}a_j \to s$ and the slices
$$\Xi_j := \tau_{a_j} \cdot (u_{d,k_j}(C_{d,k_j}) \cap (a_j - 1, a_j + 1) \times Y)$$
converge in the Hausdorff topology to $\Xi$. For each $j$, choose a closed, connected interval $\mathcal{L}_j$ with endpoints in $\cR$, such that
$$[k_j^{-1}(a_j - 4), k_j^{-1}(a_j + 4)] \subset \mathcal{L}_j \subset [k_j^{-1}(a_j - 6), k_j^{-1}(a_j + 6)].$$

Consider the capped slices $Z_j := C_{d,k_j}^{\mathcal{L}_j}$. Note that $u_{d,k_j}(Z_j) \subset [a_j - 8, a_j + 8] \times Y$ for each $j$. Since $s \not\in \mathbf{r}_\chi(d, b_*; \bfk)$, we have that, for sufficiently large $j$, each irreducible component of $Z_j$ has Euler characteristic at most $-64m^2$. Since $s \not\in \mathbf{r}_\omega(d, \epsilon_d; \bfk)$, we have that, for sufficiently large $j$, the action $\int_{Z_j} u_{d,k_j}^*\omega_{k_j}$ is at most $64md^{-1} \leq \epsilon_5(\eta, J, n, b_*)$. By Proposition~\ref{prop:low_action_almost_cyl}, the set $\Xi_j = u_{d,k_j}(Z_j) \cap (-1, 1) \times Y$ is a $1/n$-almost cylinder with respect to $(\eta_{k_j}^{a_j}, J_{k_j}^{a_j})$ for each $j$. By Lemma~\ref{lem:almost_cyl_limits} and Corollary~\ref{cor:geometric_convergence}, it follows that $\Xi$ is a $1/n$-almost cylinder with respect to $(\eta, J)$. 
\end{proof}

\subsubsection{Existence of a single proper compact invariant set} The following result asserts that at levels where there is no $\omega$-accumulation and controlled $\chi$-accumulation, any element of the stretched limit set is a cylinder over a proper, compact, $R_\eta$-invariant set. A key part of the proof is the use of intersection theory to show that the invariant set is proper. This part is inspired by \cite[Theorem $7$]{FH23}, which shows that certain feral holomorphic curves have ends limiting to proper invariant sets. The proof of that result required some curvature bounds (see \cite[Theorem $6$]{FH23}) for feral curves that are not available to us. We use alternative arguments to circumvent this issue.

\begin{prop}\label{prop:intersection_argument}
Fix $d \geq 1$, $\bfk = \{k_j\}$, and $s \in (-1,1)\,\setminus\,(\bfs_\omega(d; \bfk) \cup \mathbf{r}_\chi(d, b_*; \bfk))$. Then, for any $(\Xi, s) \in \cX_d(\bfk)$, the set $\Xi$ is equal to $(-1, 1) \times \Lambda$, where $\Lambda \subseteq Y$ is a proper, non-empty, compact, $R_\eta$-invariant set. 
\end{prop}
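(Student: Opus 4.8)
The plan is to combine Proposition~\ref{prop:almost_cylinders} (to see that $\Xi$ is a $1/n$-almost cylinder for every $n$, hence a cylinder $(-1,1)\times\Lambda$ over a non-empty compact $R_\eta$-invariant set by Lemma~\ref{lem:almost_cyl_to_cyl}) with an intersection-theoretic argument to rule out $\Lambda = Y$. First I would observe that since $s\in(-1,1)\setminus(\bfs_\omega(d;\bfk)\cup\mathbf{r}_\chi(d,b_*;\bfk))$, for every $n$ we have $s\notin\bfs_\omega(d,\epsilon_d;\bfk)$ once $d\geq d_n$ (because $\bfs_\omega(d,\epsilon_d;\bfk)\subseteq\bfs_\omega(d;\bfk)$, using $\epsilon_d>0$), so Proposition~\ref{prop:almost_cylinders} applies for all large $d$; but here $d$ is fixed, so I instead use the cleaner route: $s\notin\bfs_\omega(d;\bfk)=\bigcup_{\epsilon>0}\bfs_\omega(d,\epsilon;\bfk)$ means \emph{no} action accumulates near $s$, i.e. for the capped slices $Z_j := C_{d,k_j}^{\mathcal L_j}$ associated to shrinking intervals $\mathcal L_j\to\{s\}$, we have $\int_{Z_j}u_{d,k_j}^*\omega_{k_j}\to 0$. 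Combined with $\chi$-control from $s\notin\mathbf{r}_\chi(d,b_*;\bfk)$ and Proposition~\ref{prop:low_action_almost_cyl} applied with successively smaller action thresholds, this shows $\Xi$ is a $1/n$-almost cylinder for every $n\geq 1$; by Lemma~\ref{lem:almost_cyl_to_cyl}, $\Xi=(-1,1)\times\Lambda$ with $\Lambda$ a non-empty compact $R_\eta$-invariant subset of $Y$.

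The remaining and essential point is $\Lambda\neq Y$. I would argue by contradiction: suppose $\Lambda = Y$. Then the capped slices $Z_j$, which for large $j$ are $\widehat{J}_{k_j}$-holomorphic curves with image converging (in the Hausdorff sense) to all of $(-1,1)\times Y$ and with action $\to 0$, must in particular have image $\delta_j$-dense in $(a_j-1,a_j+1)\times Y$ with $\delta_j\to 0$. The idea, following \cite[Theorem~7]{FH23}, is that a curve whose image fills up a neighborhood of $\{a_0\}\times Y$ inside the neck $[-L_k,L_k]\times Y\subset\bW_k$ must intersect one of the auxiliary constraint curves or a fixed reference holomorphic curve positively, forcing a lower bound on its homological intersection number, hence (by positivity of intersections in dimension four) on its area, contradicting $\int_{Z_j}u_{d,k_j}^*\omega_{k_j}\to 0$. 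Concretely: in $\bW=\mathbb{CP}^2$ the divisor $\bD$ at infinity, pushed forward to $\bW_k$, is disjoint from the neck; meanwhile the line class $A_k$ has $A_k\cdot A_k = 1$, so $u_{d,k_j}$ has homological self-intersection $(4md)^2$. If $\Lambda = Y$, then for $j$ large the slice passes near a fixed point $y_*\in Y$ at a fixed height inside the neck; take a small $\widecheck{J}$-holomorphic disk $D_*$ transverse to $\{a_0\}\times Y$ through $(a_0,y_*)$ (or use a fiber of a Lefschetz-type pencil), which is disjoint from $\partial$-data, and note that the algebraic intersection number $u_{d,k_j}\cdot D_*$ is determined homologically and is independent of $j$, while geometrically the slice being $\delta_j$-dense forces $u_{d,k_j}$ to actually meet $D_*$ — but this alone is not a contradiction.

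The cleaner mechanism, which I expect is the intended one, is the following: if $\Lambda = Y$ then for large $j$ the curve $u_{d,k_j}$ restricted to a slightly larger slab $[a_j-8,a_j+8]\times Y$ has image that is $\delta_j$-dense, so in particular it intersects the $\widecheck{J}_{k_j}$-holomorphic curve through the auxiliary point $w_+\in\bW_+$ — or more usefully, it comes within $\delta_j$ of \emph{every} fiber of a ruling, and then a neck-stretching/positivity-of-intersections argument (as in \cite{FH23}) shows the number of transverse intersections with a fixed curve grows, or alternatively the area $\int_{Z_j}u^*\Omega_{k_j}$ is bounded below by the symplectic area swept, which is bounded below independent of $j$ since a $\delta$-dense holomorphic curve in $[a_0-1,a_0+1]\times Y$ with $\delta$ small has area bounded below (a monotonicity/filling estimate). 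This last inequality — ``a holomorphic curve whose image is $\delta$-dense in a slab of a symplectization-type manifold has area bounded below by a constant independent of $\delta$ for $\delta$ small'' — follows from the monotonicity bound \cite[Proposition~3.4]{Fish11} applied at many disjoint small balls, or from Proposition~\ref{prop:local_area_bound} run in reverse, and it directly contradicts $\int_{Z_j}u_{d,k_j}^*\omega_{k_j}\to 0$ together with $2\Omega_k$-domination from Lemma~\ref{lem:j_tame}. I therefore obtain $\Lambda\subsetneq Y$, completing the proof.

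\textbf{Main obstacle.} The hard part will be making the ``$\Lambda = Y$ forces large area'' step rigorous and genuinely quantitative: one must rule out that the curve is $\delta$-dense yet low-area by being, e.g., a very thin tendril winding densely. The monotonicity bound gives a lower area bound per unit ball only when the curve passes through the \emph{center} of the ball with its boundary far away, so one needs to cover $Y$ by controllably many such balls, ensure the curve's boundary stays out (guaranteed since $\partial Z_j$ sits at heights $\approx a_j\pm 8$), and use that $\delta_j\to 0$ density means the curve enters each ball — but it must enter ``substantially,'' not just graze it, which is where an intersection-number input (positivity of intersections with a fixed auxiliary curve, forcing the curve to cross rather than tangentially touch) does the real work. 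This is precisely the role of the phrase ``at one point the intersection theory of holomorphic curves'' in the introduction and the reason the argument is four-dimensional; I would structure this step as a self-contained lemma: a $\widecheck{J}$-holomorphic curve $v\colon Z\to\bW_k$ with $u_{d,k}$-type homology, with $\partial Z$ mapping outside $U_{\delta_3}$-type collar, and whose image is $\delta$-dense in $[a_0-1,a_0+1]\times Y$ for $\delta$ below a stable threshold, satisfies $\int_Z v^*\Omega_k\geq c$ for a stable $c>0$.
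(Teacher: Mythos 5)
The first half of your argument --- $s\notin\bfs_\omega(d;\bfk)$ together with the $\chi$-control, fed into Proposition~\ref{prop:low_action_almost_cyl} and Lemma~\ref{lem:almost_cyl_to_cyl}, giving $\Xi=(-1,1)\times\Lambda$ with $\Lambda$ non-empty, compact and $R_\eta$-invariant --- is correct and is exactly the paper's Lemma~\ref{lem:intersection_argument}. The gap is in the properness step, which is the real content of the proposition. Your proposed contradiction is that $\delta_j$-density of the slices forces an area lower bound (monotonicity applied at many disjoint balls), contradicting $\int_{Z_j}u_{d,k_j}^*\omega_{k_j}\to 0$. This conflates area with action. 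Monotonicity bounds the $\widehat g_k$-area, i.e.\ $\int u^*(da\wedge\lambda_k+\omega_k)$, from below, and that quantity is not small for these slices: a nearly vertical piece, tangent to $\operatorname{Span}(\partial_a,R_\eta)$, has essentially zero $\omega$-action but order-one area, and a low-action curve can fill the slab with such tendrils without violating any monotonicity estimate. Lemma~\ref{lem:j_tame} does not rescue this either: in the middle of the stretched neck one has $\Omega_k\approx\phi_k\,da\wedge\lambda_k+\omega_k$ with $\phi_k=k^{-2}$, so the $\Omega_k$-area of a slice is comparable to its action plus a term crushed by $\phi_k$; density alone gives no uniform lower bound on either. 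Consequently your fallback lemma (``$\delta$-dense with boundary far away implies $\int_Z v^*\Omega_k\geq c$'') cannot be extracted from \cite[Proposition~$3.4$]{Fish11} at many balls; as stated it is essentially the Fish--Hofer properness assertion itself, i.e.\ the thing to be proved.

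What closes the argument in the paper is a homological budget, not an area bound. One takes the moduli space $\cM_{k_j}$ of embedded degree-one $J_{k_j}$-holomorphic spheres through a point of $\bD$; an open--closed argument (automatic transversality plus Gromov compactness, Lemma~\ref{lem:spheres}) produces for each $j$ a sphere $\mathbb{S}_j$ whose lowest point in the neck sits exactly at height $a_j=k_js$, while homologically $\mathbb{S}_j\cdot u_{d,k_j}=4md$. Assuming $\Lambda=Y$, one extracts by target-local Gromov compactness a limit disk inside the $\mathbb{S}_j$ on which $\omega\neq 0$, builds tubular-neighborhood coordinates whose fibers are $J$-holomorphic disks in the leaves of the vertical foliation $\bR\times\Gamma_y(\bR)$, and uses density to find, near $8md$ marked points of that disk, local pieces of $u_{d,k_j}$ of bounded genus and area which converge, having zero action (Lemma~\ref{lem:zero_action}), onto vertical leaves covering those fibers. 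Homotopy invariance of the intersection number in these graphical coordinates, plus positivity of intersections, then forces $\mathbb{S}_j\cdot u_{d,k_j}\geq 8md$ for large $j$, contradicting $4md$. Your first sketch gestures at ``intersect with a fixed curve,'' which is the right instinct, but you never identify the finite intersection budget to be exceeded, nor the mechanism (local limits onto vertical leaves crossing a transverse disk in a sphere whose bottom is pinned at the right height) by which density manufactures many intersections; without these the intersection idea indeed ``is not a contradiction,'' as you yourself note, and the area route you then commit to does not work.
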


As preparation, we introduce the moduli space of degree $1$ holomorphic spheres and its relevant properties. For each $k$, let $\cM_k$ denote the moduli space of $J_k$-holomorphic spheres in $\bW_k$ that are Poincar\'e dual to $[\Omega]$ and pass through a fixed point $w_\infty \in \bD$. By the adjunction inequality \cite[Theorem $1.3$]{McDuff91}, each element of $\cM_k$ is embedded. 

\begin{lem}\label{lem:spheres}
The moduli space $\cM_k$ satisfies the following properties for each $k$:
\begin{enumerate}[(a)]
\item For any $k$ and any $\mathbb{S} \in \cM_k$, the algebraic intersection number of $u_{d,k}$ and $\mathbb{S}$ is equal to $4md$. 
\item For any $k$ and any $a_0 \in [-k,k]$, there exists some $\mathbb{S} \in \cM_k$ such that 
$$\inf_{z \in \mathbb{S} \cap [-k,k] \times Y} a(z) = a_0.$$
\end{enumerate}
\end{lem}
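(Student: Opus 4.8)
The plan is to establish the two properties separately, using standard Gromov theory for degree-one spheres in $\mathbb{CP}^2$ together with the specific geometry of the stretched manifolds $\bW_k$.

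For property (a), I would argue homologically. The moduli space $\cM_k$ consists of $J_k$-holomorphic spheres representing the class $A_k \in H_2(\bW_k;\bZ)$ Poincar\'e dual to $[\Omega_k]$, passing through the fixed point $w_\infty \in \bD$. The curve $u_{d,k}$ from Corollary~\ref{cor:closed_curves} represents $4md\,A_k$. Since $\bW_k$ is diffeomorphic to $\mathbb{CP}^2$ via $f_k$ and $f_k$ carries $A_k$ to $A$ (the generator dual to $[\Omega]$), we have the intersection pairing $A\cdot A = 1$ in $H_2(\mathbb{CP}^2;\bZ)$. Hence the algebraic (homological) intersection number of any $\mathbb{S}\in\cM_k$ with $u_{d,k}$ equals $(A_k)\cdot(4md\,A_k) = 4md\,(A_k\cdot A_k) = 4md$. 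This is purely topological and does not depend on transversality or even on the curves being distinct; I would just remark that algebraic intersection number is a homological invariant for closed pseudoholomorphic curves in a $4$-manifold (see e.g. McDuff--Salamon). The embeddedness statement follows from the cited adjunction inequality \cite[Theorem $1.3$]{McDuff91} since $A_k\cdot A_k = 1$ and $2g(\mathbb{S}) - 2 = -2 = (A_k\cdot A_k) - c_1(A_k)$ forces the adjunction defect to vanish; I will state this briefly.

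For property (b), I would use the fact that for the $\Omega$-tame almost-complex structure $\widecheck{J}_k$ on $\bW$ (which is genuinely $\mathbb{CP}^2$), there is through any point a unique embedded $\widecheck{J}_k$-holomorphic sphere in the class $A$; this is classical positivity-of-intersections plus Gromov, and the space of such spheres through $w_\infty\in\bD$ is a smooth $2$-sphere's worth of curves foliating $\bW\setminus\{w_\infty\}$ (a pencil of lines through $w_\infty$). Transporting this family to $\bW_k$ via $f_k$ gives $\cM_k$, and every point of $\bW_k$ other than $w_\infty$ lies on exactly one member. Now fix $a_0\in[-k,k]$ and consider the function on $\cM_k$ given by $\mathbb{S}\mapsto \inf_{z\in\mathbb{S}\cap[-k,k]\times Y} a(z)$, i.e. the lowest height attained by $\mathbb{S}$ inside the neck (setting it to $k$, say, if $\mathbb{S}$ misses the neck entirely, or handling the cases where $\mathbb{S}$ dips below $-k$). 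The family $\cM_k$ is connected; I would use an intermediate-value argument: there exist members of $\cM_k$ contained entirely in $\bW_+$ (spheres close to $\bD$ or to a line not meeting the deep neck) and members passing through points with $a$-coordinate close to $-k$ (a sphere through a point $(- k + \epsilon, y)$), so the infimum-of-$a$ function takes values near $k$ (or $+\infty$, truncated) and values near $-k$. Continuity of this function along the connected family — which follows from continuous dependence of the unique line through a moving point, plus Hausdorff continuity of $\mathbb{S}\cap[-k,k]\times Y$, and the fact that we can choose $a_0$ a regular value — then yields some $\mathbb{S}$ with infimum exactly $a_0$. The one technical wrinkle is that a single sphere $\mathbb{S}$ could, a priori, enter and leave the neck several times, but since $\mathbb{S}$ is connected and meets $\bD$ (hence $\bW_+$), and since $\mathbb{S}\cap([-k,k]\times Y)$ is a closed subset on which $a$ is continuous, the infimum is well-defined; I would note that it in fact suffices to find $\mathbb{S}$ whose lowest point is at height $a_0$, not to control the full intersection pattern.

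The main obstacle I anticipate is making the continuity/intermediate-value argument in part (b) fully rigorous across the connected family $\cM_k$: one must check that the map $\mathbb{S}\mapsto \inf\{a(z): z\in\mathbb{S}\cap[-k,k]\times Y\}$ is continuous (or at least that its image omits no regular value in $[-k,k]$), which requires Gromov compactness for the family $\cM_k$ — ruling out bubbling, since the class $A$ is primitive and of minimal area, so no nonconstant bubble can split off and the family is automatically compact. Once compactness of $\cM_k$ and the uniqueness of the line through each point are in hand, the rest is a routine connectedness argument, so I would spend most of the write-up setting up those two facts and then invoke intermediate value along a path in $\cM_k$ from a high sphere to a low one. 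I would keep the exposition short, citing \cite{McDuff91} and standard references for the pencil-of-lines structure.
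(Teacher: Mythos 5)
Your part (a) is exactly the paper's argument: the intersection number is purely homological, $[\mathbb{S}]\cdot (u_{d,k})_*[C_{d,k}] = A_k\cdot 4md\,A_k = 4md$, which the paper dispatches by citing Corollary~\ref{cor:closed_curves}. For part (b), your route is correct but genuinely different from the paper's. The paper extends $a$ to all of $\bW_k$ (with $a\geq k$ on $\bW_+$, $a\leq -k$ on $\bW_-$) and runs an open--closed argument on the set $\mathbf{a}$ of values realized as $\inf_{z\in\mathbb{S}}a(z)$: non-emptiness since $\bD\in\cM_k$, openness from automatic transversality \cite{HLS97}, closedness from Gromov compactness with no bubbling because the class is primitive (following \cite[Proposition 3.5]{FH23}). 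You instead build the full pencil structure --- existence and uniqueness of the $J_k$-line through $w_\infty$ and any second point, compactness of $\cM_k$, connectedness via the map sending a point to the unique line through it, and continuity of the minimum-height function --- and then apply the intermediate value theorem between $\bD$ and a line through a point of $\bW_-$. What each buys: your argument needs the stronger structural input (uniqueness through two points via positivity of intersections, and continuity of the line as a function of the point, which itself uses compactness plus uniqueness) but entirely avoids the openness step; the paper's argument needs only that $\mathbf{a}$ is open and closed, so it never has to establish the foliation/pencil picture, at the cost of invoking automatic transversality. Both hinge on the same no-bubbling compactness for the primitive class. Two small points to tighten in your write-up: adopt the paper's extension of $a$ to all of $\bW_k$, which cleanly disposes of your ``truncation'' wrinkle (a sphere whose global infimum is $a_0\in(-k,k)$ automatically has infimum $a_0$ over $\mathbb{S}\cap[-k,k]\times Y$); and note that connectedness of $\cM_k$ (not path-connectedness) already suffices for the intermediate value step, so the continuous image of $\cM_k$ under the min-height function is an interval containing values $\geq k$ and $\leq -k$.
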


\begin{proof}
Property (a) follows from Corollary~\ref{cor:closed_curves}. Property (b) follows from an open-closed argument as in the very similar \cite[Proposition $3.5$]{FH23}. Extend the coordinate $a$ to a smooth function $a: \bW_k \to \bR$, such that $a \geq k$ on $\bW_+$ and $a \leq -k$ on $\bW_-$. The range $a(\bW_k)$ is a compact interval $\cI \subset \bR$ containing $[-k, k]$. Let $\mathbf{a}$ denote the set of all $a_0 \in \cI$ for which there exists $\mathbb{S} \in \cM_k$ such that $\inf_{z \in \mathbb{S}} a(z) = a_0$. Since $\bD \in \cM_k$, the set $\mathbf{a}$ is non-empty. By automatic transversality of the moduli space \cite{HLS97}, the set $\mathbf{a}$ is open. By the Gromov compactness theorem, and the lack of bubbling due to the degree restriction, the set $\mathbf{a}$ is closed. We conclude that $\mathbf{a} = \cI$. Therefore, $[-k, k] \subset \mathbf{a}$, which is equivalent to Property (b). 
\end{proof}

The following lemma proves everything but properness of $\Lambda$. 

\begin{lem}\label{lem:intersection_argument}
Fix $d$, $\bfk$, $s$ as in Proposition~\ref{prop:intersection_argument}. Then, for any $(\Xi, s) \in \cX_d(\bfk)$, the set $\Xi$ is equal to $(-1, 1) \times \Lambda$, where $\Lambda \subseteq Y$ is a non-empty, compact, $R_\eta$-invariant set. 
\end{lem}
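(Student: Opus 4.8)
The plan is to combine the almost-cylinder machinery with the action-quantization results. Fix $(\Xi, s) \in \cX_d(\bfk)$ and a sequence $a_j \in (-k_j, k_j)$ with $k_j^{-1} a_j \to s$ realizing $\Xi$ as a Hausdorff limit of the height-two slices of $u_{d,k_j}$ around $a_j$. Since $s \notin \mathbf{r}_\omega(d;\bfk)$, in particular $s \notin \mathbf{r}_\omega(d,\epsilon_d;\bfk)$ for every $d$, and since $s \notin \mathbf{r}_\chi(d, b_*;\bfk)$, Proposition~\ref{prop:almost_cylinders} applies: for all large $d$ (which we have already fixed large enough), $\Xi$ is a $1/n$-almost cylinder with respect to $(\eta, J)$. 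But the hypothesis $s \notin \mathbf{r}_\omega(d;\bfk)$ is much stronger than $s \notin \mathbf{r}_\omega(d,\epsilon_d;\bfk)$ — it says \emph{no} action accumulates at $s$. The first main step is to upgrade this: I would re-run the argument of Proposition~\ref{prop:almost_cylinders} and Proposition~\ref{prop:low_action_almost_cyl}, but now, because the action of the capped slices $C_{d,k_j}^{\mathcal{L}_j}$ tends to $0$ as the intervals $\mathcal{L}_j$ shrink to $\{s\}$, conclude that $\Xi$ is a $1/n$-almost cylinder for \emph{every} $n \geq 1$. Concretely: given $n$, choose intervals $\mathcal{L}_j$ of length slightly more than $16/k_j$ around $a_j$; since $s \notin \mathbf{r}_\omega(d;\bfk)$, $\limsup_j \int_{C_{d,k_j}^{\mathcal{L}_j}} u_{d,k_j}^*\omega_{k_j} = 0$, so eventually it is below $\epsilon_5(\eta, J, n, b_*)$; also $s \notin \mathbf{r}_\chi(d,b_*;\bfk)$ bounds the Euler characteristic of each irreducible component of $C_{d,k_j}^{\mathcal{L}_j}$ by $-b_*$. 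Applying Proposition~\ref{prop:low_action_almost_cyl} componentwise (the target-local slice $u_{d,k_j}(C_{d,k_j}^{\mathcal{L}_j}) \cap (-1,1)\times Y$ is a union of the slices of the irreducible components, and a union of $1/n$-almost cylinders is a $1/n$-almost cylinder) and then Lemma~\ref{lem:almost_cyl_limits} with Corollary~\ref{cor:geometric_convergence} gives that $\Xi$ is a $1/n$-almost cylinder with respect to $(\eta, J)$ for this $n$. Since $n$ was arbitrary, $\Xi$ is a $1/n$-almost cylinder for all $n$.

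The second step is immediate: by Lemma~\ref{lem:almost_cyl_to_cyl}, a set which is a $1/n$-almost cylinder for every $n$ equals $(-1,1) \times \Lambda$ for some non-empty, compact, $X_H$-invariant subset $\Lambda \subseteq Y$; since $R_\eta = X_H$ by the construction in \S\ref{subsubsec:framed_hamiltonian}, $\Lambda$ is $R_\eta$-invariant. This completes the proof of Lemma~\ref{lem:intersection_argument}, which asserts everything except properness of $\Lambda$.

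I expect the main obstacle to be the bookkeeping in the first step — specifically, verifying that the capped slices $C_{d,k_j}^{\mathcal{L}_j}$ genuinely have action tending to $0$. One has to be careful that the "short components" $\Delta$ attached in forming the capped slice do not contribute extra action: a short component $Z$ has $u_{d,k_j}(Z) \subset (-k_j,k_j)\times Y$ with vertical oscillation $\leq 2$, but its $\omega_{k_j}$-integral is only controlled if we know it limits to the interval $\{s\}$. Here I would argue that any such short component attached to $\Sigma_j := (a\circ u_{d,k_j})^{-1}(k_j\mathcal{L}_j)$ lies within vertical distance $2$ of $k_j\mathcal{L}_j$, hence its $a$-image shrinks to $\{s\}$ as well, and then the definition of $\mathbf{r}_\omega(d;\bfk)$ — quantifying over \emph{all} sequences of intervals converging to $\{s\}$ — exactly rules out any positive $\limsup$ of the total action of $C_{d,k_j}^{\mathcal{L}_j}$. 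If the definition of $\mathbf{r}_\omega$ as stated controls only $\Sigma_j$ rather than $C_{d,k_j}^{\mathcal{L}_j}$, one slightly enlarges $\mathcal{L}_j$ to a comparably short interval whose preimage contains all the relevant short components, which is possible since only finitely many components cross any bounded $a$-window and the construction of capped slices only attaches components of oscillation $\leq 2$. The remaining pieces — the componentwise application of Proposition~\ref{prop:low_action_almost_cyl} and the passage to the limit — are routine given the lemmas already in hand.
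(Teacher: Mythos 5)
Your proposal is correct and follows essentially the same route as the paper: the paper's (much terser) proof likewise observes that $s \notin \bfs_\omega(d;\bfk)$ puts the action of the capped slices below the threshold $\epsilon_5(\eta, J, n, b_*)$ for every $n$ (no largeness of $d$ needed), invokes the argument of Proposition~\ref{prop:almost_cylinders} (i.e.\ Proposition~\ref{prop:low_action_almost_cyl} plus Lemma~\ref{lem:almost_cyl_limits} and Corollary~\ref{cor:geometric_convergence}) to conclude $\Xi$ is a $1/n$-almost cylinder for all $n$, and finishes with Lemma~\ref{lem:almost_cyl_to_cyl}. Your extra bookkeeping about short components is consistent with the paper's definitions, since the accumulation sets are defined via the capped slices themselves.
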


\begin{proof}
Fix any $n \geq 1$. Observe that $s \not\in \mathbf{r}_\omega(d, \epsilon_5; \bfk)$, where $\epsilon_5 = \epsilon_5(\eta, J, n, b_*) > 0$ is the constant from Proposition~\ref{prop:low_action_almost_cyl}. By Proposition~\ref{prop:almost_cylinders}, $\Xi$ is a $1/n$-almost cylinder. Therefore, $\Xi$ is a $1/n$-almost cylinder for every $n$, so the claim follows from Lemma~\ref{lem:almost_cyl_to_cyl}. 
\end{proof}

We are now ready to prove the proposition. 

\begin{proof}
By Lemma~\ref{lem:intersection_argument}, $\Xi = (-1, 1) \times \Lambda$ where $\Lambda \subseteq Y$ is a non-empty, compact, $R_\eta$-invariant set. It suffices to show that $\Lambda$ is a proper subset of $Y$. Assume for the sake of contradiction that $\Lambda = Y$. The proof will take $5$ steps. 

\noindent\textbf{Step $1$:} For each $j$, define $a_j := k_j s$. By Lemma~\ref{lem:spheres}(b), there exists for each $j$ a sphere $\mathbb{S}_j \in \cM_{k_j}$ such that $\inf_{z \in \mathbb{S}_j \cap [-k_j, k_j] \times Y} a(z) = a_j$. This step extracts a certain limiting disk from the sequence $\{\mathbb{S}_j\}$. For simplicity, assume that $\mathbb{S}_j$ intersects $\{a_j + 1/2\} \times Y$ transversely for each $j$. Choose $z_j \in \mathbb{S}_j$ such that $a(z_j) = a_j$. Let $\Sigma_j$ denote the connected component of $z_j$ in $\mathbb{S}_j \cap [a_j, a_j + 1/2] \times Y$. Let $v_j: \Sigma_j \to (-1, 1) \times Y$ denote the composition of $\Sigma_j \hookrightarrow \mathbb{S}_j$ with $\tau_{a_j}$. 

Each map $v_j$ is $J_{k_j}^{a_j}$-holomorphic; see \S\ref{subsubsec:geometric_convergence}. The surfaces $\Sigma_j$ each have zero genus. They have uniformly bounded area by Proposition~\ref{prop:fh_area_bound} and Remark~\ref{rem:stable_constants}. Therefore, by target-local Gromov compactness \cite{Fish11} and Corollary~\ref{cor:geometric_convergence}, after passing to a subsequence in $j$, there exists a sequence of surfaces $\overline{\Sigma}_j \subseteq \Sigma_j$ and a compact, connected $J$-holomorphic curve $\overline{v}: \overline{\Sigma} \to (-1, 1) \times Y$ such that (i) $\overline{\Sigma}_j$ contains $(a \circ v_j)^{-1}([1/4,1/4] \times Y)$ and (ii) the restrictions $\overline{v}_j := v_j|_{\overline{\Sigma}_j}$ converge to $\overline{v}$ in the Gromov topology. 

For any $r > 0$, let $D(r) \subset \mathbb{C}$ denote the closed disk of radius $r$ centered at the origin. Choose some small $r_0 > 0$ and an embedding $\theta: D(r_0) \hookrightarrow \overline{\Sigma}$ with image disjoint from any nodal points and from $\partial\overline{\Sigma}$, such that $\overline{v} \circ \theta$ is an embedding and such that $(\overline{v} \circ \theta)^*\omega \neq 0$ at any point in $D(r_0)$. 

\noindent\textbf{Step $2$:} This step introduces the so-called ``vertical foliation'' and uses it to define tubular neighborhood coordinates for the map $\overline{v} \circ \phi$. For any $y \in Y$, let $\Gamma_y: \bR \to Y$ denote the unique trajectory of $R_\eta$ for which $\Gamma_y(0) = y$. The planes $\bR \times \Gamma_y(\bR)$ form a smooth $2$-dimensional foliation of $\bR \times Y$ called the \emph{vertical foliation}. Each leaf is an immersed $J$-holomorphic plane, parameterized as follows. For any point $z = (a, y) \in \bR \times Y$, let $P_z: \mathbb{C} \to \bR \times Y$ be the map $s + t \cdot i \mapsto (a + s, \Gamma_y(t))$. The map $P_z$ is an injective $J$-holomorphic immersion with image $\bR \times \Gamma_y(\bR)$. 

Since $(\overline{v} \circ \theta)^*\omega \neq 0$ everywhere, the map $\overline{v} \circ \phi$ is transverse to the vertical foliation. We construct a tubular neighborhood such that each fiber is a $J$-holomorphic disk inside a vertical leaf. We formulate this precisely as follows. There exists an embedding $\Theta: D(r_0) \times D(r_0) \hookrightarrow \bR \times Y$ with the following properties:
\begin{itemize}
\item For any $\zeta \in D(r_0)$, we have $\Theta(\zeta, 0) = (\overline{v} \circ \theta)(\zeta)$;
\item For any $\zeta \in D_{r_0}$, write $z = (\overline{v} \circ \theta)(\zeta)$. The map $\Theta(\zeta, -): D(r_0) \to \bR \times Y$ is equal to the restriction of $P_z$ to the disk $D(r_0)$. 
\end{itemize}

The images $\overline{v}_j(\overline{\Sigma}_j)$ are graphical over $D(r_0/2)$ for large $j$. Since $\theta(D(r_0)) \subset \overline{\Sigma}$ does not contain any nodal points or boundary points, there exist embeddings $\theta_j: D(r_0) \to \overline{\Sigma}_j$ such that $\overline{v}_j \circ \theta_j$ converges to $\overline{v} \circ \theta$ in the $C^\infty$ topology. Setting $r_1 := r_0/2$, we observe that, for sufficiently large $j$, there exists a smooth map $x_j: D(r_1) \to D(r_1) \times D(r_0)$ such that 
$$(\Phi \circ x_j)(\zeta) = (\overline{v}_j \circ \theta_j)(\zeta).$$
for any $\zeta \in D(r_1)$. Moreover, the maps $x_j$ converge in the $C^\infty$ topology to the map $x: \zeta \mapsto (\zeta, 0)$. 

\noindent\textbf{Step $3$:} With the setup from Steps $1$ and $2$ complete, this step explores the consequences of our assumption that $\Lambda = Y$. We show that there exists $8md$ pieces of the curves $u_{d,k_j}(C_{d,k_j})$ that converge as $j \to \infty$ to vertical surfaces positively intersecting the disk $\overline{v} \circ \theta(D(r_0))$. 

Choose a finite set of $8md$ points $\{p_\ell\} \subset D(r_1)\,\setminus\,\partial D(r_1)$ and write $z_\ell := (\overline{v} \circ \theta)(p_\ell)$ for each $\ell$. By assumption, $\Lambda = Y$, so for each $\ell$, there exists a sequence of points $z_{\ell, j} \in u_{d,k_j}(C_{d,k_j}) \cap (a_j - 1, a_j + 1) \times Y$ such that $\tau_{a_j}(z_{\ell, j}) \to z_\ell$. For each $\ell$ and $j$, fix $\zeta_{\ell, j} \in u_{d,k_j}^{-1}(z_{\ell, j})$ and define $S_{\ell,j} := S_{\epsilon_3}(\zeta_{\ell,j})$. Write $w_{\ell,j}: S_{\ell,j} \to (-1, 1) \times Y$ for the composition of $u_{d,k_j}$, restricted to $S_{\ell,j}$, with the shift $\tau_{a_j}$. 

By Proposition~\ref{prop:closed_curves}, $G(S_{\ell,j})$ has an $\ell$- and $j$-independent upper bound. By Proposition~\ref{prop:local_area_bound}, the areas of the maps $w_{\ell,j}$ are uniformly bounded. By target-local Gromov compactness \cite{Fish11}, after passing to a subsequence in $j$, there exists a sequence of surfaces $\overline{S}_{\ell,j}$ and a compact connected $J$-holomorphic curve $\overline{w}_\ell: \overline{S}_\ell \to (-1, 1) \times Y$ such that (i) $\overline{w}_\ell(\partial\overline{S}_\ell)$ is disjoint from $B_{\epsilon_3/2}(z_\ell)$ and (ii) the restrictions $\overline{w}_{\ell,j} := w_{\ell,j}|_{\overline{S}_{\ell,j}}$ converge to $\overline{w}_\ell$ in the Gromov topology. 

For any $\ell$, we have $\int_{\overline{S}_\ell} \overline{w}_\ell^*\omega_{k_j}^{a_j} = 0$. By lemma~\ref{lem:zero_action}, we have $\overline{w}_{\ell}(\overline{S}_\ell) \subset P_{z_\ell}(\mathbb{C})$. By the open mapping theorem, $P_{z_\ell}^{-1} \circ \overline{w}_\ell(\overline{S}_\ell)$ contains an open neighborhood of $0$. Choose $r_2 < \min(r_0, \epsilon_3/4)$ such that, for any $\ell$, the image $(P_{z_\ell}^{-1} \circ \overline{w}_\ell)(\overline{S}_\ell)$ contains $D(r_2)$. Write $\hat{S}_\ell := (\overline{w}_\ell^{-1} \circ \Theta)(D(r_1) \times D(r_2))$. After shrinking $r_2$, we may assume without loss of generality that $\partial\hat{S}_\ell$ contains no nodal points or critical points.

Fix $\hat{S}_{\ell,j} := (\overline{w}_{\ell,j}^{-1} \circ \Theta)(D(r_1) \times D(r_2))$ for each $j$. Since $\partial\hat{S}_\ell$ contains no nodal points or critical points, it follows that $\overline{w}_{\ell, j}|_{\hat{S}_{\ell, j}}$ converges in the Gromov topology to $\overline{w}_\ell|_{\hat{S}_\ell}$. There exists a normalization $\widetilde{S}_\ell$ of $\hat{S}_\ell$ such that the following holds. Let $y_\ell: \widetilde{S}_\ell \to D(r_1) \times D(r_2)$ denote the lift of $\Theta^{-1} \circ \overline{w}_\ell$ to the normalization. Then for sufficiently large $j$, there are normalizations $\widetilde{S}_{\ell,j}$ of $\hat{S}_{\ell,j}$, lifts $h_{\ell,j}$ of $\Theta^{-1} \circ \overline{w}_{\ell,j}$, and diffeomorphisms $\psi_{\ell,j}: \widetilde{S}_\ell \to \widetilde{S}_{\ell,j}$ such that the maps $y_{\ell,j} := h_{\ell,j} \circ \psi_{\ell,j}$ converge to $y_\ell$ in the $C^0$ topology. 

\noindent\textbf{Step $4$:} The step proves that there exists some sufficiently large $j$ such that, for each $\ell$, the maps $x_j$ and $y_{\ell,j}$ have positive algebraic intersection number. Denote the algebraic intersection number by a dot, e.g. $x_j \cdot y_{\ell,j}$. For any $\ell$, the map $\Theta^{-1} \circ \overline{w}_\ell$ is a surjective holomorphic map from $\hat{S}_\ell$ onto $\Theta(\{p_\ell\} \times D(r_2))$. Thus, the lift $y_\ell$ is a positive degree cover of $\{p_\ell\} \times D(r_2)$. 

Recall that $x$ denotes the map $\zeta \mapsto (\zeta, 0)$. We have $x \cdot y_\ell > 0$. For any sufficiently large $j$, $x_j$ is $C^0$-close to $x$ and $y_{\ell,j}$ is $C^0$-close to $y_\ell$. The map $x_j$ is homotopic to $x$ via the straight-line homotopy $t \mapsto (1 - t)x_j + tx$. The map $y_{\ell,j}$ is homotopic to $y_\ell$ via the straight-line homotopy $t \mapsto (1-t)y_{\ell,j} + ty_\ell$. The only intersections along these homotopies are interior intersections. Thus, the algebraic intersection number is homotopy invariant, so 
$$x_j \cdot y_{\ell,j} = x \cdot y_\ell > 0$$
for sufficiently large $j$. 

\noindent\textbf{Step $5$:} This step completes the proof. By Step $4$, $x_j \cdot y_{\ell,j} > 0$ for each $\ell$ and for each large $j$. It follows that $\mathbb{S}_j \cdot w_{\ell,j} > 0$ for each sufficiently large $j$. The local intersections of $\mathbb{S}_j$ and $u_{d,k_j}$ are non-negative \cite[Theorem $1.1$]{McDuff91}. By summing over $\ell$, we conclude that $\mathbb{S}_j \cdot u_{d,k_j} \geq 8md$ for sufficiently large $j$. This contradicts Lemma~\ref{lem:spheres}(a). 

\end{proof}

\subsubsection{Completing the proof} We prove Proposition~\ref{prop:stretched_limit} using Lemma~\ref{lem:pigeonhole_omega}, Lemma~\ref{lem:pigeonhole_omega2}, Lemma~\ref{lem:pigeonhole_chi}, Proposition~\ref{prop:almost_cylinders}, and Proposition~\ref{prop:intersection_argument}. 

\begin{proof}[Proof of Proposition~\ref{prop:stretched_limit}]

Fix any $n \geq 1$ and any $d \geq d_n$, where $d_n$ is as in Proposition~\ref{prop:almost_cylinders}. By Lemma~\ref{lem:lim_set_conn}, Lemma~\ref{lem:pigeonhole_omega}, Lemma~\ref{lem:pigeonhole_omega2}, and Lemma~\ref{lem:pigeonhole_chi}, there exists a sequence $\bfk = \{k_j\}$ such that (i) the bounds \eqref{eq:action_bounds} and \eqref{eq:topology_bounds} are satisfied, (ii) $\bfs_\omega(d; \bfk)$ is countable, and (iii) $\pi_{\bR}^{-1}(\cJ) \subseteq \cX_d(\bfk)$ is connected for any closed, connected interval $\cJ$ of positive length. Write $\cX_d := \cX_d(\bfk)$. The proof will take $2$ steps. 

\noindent\textbf{Step $1$:} This step constructs a closed, connected interval $\cJ \subset (-1, 1)$ of positive length such that (a) $\cJ$ contains $id^{-2}$ for some $i \in [-d^2, d^2] \cap \mathbb{Z}$, (b) $\cJ$ does not intersect $\bfs_\omega(d, \epsilon_d; \bfk) \cup \bfs_\chi(d, b_*; \bfk)$, where $\epsilon_d = 128md^{-1}$ and $b_* = 64m^2$, and (c) $\cJ$ contains some $s \not\in \bfs_\omega(d; \bfk)$. Let $c_4$ denote the constant from Lemma~\ref{lem:pigeonhole_chi} and take $d \geq 64mc_4$. It follows from \eqref{eq:action_bounds} and \eqref{eq:topology_bounds} that
$$\#(\mathbf{r}_\omega(d; \bfk) \cup \mathbf{r}_\chi(d; \bfk)) \leq d^2/2.$$

From this bound, it follows that there exists a closed, connected interval $\cJ \subset (-1, 1)$ of length $2d^{-2}$ satisfying (b), i.e. $\cJ$ does not intersect $\bfs_\omega(d, \epsilon_d; \bfk) \cup \bfs_\chi(d, b_*; \bfk)$. We verify that $\cJ$ also satisfies (a) and (c). Since $\cJ$ has length $2d^{-2}$, (a) follows from the pigeonhole principle. Property (c) follows because, by (ii) above, $\bfs_\omega(d; \bfk)$ is countable, while $\cJ$ is uncountable.

\noindent\textbf{Step $2$:} This step completes the proof of the proposition. We verify that $\cJ$ satisfies each of Proposition~\ref{prop:stretched_limit}(a--d). By Property (a) from Step $1$, there exists $s = id^{-2}$ lying in $\cJ$. Observe that $k_js \in \bfa_{d,k_j}$ for each $j$, so the curves $u_{d,k_j}$ pass through $\{k_j s\} \times \bfp$ for each $j$. Take $\Xi$ to be any subsequential limit of the slices
$$\tau_{k_js} \cdot \Big(u_{d,k_j}(C_{d,k_j}) \cap (k_js - 1, k_js + 1) \times Y\Big) \subseteq (-1,1) \times Y.$$

Then we have $(\Xi, s) \in \cX_d$ and $\{0\} \times \mathbf{p} \subset \Xi$, which implies Proposition~\ref{prop:stretched_limit}(a). Proposition~\ref{prop:stretched_limit}(b) follows from Property (b) of Step $1$ and Proposition~\ref{prop:almost_cylinders}. Proposition~\ref{prop:stretched_limit}(c) follows from Property (c) of Step $1$ and Proposition~\ref{prop:intersection_argument}. Proposition~\ref{prop:stretched_limit}(d) follows from Property (iii) of the sequence $\bfk$. 
\end{proof}

\subsection{Other $4$-manifolds}\label{subsec:other_manifolds} We explain how to adapt the proof of Theorem~\ref{thm:main} to Hamiltonians on either $\mathbb{M}_0 := T^*\mathbb{S}^2$ or on $\mathbb{M}_1 := T^*\mathbb{T}^2$. 

\subsubsection{Compactification} We compactify to $(\mathbb{S}^2 \times \mathbb{S}^2, \Omega)$. The symplectic structure $\Omega$ is equal to $\omega \times \omega$, where $\omega$ is an area form on $\mathbb{S}^2$ with area $1$. Write $L_0 \subset \mathbb{S}^2 \times \mathbb{S}^2$ for the Lagrangian anti-diagonal and write $L_1 := \mathbb{S}^1 \times \mathbb{S}^1$, where we view $\mathbb{S}^1 \subset \mathbb{S}^2$ as the equator. Write $\mathbb{D}_0 \subset \mathbb{S}^2 \times \mathbb{S}^2$ for the diagonal and write $\mathbb{D}_1 := \{w\} \times \mathbb{S}^2$, where $w \in \mathbb{S}^2\,\setminus\,\mathbb{S}^1$. For each $i$, a small neighborhood of the zero section in $\mathbb{M}_i$ is symplectomorphic to a small neighborhood of the Lagrangian $L_i$, and is disjoint from the symplectic divisor $\mathbb{D}_i$. For each $i$, after an appropriate rescaling about the zero section, we view $Y$ as a hypersurface in $\mathbb{S}^2 \times \mathbb{S}^2$ which is disjoint from the divisor $\mathbb{D}_i$. 

\subsubsection{The curves $u_{d,k}$} The closed curves are defined using Proposition~\ref{prop:closed_curves_general}, a version of Proposition~\ref{prop:closed_curves} that holds for more symplectic $4$-manifolds, including $\mathbb{S}^2 \times \mathbb{S}^2$. 

\subsubsection{Intersection theory argument} We need to adapt Lemma~\ref{lem:pigeonhole_omega}, Lemma~\ref{lem:pigeonhole_chi}, Proposition~\ref{prop:low_action_almost_cyl}, and Proposition~\ref{prop:intersection_argument}. The first three results go through with minimal modifications, but Proposition~\ref{prop:intersection_argument} requires some more care. The method of proof, which involved intersecting the curves $u_{d,k}$ with a moduli space $\cM_k$ of embedded $J_k$-holomorphic spheres in $\mathbb{CP}^2$ remains the same, but we need a suitable analogue of $\cM_k$. When we are working in $\bM_0$, we replace $\cM_k$ with the moduli space of stable $J_k$-holomorphic spheres representing the class $[\{*\} \times \mathbb{S}^2] + [\mathbb{S}^2 \times \{*\}]$. Denote this space by $\cM_k^0$. It consists either of embedded $J_k$-holomorphic spheres representing $[\{*\} \times \mathbb{S}^2] + [\mathbb{S}^2 \times \{*\}]$ or of pairs of spheres connected by a nodal point, where the map on one sphere is an embedding representing the class $[\{*\} \times \mathbb{S}^2]$, and the map on the other sphere is an embedding representing the class $[\mathbb{S}^2 \times \{*\}]$. When we are working in $\bM_1$, we replace $\cM_k$ with the moduli space of embedded spheres representing the class $[\mathbb{S}^2 \times \{*\}]$. Note that $\mathbb{D}_i \in \cM_k^i$ for each $k$ and each $i$. The analogues of Lemma~\ref{lem:spheres} hold for the spaces $\cM_k^i$, and from here onwards the same argument goes through without modification.

\section{Global Le Calvez--Yoccoz property}\label{sec:lcy}

The goal of this section is to prove Theorem~\ref{thm:main2}. We prove the following equivalent result. 

\begin{thm}\label{thm:main2_local}
Let $H: \bR^4 \to \bR$ be a smooth function. Then, for any $s_0 \in \cR_c(H)$, there exists some $\delta_5 > 0$ and a full measure subset $\mathcal{Q} \subseteq \cR_c(H) \cap (s_0 - \delta_5, s_0 + \delta_5)$ such that the following holds. Fix any $s \in \mathcal{Q}$ and set $Y = H^{-1}(s)$. Then any compact $X_H$-invariant subset $\Lambda \subseteq Y$ containing $\cP(s)$ is either equal to $Y$ or is not locally maximal. 
\end{thm}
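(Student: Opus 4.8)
The plan is to run the ``adiabatic'' analogue of the entire machinery developed in \S\ref{sec:invariant_sets}, replacing the single-level neck stretching at $Y$ by the simultaneous neck stretching at all levels near $Y$ sketched in \S\ref{subsec:proof_outlines}. First I would fix $s_0 \in \cR_c(H)$, choose $\delta_5 \in (0,\delta_3)$ so that $[s_0-\delta_5, s_0+\delta_5] \subset \cR_c(H)$ and so that all the geometric constructions of \S\ref{subsec:neck_stretching_general} apply uniformly on this slab, and compactify $\bR^4$ to $\mathbb{CP}^2$ as in \S\ref{subsubsec:compactification}. For each $d \geq 1$ I would construct degree $4md$ holomorphic curves $u_{d,k}$ through a point configuration $\mathbf{w}_{d,k}$ via Proposition~\ref{prop:closed_curves}, but now placing the point constraints at a $d^2$-sized grid of \emph{heights} distributed across the \emph{whole} stretched slab $[-L_k,L_k]\times[s_0-\delta_5,s_0+\delta_5]$ rather than over a single level $Y$. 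The ``adiabatic limit set'' $\widehat{\cX}_d$ would be defined exactly as the stretched limit set $\cX_d$, but tracking pairs $(\Xi, s, a)$ where $s$ records the limiting \emph{base level} $H^{-1}(s)$ and $a$ the limiting height within the neck over that level; elements $\Xi$ would now be closed subsets of $(-1,1)\times H^{-1}(s)$.

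The key steps, in order: (1) adapt the pigeonhole lemmas (Lemma~\ref{lem:pigeonhole_omega}, Lemma~\ref{lem:pigeonhole_omega2}, Lemma~\ref{lem:pigeonhole_chi}) to bound, for all but a measure-zero set of base levels $s$, the set of neck-heights at which action or Euler characteristic accumulates — the total action $4md$ and total genus $O(m^2d^2)$ now must be distributed over both heights and levels, and a Fubini-type argument extracts a full-measure set $\cQ_0$ of levels on which the per-level budgets behave; (2) rerun Proposition~\ref{prop:almost_cylinders} and Proposition~\ref{prop:low_action_almost_cyl} to show that outside the accumulation heights, every $(\Xi, s, a) \in \widehat{\cX}_d$ has $\Xi$ a $1/n$-almost cylinder in $(-1,1)\times H^{-1}(s)$ with $n \to \infty$ as $d \to \infty$; (3) rerun the intersection-theory argument of Proposition~\ref{prop:intersection_argument}, intersecting $u_{d,k}$ with the moduli space $\cM_k$ of degree-one spheres through a point at infinity, to show that at heights with controlled $\chi$-accumulation the almost cylinder is a genuine cylinder $(-1,1)\times\Lambda$ over a \emph{proper} compact $X_H$-invariant $\Lambda \subset H^{-1}(s)$; (4) using the grid of point constraints, produce for each such $s$ an element $(\Xi, s, a) \in \widehat{\cX}_d$ with $\Xi$ containing a prescribed point of $H^{-1}(s)$, and by placing a constraint at a point of $H^{-1}(s)\setminus\Lambda$ for \emph{every} small invariant $\Lambda$, produce elements $\Xi$ close to $(-1,1)\times\cP(s)$ (since a curve of small action near a level with few orbits must hug a closed orbit) as well as elements equal to $(-1,1)\times H^{-1}(s)$; (5) pass to the $d\to\infty$ limit using Lemma~\ref{lem:accumulation_points} to get, for each $s$ in a full-measure set $\cQ$, a connected family $\cY^s \subseteq \cK(H^{-1}(s))$ of compact $X_H$-invariant subsets with $H^{-1}(s) \in \cY^s$ and some $\Lambda \in \cY^s$ contained in $\overline{\cP(s)}$; (6) conclude by the elementary topological argument: if $\Lambda' \supseteq \cP(s)$ is compact invariant and locally maximal, the connected family $\cY^s$ joining $\overline{\cP(s)}$ to $H^{-1}(s)$ must, by local maximality, stay inside $\Lambda'$ until it reaches $H^{-1}(s)$, forcing $\Lambda' = H^{-1}(s)$.

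I expect the main obstacle to be step (1) together with the careful bookkeeping needed to make the ``adiabatic'' construction work: one must interpolate the almost-complex structure across a slab of levels (this is what the realized-Hamiltonian-homotopy formalism of \S\ref{subsec:geometric_prelim} and the tameness Lemma~\ref{lem:j_tame} are set up for), verify that the convergence result Lemma~\ref{lem:geometric_convergence} still governs the limiting geometry as both $k\to\infty$ and the base level varies, and then show that the Fubini argument genuinely yields a \emph{full measure} set of good base levels rather than merely a dense or residual set — this is where the quadratic index $\sim d^2$ versus the linear number $O(d)$ of ``bad'' heights per level is essential, since the measure of bad levels must be shown to vanish as $d\to\infty$. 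A secondary subtlety is ensuring in step (4) that a low-action curve slice over a level with at most the orbits in $\cP(s)$ is forced to lie near $\cP(s)$; this should follow from Lemma~\ref{lem:zero_action} and a compactness argument, but must be made quantitative enough to survive the limit.
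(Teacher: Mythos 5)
Your overall skeleton (adiabatic stretching across a slab of levels, an adiabatic limit set, per-level pigeonhole bookkeeping with the $d^{2}$ grid of constraints, a $d\to\infty$ limit to a connected family of invariant sets, and the final topological argument via local maximality) is the same as the paper's. But there is a genuine gap at the single step that makes the Le Calvez--Yoccoz statement work: producing, for almost every level $s$, an element of the connected family that is contained in $\overline{\cP(s)}$. Your steps (3)--(4) do not deliver this. Rerunning the intersection-theory argument of Proposition~\ref{prop:intersection_argument} only yields a \emph{proper} compact invariant set $\Lambda\subset H^{-1}(s)$, and properness is irrelevant here: if the family only joins some proper invariant $\Lambda_0$ to $H^{-1}(s)$, a locally maximal $\Lambda'\supseteq\cP(s)$ need not contain $\Lambda_0$, so local maximality gives no contradiction. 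What is needed is an element consisting of \emph{closed orbits}, and your justification for that (``a curve of small action near a level with few orbits must hug a closed orbit'') is not correct as stated: low action alone, via the almost-cylinder analysis and Lemma~\ref{lem:zero_action}, only forces the limit to be a cylinder over some compact invariant set, which could be anything up to the whole level; and there is no reason a typical level has ``few orbits''.

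The missing mechanism is the one the paper builds in \S\ref{sec:lcy} and which has no analogue in \S\ref{sec:invariant_sets}: one must also control the $\widehat\lambda$-integrals of the slices (the blowup set $\widehat\bfs_\lambda(d;\bfk)$, shown to have measure zero in Lemma~\ref{lem:pigeonhole_blowup}), so that by the exponential area bound (Lemma~\ref{lem:lambda_currents}) the low-action slices have uniformly bounded \emph{area}. Only then can one apply compactness for holomorphic currents (Proposition~\ref{prop:currents_compactness}) and the fact that zero-action, finite-area currents are cylinders over closed orbit sets (Lemma~\ref{lem:zero_action_currents}) to conclude, as in Propositions~\ref{prop:closed_orbits_quantitative} and \ref{prop:closed_orbits_adiabatic}, that at almost every level the adiabatic limit set contains $(-1,1)\times\Lambda$ with $\Lambda$ a finite union of closed orbits, hence $\Lambda\subseteq\overline{\cP(s)}$. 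Without this extra layer (a new accumulation-type set measured by the $\lambda$-integral, its measure-zero bound, and the currents argument), your construction produces a connected family joining an unidentified invariant set to $H^{-1}(s)$, which is Theorem~\ref{thm:main}-type output but does not prove Theorem~\ref{thm:main2_local}. Your final step (6) is fine in spirit, though it is cleaner to take unions with the given $\Lambda'$ as in the paper so that the connected family passes through $\Lambda'$ itself.
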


For the remainder of the section, fix a smooth function $H: \bR^4 \to \bR$. Fix any $s_0 \in \cR_c(H)$. Assume without loss of generality that $s_0 = 0$; we reduce to this case by replacing $H$ with $H - s_0$. Set $Y := H^{-1}(0)$. 

\subsection{An existence result for almost cylinders} To state our result, we recall several definitions and terms introduced in the previous section. We repeat the setup in \S\ref{subsubsec:compactification}, \S\ref{subsubsec:framed_hamiltonian}, and \S\ref{subsubsec:realized_hamiltonian}. We compactify to $\bW = \mathbb{CP}^2$, define a collar map $\iota: (-\delta_1, \delta_1) \times Y \to \bW$, and define a pair $(\widehat\eta, \widehat{J}) \in \cD( (-\delta_1, \delta_1) \times Y)$. Recall that, for any $s \in (-\delta_1, \delta_1)$, $(\eta^s, J^s) \in \cD(Y)$ denotes the pullback of $(\widehat\eta, \widehat{J})$ by the map $y \mapsto (s, y)$. Recall the definition of $\delta$-almost cylinders from \S\ref{subsec:almost_cyl}. Now, we state a new existence result for almost cylinders. 

\begin{prop}\label{prop:main2}
There exists a constant $\delta_5 \in (0, 1)$ such that the following holds for any finite set of points $\mathbf{p} \subset Y$ and any positive integer $n \geq 1$. There exists a subset $\mathcal{Q}_{\mathbf{p}, n} \subseteq (-\delta_5, \delta_5)$ of measure at least $(2 - 2^n)\delta_5$, and for each $s \in \mathcal{Q}_{\mathbf{p}, n}$ a connected subset $\cZ_{\mathbf{p}, n}^s \subseteq \cK((-1,1)\times Y)$ with the following properties:
\begin{enumerate}[(a)]
\item There exists some $\Lambda_n \in \cZ_{\mathbf{p}, n}^s$ such that $\{0\} \times \mathbf{p} \subset \Lambda_n$.
\item There exists some $\Lambda_n \in \cZ_{\mathbf{p}, n}^s$ equal to $(-1,1) \times \Xi_n$, where $\Xi_n$ is a finite union of closed orbits of $R^s$. 
\item Each $\Lambda \in \cZ_{\mathbf{p}, n}^s$ is a $1/n$-almost cylinder with respect to the pair $(\eta^s, J^s) \in \cD(Y)$. 
\end{enumerate}
\end{prop}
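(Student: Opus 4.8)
The plan is to run the proof of Proposition~\ref{prop:main} through an \emph{adiabatic} neck stretching. Rather than concentrating the stretching at $Y=H^{-1}(0)$ as in \S\ref{subsubsec:neck_stretching}, I would carry out the construction of \S\ref{subsec:neck_stretching_general} with a sequence $\phi_k\colon\bW\to(0,1]$ for which $1-\phi_k$ is supported in a fixed neighborhood of $H^{-1}([-\delta_5,\delta_5])$, $\phi_k$ is a small constant on each level set in this range, and $L_{\phi_k}\to\infty$, after fixing $\delta_5\in(0,1)$ small enough that $[-2\delta_5,2\delta_5]\subseteq\cR_c(H)$ is free of critical points. The rescaling $\Phi_k$ then restricts to a diffeomorphism of $(-\delta_5,\delta_5)$ onto all but a uniformly bounded part of the neck $(-L_k,L_k)$, expanding by a factor comparable to $L_k/\delta_5$. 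I would define the \emph{adiabatic limit set} $\widehat{\cX}_d(\bfk)\subseteq\cK((-1,1)\times Y)\times(-\delta_5,\delta_5)$ just as the stretched limit set of \S\ref{subsubsec:stretched_limit_set}, except that $(\Xi,s)$ is recorded when there is a sequence $a_j\in(-L_{k_j},L_{k_j})$ with $\Phi_{k_j}^{-1}(a_j)\to s$ whose translated height-two slices Hausdorff-converge to $\Xi$; the connectedness statement of Lemma~\ref{lem:lim_set_conn} and its proof carry over. By Lemma~\ref{lem:geometric_convergence}, if $\Phi_k^{-1}(a_k)\to s$ then $(\eta_k^{a_k},J_k^{a_k})\to(\eta^s,J^s)$ in $\cD([-8,8]\times Y)$ and the family of all such pairs is precompact, so Propositions~\ref{prop:fh_area_bound}, \ref{prop:fh_quantization}, \ref{prop:local_area_bound}, \ref{prop:low_action_almost_cyl} and Remark~\ref{rem:stable_constants} apply with constants uniform along the neck. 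The input curves $u_{d,k}$ are produced by Proposition~\ref{prop:closed_curves} with $e=4md$: closed, connected, $J_k$-holomorphic, of degree $4md$ and arithmetic genus $g(4md)$, passing through $w_\pm$ together with the points $\mathbf{p}$ inserted at roughly $2d^2$ approximately evenly spaced levels $\{s^{(d)}_i\}\subset(-\delta_5,\delta_5)$; thus each $u_{d,k}$ has action $4md$ and $\chi(C_{d,k})\geq-16m^2d^2$.

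The heart of the argument is the adiabatic analogue of the pigeonhole estimates of \S\ref{subsubsec:capped_accumulation}. For distinct $s\neq s'$ and large $k$ the $a$-slabs of width $8$ about $\Phi_k(s)$ and $\Phi_k(s')$ are disjoint, hence so are the associated capped slices $C^{\cL}_{d,k}$, so summing the action bound $4md$ over a disjoint family of capped slices, and summing the Euler-characteristic bound while controlling the number of non-short pieces via the quantization estimate Lemma~\ref{lem:quantization}, gives exactly as in Lemmas~\ref{lem:pigeonhole_omega}, \ref{lem:pigeonhole_omega2} and \ref{lem:pigeonhole_chi} that, after passing to a subsequence $\bfk$: the set $\widehat{\mathbf r}_\omega(d,\epsilon;\bfk)$ of levels carrying accumulating action greater than $\epsilon$ is finite with $O(d/\epsilon)$ elements, the set $\widehat{\mathbf r}_\omega(d;\bfk)$ of levels carrying any accumulating action is countable, and $\widehat{\mathbf r}_\chi(d,b;\bfk)$ is finite with $O(d+d^2/b)$ elements. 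With these in hand, the argument of Proposition~\ref{prop:almost_cylinders}, run with almost-cylinder parameter $2n$ and a topology parameter $b_*(n)$ to be chosen below, shows that at every level $s$ outside a finite exceptional subset of size $O(d/\epsilon_5(n)+d^2/b_*(n))$ every $(\Xi,s)\in\widehat{\cX}_d(\bfk)$ is a $1/(2n)$-almost cylinder with respect to $(\eta^s,J^s)$, while at every $s\notin\widehat{\mathbf r}_\omega(d;\bfk)\cup\widehat{\mathbf r}_\chi(d,b_*(n);\bfk)$ the capped slices over $s$ have action tending to $0$ and uniformly bounded topology, so a target-local Gromov compactness argument \cite{Fish11} — as in Step~1 of the proof of Proposition~\ref{prop:low_action_almost_cyl} and \cite[Proposition~4.47]{FH23} — identifies the corresponding $\Xi$ with $(-1,1)\times\Xi_n$ for $\Xi_n$ a finite union of closed orbits of $R^s$, the finiteness coming from Lemma~\ref{lem:zero_action} together with the genus bound on the Gromov limit. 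In contrast with Proposition~\ref{prop:intersection_argument}, no intersection-theoretic input is needed for this.

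To assemble $\cZ^s_{\mathbf{p},n}$, choose $b_*(n)$ large enough that the finite exceptional set of the previous paragraph has at most a $2^{-n-3}$-fraction as many elements as there are constraint levels, and then $d=d(n)$ large; then all but a subset of $(-\delta_5,\delta_5)$ of measure at most $2^{-n}\delta_5$ lies within $O(\delta_5/d^2)$ of a constraint level $s_*=s^{(d)}_i$ which avoids the exceptional set and the finite set $\widehat{\mathbf r}_\chi(d,b_*(n);\bfk)$. For each $s$ in the complement, pick such a nearby $s_*$ and let $\cJ_s$ be a short closed interval about $s_*$, of positive length, disjoint from $\widehat{\mathbf r}_\chi(d,b_*(n);\bfk)$ and from the finite exceptional set, and short enough that, by $C^\infty$-continuity of $s'\mapsto(\eta^{s'},J^{s'})$, a $1/(2n)$-almost cylinder with respect to $(\eta^{s'},J^{s'})$ for any $s'\in\cJ_s$ is a $1/n$-almost cylinder with respect to $(\eta^s,J^s)$ (using that the flow $\phi^{s'}$ varies continuously over the bounded time window $(-1,1)$ and that the metric of $(\eta^{s'},J^{s'})$ varies continuously). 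Set $\cZ^s_{\mathbf{p},n}:=\pi_{\cK}(\pi_{\bR}^{-1}(\cJ_s)\cap\widehat{\cX}_d(\bfk))$; it is connected by the connectedness lemma and continuity of $\pi_{\cK}$. Property (a) holds because the height-two slices over $s_*$ pass through $\{0\}\times\mathbf{p}$; property (b) holds because $\cJ_s$ is uncountable and disjoint from $\widehat{\mathbf r}_\chi(d,b_*(n);\bfk)$, hence meets $(-\delta_5,\delta_5)\setminus(\widehat{\mathbf r}_\omega(d;\bfk)\cup\widehat{\mathbf r}_\chi(d,b_*(n);\bfk))$; property (c) holds by the preceding sentence and the choice of $\cJ_s$. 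Taking $\cQ_{\mathbf{p},n}$ to be this complement yields the stated measure bound.

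The step I expect to be the main obstacle is the quantitative bookkeeping in the last two paragraphs: arranging $\cJ_s$ small enough for property (c), so that the level-dependence of the almost-cylinder condition is absorbed into continuity of $(\eta^{s'},J^{s'})$, while simultaneously guaranteeing that $\cJ_s$ meets both a ``good'' constraint level (for (a)) and an action-vanishing, topology-bounded level (for (b)), all with the total discarded measure controlled by $n$. A secondary technical point, also present implicitly in the non-adiabatic argument, is making the orbit-cylinder conclusion in (b) precise: identifying the Hausdorff limit of the height-two slices with the slice of the target-local Gromov limit of the capped slices and deducing that only finitely many closed orbits appear.
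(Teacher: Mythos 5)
Your skeleton matches the paper's route almost everywhere: the adiabatic stretching with a rescaling that is affine on $(-\delta_5,\delta_5)$, the adiabatic limit set and its connectedness, the point constraints on $\sim 2d^2$ equally spaced levels, the pigeonhole bounds on action and topology accumulation, the $1/(2n)$-to-$1/n$ continuity trick for property (c), and the measure bookkeeping are all as in the paper (Corollary~\ref{cor:geometric_convergence_adiabatic}, Lemma~\ref{lem:adiabatic_limit_set_conn}, Lemma~\ref{lem:pigeonhole_adiabatic}, Proposition~\ref{prop:almost_cylinders_adiabatic}, Proposition~\ref{prop:adiabatic_limit}). The genuine gap is in your argument for property (b). You claim that at a level with no accumulating action and bounded topology, target-local Gromov compactness applied to the capped slices yields a zero-action limit curve, hence a cylinder over finitely many closed orbits. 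But target-local Gromov compactness requires a uniform \emph{area} bound, and the area of a slice is $\int u^*(da\wedge\widehat\lambda_k+\widehat\omega_k)$: the action only controls the $\widehat\omega_k$ part, and nothing in your hypotheses controls $\int u^*(da\wedge\widehat\lambda_k)$ over a fixed height-$O(1)$ window. Proposition~\ref{prop:fh_area_bound} converts a bound on the boundary $\widehat\lambda$-integrals into an area bound, but no such boundary bound is available at a fixed level: the global bound $\int u_{d,k}^*(da\wedge\widehat\lambda_k)\lesssim \delta_5^{-1}mk d$ controls the $\lambda$-integral only on average over the $\sim k$ disjoint windows, and it can diverge along particular levels as $k\to\infty$. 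This is exactly why the paper introduces the smoothed $\lambda$-integrals $E_{d,k,\lambda}$ and the blowup set $\widehat\bfs_\lambda(d,A;\bfk)$, proves it is Lebesgue-null (Lemma~\ref{lem:pigeonhole_blowup}), and converts finiteness of $E_{d,k,\lambda}$ plus vanishing action into an area bound (Lemma~\ref{lem:lambda_currents}); the orbit-cylinder identification is then carried out with Taubes' compactness for holomorphic currents and the zero-action lemma for currents (Proposition~\ref{prop:currents_compactness}, Lemma~\ref{lem:zero_action_currents}, Propositions~\ref{prop:closed_orbits_quantitative} and~\ref{prop:closed_orbits_adiabatic}), which also avoids any need for genus bounds. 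Your version, which never touches the $\lambda$-direction, has no compactness theorem to invoke at the chosen level, so the conclusion of (b) does not follow as written; you would need to add the blowup-set analysis (or an equivalent area control) and restrict to levels outside a null set where the $\lambda$-integrals stay bounded.

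Two smaller inaccuracies in the same step: being outside $\widehat\bfs_\chi(d,b_*;\bfk)$ bounds the topology of each irreducible component of a capped slice but not the number of components (vertical cylinders carry zero action, so their number is not quantized away), so "finiteness from the genus bound on the Gromov limit" is not justified — in the paper finiteness of the limiting orbit set comes from the finite mass of the limit current. And Proposition~\ref{prop:local_area_bound} is a purely local area bound: it yields the almost-cylinder property (c), but cannot be promoted to the global area bound needed for the compactness in (b).
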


\subsection{Proof of Theorem~\ref{thm:main2_local}}

We defer the proof of Proposition~\ref{prop:main2} to \S\ref{subsec:almost_cyl_adiabatic_proof}. We first use it to complete the proof of Theorem~\ref{thm:main2_local}. 

\begin{proof}
Fix any finite set of points $\mathbf{p} \subset Y$ and any $n \geq 1$, let $\mathcal{Q}_{\mathbf{p},n} \subseteq (-\delta_5, \delta_5)$ be the set from Proposition~\ref{prop:main2} and, for any $s \in \mathcal{Q}_{\bfp, n}$, let $\cZ^s_{\bfp, n}$ be the collection of almost-cylinders from Proposition~\ref{prop:main2}. Define the set
$$\mathcal{Q}_{\bfp} := \bigcup_{\ell \geq 1} \bigcap_{n \geq \ell} \cQ_{\bfp, n} \subseteq (-\delta_5, \delta_5).$$ 

The rest of the proof will take $3$ steps. 

\noindent\textbf{Step $1$:} This step proves that $\cQ_{\bfp}$ has full measure in $(-\delta_5, \delta_5)$. Since $|\cQ_{\bfp,n}| \geq (2 - 2^{-n})\delta_5$ for each $n$, we have $|\bigcap_{n \geq \ell} \cQ_{\bfp,n} \geq (2 - 2^{-\ell+1})\delta_5|$. Taking the union over all $\ell$, we have $|\cQ_{\bfp}| = 2\delta_5$. 

\noindent\textbf{Step $2$:} For any $s \in \cQ_{\bfp}$, choose $n_j \to \infty$ such that (i) $s \in \cQ_{\bfp,n_j}$ for each $j$ and (ii) there exists $\Xi_j \in \cZ^s_{\bfp, n_j}$ such that the sequence $\{\Xi_j\}$ converges. Let $\cZ^s_{\bfp}$ denote the set of subsequential limit points of the sequence $\{\cZ^s_{\bfp, n_j}\}$ as $j \to \infty$. Since $\cZ^s_{\bfp, n_j}$ is connected for each $j$, the set $\cZ^s_{\bfp}$ is connected by (ii) and Lemma~\ref{lem:accumulation_points}. We claim that $\cZ^s_{\bfp}$ has the following properties:
\begin{enumerate}[(a)]
\item There exists some $\Xi \in \cZ_{\bfp}^s$ such that $\{0\} \times \bfp \subset \Xi$.
\item There exists a compact subset $\Lambda \subseteq \overline{\cP(s)}$ such that $(-1,1) \times \Lambda \in \cZ^s_{\bfp}$.
\item Each $\Xi \in \cZ_{\bfp}^s$ is equal to $(-1,1) \times \Lambda$, where $\Lambda \in \cK(Y)$ is a non-empty compact $R^s$-invariant set. 
\end{enumerate}

These properties follow from Proposition~\ref{prop:main2}(a--c), Lemma~\ref{lem:almost_cyl_limits}, and Lemma~\ref{lem:almost_cyl_to_cyl}. 

\noindent\textbf{Step $3$:} This step finishes the proof. Fix a sequence of finite subsets $\bfp_\ell \subset Y$ that converge to $Y$ in the Hausdorff topology as $\ell \to \infty$. Define $\cQ := \bigcap_\ell \cQ_{\bfp_\ell}$. Since $\cQ$ is a countable intersection of full measure subsets, it has full measure itself. Fix any $s \in \cQ$. After passing to a subsequence in $\ell$, there exists $\Xi_\ell \in \cZ^s_{\bfp_\ell}$ for each $\ell$ such that the sequence $\{\Xi_\ell\}$ converges. Define $\cZ^s$ to be the set of subsequential limit points of the sequence $\{\cZ^s_{\bfp_\ell}\}$. By Lemma~\ref{lem:accumulation_points}, $\cZ^s$ is connected. Let $\cY^s \subseteq \cK(Y)$ denote the image of $\cZ^s$ under the map $\Xi \mapsto \Xi \cap \{0\} \times Y$. By Property (c) above and Lemma~\ref{lem:intersection}, the set $\cY^s$ is connected. The set $\cY^s$ has the following properties, which follow from Properties (a--c) above:
\begin{enumerate}[(a')]
\item $Y \in \cY^s$.
\item There exists a compact $R^s$-invariant subset $\Lambda \subseteq \overline{\cP(s)}$ such that $\Lambda \in \cY^s$.
\item Each $\Lambda \in \cY^s$ is a non-empty compact $R^s$-invariant set. 
\end{enumerate}

Fix $s \in \cQ$ and an $R^s$-invariant set $\Lambda$ containing $\cP(s)$. We will show that if $\Lambda \neq Y$, then the set $\Lambda$ is not locally maximal. Let $\cY$ denote the image of $\cY^s$ under the map $\Lambda' \mapsto \Lambda \cup \Lambda'$. The family $\cY$ is connected by Lemma~\ref{lem:union}, consists of $R^s$-invariant sets by Property (c'), and contains both $Y$ and $\Lambda$ by Properties (a') and (b'). It follows that $\Lambda$ is not locally maximal. 
\end{proof}

\subsection{Geometric setup}\label{subsec:geom_setup_adiabatic} 

The geometric setup for this section mostly follows \S\ref{subsec:geom_setup}. However, we will use a different neck stretching procedure. Instead of stretching the neck around the hypersurface $Y$ as in \S\ref{subsec:geom_setup} above, we simultaneously stretch the neck around each of the hypersurfaces $H^{-1}(s)$ for small $s$. On a technical level, this only requires minor changes to the setup. The most significant change is the replacement of the sequence $\{\phi_k\}$ with a new sequence $\{\widehat{\phi}_k\}$.  

\subsubsection{Recollections from \S\ref{sec:invariant_sets}} We repeat the setup from \S\ref{subsec:geom_setup}, starting from \S\ref{subsubsec:base_acs} and ending at \S\ref{subsubsec:stretched_manifolds}.
\begin{itemize}
\item In \S\ref{subsubsec:base_acs} we defined an $\Omega$-tame almost-complex structure $\widecheck{J}$ on $\bW$ and fixed $\widehat{J} = \iota^*\widecheck{J}$; recall alaso that $\widehat{J}$ is $\widehat\eta$-adapted on $(-\delta_2, \delta_2) \times Y$. 
\item In \S\ref{subsubsec:deformed_acs} we defined deformations $\widecheck{J}_\phi$, agreeing with $\widecheck{J}$ on the bundle $\widecheck{\xi} = \ker(ds) \cap \ker(\widecheck{\lambda})$, which by Lemma~\ref{lem:j_tame} are tame if $1 - \phi$ is supported in $U_{\delta_3}$. 
\item In \S\ref{subsubsec:stretched_manifolds}, we defined stretched manifolds $\bW_\phi$, containing long necks $[-L_\phi, L_\phi] \times Y$, and diffeomorphisms $f_\phi: \bW \to \bW_\phi$. Let $\widehat\Omega_\phi$, $\widehat\eta_\phi = (\widehat\lambda_\phi, \widehat\omega_\phi)$, $\widecheck{\xi}_\phi$, $\widehat{J}_\phi$ denote the pushforwards by $f_\phi$ of $\Omega$, $\widecheck\eta = (\widecheck\lambda, \widecheck\omega)$, $\widecheck\xi$, $\widecheck{J}$, respectively.
\end{itemize} 

The remainder of our setup consists of minor variations on \S\ref{subsubsec:neck_stretching}, \S\ref{subsubsec:geometric_convergence}, and \S\ref{subsubsec:stretched_limit_set}. 

\subsubsection{Adiabatic neck stretching}\label{subsubsec:neck_stretching_adiabatic} Choose a constant $\delta_4 \in (0, \delta_3/2)$, a constant $\delta_5 \in (0, \delta_4/2)$ and, for each $k > 4\delta_5^{-1}$, a smooth function $\widehat\phi_k: \bW \to (0 ,1]$ with the following properties:
\begin{itemize}
\item The function $1 - \widehat\phi_k$ is supported on $U_{\delta_4}$.
\item For any $s \in (-\delta_3, \delta_3)$, $\widehat\phi_k$ is equal to a constant $\widehat\phi_k(s)$ on the hypersurface $H^{-1}(s)$.
\item For any $s \in (-\delta_3, \delta_3)$, we have $\widehat\phi_k(s) = \widehat\phi_k(-s)$.
\item The integral $L_{\widehat\phi_k} = \int_{-\delta_3}^0 \widehat\phi_k(t)^{-1} dt$ is at least $16k$.
\item $\widehat\phi_k(s) = \delta_5^{-1}k^{-1}$ for every $s \in (-k^{-1}, k^{-1})$.
\end{itemize}

We require the sequence $\{\widehat\phi_k\}$ to converge in the $C^\infty$ topology as $k \to \infty$. The sequence $\{\widehat\phi_k\}$ is identical to the sequence $\{\phi_k\}$ from \S\ref{subsubsec:neck_stretching}, apart from the last property. This change gives a different geometric convergence result, see Corollary~\ref{cor:geometric_convergence_adiabatic} below. 

We simplify the notation for the various geometric objects associated with $\widehat\phi_k$. Write $\widecheck{J}_k := \widecheck{J}_{\widehat\phi_k}$\footnote{This coincides with the notation for $\widecheck{J}_{\phi_k}$ from the previous section, but the two almost-complex structures never appear together so the ambiguity does not impact clarity.}. Define $\widehat L_k := L_{\widehat\phi_k} = \int_{-\delta_3}^0 \widehat\phi_k(t) dt$ and define a function $\widehat\Phi_k(s) := \int_{-\delta_3}^s \widehat\phi_k(t)^{-1} dt - L_k$. Then, write $\widehat\bW_k := \bW_{\widehat\phi_k}$ for the stretched manifolds and $\widehat f_k := f_{\widehat\phi_k}$ for the diffeomorphisms $\bW \to \widehat\bW_k$. Write $\widehat\Omega_k := \Omega_{\widehat\phi_k}$ and $\widehat J_k := \widehat{J}_{\phi_k}$. Write $\widehat\omega_k := \widehat{\omega}_{\phi_k}$, $\widehat\lambda_k := \widehat{\lambda}_{\phi_k}$, $\widehat\eta_k = (\widehat\lambda_k, \widehat\omega_k)$, $\widehat{R}_k := \widehat{R}_{\phi_k}$, and $\widehat\xi_k := \widehat{\xi}_{\phi_k}$. 

Fix any $k$ and any $a$ such that $[a - 8, a + 8] \subseteq [-\widehat{L}_k, \widehat{L}_k]$. Then, we define $(\widehat\eta_k^a, \widehat{J}_k^a) \in \cD( [-8, 8] \times Y)$ to be the pair defined by restriction of $(\widehat\eta_k, \widehat{J}_k)$ to $[a - 8, a + 8] \times Y$ and then translation by $-a$. The following convergence result is a consequence of Lemma~\ref{lem:geometric_convergence}. 

\begin{cor}\label{cor:geometric_convergence_adiabatic}
Fix any sequence $\{a_k\}$ such that $a_k \in (-k, k)$ for each $k$ and the sequence $\{\delta_5 k^{-1}a_k\}$ converges to some $s \in [-\delta_5, \delta_5]$. Then, we have $(\widehat\eta_k^{a_k}, \widehat{J}_k^{a_k}) \to (\eta^s, J^s)$ in $\cD([-8, 8] \times Y)$.
\end{cor}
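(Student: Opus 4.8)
The plan is to deduce this from Lemma~\ref{lem:geometric_convergence} in exactly the same way that Corollary~\ref{cor:geometric_convergence} was deduced from it, the only change being that the rescaling functions $\widehat\Phi_k$ of \S\ref{subsubsec:neck_stretching_adiabatic} play the role of the functions $\Phi_k$ of \S\ref{subsubsec:neck_stretching}. Throughout, write $s := \lim_k \delta_5 k^{-1} a_k \in [-\delta_5,\delta_5]$ and regard the pair $(\eta^s, J^s) \in \cD(Y)$ as an element of $\cD([-8,8]\times Y)$ via the canonical inclusion of a framed Hamiltonian structure with adapted almost-complex structure into a translation-invariant realized Hamiltonian homotopy with adapted almost-complex structure. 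Since the functions $\widehat\phi_k$ satisfy all requirements of the general construction in \S\ref{subsubsec:neck_stretching_general} (they differ from the $\phi_k$ only in the last of their defining properties), Lemma~\ref{lem:geometric_convergence} is available.

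First I would verify the two hypotheses of Lemma~\ref{lem:geometric_convergence} for the data $\{(\widehat\eta_k,\widehat J_k)\}$ and the sequence $\{a_k\}$. Hypothesis (i), that $[a_k - 8, a_k + 8] \subseteq [-\widehat L_k, \widehat L_k]$ for all large $k$, is immediate from $|a_k| < k$ and $\widehat L_k = L_{\widehat\phi_k} \geq 16k$. For hypothesis (ii) I would use the last defining property of $\widehat\phi_k$: because $\widehat\phi_k$ equals the constant $\delta_5 k^{-1}$ on $(-\delta_5,\delta_5)$ and $\widehat\Phi_k(0) = 0$ (the symmetry $\widehat\phi_k(-t) = \widehat\phi_k(t)$ makes $\widehat\Phi_k$ odd), the map $\widehat\Phi_k$ restricts on $(-\delta_5,\delta_5)$ to the linear diffeomorphism $t \mapsto \delta_5^{-1} k\, t$ onto $(-k,k)$. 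Hence $\widehat\Phi_k^{-1}$ restricts on $(-k,k)$ to $a \mapsto \delta_5 k^{-1} a$, so $\widehat\Phi_k^{-1}(a_k) = \delta_5 k^{-1} a_k \to s \in [-\delta_5,\delta_5] \subseteq [-\delta_3,\delta_3]$; this is hypothesis (ii).

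Lemma~\ref{lem:geometric_convergence} then yields that $\{(\widehat\eta_k^{a_k}, \widehat J_k^{a_k})\}$ converges in $\cD([-8,8]\times Y)$, and it remains to identify the limit. By Step $1$ of the proof of that lemma, the functions $s_k(a) := \widehat\Phi_k^{-1}(a_k + a)$ on $[-8,8]$ solve the ODE $s_k'(a) = \widehat\phi_k(s_k(a))$ with initial condition $s_k(0) = \widehat\Phi_k^{-1}(a_k) \to s$, and converge in $C^\infty$ to the solution $\bar s$ of $\bar s'(a) = \widehat\phi(\bar s(a))$, $\bar s(0) = s$, where $\widehat\phi = \lim_k \widehat\phi_k$. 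Since $\widehat\phi_k \equiv \delta_5 k^{-1} \to 0$ on $(-\delta_5,\delta_5)$, the smooth limit $\widehat\phi$ vanishes on $[-\delta_5,\delta_5]$, so uniqueness for the initial value problem forces $\bar s \equiv s$. Finally, by Steps $3$ and $4$ of the proof of Lemma~\ref{lem:geometric_convergence}, the limiting pair $(\bar\eta,\bar J)$ satisfies $\bar\lambda^*(a) = \lambda^{\bar s(a)} = \lambda^{s}$, $\bar\omega^*(a) = \omega^{\bar s(a)} = \omega^{s}$ for every $a \in [-8,8]$, and the restriction of $\bar J$ to $\{a\}\times Y$ agrees with $J^{\bar s(a)} = J^{s}$; this is precisely the realized Hamiltonian homotopy and adapted almost-complex structure on $[-8,8]\times Y$ induced by $(\eta^s, J^s)$. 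Thus $(\widehat\eta_k^{a_k}, \widehat J_k^{a_k}) \to (\eta^s, J^s)$.

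I do not expect a serious obstacle here: the corollary is essentially bookkeeping on top of Lemma~\ref{lem:geometric_convergence}. The only point that needs a little care is that the window $[a_k-8,a_k+8]$ of neck heights may poke slightly outside the region $(-k,k)$ on which $\widehat\Phi_k^{-1}$ is exactly linear, so rather than asserting that $s_k$ is affine one should invoke the ODE argument of Step $1$ of Lemma~\ref{lem:geometric_convergence} (which needs only convergence of the initial values $s_k(0)$, already established) together with the fact that $\widehat\phi$ vanishes on $[-\delta_5,\delta_5]$ to pin down $\bar s \equiv s$; this is the sole place where the adiabatic rescaling differs from the ordinary one in \S\ref{subsubsec:neck_stretching}.
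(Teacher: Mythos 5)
Your proposal is correct and follows essentially the same route as the paper: both reduce to Lemma~\ref{lem:geometric_convergence} by checking that $\widehat\Phi_k^{-1}(a_k) = \delta_5 k^{-1}a_k \to s$, so that the functions $\widehat{s}_k(a) = \widehat\Phi_k^{-1}(a_k+a)$ converge in $C^\infty$ to the constant $\bar s \equiv s$, and then read off the limit $(\eta^s, J^s)$ from Step $3$ of that lemma's proof. Your extra use of the ODE $s_k' = \widehat\phi_k(s_k)$ together with the vanishing of $\widehat\phi$ on $[-\delta_5,\delta_5]$ to handle the possibility that $a_k + a$ exits $(-k,k)$ is a slightly more careful treatment of a point the paper passes over silently, but it is the same argument in substance.
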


\begin{proof}
Let $\widehat{s}_k: [-8, 8] \to [-\delta_3, \delta_3]$ denote the functions $a \mapsto \widehat\Phi_k^{-1}(a_k + a)$. Observe that $\widehat\Phi_k^{-1}(a) = \delta_5 k^{-1}a$ for any $a \in (-k, k)$. Then, given our assumptions on $\{a_k\}$, the sequence $\{\widehat{s}_k\}$ converges in the $C^\infty$ topology to $\widehat{s} \equiv s$. Now apply Lemma~\ref{lem:geometric_convergence}. Step $3$ of its proof shows that the limiting pair is $(\eta^s, J^s)$. 
\end{proof}

\subsubsection{Adiabatic limit set} We introduce an adiabatic analogue of the stretched limit set from \S\ref{subsubsec:stretched_limit_set}. For any sequence $\{k_j\}$, fix a closed, connected Riemann surface $\widehat{C}_j$ and a $\widehat{J}_{k_j}$-holomorphic curve $\widehat{u}_j: \widehat{C}_j \to \widehat{\bW}_{k_j}$. The \emph{adiabatic limit set} $\widehat{\cX} \subseteq \cK( (-1,1) \times Y) \times (-\delta_5, \delta_5)$ is the collection of pairs $(\Xi, s)$ for which there exists a sequence $\{a_j\}$ such that 
\begin{enumerate}[(\roman*)]
\item $\delta_5 k_j^{-1}a_j \to s$
\item A subsequence of the slices
$$\tau_{a_j} \cdot \Big(\widehat{u}_{j}(\widehat{C}_j) \cap (a_j - 1, a_j + 1) \times Y\Big) \subseteq (-1,1)\times Y$$
converges in $\cK((-1,1)\times Y)$ to $\Xi$. 
\end{enumerate}

\subsection{Proof of Proposition~\ref{prop:main2}}\label{subsec:almost_cyl_adiabatic_proof} Fix a finite set of points $\bfp \subset Y$. Let $m := \#\bfp$ denote the cardinality of $\bfp$. 

\subsubsection{Closed holomorphic curves with point constraints}\label{subsubsec:closed_curves_adiabatic} Define point constraints $\widehat{\bfw}_{d,k} \subset \widehat{\bW}_k$ as follows. Let $\widehat{\bfa}_{d,k} \subset [-k, k]$ denote a finite subset of $2d^2 + 1$ equally spaced points: 
$$\widehat{\bfa}_{d,k} := \{-ik d^{-2}\,|\,i\in \bZ \cap [-d^2, d^2]\}.$$

Choose points $w_\pm \in \bW_{\pm}$ and set $\widehat{\bfw}_{d,k} := \widehat{\bfa}_{d,k} \times \bfp \cup \{w_+, w_-\}$. Using Lemma~\ref{lem:j_tame} and Proposition~\ref{prop:closed_curves} above, we obtain the following analogue of Corollary~\ref{cor:closed_curves}. 

\begin{cor}\label{cor:closed_curves_adiabatic}
Fix any integer $d \geq 1$ and any $k$ sufficiently large so that $\widehat{J}_k$ is $\widehat{\Omega}_k$-tame. Then there exists a closed, connected Riemann surface $\widehat{C}_{d,k}$ and a $\widehat{J}_k$-holomorphic curve $\widehat{u}_{d,k}: \widehat{C}_{d,k} \to \widehat{\bW}_k$ such that (i) $G_a(\widehat{C}_{d,k}) = g(4md)$, (ii) the class $(\widehat{u}_{d,k})_*[\widehat{C}_{d,k}]$ is Poincar\'e dual to $4md[\widehat{\Omega}_k]$ and (iii) $\widehat{\bfw}_{d,k} \subset \widehat{u}_{d,k}(\widehat{C}_{d,k})$.
\end{cor}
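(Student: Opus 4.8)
The plan is to deduce Corollary~\ref{cor:closed_curves_adiabatic} from Proposition~\ref{prop:closed_curves} by running the same argument that produced Corollary~\ref{cor:closed_curves}, with the roles of $f_k$ and $\widecheck{J}_{\phi_k}$ now played by $\widehat{f}_k = f_{\widehat\phi_k}$ and $\widecheck{J}_k = \widecheck{J}_{\widehat\phi_k}$. First I would check applicability: since $\widehat{\bfa}_{d,k}$ consists of $2d^2+1$ points and $\#\bfp = m$, we have $\#\widehat{\bfw}_{d,k} = m(2d^2+1) + 2 = 2md^2 + m + 2$, and an elementary estimate gives $2md^2 + m + 2 \leq I(4md)/2 = 4m^2 d^2 + 3md$ for all $d,m \geq 1$. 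Hence Proposition~\ref{prop:closed_curves} applies with $e = 4md$ and $\mathbf{w} = \widehat{f}_k^{-1}(\widehat{\bfw}_{d,k})$.

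Next I would record that $\widecheck{J}_k = \widecheck{J}_{\widehat\phi_k}$ is $\Omega$-tame: by the construction in \S\ref{subsubsec:neck_stretching_adiabatic} the function $1 - \widehat\phi_k$ is supported in $U_{\delta_4} \subseteq U_{\delta_3}$ (as $\delta_4 < \delta_3$), so Lemma~\ref{lem:j_tame} gives the tameness. Applying Proposition~\ref{prop:closed_curves} with $e = 4md$, $\bar{J} = \widecheck{J}_k$, and $\mathbf{w} = \widehat{f}_k^{-1}(\widehat{\bfw}_{d,k})$ yields a closed, connected Riemann surface $\widehat{C}_{d,k}$ and a $\widecheck{J}_k$-holomorphic curve $v : \widehat{C}_{d,k} \to \bW$ with $G_a(\widehat{C}_{d,k}) = g(4md)$, with $v_*[\widehat{C}_{d,k}] = 4md\,A$ where $A \in H_2(\bW;\bZ)$ is Poincaré dual to $[\Omega]$, and with $\widehat{f}_k^{-1}(\widehat{\bfw}_{d,k}) \subset v(\widehat{C}_{d,k})$. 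I would then set $\widehat{u}_{d,k} := \widehat{f}_k \circ v$. Since $\widehat{f}_k : \bW \to \widehat{\bW}_k$ is a diffeomorphism with $(\widehat{f}_k)_*\widecheck{J}_k = \widehat{J}_k$ (by the definition of the pushforward almost-complex structure in \S\ref{subsubsec:stretched_manifolds}), the map $\widehat{u}_{d,k}$ is $\widehat{J}_k$-holomorphic; the domain is unchanged, so (i) holds; $(\widehat{u}_{d,k})_*[\widehat{C}_{d,k}] = (\widehat{f}_k)_*(4md\,A)$ is Poincaré dual to $4md\,[(\widehat{f}_k)_*\Omega] = 4md\,[\widehat{\Omega}_k]$, giving (ii); and $\widehat{\bfw}_{d,k} = \widehat{f}_k\bigl(\widehat{f}_k^{-1}(\widehat{\bfw}_{d,k})\bigr) \subset \widehat{f}_k(v(\widehat{C}_{d,k})) = \widehat{u}_{d,k}(\widehat{C}_{d,k})$, giving (iii).

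I do not expect any genuine obstacle here: all of the analytic content sits inside Proposition~\ref{prop:closed_curves} and Lemma~\ref{lem:j_tame}, which are already available, and the rest is bookkeeping — tracking that $\widehat{f}_k$, being a diffeomorphism, transports holomorphicity, arithmetic genus, and homology classes correctly, and that the new degenerating sequence $\{\widehat\phi_k\}$ still satisfies the support hypothesis of Lemma~\ref{lem:j_tame}. In fact, since tameness of $\widecheck{J}_k$ holds for every admissible $k$ and is preserved under the diffeomorphism $\widehat{f}_k$, the almost-complex structure $\widehat{J}_k$ is $\widehat{\Omega}_k$-tame for every $k > 4\delta_5^{-1}$, so the ``$k$ sufficiently large'' clause in the statement is automatically satisfied; one may either verify this and drop the clause or simply invoke it as stated.
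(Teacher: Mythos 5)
Your proposal is correct and follows essentially the same route as the paper, which obtains this corollary exactly by combining Lemma~\ref{lem:j_tame} (tameness of $\widecheck{J}_{\widehat\phi_k}$, valid since $1-\widehat\phi_k$ is supported in $U_{\delta_4}\subset U_{\delta_3}$) with Proposition~\ref{prop:closed_curves} applied to $e=4md$ and $\mathbf{w}=\widehat{f}_k^{-1}(\widehat{\bfw}_{d,k})$, then pushing forward by the diffeomorphism $\widehat{f}_k$. Your closing remark that tameness, and hence the conclusion, in fact holds for every admissible $k>4\delta_5^{-1}$ is a correct (and harmless) strengthening of the stated hypothesis.
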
 

\subsubsection{Construction of adiabatic limit sets}\label{subsubsec:adiabatic_limit_set_construction} For each fixed $d$, let $\{\widehat{u}_{d,k}\}$ denote the sequence of curves from Corollary~\ref{cor:closed_curves_adiabatic}. For any sequence $\bfk$, let $\widehat\cX_d(\bfk)$ denote the adiabatic limit set of this sequence $\{\widehat{u}_{d,k_j}\}$. The analogue of Lemma~\ref{lem:lim_set_conn} holds for adiabatic limit sets, with the same proof. 

\begin{lem}\label{lem:adiabatic_limit_set_conn}
For any fixed $d \geq 1$ and any sequence $\bfk = \{k_j\}$, there exists a subsequence $\bfk' \subseteq \bfk$ such that for any closed, connected interval $\cJ$ of positive length, the subset $\pi_{\bR}^{-1}(\cJ) \subseteq \widehat\cX_d(\bfk')$ is connected.
\end{lem}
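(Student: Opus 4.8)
The plan is to repeat the proof of Lemma~\ref{lem:lim_set_conn} essentially verbatim, rescaling the vertical parameter by the factor $\delta_5$ so that it matches the definition of $\widehat\cX_d(\bfk)$ in \S\ref{subsubsec:adiabatic_limit_set_construction}. First I would introduce, for each $j$, the adiabatic slicing map
\[
\widehat\cS_j : (-\delta_5,\delta_5) \longrightarrow \cK\big((-1,1)\times Y\big) \times (-\delta_5,\delta_5), \qquad s \longmapsto \Big( \tau_{\delta_5^{-1}k_j s}\cdot\big(\widehat u_{d,k_j}(\widehat C_{d,k_j}) \cap (\delta_5^{-1}k_j s - 1,\, \delta_5^{-1}k_j s + 1)\times Y\big),\; s\Big),
\]
which is well defined since $|s|<\delta_5$ forces $|\delta_5^{-1}k_j s|<k_j$, keeping the relevant slice (and the interval $(\delta_5^{-1}k_j s-1,\delta_5^{-1}k_j s+1)$) safely inside the neck $[-\widehat L_{k_j},\widehat L_{k_j}]$ because $\widehat L_{k_j}\ge 16k_j$. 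The factor $\delta_5$ is exactly the one appearing in condition (i) of the definition of $\widehat\cX$: setting $a_j=\delta_5^{-1}k_j s_j$ one has $\delta_5 k_j^{-1}a_j=s_j$, so $(\Xi,s)\in\widehat\cX_d(\bfk)$ if and only if $\widehat\cS_j(s_j)\to(\Xi,s)$ along a subsequence for some $s_j\to s$. The $\cK$-valued component of $\widehat\cS_j$ is continuous by exactly the argument of \cite[Lemma~$5.2$]{CGP24} --- which only uses that $\widehat u_{d,k_j}$ is a proper map together with continuity of the slicing and Hausdorff-limit operations, neither of which distinguishes adiabatic from fixed-neck stretching --- so $\widehat\cS_j$ is continuous.

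Then I would run the same diagonal extraction and connectedness argument. Fix a countable dense subset $\widehat{\bfs}\subseteq(-\delta_5,\delta_5)$ and pass to a subsequence $\bfk'=\{k_j\}$ along which $\{\widehat\cS_j(s)\}$ converges for every $s\in\widehat{\bfs}$. Given a closed connected interval $\cJ\subseteq(-\delta_5,\delta_5)$ of positive length, it contains some $s\in\widehat{\bfs}$, and $\widehat\cS_j(s)$ converges to an element $(\Xi,s)\in\pi_{\bR}^{-1}(\cJ)$. For an arbitrary second element $(\Xi',s')\in\pi_{\bR}^{-1}(\cJ)$ I would extract a further subsequence $\{k'_\ell\}$ and points $s'_\ell\to s'$ with $\widehat\cS_{k'_\ell}(s'_\ell)\to(\Xi',s')$, let $\cJ_\ell\subseteq\cJ$ be the closed interval with endpoints $s$ and $s'_\ell$, and set $\cZ_\ell:=\widehat\cS_{k'_\ell}(\cJ_\ell)$; this is connected since $\widehat\cS_{k'_\ell}$ is continuous and $\cJ_\ell$ is connected. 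Because $s\in\cJ_\ell$ for all $\ell$ and $\widehat\cS_{k'_\ell}(s)\to(\Xi,s)$, Lemma~\ref{lem:accumulation_points} applies and shows that the set $\cX''$ of subsequential Hausdorff limit points of $\{\cZ_\ell\}$ is closed and connected; it contains both $(\Xi,s)$ and $(\Xi',s')$; and since $\cJ$ is closed with $\cJ_\ell\subseteq\cJ$, every element of $\cX''$ has second coordinate in $\cJ$, hence $\cX''\subseteq\pi_{\bR}^{-1}(\cJ)$. This produces a connected subset of $\pi_{\bR}^{-1}(\cJ)$ joining the two given points, so $\pi_{\bR}^{-1}(\cJ)$ is connected.

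I do not expect a genuine obstacle here; the statement is a formal consequence of the setup in \S\ref{subsubsec:neck_stretching_adiabatic}. The only points that need attention are bookkeeping ones: getting the $\delta_5$-rescaling in $\widehat\cS_j$ consistent with condition (i) in the definition of the adiabatic limit set, and checking that the continuity proof of \cite[Lemma~$5.2$]{CGP24} transfers without change (as noted above, it does). Beyond that the argument is literally the proof of Lemma~\ref{lem:lim_set_conn} with $u_{d,k_j}$, the normalization $k_j^{-1}a_j\to s$, and the parameter interval $(-1,1)$ replaced throughout by $\widehat u_{d,k_j}$, $\delta_5 k_j^{-1}a_j\to s$, and $(-\delta_5,\delta_5)$, so I would present it by citing the proof of Lemma~\ref{lem:lim_set_conn} and indicating these substitutions.
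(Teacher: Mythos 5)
Your proposal is correct and is exactly the paper's argument: the paper proves Lemma~\ref{lem:adiabatic_limit_set_conn} by declaring that the proof of Lemma~\ref{lem:lim_set_conn} goes through verbatim for adiabatic limit sets, which is precisely the substitution scheme (slicing map rescaled by $\delta_5$, countable dense set, diagonal subsequence, connectedness via Lemma~\ref{lem:accumulation_points}) that you carry out. Your explicit bookkeeping of the $\delta_5$-rescaling and of the second coordinate in the slicing map is a slightly more careful rendering of the same proof, not a different route.
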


\subsubsection{Main technical proposition} The following proposition concerns the structure of $\widehat{\cX}_d(\bfk)$ when $d$ is large. It is analogous to Proposition~\ref{prop:stretched_limit}. 

\begin{prop}\label{prop:adiabatic_limit}
Fix any integer $n \geq 1$. Then there exists a large integer $d \gg 1$, a sequence $\bfk$, and a subset $\cQ \subset (-\delta_5, \delta_5)$ of measure at least $(2 - 2^{-n-1})\delta_5$ such that the following holds for any $s \in \cQ$. Define $\cJ := [-\delta_5, \delta_5] \cap [s - d^{-2}, s + d^{-2}]$. Then $\pi_{\bR}^{-1}(\cJ) \subseteq \widehat\cX_d(\bfk)$ satisfies the following properties: 
\begin{enumerate}[(a)]
\item There exists $(\Xi, s') \in \pi_{\bR}^{-1}(\cJ)$ such that $\{0\} \times \bfp \subset \Xi$. 
\item There exists $(\Xi, s') \in \pi_{\bR}^{-1}(\cJ)$ such that $\Xi = (-1, 1) \times \Lambda$, where $\Lambda \subseteq Y$ is a finite union of closed orbits of $R^s$. 
\item For every $(\Xi, s') \in \pi_{\bR}^{-1}(\cJ)$, the set $\Xi$ is a $1/n$-almost cylinder with respect to $(\eta^s, J^s)$.
\item The set $\pi_{\bR}^{-1}(\cJ)$ is connected. 
\end{enumerate}
\end{prop}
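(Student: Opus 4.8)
\textbf{Proof proposal for Proposition~\ref{prop:adiabatic_limit}.}

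The plan is to mirror the structure of the proof of Proposition~\ref{prop:stretched_limit} from \S\ref{subsec:stretched_limit_proof}, but using the adiabatic limit sets $\widehat\cX_d(\bfk)$ and the adiabatic geometric convergence result Corollary~\ref{cor:geometric_convergence_adiabatic} in place of their ``stretched'' counterparts, and tracking which levels $s$ are ``good'' so as to build the positive-measure set $\cQ$. First I would set up the analogues of the accumulation sets $\bfs_\omega(d,\epsilon;\bfk)$ and $\bfs_\chi(d,b;\bfk)$ for the curves $\widehat u_{d,k}$: define capped slices $\widehat C^{\cI}_{d,k}\subseteq\widehat C_{d,k}$ exactly as before (using regular levels of $a\circ\widehat u_{d,k}$ avoiding nodal points), and define $\widehat\bfs_\omega(d,\epsilon;\bfk)$ and $\widehat\bfs_\chi(d,b;\bfk)$ by the same accumulation conditions on action $\int u^*\widehat\omega_k$ and on Euler characteristic of irreducible components. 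The pigeonhole arguments of Lemma~\ref{lem:pigeonhole_omega}, Lemma~\ref{lem:pigeonhole_omega2}, and Lemma~\ref{lem:pigeonhole_chi} go through verbatim, since they rely only on: (i) the global bound $\int_{\widehat C_{d,k}}\widehat u_{d,k}^*\widehat\Omega_k = 4md$ and $G_a(\widehat C_{d,k})=g(4md)$ from Corollary~\ref{cor:closed_curves_adiabatic}; (ii) the tameness inequality Lemma~\ref{lem:j_tame}; and (iii) the quantization Lemma~\ref{lem:quantization}, whose proof only uses Proposition~\ref{prop:fh_quantization} (applied via Remark~\ref{rem:stable_constants}, which applies equally to the precompact family $\{(\widehat\eta^a_k,\widehat J^a_k)\}$ by the same argument) and the monotonicity bound. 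So we obtain a subsequence $\bfk$ such that $\#\widehat\bfs_\omega(d,\epsilon_d;\bfk)\le 8m\epsilon_d^{-1}d$, $\widehat\bfs_\omega(d;\bfk)$ is countable, and $\#\widehat\bfs_\chi(d,b_*;\bfk)\le 4m(c_4d+4mb_*^{-1}d^2)$, with $\epsilon_d = 128md^{-1}$, $b_* = 64m^2$ as in \S\ref{sec:invariant_sets}.

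Next I would establish the adiabatic analogue of Proposition~\ref{prop:almost_cylinders}: for $d$ sufficiently large and any $s\in(-\delta_5,\delta_5)$, if $s'\notin\widehat\bfs_\omega(d,\epsilon_d;\bfk)\cup\widehat\bfs_\chi(d,b_*;\bfk)$ then any $(\Xi,s')\in\widehat\cX_d(\bfk)$ is a $1/n$-almost cylinder \emph{with respect to $(\eta^{s'},J^{s'})$}. The argument is identical to the proof of Proposition~\ref{prop:almost_cylinders}: pick $a_j\in(-k_j,k_j)$ with $\delta_5 k_j^{-1}a_j\to s'$ realizing $\Xi$, cap the slices over intervals $\cL_j$ of height roughly $12$ to get surfaces $Z_j := \widehat C_{d,k_j}^{\cL_j}$ with bounded Euler characteristic per component and action at most $64md^{-1}\le\epsilon_5$, apply Proposition~\ref{prop:low_action_almost_cyl} to conclude $\Xi_j$ is a $1/n$-almost cylinder with respect to $(\widehat\eta_{k_j}^{a_j},\widehat J_{k_j}^{a_j})$, and finally pass to the limit using Lemma~\ref{lem:almost_cyl_limits} together with Corollary~\ref{cor:geometric_convergence_adiabatic}, which gives $(\widehat\eta_{k_j}^{a_j},\widehat J_{k_j}^{a_j})\to(\eta^{s'},J^{s'})$. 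The one subtlety, which is the main point of departure from \S\ref{sec:invariant_sets}: the limiting pair depends on $s'$. As long as we restrict to $s'$ inside a tiny interval $\cJ$ of length $2d^{-2}$ around a fixed $s$, the pairs $(\eta^{s'},J^{s'})$ for $s'\in\cJ$ all lie in a $C^\infty$-small neighborhood of $(\eta^s,J^s)$, so by the stability clause in Proposition~\ref{prop:low_action_almost_cyl} (the constant $\epsilon_5$ is stable) and in Lemma~\ref{lem:almost_cyl_limits} (which allows the data to vary), the almost-cylinder conclusion holds uniformly with respect to $(\eta^s,J^s)$ itself after shrinking $d^{-2}$; this is what yields property (c). For the properness-replacement, property (b): since $\widehat u_{d,k}$ represents $4md$ times the generator and meets both $\bW_\pm$, and since by Lemma~\ref{lem:j_tame} the low-action capped pieces have small action, the analogue of Proposition~\ref{prop:intersection_argument} is not actually needed here — we only need to find an $s'\in\cJ$ with no $\omega$-accumulation at all (available because $\widehat\bfs_\omega(d;\bfk)$ is countable while $\cJ$ is uncountable), at which level an element of $\widehat\cX_d(\bfk)$ is a cylinder over an invariant set that, being a $1/n$-almost cylinder for every $n$ in the vicinity of $(\eta^s,J^s)$ and carrying zero action in its capped pieces, limits (as $d\to\infty$; but we only need finite $n$ here) to a finite union of closed orbits via Lemma~\ref{lem:zero_action} and the action quantization of Proposition~\ref{prop:fh_quantization}. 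I would actually phrase (b) by noting that a zero-action holomorphic piece through each constraint point forces the limit to be a union of leaves $\bR\times\Gamma_y$, i.e. a finite union of closed orbits since $\Lambda$ is compact and invariant.

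Finally, to produce the measure bound I would run the following bookkeeping. After choosing $\bfk$ as above, let $\mathbf{E}_d := \widehat\bfs_\omega(d,\epsilon_d;\bfk)\cup\widehat\bfs_\chi(d,b_*;\bfk)$, a finite set with $\#\mathbf{E}_d\le d^2/2$ for $d$ large (using $\epsilon_d = 128md^{-1}$, $b_*=64m^2$, and $d\ge 64mc_4$, exactly as in Step 1 of the proof of Proposition~\ref{prop:stretched_limit}). Let $\cQ\subseteq(-\delta_5,\delta_5)$ be the set of $s$ such that the interval $\cJ := [-\delta_5,\delta_5]\cap[s-d^{-2},s+d^{-2}]$ is disjoint from $\mathbf{E}_d$; the complement of $\cQ$ is covered by $\#\mathbf{E}_d$ intervals of length $2d^{-2}$, so $|\cQ| \ge 2\delta_5 - (d^2/2)(2d^{-2}) = 2\delta_5 - 1$, which for $d$ large exceeds $(2-2^{-n-1})\delta_5$ provided $\delta_5$ is bounded below away from $0$ — and since $\delta_5$ is a fixed constant coming from the adiabatic neck-stretching setup in \S\ref{subsubsec:neck_stretching_adiabatic}, I would simply shrink $d^{-2}$ further (replace $\cJ$'s half-length by $c\,d^{-2}$ for a small constant) and require $\#\mathbf{E}_d\cdot(\text{half-length})\le 2^{-n-2}\delta_5$, which holds for $d$ large since $\#\mathbf{E}_d=O(d^2)$ and the half-length is $O(d^{-2})$. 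For each $s\in\cQ$: property (a) holds because $\cJ$ has length comparable to $d^{-2}$ hence contains a constraint level $i d^{-2}$ by the pigeonhole principle, and the slice over that level passes through $\{0\}\times\bfp$; property (b) holds by picking $s'\in\cJ$ with $s'\notin\widehat\bfs_\omega(d;\bfk)$ and applying the zero-action argument above; property (c) holds by the adiabatic Proposition~\ref{prop:almost_cylinders}; property (d) holds by Lemma~\ref{lem:adiabatic_limit_set_conn}. I expect the main obstacle to be property (c)'s uniformity in the base data: carefully arranging that a single choice of large $d$ makes the almost-cylinder conclusion hold \emph{with respect to the fixed pair $(\eta^s,J^s)$}, uniformly over the uncountably many $s\in\cQ$, rather than with respect to the nearby varying pairs $(\eta^{s'},J^{s'})$ — this requires combining the stability of $\epsilon_5$ with a uniform modulus of continuity for $s\mapsto(\eta^s,J^s)$ on the compact interval $[-\delta_5,\delta_5]$, and shrinking the interval length $d^{-2}$ accordingly, which I would make explicit at the start of the argument.
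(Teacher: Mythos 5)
Your overall architecture (adiabatic accumulation sets, the $1/2n$-versus-$1/n$ trick to pass from $(\eta^{s'},J^{s'})$ to $(\eta^s,J^s)$, connectivity via Lemma~\ref{lem:adiabatic_limit_set_conn}, and the pigeonhole count of bad levels) matches the paper, but there is a genuine gap in your treatment of property (b). You assert that the intersection-type argument is not needed (true) and that at a level $s'\in\cJ$ with no $\omega$-accumulation the limit is automatically a finite union of closed orbits, ``since a zero-action piece forces the limit to be a union of leaves $\bR\times\Gamma_y$, i.e.\ a finite union of closed orbits since $\Lambda$ is compact and invariant.'' That last inference is false: zero action only shows the limit set is vertically invariant, i.e.\ a cylinder $(-1,1)\times\Lambda$ over a compact $R^s$-invariant set $\Lambda$, and a compact invariant set which is a union of flow lines need not consist of finitely many \emph{closed} orbits (think of an invariant torus carrying an irrational flow, or a minimal set with dense orbits as in Remark~\ref{rem:katok}; each leaf $\bR\times\Gamma_y$ has zero action, yet no orbit is closed). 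What is actually needed, and what the paper supplies, is a finite-\emph{area} (mass) bound on the slices, i.e.\ control of the $\widehat\lambda$-integrals: the paper introduces the smoothed $\lambda$-integral $E_{d,k,\lambda}$, shows the blowup set $\widehat\bfs_\lambda(d;\bfk)$ is Lebesgue-null (Lemmas~\ref{lem:pigeonhole_blowup}, \ref{lem:pigeonhole_blowup2}), bounds the current's area by $E_{d,k,\lambda}$ plus action (Lemma~\ref{lem:lambda_currents}), and then invokes Taubes-style compactness for holomorphic currents together with the fact that zero-action, \emph{finite-area} currents are cylinders over closed orbit sets (Proposition~\ref{prop:currents_compactness}, Lemma~\ref{lem:zero_action_currents}, packaged in Propositions~\ref{prop:closed_orbits_quantitative} and \ref{prop:closed_orbits_adiabatic}). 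This $\lambda$-blowup analysis is precisely the new ingredient of the adiabatic section that your proposal omits, and without it property (b) is unproven.

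A secondary problem is the measure bookkeeping. With your $n$-independent thresholds $\epsilon_d=128md^{-1}$ and $b_*=64m^2$, the $\chi$-accumulation set can have up to roughly $d^2/4$ points, and fattening each by the interval of half-length $d^{-2}$ prescribed in the statement removes measure of order $1$, not $\le 2^{-n-1}\delta_5$. Your proposed remedy, shrinking the half-length of $\cJ$ to $c\,d^{-2}$, is not available: the proposition fixes $\cJ=[-\delta_5,\delta_5]\cap[s-d^{-2},s+d^{-2}]$, and shrinking it below the constraint spacing $\delta_5 d^{-2}$ would also break your pigeonhole argument for property (a). The correct fix is the paper's: make the thresholds depend on $n$ and scale differently in $d$ (namely $\epsilon_{d,2n}\sim 2^{2n}\delta_5^{-1}m d^{-1/2}$ and $b_{2n}\sim 2^{2n+2}m^2\delta_5^{-1}$), so the number of bad levels is at most $(2^{-4n}+O(d^{-1/2}))\delta_5 d^2$ and the excluded measure is at most $2^{-n-1}\delta_5$.
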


\subsubsection{Proof of Proposition~\ref{prop:main2}} We defer the proof of Proposition~\ref{prop:adiabatic_limit} to \S\ref{subsec:adiabatic_lim_proof}. We first use Proposition~\ref{prop:adiabatic_limit} and Lemma~\ref{lem:adiabatic_limit_set_conn} to prove Proposition~\ref{prop:main2}.   

\begin{proof}
Fix $n \geq 1$ and some large $d \geq 2^{8n}\delta_5^{-1/2}$. Let $\bfk$ be the sequence and let $\cQ \subseteq (-\delta_5, \delta_5)$ be the set of measure at least $(2 - 2^{-n-1})\delta_5$ produced by Proposition~\ref{prop:adiabatic_limit}. Define $\cQ_{\bfp, n} := (-\delta_5 + d^{-2}, \delta_5 - d^{-2}) \cap \cQ$. Since $d \geq 2^{8n}$, it follows that $\cQ_{\bfp, n}$ has measure at least $(2 - 2^{-n})\delta_5$. Fix any $s \in \cQ_{\bfp, n}$ and let $\cJ \subset (-\delta_5, \delta_5)$ denote the interval $(s - d^{-2}, s + d^{-2})$. Set $\cW := \pi_{\bR}^{-1}(\cJ) \subseteq \widehat\cX_d(\bfk)$. The set $\cW$ is connected by Proposition~\ref{prop:adiabatic_limit}(d). Define $\cZ^s_{\bfp, n} \subseteq \cK( (-1,1) \times Y)$ to be equal to $\pi_{\cK}(\cW)$. The set $\cW$ is connected and $\pi_{\cK}$ is continuous, so $\cZ^s_{\bfp, n}$ is connected. Now, Proposition~\ref{prop:main2}(a--c) each follow from Proposition~\ref{prop:adiabatic_limit}(a--c). 
\end{proof}

\subsection{Proof of Proposition~\ref{prop:adiabatic_limit}}\label{subsec:adiabatic_lim_proof}

The proof of Proposition~\ref{prop:adiabatic_limit} follows a similar format to the proof of Proposition~\ref{prop:stretched_limit}. 

\subsubsection{Capped slices, accumulation, and blowup}\label{subsubsec:capped_acc_blowup} We introduce several definitions and notations. Let $\widehat\cR \subseteq (-\delta_5, \delta_5)$ denote the set of levels $s$ such that, for each $d$ and $k$, we have (i) $\delta_5^{-1}ks$ is a regular value of $\widehat{u}_{d,k}$ and (ii) the subset $(a \circ \widehat{u}_{d,k})^{-1}(\delta_5^{-1} ks)$ does not contain any nodal points. Now, given any closed interval $\cI \subseteq (-\delta, \delta)$ with endpoints in $\widehat{\cR}$, we associate to it a capped slice $\widehat{C}^{\cI}_{d,k} \subseteq \widehat{C}_{d,k}$, defined as in \S\ref{subsubsec:capped_accumulation}, but scaling $\cI$ by $\delta_5^{-1}k$ instead of $k$. Define accumulation sets $\widehat\bfs_\omega(d, \epsilon; \bfk)$ and $\widehat\bfs_\chi(d, b; \bfk)$ as follows. We say $s \in \widehat{\bfs}_\omega(d, \epsilon; \bfk)$ if and only if there exists a sequence of intervals $\mathcal{L}_j$ satisfying the following properties:
\begin{enumerate}[(\roman*)]
\item The sequence $\{\cL_j\}$ converges to $\{s\}$ in $\cK(\bR)$;
\item We have $\limsup_{j \to \infty} \int_{\widehat{C}^{\mathcal{L}_j}_{d,k_j}} \widehat{u}_{d,k_j}^*\widehat{\omega}_{k_j} > \epsilon$.
\end{enumerate}

Define $\widehat{\bfs}_\omega(d; \bfk) := \bigcup_{\epsilon > 0} \widehat{\bfs}_\omega(d, \epsilon; \bfk)$. We say $s \in \widehat{\bfs}_\chi(d, b; \bfk)$ if it admits a sequence of intervals $\mathcal{L}_j$ satisfying the following properties:
\begin{enumerate}[(\roman*)]
\item The sequence $\{\cL_j\}$ converges to $\{s\} \in \cK(\bR)$; 
\item There exists a sequence of irreducible components $Z_{d,j} \subseteq \widehat{C}^{\mathcal{L}_j}_{d,k_j}$ such that 
$$\limsup_{j \to \infty} \chi(Z_{d,j}) < -b.$$
\end{enumerate}

The following lemma bounds the sizes of the accumulation sets. 

\begin{lem}\label{lem:pigeonhole_adiabatic}
There exists a positive constant $c_5 \geq 1$ such that the following holds for any $d \geq 1$, $b \geq 1$, $\epsilon > 0$, and sequence $\bfk$. There exists a subsequence $\bfk' \subseteq \bfk$ such that
\begin{equation}\label{eq:at_bounds_adiabatic} \#\widehat{\bfs}_\omega(d, \epsilon; \bfk') \leq 4m\epsilon^{-1}d,\qquad \#\widehat\bfs_{\chi}(d, b; \bfk') \leq 4m(c_5 d + 4mb^{-1}d^2).\end{equation}
\end{lem}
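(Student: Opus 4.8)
The proof of Lemma~\ref{lem:pigeonhole_adiabatic} will follow the same structure as the proofs of Lemma~\ref{lem:pigeonhole_omega} and Lemma~\ref{lem:pigeonhole_chi} in the previous section, the only change being the bookkeeping of the rescaling factor $\delta_5^{-1}k$ (rather than $k$) that appears in the definition of the adiabatic capped slices $\widehat C^{\cI}_{d,k}$. The plan is to treat the $\omega$-accumulation bound and the $\chi$-accumulation bound separately, in that order.

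\emph{The $\omega$-accumulation bound.} Fix $d$, $\epsilon$, and $\bfk$. Set $N := \lfloor 4m\epsilon^{-1}d\rfloor + 1$ and argue by contradiction: if for every subsequence $\bfk'$ one had $\#\widehat\bfs_\omega(d,\epsilon;\bfk') \geq N$, then the same inductive extraction argument as in Step $1$ of the proof of Lemma~\ref{lem:pigeonhole_omega} produces a subsequence $\bfk' = \{k_j\}$ and $N$ distinct levels $s_1,\dots,s_N \in (-\delta_5,\delta_5)$ together with sequences of intervals $\cL_{j,i} \to \{s_i\}$ such that the capped slices $\widehat C_{d,j,i} := \widehat C^{\cL_{j,i}}_{d,k_j}$ satisfy $\int_{\widehat C_{d,j,i}} \widehat u_{d,k_j}^*\widehat\omega_{k_j} \geq \epsilon$. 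The key observation is that, just as before, every point of $(a\circ\widehat u_{d,k_j})(\widehat C_{d,j,i})$ is within vertical distance $2$ of the rescaled interval $\delta_5^{-1}k_j\cL_{j,i}$; the images $\delta_5^{-1}k_j\cL_{j,i}$ for distinct $i$ drift apart without bound as $j \to \infty$ (because the $s_i$ are distinct and the rescaling factor $\delta_5^{-1}k_j \to \infty$), so for large $j$ the surfaces $\widehat C_{d,j,i}$ are pairwise disjoint. Then, using Corollary~\ref{cor:closed_curves_adiabatic} and Lemma~\ref{lem:j_tame} exactly as in \eqref{eq:action_pigeonhole},
\begin{equation*}
4md = \int_{\widehat C_{d,k_j}} \widehat u_{d,k_j}^*\widehat\Omega_{k_j} \geq \frac12\sum_{i=1}^N \int_{\widehat C_{d,j,i}} \widehat u_{d,k_j}^*\widehat\omega_{k_j} \geq N\epsilon/2 > 4md,
\end{equation*}
a contradiction. (Note the bound here is $4m\epsilon^{-1}d$ rather than the $8m\epsilon^{-1}d$ of Lemma~\ref{lem:pigeonhole_omega}; the factor-of-two discrepancy is immaterial and can be absorbed, or one simply writes $8m$ to be safe — I would keep $4m$ since the total $\widehat\Omega_{k_j}$-mass is $4md$ and the argument is identical.)

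\emph{The $\chi$-accumulation bound.} Here I would first establish the adiabatic analogue of the quantization Lemma~\ref{lem:quantization}: there is a constant $c_5 \geq 1$ such that any compact irreducible $Z \subset \widehat C_{d,k}$ with $\widehat u_{d,k}(\partial Z) \subset \{a_0\}\times Y$ for some $a_0 \in (-k,k)$ and containing a point $\zeta$ with $\widehat u_{d,k}(\zeta) \notin [a_0-2,a_0+2]\times Y$ satisfies $\int_Z \widehat u_{d,k}^*\widehat\Omega_k \geq c_5^{-1}$. The proof is verbatim the two-case argument of Lemma~\ref{lem:quantization} — Step $1$ (curve stays in the neck) uses Proposition~\ref{prop:fh_quantization} applied to the pair $(\widehat\eta_k^{a_0},\widehat J_k^{a_0})$, which is legitimate because Lemma~\ref{lem:geometric_convergence} and Remark~\ref{rem:stable_constants} apply equally to the sequence $\{\widehat\phi_k\}$ (the hypotheses of Lemma~\ref{lem:geometric_convergence} only concern convergence of the reparametrizing ODEs, which still holds); Step $2$ (curve leaves the neck) uses the monotonicity bound \cite[Proposition $3.4$]{Fish11} exactly as written, since off the neck the geometry is unchanged. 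With $c_5$ in hand, the bound on $\#\widehat\bfs_\chi(d,b;\bfk')$ follows by repeating the proof of Lemma~\ref{lem:pigeonhole_chi} line for line: assume $\#\widehat\bfs_\chi(d,b;\bfk') > 4mb^{-1}d(4md + c_5)$ for all subsequences, extract $N := \lfloor 4mb^{-1}d(4md+c_5)\rfloor$ levels with capped slices each containing an irreducible component $Z_{d,j,i}$ of Euler characteristic $\leq -b$, bound $\chi$ of the complement $\widehat S_{d,j} := \widehat C_{d,k_j}\setminus\bigcup_i Z_{d,j,i}$ by $4mc_5 d$ using the quantization lemma (each "long" excess component carries $\widehat\Omega_{k_j}$-mass $\geq c_5^{-1}$ and the total mass is $4md$), and then derive the incompatibility of
\begin{equation*}
\chi(\widehat C_{d,k_j}) \geq -2G_a(\widehat C_{d,k_j}) \geq -16m^2d^2
\qquad\text{and}\qquad
\chi(\widehat C_{d,k_j}) = \chi(\widehat S_{d,j}) + \sum_{i=1}^N \chi(Z_{d,j,i}) \leq 4mc_5 d - Nb < -16m^2d^2,
\end{equation*}
which is the desired contradiction.

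\emph{Assembling the two bounds.} Finally, a diagonal argument over $\epsilon$ and $b$ (or, since both displayed bounds are needed only for the single pair $(\epsilon_d, b_*)$ used downstream, a single application followed by passing to a common subsequence) yields one subsequence $\bfk' \subseteq \bfk$ on which both estimates in \eqref{eq:at_bounds_adiabatic} hold. I do not expect any genuine obstacle here: the entire content is that the two counting arguments of \S\ref{subsec:stretched_limit_proof} are insensitive to replacing the vertical coordinate scaling $k$ by $\delta_5^{-1}k$, and that the geometric-convergence machinery of \S\ref{subsec:neck_stretching_general} was already set up (via Lemma~\ref{lem:geometric_convergence} and Corollary~\ref{cor:geometric_convergence_adiabatic}) to apply uniformly to the adiabatic sequence $\{\widehat\phi_k\}$. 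The one place requiring a moment's care is verifying that distinct levels $s_i$ force the rescaled intervals $\delta_5^{-1}k_j\cL_{j,i}$ to separate — but this is immediate since $\delta_5^{-1}k_j|s_i - s_{i'}| \to \infty$ while each $\cL_{j,i}$ shrinks to a point.
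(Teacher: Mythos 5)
Your proposal matches the paper's proof, which is exactly the same outline: repeat the pigeonhole arguments of Lemmas~\ref{lem:pigeonhole_omega} and \ref{lem:pigeonhole_chi} with the vertical rescaling $k$ replaced by $\delta_5^{-1}k$, introduce $c_5$ via an adiabatic analogue of Lemma~\ref{lem:quantization} proved verbatim, and substitute Corollary~\ref{cor:closed_curves_adiabatic} for Corollary~\ref{cor:closed_curves}. The factor-of-two issue you flag is real but harmless (the stated constant $4m\epsilon^{-1}d$ versus the $8m\epsilon^{-1}d$ that the repeated argument with the Lemma~\ref{lem:j_tame} loss literally yields is already an inconsistency internal to the paper, and only the order of magnitude is used downstream), so with $N := \lfloor 8m\epsilon^{-1}d\rfloor + 1$ your contradiction chain closes and the proof is complete as you describe.
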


\begin{proof}
The lemma follows from repeating the proofs of Lemmas~\ref{lem:pigeonhole_omega} and \ref{lem:pigeonhole_chi}, changing the notation, and making a couple of other modifications that we list here. The constant $c_5$ replaces the constant $c_4$ from Lemma~\ref{lem:pigeonhole_chi}. It arises from an analogue of Lemma~\ref{lem:quantization}, with an identical proof. We replace the uses of Corollary~\ref{cor:closed_curves} with Corollary~\ref{cor:closed_curves_adiabatic}. 
\end{proof}

Lemma~\ref{lem:pigeonhole_adiabatic} has the following consequence.

\begin{lem}\label{lem:pigeonhole_adiabatic2}
Fix any $d \geq 1$ and any sequence $\bfk$. Then, there exists a subsequence $\bfk' \subseteq \bfk$ such that $\widehat{\bfs}_\omega(d; \bfk')$ is countable. 
\end{lem}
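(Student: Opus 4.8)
The plan is to deduce Lemma~\ref{lem:pigeonhole_adiabatic2} from Lemma~\ref{lem:pigeonhole_adiabatic} by the same diagonal argument that derived Lemma~\ref{lem:pigeonhole_omega2} from Lemma~\ref{lem:pigeonhole_omega}. The key observation is that $\widehat{\bfs}_\omega(d;\bfk') = \bigcup_{n\geq 1}\widehat{\bfs}_\omega(d,1/n;\bfk')$, so it suffices to produce a single subsequence along which each of the countably many sets $\widehat{\bfs}_\omega(d,1/n;\bfk')$ is finite; a countable union of finite sets is countable.

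First I would fix $d$ and the sequence $\bfk = \bfk_0$. Applying Lemma~\ref{lem:pigeonhole_adiabatic} with $\epsilon = 1$ produces a subsequence $\bfk_1 \subseteq \bfk_0$ with $\#\widehat{\bfs}_\omega(d,1;\bfk_1)\leq 4md < \infty$. Inductively, given $\bfk_{n-1}$, apply Lemma~\ref{lem:pigeonhole_adiabatic} with $\epsilon = 1/n$ to obtain $\bfk_n \subseteq \bfk_{n-1}$ with $\#\widehat{\bfs}_\omega(d,1/n;\bfk_n)\leq 4mnd < \infty$. Now take $\bfk'$ to be the diagonal subsequence whose $n$-th term is the $n$-th term of $\bfk_n$; then $\bfk'$ is (eventually) a subsequence of every $\bfk_n$, so by the monotonicity inclusion (the analogue of \eqref{eq:inclusion}, which holds for the adiabatic accumulation sets with the same proof) we have $\widehat{\bfs}_\omega(d,1/n;\bfk') \subseteq \widehat{\bfs}_\omega(d,1/n;\bfk_n)$, hence $\#\widehat{\bfs}_\omega(d,1/n;\bfk')$ is finite for every $n$. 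Therefore $\widehat{\bfs}_\omega(d;\bfk') = \bigcup_{n\geq 1}\widehat{\bfs}_\omega(d,1/n;\bfk')$ is a countable union of finite sets, so it is countable, as claimed.

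There is essentially no obstacle here; the only point requiring a word of care is the monotonicity of $\widehat{\bfs}_\omega(d,\epsilon;\cdot)$ under passing to subsequences, which is immediate from the definition since any sequence of intervals $\{\cL_j\}$ witnessing membership of $s$ for a subsequence also witnesses it (after reindexing) for the original sequence — exactly as in \eqref{eq:inclusion}. Everything else is the standard diagonalization packaging, and one could alternatively cite the proof of Lemma~\ref{lem:pigeonhole_omega2} verbatim with $\bfs_\omega$ replaced by $\widehat{\bfs}_\omega$ and Lemma~\ref{lem:pigeonhole_omega} replaced by Lemma~\ref{lem:pigeonhole_adiabatic}.
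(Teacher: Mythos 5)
Your proof is correct and is exactly the argument the paper intends: the paper states Lemma~\ref{lem:pigeonhole_adiabatic2} as a direct consequence of Lemma~\ref{lem:pigeonhole_adiabatic}, mirroring the proof of Lemma~\ref{lem:pigeonhole_omega2}, which is the same diagonal extraction over $\epsilon = 1/n$ followed by writing $\widehat{\bfs}_\omega(d;\bfk')$ as a countable union of finite sets. Your added remark on monotonicity under subsequences is the right (and only) point needing care, and it holds just as in \eqref{eq:inclusion}.
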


We now introduce a new set that did not appear in \S\ref{subsec:stretched_limit_proof}, which tracks the levels sets on which the integral of $\widehat\lambda$ blows up. To ease some technical complications, our definition will use a smoothed version of the $\widehat\lambda$-integral. Define a smooth function $r: [-2, 2] \to [0,1]$ such that (i) $r = 0$ in a neighborhood of $-2$, (ii) $r = 1$ in a neighborhood of $2$, (iii) $r'(a) \in [0, 1]$ for each $a \in [-2, 2]$, and (iv) $r(a) = a$ for $a \in [-1/2, 1/2]$. For any $k$ and any $s \in (-\delta_5, \delta_5)$, define $a_{s,k} := \delta_5 k^{-1} s$ and let $r_{s,k} := r \circ \tau_{a_{s,k}}$ denote the smooth function $a \mapsto r(a - a_{s,k})$. For any $d \geq 1$, any $k$, and any $s \in (-\delta_5, \delta_5)$, define the surface
\begin{equation}\label{eq:height_4_slice}
C_{s,d,k} := (a \circ \widehat{u}_{d,k})^{-1}( a_{s,k} - 4, a_{s,k} + 4).
\end{equation}

Define a smooth function 
\begin{equation}\label{eq:lambda_integrals}
\begin{gathered}
E_{d,k,\lambda}: (-\delta_5, \delta_5) \to (0, \infty), \\
s \mapsto \int_{C_{s,d,k}} \widehat{u}_{d,k}^*(r_{s,k}'(a)(da \wedge \widehat\lambda_k)).
\end{gathered}
\end{equation}

For any $d \geq 1$, any sequence $\bfk = \{k_j\}$, and any constant $A > 0$, let $\widehat\bfs_\lambda(d, A; \bfk)$ denote the set of all $s \in (-\delta_5, \delta_5)$ with the following property. For any sequence $s_j \to s$, we have
$$\liminf_{j \to \infty} E_{d,k_j,\lambda}(s_j) > A.$$

Define $\widehat\bfs_\lambda(d; \bfk) := \bigcap_{A > 0} \widehat\bfs_\lambda(d, A; \bfk)$. This is a null set. 

\begin{lem}\label{lem:pigeonhole_blowup}
For any $d \geq 1$ and any sequence $\bfk$, the set $\widehat\bfs_\lambda(d; \bfk)$ has zero Lebesgue measure. 
\end{lem}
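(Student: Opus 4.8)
The plan is to control the $L^{1}$-norm of $E_{d,k,\lambda}$ on $(-\delta_5,\delta_5)$ by a multiple of the total symplectic area of $\widehat{u}_{d,k}$, which is $4md$ by Corollary~\ref{cor:closed_curves_adiabatic}, and then to finish with Fatou's lemma. First I would record that $E_{d,k,\lambda}$ depends smoothly on $s$: in \eqref{eq:lambda_integrals} the factor $r_{s,k}'(a)=r'(a-a_{s,k})$ is supported in $\{|a-a_{s,k}|\leq 2\}$, so the integral may be taken over the fixed region $(a\circ\widehat{u}_{d,k})^{-1}([-\widehat{L}_k,\widehat{L}_k]\times Y)$ and the integrand depends smoothly on $s$ through $a_{s,k}$; in particular each $E_{d,k_j,\lambda}$, and hence $\liminf_{j}E_{d,k_j,\lambda}$, is Borel. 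Next I would unwind the definition: if $s\in\widehat\bfs_\lambda(d;\bfk)=\bigcap_{A>0}\widehat\bfs_\lambda(d,A;\bfk)$, then applying the defining property of $\widehat\bfs_\lambda(d,A;\bfk)$ to the constant sequence $s_j\equiv s$ gives $\liminf_{j}E_{d,k_j,\lambda}(s)>A$ for all $A>0$, so $\liminf_{j}E_{d,k_j,\lambda}(s)=+\infty$. Thus $\widehat\bfs_\lambda(d;\bfk)\subseteq\{\,s:\liminf_{j}E_{d,k_j,\lambda}(s)=+\infty\,\}$, and by Fatou's lemma it suffices to prove $\liminf_{j}\int_{-\delta_5}^{\delta_5}E_{d,k_j,\lambda}(s)\,ds<\infty$.

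To establish the integral bound, I would fix a large $k$. Since the integrand of \eqref{eq:lambda_integrals} is supported over $\{|a-a_{s,k}|\leq 2\}$, which for every $s\in(-\delta_5,\delta_5)$ lies inside the neck — indeed inside $f_k(U_{\delta_3})$, where Lemma~\ref{lem:j_tame} applies — Fubini's theorem gives
$$\int_{-\delta_5}^{\delta_5}E_{d,k,\lambda}(s)\,ds=\int_{\widehat{C}_{d,k}}\widehat{u}_{d,k}^{*}\Bigl(\Bigl(\int_{-\delta_5}^{\delta_5}r'(a-a_{s,k})\,ds\Bigr)\,da\wedge\widehat\lambda_k\Bigr).$$
The map $s\mapsto a_{s,k}$ is affine with slope $c_k=\delta_5^{-1}k$ (equivalently, $\widehat\Phi_k^{-1}(a)=\delta_5 k^{-1}a$ on $(-k,k)$, as in the proof of Corollary~\ref{cor:geometric_convergence_adiabatic}), so substituting $\tau=a-a_{s,k}$ yields, for each fixed $a$,
$$\int_{-\delta_5}^{\delta_5}r'(a-a_{s,k})\,ds\leq c_k^{-1}\int_{-2}^{2}r'(\tau)\,d\tau=c_k^{-1}\bigl(r(2)-r(-2)\bigr)=\delta_5 k^{-1},$$
using $r'\geq 0$ and the normalization of $r$. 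On the other hand, on the neck one has $\widehat{u}_{d,k}^{*}\widehat\omega_k\geq 0$ pointwise as a $2$-form on $\widehat{C}_{d,k}$ (since $\widehat{J}_k$ is $\widehat\eta_k$-adapted; cf. Lemma~\ref{lem:zero_action}), so $\widehat{u}_{d,k}^{*}(da\wedge\widehat\lambda_k)\leq\widehat{u}_{d,k}^{*}(da\wedge\widehat\lambda_k+\widehat\omega_k)$, and by Lemma~\ref{lem:j_tame} the right-hand side is $\leq 2\,\widehat{u}_{d,k}^{*}\widehat\Omega_k$ pointwise on the neck. Combining this with the area identity $\int_{\widehat{C}_{d,k}}\widehat{u}_{d,k}^{*}\widehat\Omega_k=4md$ from Corollary~\ref{cor:closed_curves_adiabatic} gives
$$\int_{-\delta_5}^{\delta_5}E_{d,k,\lambda}(s)\,ds\leq\delta_5 k^{-1}\int_{\widehat{C}_{d,k}}\widehat{u}_{d,k}^{*}(da\wedge\widehat\lambda_k)\leq 2\delta_5 k^{-1}\int_{\widehat{C}_{d,k}}\widehat{u}_{d,k}^{*}\widehat\Omega_k=8md\,\delta_5\,k^{-1}.$$

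To conclude, I would take $k=k_j\to\infty$, so that $\liminf_{j}\int_{-\delta_5}^{\delta_5}E_{d,k_j,\lambda}(s)\,ds=0$; by Fatou's lemma $\int_{-\delta_5}^{\delta_5}\liminf_{j}E_{d,k_j,\lambda}(s)\,ds=0$, hence $\liminf_{j}E_{d,k_j,\lambda}(s)<\infty$ for almost every $s\in(-\delta_5,\delta_5)$, and the first paragraph then shows $\widehat\bfs_\lambda(d;\bfk)$ is a null set.

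I do not expect a serious obstacle here; the argument is short, and the only point that needs care is the assertion, used in the area estimate, that the portion of $\widehat{u}_{d,k}$ over $\{|a-a_{s,k}|\leq 2\}$ lies entirely in the part of the neck where Lemma~\ref{lem:j_tame} supplies the factor-$2$ comparison between $da\wedge\widehat\lambda_k+\widehat\omega_k$ and $\widehat\Omega_k$. This is where one uses that $|a_{s,k}|\leq\delta_5^{-1}k\cdot\delta_5=k$ is far smaller than the half-length $\widehat{L}_k\geq 16k$ of the neck, and that $f_k$ carries $U_{\delta_3}$ onto the entire open neck $(-\widehat{L}_k,\widehat{L}_k)\times Y$. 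Everything else — the smoothness of $E_{d,k,\lambda}$, the Fubini computation, the change of variables, and the application of Fatou's lemma — is routine.
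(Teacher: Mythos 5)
Your overall route is genuinely different from the paper's and, modulo one quantitative slip, it works. The paper proves the lemma through Lemma~\ref{lem:pigeonhole_blowup2}: it shows each $\widehat\bfs_\lambda(d,A;\bfk)$ is open, uses a compact-exhaustion plus disjoint-interval pigeonhole argument against the global bound \eqref{eq:pigeonhole_blowup2} to get $|\widehat\bfs_\lambda(d,A;\bfk)| = O(A^{-1}md)$, and then intersects over $A$. You instead bound $\int_{-\delta_5}^{\delta_5}E_{d,k,\lambda}(s)\,ds$ by Tonelli and conclude with Fatou, using only that constant sequences $s_j\equiv s$ are admissible in the definition of $\widehat\bfs_\lambda(d,A;\bfk)$; this is a cleaner Chebyshev-type argument that delivers the same quantitative content and avoids the openness/covering discussion entirely. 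The non-negativity needed for Tonelli and Fatou is fine: on the neck $\widehat J_k$ is $\widehat\eta_k$-adapted (Lemma~\ref{lem:phi_adapted}), so $\widehat u_{d,k}^*(da\wedge\widehat\lambda_k)\geq 0$ pointwise and $r'\geq 0$.

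The slip is in the area estimate: Lemma~\ref{lem:j_tame} does \emph{not} give $\widehat u_{d,k}^*(da\wedge\widehat\lambda_k+\widehat\omega_k)\leq 2\,\widehat u_{d,k}^*\widehat\Omega_k$ on the neck. The comparison form in that lemma is $dH\wedge\widecheck\lambda+\widecheck\omega$, and on the neck $H\circ f_k^{-1}=\widehat\Phi_k^{-1}(a)$, so $dH$ pushes forward to $\widehat\phi_k\,da$; the correct pointwise inequality is $2\,\widehat u_{d,k}^*\widehat\Omega_k\geq \widehat u_{d,k}^*(\widehat\phi_k\,da\wedge\widehat\lambda_k+\widehat\omega_k)$, so the bound on $\int\widehat u_{d,k}^*(da\wedge\widehat\lambda_k)$ carries the factor $\widehat\phi_k^{-1}\sim\delta_5^{-1}k$ and is $O(\delta_5^{-1}mkd)$, exactly as in \eqref{eq:pigeonhole_blowup2} of the paper — not $8md$. (A sanity check: $\widehat u_{d,k}$ crosses the whole neck, so its $da\wedge\widehat\lambda_k$-integral grows at least linearly in $k$; your claimed $k$-independent bound cannot hold.) Consequently your conclusion that $\liminf_j\int E_{d,k_j,\lambda}\,ds=0$ is false. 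However, the $\delta_5 k^{-1}$ you gain from the change of variables in $s$ exactly cancels this factor, so the corrected computation gives $\int_{-\delta_5}^{\delta_5}E_{d,k,\lambda}(s)\,ds\leq C m d$ uniformly in $k$, and a uniform bound is all Fatou needs: the set where $\liminf_j E_{d,k_j,\lambda}=\infty$ is still null, and your first paragraph then yields the lemma. So the argument survives with a one-line repair, but as written the key pointwise inequality and the ensuing ``$\to 0$'' claim are incorrect.
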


Lemma~\ref{lem:pigeonhole_blowup} is proved from the following bound on $\widehat\bfs_\lambda(d, A; \bfk)$. 

\begin{lem}\label{lem:pigeonhole_blowup2}
Fix any $d \geq 1$, any sequence $\bfk = \{k_j\}$, and any $A > 0$. Then the set $\widehat\bfs_\lambda(d, A; \bfk)$ is open and has Lebesgue measure at most $512 A^{-1}md$. 
\end{lem}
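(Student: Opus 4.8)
The statement to prove is Lemma~\ref{lem:pigeonhole_blowup2}: the set $\widehat\bfs_\lambda(d, A; \bfk)$ is open and has Lebesgue measure at most $512 A^{-1}md$. The plan is to run the same kind of pigeonhole argument as in the proof of Lemma~\ref{lem:pigeonhole_omega}, but now using that the smoothed $\widehat\lambda$-integrals $E_{d,k_j,\lambda}$, being essentially $da\wedge\widehat\lambda$-areas of height-$8$ slices, are controlled by the global symplectic area via the exponential area bound of Proposition~\ref{prop:fh_area_bound}. Concretely: if $\widehat\bfs_\lambda(d, A; \bfk)$ had measure exceeding $512 A^{-1}md$, I could find a large collection of ``well-separated'' levels at which $E_{d,k_j,\lambda}$ stays above $A$ along all sequences converging to them, and then the disjointness of the corresponding capped slices would force the total $da\wedge\widehat\lambda$-area of $\widehat{C}_{d,k_j}$ — which by Proposition~\ref{prop:fh_area_bound} and Corollary~\ref{cor:closed_curves_adiabatic} is $O(md)$ — to exceed $O(md)$, a contradiction.

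First I would establish openness. Fix $s\in\widehat\bfs_\lambda(d,A;\bfk)$, meaning $\liminf_j E_{d,k_j,\lambda}(s_j)>A$ for \emph{every} sequence $s_j\to s$. The function $(s,j)\mapsto E_{d,k_j,\lambda}(s)$ is smooth in $s$ because $r_{s,k}$ depends smoothly on $s$ and $C_{s,d,k}$ is a smoothly varying slice (here I use that for fixed $d,k$ the set of $s$ for which $a_{s,k}\pm 4$ fail to be regular values of $a\circ\widehat u_{d,k}$ is finite, and a routine perturbation/continuity argument handles these; since $r'$ is supported away from the slice endpoints, $E$ is in fact continuous across them). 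A standard diagonal/compactness argument then shows that the condition ``$\liminf_j E_{d,k_j,\lambda}(s_j')>A$ for all $s_j'\to s'$'' is preserved on a small interval around any $s$ where it holds: if it failed at points $s'$ arbitrarily close to $s$, one could assemble a sequence converging to $s$ along which $E$ drops to $\le A$, contradicting $s\in\widehat\bfs_\lambda(d,A;\bfk)$. Hence $\widehat\bfs_\lambda(d,A;\bfk)$ is open.

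Next I would prove the measure bound. Suppose for contradiction $|\widehat\bfs_\lambda(d,A;\bfk)|>512A^{-1}md$. Since the set is open it contains a finite union of disjoint intervals of total length $>512A^{-1}md$; choosing within them $N:=\lfloor 256A^{-1}md\rfloor+1$ points $s_1,\dots,s_N$ whose pairwise distances are bounded below by a fixed positive number would not quite work directly, so instead I mimic Step~1 of Lemma~\ref{lem:pigeonhole_omega}: by repeatedly passing to subsequences of $\bfk$ I extract a subsequence $\bfk'=\{k_j\}$ and $N$ distinct levels $s_1,\dots,s_N$ together with, for each $i$, a sequence $s_{j,i}\to s_i$ (taking $s_{j,i}$ to lie in $\widehat\cR$ so that $a_{s_{j,i},k_j}\pm 4$ are regular values) with $E_{d,k_j,\lambda}(s_{j,i})>A$ for all large $j$. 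As in Step~2 of that proof, for $j$ large the supports of $r_{s_{j,i},k_j}'$, which lie in $[a_{s_{j,i},k_j}-2, a_{s_{j,i},k_j}+2]\times Y$, are pairwise disjoint because the $a_{s_{j,i},k_j}=\delta_5 k_j^{-1}s_i$ are spread over a window of length $\approx 2k_j$. Therefore the slices $C_{s_{j,i},d,k_j}$ contribute disjointly to the integral $\int_{\widehat{C}_{d,k_j}}\widehat u_{d,k_j}^*(da\wedge\widehat\lambda_{k_j})$ over the neck region, and since $r'\le 1$ each contributes at least $A$; summing over $i$ gives $\int_{\widehat{C}_{d,k_j}}\widehat u_{d,k_j}^*(da\wedge\widehat\lambda_{k_j})\ge NA > 256md$ (restricting attention to the portion of the neck inside $[-\widehat L_{k_j},\widehat L_{k_j}]\times Y$). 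On the other hand, Proposition~\ref{prop:fh_area_bound} (applied on windows of height $8$, using Remark~\ref{rem:stable_constants} to get $k$-independent constants) together with the action bound $\int_{\widehat{C}_{d,k_j}}\widehat u_{d,k_j}^*\widehat\omega_{k_j}\le 4md$ from Corollary~\ref{cor:closed_curves_adiabatic} and Lemma~\ref{lem:j_tame} bounds the total $da\wedge\widehat\lambda_{k_j}$-area over the neck by a constant multiple of $md$; choosing the numerical constant $512$ absorbs the exponential factor $e^{8c_3}$ and the $\min$-term contribution (which one controls near the ends of the neck, where $\widehat\lambda_{k_j}$ is translation-invariant so its slice integrals are eventually constant in $j$). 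This contradiction proves $|\widehat\bfs_\lambda(d,A;\bfk)|\le 512A^{-1}md$.

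\textbf{Main obstacle.} The routine part is the pigeonhole/disjointness bookkeeping. The delicate point is deriving the a priori bound $\int_{\mathrm{neck}}\widehat u_{d,k_j}^*(da\wedge\widehat\lambda_{k_j})=O(md)$ uniformly in $j$ from Proposition~\ref{prop:fh_area_bound}: that proposition bounds slice $\widehat\lambda$-integrals on a single height-$8$ window in terms of the action \emph{plus} a boundary term $\min\{\int_{(a\circ u)^{-1}(a_\pm)}u^*\widehat\lambda\}$, so to turn per-window estimates into a global neck estimate I need to cover the neck $[-\widehat L_{k_j},\widehat L_{k_j}]$ by $O(1)$-many overlapping windows — impossible since $\widehat L_{k_j}\to\infty$ — so instead I should integrate the height-$8$ estimate over all $a_0$, i.e. apply \eqref{eq:fh_area_bound2} once with $a_\pm=\pm\widehat L_{k_j}$ to bound the whole-neck area by $(c_3\min\{\cdots\}+\mathrm{action})(e^{c_3\cdot 2\widehat L_{k_j}}-1)+\mathrm{action}$, which is useless because of the exponential in $\widehat L_{k_j}$. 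The correct fix, and the real content, is to exploit the symmetry and near-translation-invariance built into the $\widehat\phi_{k}$ (which make $\widehat\lambda_{k_j}$ translation-invariant on the neck, so $\int_{(a\circ u)^{-1}(a_0)}u^*\widehat\lambda_{k_j}$ is essentially $a_0$-independent and equals a fixed multiple of the action by a homological/flux argument) — this is exactly the mechanism already used implicitly in Proposition~\ref{prop:fh_area_bound} and in \cite{FH23}, and carefully invoking it is where the proof's weight lies.
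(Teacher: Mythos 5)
Your openness argument is fine (the diagonal construction of a sequence $s_j \to s$ with $\liminf_j E_{d,k_j,\lambda}(s_j) \le A$ from nearby points outside the set is a valid, even slightly more elementary, alternative to the paper's argument via the closed limit set $\cG = \limsup \overline{\cG_j}$ of the graphs of $E_{d,k_j,\lambda}$). The measure bound, however, has a genuine gap, and it is exactly at the point you flag as the ``main obstacle''. Your contradiction needs a $j$-uniform bound $\int_{\mathrm{neck}} \widehat{u}_{d,k_j}^*(da \wedge \widehat\lambda_{k_j}) = O(md)$, and no such bound holds: the correct global estimate, which the paper gets from the tameness inequality \eqref{eq:j_tame} of Lemma~\ref{lem:j_tame} together with Corollary~\ref{cor:closed_curves_adiabatic} (not from Proposition~\ref{prop:fh_area_bound}), is $\int_{(a\circ\widehat{u}_{d,k_j})^{-1}((-k_j,k_j))} \widehat{u}_{d,k_j}^*(da\wedge\widehat\lambda_{k_j}) \le 8\delta_5^{-1} m k_j d$, i.e. it grows \emph{linearly in $k_j$} because on the central neck $dH$ pushes forward to $\delta_5 k_j^{-1}\,da$, so $da\wedge\widehat\lambda_{k_j} \le 2\delta_5^{-1}k_j\,\Omega_{k_j}$ on tangent planes. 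The translation-invariance/flux heuristic you propose cannot repair this: if a $k_j$-independent $O(md)$ bound were true, your argument would show that $\widehat\bfs_\lambda(d,A;\bfk)$ is \emph{finite} (at most $O(A^{-1}md)$ points), which is far stronger than the lemma and is not what the geometry gives.

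There is a second, related problem: selecting a fixed number $N$ of \emph{distinct} levels never uses the measure hypothesis, since any nonempty open set contains infinitely many points; the Lebesgue measure only enters through a packing count at the scale at which slices become disjoint. The paper's proof works precisely by matching two quantities that both scale linearly in $k_j$: by inner regularity it reduces to a compact $\widehat\bfs \subseteq \widehat\bfs_\lambda(d,A;\bfk)$, uses the graph-limit argument to get $E_{d,k_j,\lambda}(s) > A$ \emph{uniformly} for all $s \in \widehat\bfs$ once $j$ is large, then takes a maximal collection of points of $\widehat\bfs$ whose $s$-intervals of radius $4\delta_5 k_j^{-1}$ are pairwise disjoint. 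The number $m_{j,i}$ of such points satisfies $|\widehat\bfs| \le 64\delta_5 k_j^{-1} m_{j,i}$ (tripled intervals cover $\widehat\bfs$), while each point contributes at least $A$ to the neck integral over pairwise disjoint height-$8$ windows, so $A\,m_{j,i} \le 8\delta_5^{-1} m k_j d$. The two factors of $k_j$ cancel, yielding $|\widehat\bfs| \le 512 A^{-1} m d$. Your proposal is missing both the uniform-on-compacts lower bound on $E_{d,k_j,\lambda}$ and this $k_j$-dependent packing, which are the actual content of the lemma.
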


\begin{proof}
The proof will take $3$ steps.

\noindent\textbf{Step $1$:} This step proves that that $\widehat\bfs_\lambda(d, A; \bfk)$ is open. For each $j$, let $E_j := E_{d,k_j,\lambda}$. Let
$$\cG_j := \{(s, E_{d,k_j,\lambda}(s))\,:\,s \in (-\delta_5(1 - 2k_j^{-1}), \delta_5(1-2k_j^{-1}))\} \subset \bR^2$$ 
denote the graph of $E_j$. Let $\cG := \limsup \overline{\cG_j}$ denote the set of limit points of the sequence of subsets $\{\cG_j\}$; this is a closed subset of $\bR^2$. Then, a point $s \in (-\delta_5, \delta_5)$ lies in $\widehat\bfs_\lambda(d, A; \bfk)$ if and only if there is no $E \in [0, A]$ such that $(s, E) \in \cG$. Since $\cG$ is closed, this is an open condition, and therefore $\widehat\bfs_\lambda(d, A; \bfk)$ is open. 

\noindent\textbf{Step $2$:} This step proves the following claim. Fix any compact subset $\widehat\bfs \subseteq \widehat\bfs_\lambda(d, A; \bfk')$. Then, we claim that there exists some $j_{\bfs} \geq 1$ such that if $j \geq j_{\bfs}$, then $E_j(s) > A$ for every $s \in \bfs$. This is equivalent to showing that $\cG_j \cap \bfs \times [0, A]$ is empty for sufficiently large $j$. Observe that $\cG$ is disjoint from $\bfs \times [0, A]$. Any limit point of the sequence of sets $\{\overline{\cG_j} \cap \bfs \times [0, A]\}$ must lie in $\cG \cap \bfs \times [0, A]$. It follows that the sequence has no limit points. Thus, all but finitely many elements of are empty. 

\noindent\textbf{Step $3$:} This step completes the proof. For any open subset $U \subseteq \bR$, the Lebesgue measure $|U|$ is equal to the supremum of $|K|$ over all compact subsets $K \subset U$. Therefore, it suffices to prove the bound
\begin{equation}\label{eq:pigeonhole_blowup5}|\widehat\bfs| \leq 200 A^{-1}md \end{equation} 
for any compact subset $\widehat\bfs \subseteq \widehat\bfs_\lambda(d, A; \bfk)$. Since $\widehat\bfs$ is compact, by Step $2$ there exists some $j_{\bfs} \geq 1$ such that for any $j \geq j_{\bfs}$ and any $s$, we have $E_j(s) > A$. It follows that 
$$\int_{C_{s,d,k_j}} \widehat{u}_{d,k_j}^*(r_{s,k_j}'(a) da \wedge \widehat{\lambda}_{k_j}) > A$$
for any such $s$ and $j$. Since $r_{s,k}'(a) \in [0,1]$ for every $a$, we deduce the bound
\begin{equation}\label{eq:pigeonhole_blowup} \int_{(a \circ \widehat{u}_{d,k_j})^{-1}( (a_{s,k_j} - 2, a_{s,k_j} + 2))} \widehat{u}_{d,k_j}^*(da \wedge \widehat\lambda_{k_j}) \geq A.\end{equation}

Corollary~\ref{cor:closed_curves_adiabatic} and Lemma~\ref{lem:j_tame} imply a global bound on the integral of $da \wedge \widehat\lambda_{k_j}$ for each $j$:
\begin{equation}\label{eq:pigeonhole_blowup2} \int_{(a \circ \widehat{u}_{d,k_j})^{-1}( (-k_j, k_j))} \widehat{u}_{d,k_j}^*(da \wedge \widehat\lambda_{k_j}) \leq 8\delta_5^{-1}mk_jd. \end{equation}

Now, for each $j$, choose a maximal collection of points $\{s_{j,i}\}_{i=1}^{m_{j,i}}$ in $\widehat\bfs$ such that the intervals $(s_{j,i} - 4\delta_5 k_j^{-1}, s_{j,i} + 4\delta_5 k_j^{-1})$ are pairwise disjoint. It follows from the maximality property that the tripled intervals $(s_{j,i} - 12 \delta_5 k_j^{-1}, s_{j,i} + 12\delta_5 k_j^{-1})$ cover the set $\widehat\bfs$.  It follows that 
\begin{equation}\label{eq:pigeonhole_blowup3}|\widehat\bfs| \leq 64\delta_5 k_j^{-1}m_{j,i}.\end{equation}

By \eqref{eq:pigeonhole_blowup}, we have
\begin{equation}\label{eq:pigeonhole_blowup4} \sum_{i=1}^{m_{j,i}} \int_{(a \circ \widehat{u}_{d,k_j})^{-1}( (\delta_5^{-1}k_js_{j,i} - 2, \delta_5^{-1}k_js_{j,i} + 2))} \widehat{u}_{d,k_j}^*(da \wedge \widehat\lambda_{k_j}) \geq Am_{j,i}.\end{equation}

The sets $(a \circ \widehat{u}_{d,k})^{-1}( (k_js_{j,i}- 4, k_js_{j,i} + 4) )$ are disjoint by construction. It follows from \eqref{eq:pigeonhole_blowup2} that the left-hand side of \eqref{eq:pigeonhole_blowup4} is bounded above by $8m\delta_5^{-1}k_jd$. After re-arranging, we have $m_{j,i} \leq 8A^{-1}\delta_5^{-1}mk_jd$. Plug this into \eqref{eq:pigeonhole_blowup3} to show $|\widehat\bfs| \leq 200 A^{-1}md$, proving \eqref{eq:pigeonhole_blowup5}. 
\end{proof}

\subsubsection{Controlled accumulation implies almost cylinders} Fix any $d \geq 1$ and any $n \geq 1$. Define constants $\epsilon_{d, n} := 2^{2n}\delta_5^{-1}m d^{-1/2}$ and $b_{n} = 2^{2n+2}m^2\delta_5^{-1}$. The following result is an analogue of Proposition~\ref{prop:almost_cylinders}. 

\begin{prop}\label{prop:almost_cylinders_adiabatic}
For any integer $n \geq 1$, there exists some $d_n \geq 1$ such that the following holds for all $d \geq d_n$ and any sequence $\bfk$. Fix any $s \in (-\delta_5,\delta_5)\,\setminus\,(\widehat{\bfs}_\omega(d, \epsilon_{d,n}) \cup \widehat{\bfs}_\chi(d, b_n)$. Then, for any $(\Xi, s) \in \widehat\cX_d(\bfk)$, the set $\Xi$ is a $1/n$-almost cylinder with respect to $(\eta^s, J^s)$. 
\end{prop}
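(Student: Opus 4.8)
\textbf{Proof strategy for Proposition~\ref{prop:almost_cylinders_adiabatic}.}
The plan is to mirror the proof of Proposition~\ref{prop:almost_cylinders} almost verbatim, substituting the adiabatic objects for their ordinary counterparts and using the adiabatic geometric convergence result. Fix $n \geq 1$ and take $d_n$ large enough that $\epsilon_{d,n} = 2^{2n}\delta_5^{-1}m d^{-1/2} \leq \epsilon_5(\eta^s, J^s, n, b_n)$ for all $d \geq d_n$ and all $s \in [-\delta_5, \delta_5]$; here $\epsilon_5$ is the stable constant from Proposition~\ref{prop:low_action_almost_cyl}, and uniformity over $s$ is legitimate because the family $\{(\eta^s, J^s)\}_{s \in [-\delta_5,\delta_5]}$ is precompact in $\cD(Y)$ (as in Remark~\ref{rem:stable_constants}). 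Now fix $s \notin \widehat{\bfs}_\omega(d, \epsilon_{d,n}; \bfk) \cup \widehat{\bfs}_\chi(d, b_n; \bfk)$ and an element $(\Xi, s) \in \widehat{\cX}_d(\bfk)$. By definition of the adiabatic limit set there is a sequence $a_j \in (-k_j, k_j)$ with $\delta_5 k_j^{-1} a_j \to s$ such that the slices $\Xi_j := \tau_{a_j}\cdot(\widehat{u}_{d,k_j}(\widehat{C}_{d,k_j}) \cap (a_j - 1, a_j + 1)\times Y)$ converge in the Hausdorff topology to $\Xi$.

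The next step is to choose, for each $j$, a closed connected interval $\mathcal{L}_j$ with endpoints in $\widehat{\cR}$ satisfying
$$[\delta_5^{-1}k_j^{-1}\cdot\delta_5(a_j - 4),\ \delta_5^{-1}k_j^{-1}\cdot\delta_5(a_j + 4)] \subset \mathcal{L}_j \subset [\delta_5^{-1}k_j^{-1}\cdot\delta_5(a_j - 6),\ \delta_5^{-1}k_j^{-1}\cdot\delta_5(a_j + 6)]$$
— more simply, $\mathcal{L}_j$ is an interval in $Y$-coordinates whose $\delta_5^{-1}k_j$-scaling sits between $[a_j - 4, a_j + 4]$ and $[a_j - 6, a_j + 6]$; such an interval exists since $\widehat{\cR}$ is co-countable. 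Form the capped slice $Z_j := \widehat{C}^{\mathcal{L}_j}_{d,k_j}$, so that $\widehat{u}_{d,k_j}(Z_j) \subset [a_j - 8, a_j + 8]\times Y$. Because $s \notin \widehat{\bfs}_\chi(d, b_n; \bfk)$, for all large $j$ every irreducible component of $Z_j$ has Euler characteristic at least $-b_n$; because $s \notin \widehat{\bfs}_\omega(d, \epsilon_{d,n}; \bfk)$, for all large $j$ we have $\int_{Z_j} \widehat{u}_{d,k_j}^*\widehat{\omega}_{k_j} \leq \epsilon_{d,n} \leq \epsilon_5(\eta^s, J^s, n, b_n)$. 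Translating by $-a_j$ and applying Proposition~\ref{prop:low_action_almost_cyl} to each component of $Z_j$ (with respect to the pair $(\widehat\eta_{k_j}^{a_j}, \widehat{J}_{k_j}^{a_j}) \in \cD([-8,8]\times Y)$), we conclude that $\Xi_j = \widehat{u}_{d,k_j}(Z_j) \cap (-1,1)\times Y$ is a $1/n$-almost cylinder with respect to $(\widehat\eta_{k_j}^{a_j}, \widehat{J}_{k_j}^{a_j})$ for all large $j$. (Strictly one applies Proposition~\ref{prop:low_action_almost_cyl} componentwise and takes the union; a finite union of $1/n$-almost cylinders that collectively cover the relevant fibers is again a $1/n$-almost cylinder, as the defining bounds in \eqref{eq:almost_cyl_bounds} are preserved under union.)

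Finally, invoke Corollary~\ref{cor:geometric_convergence_adiabatic}: since $\delta_5 k_j^{-1} a_j \to s$, we have $(\widehat\eta_{k_j}^{a_j}, \widehat{J}_{k_j}^{a_j}) \to (\eta^s, J^s)$ in $\cD([-8,8]\times Y)$, hence also in $\cD([-1,1]\times Y)$ by restriction. Then Lemma~\ref{lem:almost_cyl_limits} (applicable since $1/n < 1/2$ for $n \geq 3$, and for $n = 1, 2$ one replaces $n$ by $3$ at no cost, or checks the hypothesis $\delta \in (0,1/2)$ directly — the statement is trivial for $\delta \geq 1/2$ anyway) shows that $\Xi = \limsup \Xi_j$ is a $1/n$-almost cylinder with respect to $(\eta^s, J^s)$, as desired. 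The only genuinely new ingredient relative to Proposition~\ref{prop:almost_cylinders} is the replacement of Corollary~\ref{cor:geometric_convergence} by Corollary~\ref{cor:geometric_convergence_adiabatic}, where the limiting pair now depends on $s$; the main point to be careful about is therefore the uniform choice of the stable constant $\epsilon_5$ over the precompact family $\{(\eta^s, J^s)\}$, which is exactly the mechanism of Remark~\ref{rem:stable_constants}. I expect no serious obstacle: this is a routine adaptation, and the bookkeeping with the scaling factor $\delta_5^{-1}k$ in the definition of capped slices and of the $\widehat{\bfs}_\bullet$ sets is the only place demanding attention.
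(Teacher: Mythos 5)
Your proposal is correct and is essentially the paper's own proof: the paper simply repeats the argument of Proposition~\ref{prop:almost_cylinders} with notation changed, substituting Corollary~\ref{cor:geometric_convergence_adiabatic} for Corollary~\ref{cor:geometric_convergence}, exactly as you do. Your extra remarks on choosing $\epsilon_5$ uniformly over the precompact family $\{(\eta^s,J^s)\}$ via Remark~\ref{rem:stable_constants} and on the $\delta_5^{-1}k$ rescaling in the capped slices are the right points to flag and are consistent with the paper.
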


\begin{proof} 
The proposition follows from repeating the proof of Proposition~\ref{prop:almost_cylinders} and changing the notation. Corollary~\ref{cor:geometric_convergence_adiabatic} is used instead of Corollary~\ref{cor:geometric_convergence}. 
\end{proof}

\subsubsection{No blowup implies closed orbits} The following proposition has no analogue in \S\ref{subsec:stretched_limit_proof}. It asserts that, at any level lying outside the blowup set and $\omega$-accumulation set, the adiabatic limit set contains a union of closed orbits. The statement and proof are inspired by \cite[Proposition $3.9$]{FH22}. 

\begin{prop}\label{prop:closed_orbits_adiabatic}
For any $d \geq 1$, any sequence $\bfk$, and any $s \not\in \widehat{\bfs}_\lambda(d; \bfk) \cup \widehat{\bfs}_\omega(d; \bfk)$, there exists $(\Xi, s) \in \widehat\cX_d(\bfk)$ such that $\Xi = (-1, 1) \times \Lambda$, where $\Lambda$ is a finite union of closed orbits of $R^s$. 
\end{prop}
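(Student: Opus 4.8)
The plan is to fix $d$, $\bfk$, and a level $s \notin \widehat{\bfs}_\lambda(d; \bfk) \cup \widehat{\bfs}_\omega(d; \bfk)$, and to extract an element $(\Xi, s)$ of the adiabatic limit set that is a cylinder over a finite union of closed $R^s$-orbits. The strategy has two halves: first, use the defining property of being outside $\widehat{\bfs}_\lambda(d;\bfk)$ and $\widehat{\bfs}_\omega(d;\bfk)$ to find, along a suitable subsequence $\{k_j\}$ and a suitable sequence of recentering heights $a_j$ with $\delta_5 k_j^{-1} a_j \to s$, a sequence of height-bounded slices of $\widehat{u}_{d,k_j}$ that has uniformly bounded action \emph{and} uniformly bounded $\widehat\lambda$-integral across a central slice; second, feed those bounds into Proposition~\ref{prop:fh_area_bound} to get a uniform area bound on a height-$2$ window, then run a target-local Gromov compactness argument (as in the proof of Proposition~\ref{prop:low_action_almost_cyl}, Step $1$) to extract a limit whose action is \emph{zero}, so that by Lemma~\ref{lem:zero_action} the limit lives inside $\bR$ times a union of $R^s$-orbits. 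Combining with Corollary~\ref{cor:geometric_convergence_adiabatic}, which identifies the limiting geometric data as $(\eta^s, J^s)$, one concludes $\Xi = (-1,1)\times\Lambda$ with $\Lambda$ a compact $R^s$-invariant set contained in finitely many closed orbits; a short argument then upgrades ``contained in finitely many closed orbits'' to ``equal to a finite union of closed orbits''.

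**Main steps in order.** \emph{(1) Choice of heights.} Since $s \notin \widehat{\bfs}_\lambda(d;\bfk)$, unwinding the definition of $\widehat{\bfs}_\lambda(d; \bfk) = \bigcap_{A>0}\widehat{\bfs}_\lambda(d,A;\bfk)$, there is some finite $A$ with $s \notin \widehat{\bfs}_\lambda(d, A; \bfk)$; hence there is a subsequence and a sequence $s_j \to s$ with $\liminf_j E_{d,k_j,\lambda}(s_j) \le A$, so after passing to a further subsequence $E_{d,k_j,\lambda}(s_j) \le A+1$ for all $j$. Set $a_j := \delta_5 k_j^{-1} s_j$; then $\delta_5 k_j^{-1} a_j = s_j \to s$. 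By the shape of $r$ (namely $r' \ge 0$, supported near the window, with $r(a)=a$ on $[-1/2,1/2]$), the bound on $E_{d,k_j,\lambda}(s_j)$ controls the $\widehat\lambda$-flux across some slice in the window $[a_j - 2, a_j + 2]$; more precisely, since $\int_{a_{s_j,k_j}-2}^{a_{s_j,k_j}+2} r'_{s_j,k_j}(a)\,da = 1$, there is a regular value $a_j^\circ$ in that window with $\int_{(a\circ\widehat u_{d,k_j})^{-1}(a_j^\circ)} \widehat u_{d,k_j}^*\widehat\lambda_{k_j} \le$ (a constant multiple of) $A+1$. \emph{(2) Action bound.} Since $s \notin \widehat{\bfs}_\omega(d;\bfk)$, the action of every capped slice $\widehat C_{d,k_j}^{\cL_j}$ for $\cL_j \to \{s\}$ tends to $0$; in particular the action of the restriction of $\widehat u_{d,k_j}$ to a height-$8$ window about $a_j$ tends to $0$. \emph{(3) Area bound.} Apply Proposition~\ref{prop:fh_area_bound} (via Remark~\ref{rem:stable_constants}, using precompactness of $\{(\widehat\eta_{k_j}^{a_j}, \widehat J_{k_j}^{a_j})\}$ from Corollary~\ref{cor:geometric_convergence_adiabatic}) on a height-$2$ sub-window with $\widehat\lambda$-flux bounded by Step $(1)$ on one boundary and action going to $0$: this gives a uniform area bound on the height-$2$ slices, vanishing mod the action term. \emph{(4) Gromov compactness and zero action.} Recenter by $\tau_{a_j}$ and pass to a subsequential limit: by target-local Gromov compactness \cite{Fish11}, the slices converge to a $J^s$-holomorphic set $\Xi$ in $(-1,1)\times Y$ of zero action (the action term went to $0$ in Step $(2)$), and $\Xi$ is nonempty because the curves $\widehat u_{d,k_j}$ cross the whole neck. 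By Lemma~\ref{lem:zero_action}, every component of any parametrization of $\Xi$ maps into $\bR\times\gamma$ for an orbit $\gamma$ of $R^s$. \emph{(5) Cylinder and finiteness.} As in Step $1$ of the proof of Proposition~\ref{prop:low_action_almost_cyl}, the zero-action limit is translation-invariant and $R^s$-flow-invariant, hence $\Xi = (-1,1)\times\Lambda$ for a compact $R^s$-invariant $\Lambda \subseteq Y$ consisting of points lying on orbits whose total count is bounded (the number of components of the limiting building is finite by the area bound), so $\Lambda$ is contained in a finite union of closed orbits; since a compact invariant subset of a finite union of orbits is itself a finite union of (closed) orbits, $\Lambda$ has the required form. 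Finally $(\Xi, s) \in \widehat\cX_d(\bfk)$ by construction of $a_j$.

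**Anticipated main obstacle.** The delicate point is Step $(1)$: converting ``$s \notin \widehat{\bfs}_\lambda(d;\bfk)$'' into a genuine uniform bound on the $\widehat\lambda$-flux across a concrete slice lying a bounded distance from $a_j$, with the bound uniform in $j$ along the chosen subsequence. One has to be careful about the interplay between the height centers $s_j$ used in the definition of $E_{d,k_j,\lambda}$ and the height centers $a_j$ used to recenter the curves, and to verify that the smoothed integral $E_{d,k_j,\lambda}$ indeed dominates an honest $\widehat\lambda$-integral over a sub-slice (this is where properties (iii)--(iv) of $r$ enter). A secondary subtlety is ensuring the area bound from Proposition~\ref{prop:fh_area_bound} applies: one needs regular values of $a\circ\widehat u_{d,k_j}$ bounding the window and one needs the boundary of $\widehat C_{d,k_j}$ to stay outside the window, both of which hold for $k_j$ large since the neck length $\widehat L_{k_j} \to \infty$. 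Everything downstream (Steps $(2)$--$(5)$) is a routine adaptation of arguments already carried out in \S\ref{subsec:stretched_limit_proof} and in the cited references \cite{FH22, FH23, CGP24}, so I expect the write-up there to be short; the real content is the blowup-control step.
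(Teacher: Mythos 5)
Your Steps (1)--(3) match what the paper does in Lemma~\ref{lem:lambda_currents} (co-area/flux control from $E_{d,k,\lambda}$ plus Proposition~\ref{prop:fh_area_bound} and Remark~\ref{rem:stable_constants}), and the reduction ``$s\notin\widehat\bfs_\lambda(d;\bfk)$ gives a sequence $s_j\to s$ with $\liminf_j E_{d,k_j,\lambda}(s_j)<\infty$'' is exactly the paper's first move (modulo a slip in your formula for $a_j$, which should be $a_j=\delta_5^{-1}k_js_j$, not $\delta_5k_j^{-1}s_j$). The divergence, and the genuine gap, is in Steps (4)--(5). First, the compactness tool: the hypothesis of the proposition excludes only $\widehat\bfs_\lambda$ and $\widehat\bfs_\omega$, not $\widehat\bfs_\chi$, so while the genus of the slices is bounded for fixed $d$, their number of components and boundary circles (hence $\chi$) is not; this is precisely why the paper abandons map-level compactness here and instead passes to $J$-holomorphic \emph{currents} (Proposition~\ref{prop:currents_compactness}, which needs only the area bound you produced, plus lower semicontinuity of mass), with the Hausdorff convergence of the slices to the support built into the definition of geometric convergence rather than asserted. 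Your invocation of target-local Gromov compactness needs, at minimum, an argument that the uncontrolled component/boundary data does not obstruct extracting a limit whose image captures the full Hausdorff limit $\Xi$; as written this is not justified.

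Second, and more seriously, Step (5) silently upgrades ``orbit'' to ``closed orbit''. Lemma~\ref{lem:zero_action} only says a zero-action component lies in $\bR\times\gamma$ for a \emph{trajectory} $\gamma$ of $R^s$, which need not be periodic; nothing in your argument rules out $\Lambda$ being (the closure of) a non-closed trajectory, and your phrase ``orbits whose total count is bounded\ldots so $\Lambda$ is contained in a finite union of closed orbits'' has no supporting argument. In the paper the closedness and finiteness are exactly the content of Lemma~\ref{lem:zero_action_currents}: a $J^s$-holomorphic current on $\cI\times Y$ with zero action \emph{and finite area} is $\cI\times\cO$ for a closed $R^s$-orbit set $\cO$ (proved via the properness/finite-mass arguments of \cite{Prasad23a}). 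Your Step (3) does give the needed area bound, so the fix is to route the conclusion through this lemma (or reprove its content), i.e.\ to combine zero action with finite area and properness of the limit to force periodicity; without that step the proof does not establish that $\Lambda$ is a finite union of closed orbits, which is the whole point of the proposition.
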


We prove Proposition~\ref{prop:closed_orbits_adiabatic} using a more technical and general result, Proposition~\ref{prop:closed_orbits_quantitative} below. This result is formulated using the language of currents and geometric convergence. For any $s$, a \emph{closed $R^s$-orbit set} is a finite collection $\cO = \{(\gamma_i, m_i)\}$ of pairs $(\gamma_i, m_i)$ where $\gamma_i$ is a closed orbit of $R^s$ and $m_i$ is a positive integer. The orbit set $\cO$ is naturally a \emph{$1$-current} on $Y$. A $1$-current on $Y$ is a continuous linear functional on the space of smooth $1$-forms on $Y$. The pairing of $\cO$ with a smooth $1$-form $\alpha$ is defined to be $\cO(\alpha) := \sum_i m_i \int_{\gamma_i} \alpha$. Consider two orbit sets $\cO$ and $\cO'$ to be equivalent if they are equivalent as $1$-currents. 

We also work with \emph{$2$-currents} on $\mathcal{I} \times Y$, where $\cI \subset \bR$ is any connected interval. These are continuous linear functionals on the space of compactly supported $2$-forms on $Y$. For example, any closed $R^s$-orbit set $\cO$ defines a $2$-dimensional current $\cI \times \cO$ on $\cI \times Y$; the pairing with a smooth $2$-form $\beta$ is defined by
$$(\cI \times \cO)(\beta) := \sum_i m_i \int_{\cI \times \gamma_i} \beta.$$

The \emph{support} $\operatorname{supp}(\cC) \subseteq \cI \times Y$ of a $2$-current $\cC$ is the complement of the largest open set $U$ such that $\cC(\beta) = 0$ for any $\beta$ compactly supported in $U$. A sequence of $2$-currents $\{\cC_j\}$ on $\cI \times Y$ \emph{geometrically converges} to a $2$-current $\cC$ if and only if i) $\cC_j(\beta) \to \cC(\beta)$ for any compactly supported $2$-form $\beta$ and ii) $\operatorname{supp}(\cC_j) \cap K \to \operatorname{supp}(\cC) \cap K$ in $\cK(K)$ for any compact subset $K \subset \cI \times Y$. 

For each $d$, $k$, and $s \in (-\delta_5, \delta_5)$, we define a $2$-current $\cC_{s,d,k}$ on $(-2, 2) \times Y$ by the formula 
$$\cC_{s,d,k}(\beta) := \int_{C_{s,d,k}} (\tau_{s,k} \circ \widehat{u}_{d,k})^*\beta.$$

Now, we state our technical proposition. 

\begin{prop}\label{prop:closed_orbits_quantitative}
Fix any $d \geq 1$, any sequence $\bfk = \{k_j\}$, and any $s \not\in \widehat{\bfs}_\omega(d; \bfk)$. Fix any sequence $s_j \to s$ such that $\liminf_{j \to \infty} E_{d,k_j,\lambda}(s_j) < \infty$. Then, there exists a closed $R^s$-orbit set $\cO$ such that the sequence of currents $\{\cC_{s_j,d,k_j}\}$ has a subsquence converging geometrically to $(-2, 2) \times \cO$. 
\end{prop}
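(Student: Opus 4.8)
The plan is to establish Proposition~\ref{prop:closed_orbits_quantitative} via a compactness argument for $J$-holomorphic currents, in the spirit of the SFT compactness proof for curves of bounded genus and bounded action, but adapted to the setting of a single finite-height window and a degenerating sequence of adapted data. First I would fix the sequence $s_j \to s$ with $\liminf_{j \to \infty} E_{d,k_j,\lambda}(s_j) =: M < \infty$, pass to a subsequence realizing this liminf, and translate the curves so that the relevant height window is centered: set $a_j := a_{s_j,k_j} = \delta_5 k_j^{-1}s_j$ and consider the restricted maps $v_j := \tau_{a_j} \circ \widehat{u}_{d,k_j}|_{C_{s_j,d,k_j}}$, which are $\widehat{J}_{k_j}^{a_j}$-holomorphic maps into $(-4,4)\times Y$. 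By Corollary~\ref{cor:geometric_convergence_adiabatic} the almost-complex data $(\widehat\eta_{k_j}^{a_j}, \widehat{J}_{k_j}^{a_j})$ converge to $(\eta^s, J^s)$ in $\cD([-8,8]\times Y)$, since $\delta_5 k_j^{-1}a_j = s_j \to s$. The key point is to produce uniform area bounds on a slightly shrunken window $(-2,2)\times Y$: the action $\int_{C_{s_j,d,k_j}} v_j^*\widehat\omega_{k_j}$ is bounded by the total action $4md$ from Corollary~\ref{cor:closed_curves_adiabatic}, and the $\widehat\lambda$-part of the area is controlled on $(-2,2)\times Y$ by $M + o(1)$ using the hypothesis on $E_{d,k_j,\lambda}(s_j)$ together with Proposition~\ref{prop:fh_area_bound} (the exponential area bound) applied on $[-4,4]\times Y$ with $a_\pm$ slightly inside $\pm 4$ — this requires care but is the exact role the bound $E_{d,k_j,\lambda}(s_j) \le M$ plays, since that integral smoothed-measures $\int (da\wedge\widehat\lambda)$ near the center. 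Combined with $\chi(C_{s_j,d,k_j}) \ge -16m^2d^2$ from Corollary~\ref{cor:closed_curves_adiabatic} (bounded genus), we have uniform area and topology bounds on $(-2,2)\times Y$.

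Next I would apply the compactness theorem for $J$-holomorphic currents (cf.\ \cite[Remark $5.20$]{Prasad23a} and the references there, the same tool invoked to prove Proposition~\ref{prop:fh_quantization} without a genus bound, together with target-local considerations) to extract a subsequence along which the currents $\cC_{s_j,d,k_j}$ converge geometrically to a limiting $J^s$-holomorphic $2$-current $\cC$ on $(-2,2)\times Y$, in the sense defined just before the proposition: weak convergence as currents plus Hausdorff convergence of supports on compact subsets. The crucial structural input is that $\cC$ has action zero: since $s \not\in \widehat{\bfs}_\omega(d;\bfk)$, there is no $\omega$-accumulation at $s$, so for intervals $\cL_j \to \{s\}$ the action $\int_{\widehat{C}^{\cL_j}_{d,k_j}} \widehat{u}_{d,k_j}^*\widehat\omega_{k_j} \to 0$; translating and taking the limit, $\cC$ pairs to zero with $\widehat\omega^s$-type forms, hence with the restriction of $\widehat\omega$ to the window — more precisely, $\int \cC \wedge (\text{the adapted }2\text{-form}) = 0$. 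By the current analogue of Lemma~\ref{lem:zero_action}, a connected $J^s$-holomorphic curve of zero action lies in $\bR \times \gamma$ for an orbit $\gamma$ of $R^s$; applying this componentwise to the current $\cC$ shows $\operatorname{supp}(\cC)$ is a union of vertical cylinders $(-2,2)\times\gamma_i$ over closed or non-closed $R^s$-orbits, with positive integer multiplicities $m_i$ coming from the current structure.

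The remaining step, and the one I expect to be the main obstacle, is showing that the orbits $\gamma_i$ appearing are \emph{closed}, so that $\cC = (-2,2)\times\cO$ for an honest closed $R^s$-orbit set $\cO$, and that there are only finitely many of them. Finiteness follows from the uniform area bound on $(-2,2)\times Y$: each vertical cylinder over an orbit segment of height $4$ contributes a definite amount of $da\wedge\widehat\lambda$-area bounded below (the orbit has speed one in the $\widehat R$-direction), so the bound $M + 4md + o(1)$ on total area caps the number of components and total multiplicity. Closedness is subtler: it is forced by the geometry of the approximating curves $v_j$ — each $v_j$ is a piece of a \emph{closed} curve $\widehat{u}_{d,k_j}$, and the boundary of $C_{s_j,d,k_j}$ sits at heights $\pm 4$. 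If a limit leaf were a non-closed orbit cylinder, the approximating curve pieces near it would have to exit the window through $\{\pm 4\}\times Y$ while staying graphically close to the cylinder, but then by a neck-length/translation argument (running the same piece at a nearby translated window and using that the full curve has total action $\le 4md$, hence at most $O(d)$ ``non-trivial'' heights) one derives a contradiction with the area accounting, exactly as in \cite[Proposition $3.9$]{FH22} which the proposition statement cites as inspiration. Concretely, I would show: if the limit contained a cylinder over a non-periodic orbit, then by pushing the window up and down the neck and using the exponential area bound Proposition~\ref{prop:fh_area_bound} one gets a non-vanishing lower bound on $\widehat\lambda$-integrals of slices propagating over a length-$\to\infty$ portion of the neck, contradicting the total area bound $O(md)$. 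This is where the finite-height and bounded-topology hypotheses, and the absence of $\omega$-accumulation, are all used simultaneously; I would budget most of the writing for making this argument precise, and defer the more routine current-compactness and zero-action-structure steps to citations of \cite{FH23, CGP24, Prasad23a}.

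\begin{proof}[Proof of Proposition~\ref{prop:closed_orbits_quantitative}]
Deferred; see the proof sketch above.
\end{proof}
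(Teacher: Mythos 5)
The first two-thirds of your plan is essentially the paper's argument: the control of the $da\wedge\widehat\lambda_k$-area of the window by $E_{d,k_j,\lambda}(s_j)$ plus the action (this is exactly Lemma~\ref{lem:lambda_currents}, proved via the exponential area bound of Proposition~\ref{prop:fh_area_bound}), then compactness for $J$-holomorphic currents (Proposition~\ref{prop:currents_compactness}, which needs no topology bound, so your appeal to $\chi(C_{s_j,d,k_j})$ is unnecessary), then vanishing of the limit action because $s\notin\widehat\bfs_\omega(d;\bfk)$. Up to that point you are on the paper's track.

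The gap is in your final step. The paper does not need any ``closedness is subtler'' argument: by lower semicontinuity of mass the geometric limit $\cC$ has \emph{finite area}, and Lemma~\ref{lem:zero_action_currents} (citing \cite[Lemmas $5.27$--$5.29$]{Prasad23a}) says precisely that a proper, boundaryless $J^s$-holomorphic current on $\cI\times Y$ with zero action and finite area is $\cI\times\cO$ for a \emph{closed} $R^s$-orbit set $\cO$ — a vertical leaf over a non-periodic orbit cannot carry a proper finite-area component, and the same lemma also rules out your ``union of cylinders over closed or non-closed orbits, possibly partial leaves'' ambiguity. Your substitute argument, propagating a definite $\widehat\lambda$-integral along the neck to contradict ``the total area bound $O(md)$,'' rests on a false accounting: only the $\widehat\Omega_{k_j}$-area of the closed curve is $O(md)$, while the $da\wedge\widehat\lambda_{k_j}$-area over the whole neck is only bounded by $O(m k_j d)$ (cf.\ \eqref{eq:pigeonhole_blowup2}), which diverges as $j\to\infty$, so no contradiction is obtained this way. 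Since you both defer this step and base its sketch on that incorrect bound, the proposal as written does not close the proof; replacing it with the finite-area/zero-action structure result (Lemma~\ref{lem:zero_action_currents}) would repair it and in fact shortens the argument to the paper's.
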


We explain why Proposition~\ref{prop:closed_orbits_quantitative} implies Proposition~\ref{prop:closed_orbits_adiabatic}. 

\begin{proof}[Proof of Proposition~\ref{prop:closed_orbits_adiabatic}] 
Since $s \not\in \widehat\bfs_\lambda(d; \bfk)$, there exists a sequence $s_j \to s$ such that $\liminf_{j \to \infty} E_{d,k_j,\lambda}(s_j) < \infty$. By Proposition~\ref{prop:closed_orbits_quantitative}, after passing to a subsequence, there exists a closed $R^s$-orbit set $\cO$ such that $\cC_{s_j,d,k_j} \to (-2, 2) \times \cO$. For each $j$, we write $a_j := a_{s_j,k_j}$ and $\cC_j := \cC_{s_j,d,k_j}$ for brevity. For each $j$, the slices
$$\Xi_j := \tau_{a_j} \cdot \Big(\widehat{u}_{d,k}(\widehat{C}_{d,k}) \cap (a_j - 1, a_j + 1) \times Y \Big)$$
are equal to $\operatorname{supp}(\cC_j) \cap (-1, 1) \times Y$. We claim that $\Xi_j \to (-1, 1) \times \cO$, which implies the proposition. First, we prove $\limsup \Xi_j \subseteq (-1, 1) \times \cO$. Fix any point $z \in \limsup \Xi_j$. Then, any neighborhood of $z$ intersects infinitely many $\Xi_j$. This implies that any neighborhood of $z$ intersects infinitely many of the sets $\operatorname{supp}(\cC_j) \cap [-1, 1] \times Y$, so by geometric convergence it follows that $z \in [-1, 1] \times \cO$. Since $z \in (-1, 1) \times Y$, it follows that $z \in (-1, 1) \times \cO$. Second, we prove $(-1, 1) \times \cO \subseteq \liminf \Xi_j$. Choose any point $z \in (-1, 1) \times \cO$. By geometric convergence, we have $[-1, 1] \times \cO = \lim \operatorname{supp}(C_j) \cap [-1, 1] \times Y$, so any neighborhood of $z$ intersects all but finitely many of the sets $\operatorname{supp}(C_j) \cap [-1, 1] \times Y$. It follows that any neighborhood of $z$ intersects all but finitely many of the $\Xi_j$, so $z \in \liminf \Xi_j$. 
\end{proof}

The proof of Proposition~\ref{prop:closed_orbits_quantitative} relies on the compactness properties of $J$-holomorphic currents and some arguments from \cite[\S $5$]{Prasad23a}. We discuss a special class of $J$-holomorphic currents, similar to the $J$-holomorphic currents from ECH \cite[\S$3.1$]{ech_notes}, and state some key results. We refer the reader to \cite{DW21} or to \cite[\S $5$]{Prasad23a} for a more detailed and general exposition. Given an open interval $\cI$ and a pair $(\bar\eta, \bar{J}) \in \cD(\cI \times Y)$, a \emph{$\bar{J}$-holomorphic current} is a finite collection $\cC = \{(u_i, n_i)\}$ where each $C_i$ is an irreducible, non-compact Riemann surface without boundary, each $u_i: C_i \to \cI \times Y$ is a proper $\bar{J}$-holomorphic curve, and each $n_i$ is a positive integer. The collection $\cC$ defines a $2$-current by the formula
$$\cC(\beta) := \sum_i n_i \int_{C_i} u_i^*\beta.$$

For example, taking $(\bar\eta, \bar{J}) = (\eta^s, J^s)$ and $\cO = \{(\gamma_i, m_i)\}$ to be a closed $R^s$-orbit set, the $2$-current $\cI \times \cO = \{( \cI \times \gamma_i, m_i)\}$ is $J^s$-holomorphic. The currents $\cC_{s,d,k}$ defined above form another class of examples: they are $\widehat{J}_k^{a_{s,k}}$-holomorphic currents. Consider two $\bar{J}$-holomorphic currents $\cC$ and $\cC'$ to be equivalent if they are equivalent as $2$-currents. The \emph{area} of a $\bar{J}$-holomorphic current is 
$$\operatorname{Area}_{\bar{g}}(\cC) := \sum_i n_i \int_{C_i} \operatorname{dvol}_{u_i^*\bar{g}} = \sum_i n_i \int_{C_i} u_i^*(da \wedge \bar{\lambda} + \bar{\omega}) \in [0, \infty].$$

Taubes \cite{Taubes98} proved that a sequence of holomorphic currents with bounded area in a $4$-manifold has a geometrically convergent subsequence. 

\begin{prop}[{\cite[Proposition $3.3$]{Taubes98}}]\label{prop:currents_compactness}
Fix a finite constant $c > 0$. Fix an open interval $\cI$ and a convergent sequence $(\bar\eta_k, \bar{J}_k) \to (\bar\eta, \bar{J}) \in \cD(\cI \times Y)$. For each $k$, let $\cC_k$ be a $\bar{J}_k$-holomorphic current on $\cI \times Y$ such that $\operatorname{Area}_{\bar{g}_k}(\cC_k) \leq c$. Then, after passing to a subsequence, the sequence $\{\cC_k\}$ geometrically converges to a $\bar{J}$-holomorphic current $\cC$ such that $\operatorname{Area}_{\bar{g}}(\cC) \leq c$. 
\end{prop}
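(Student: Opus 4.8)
The plan is to combine weak-$*$ compactness of integral $2$-currents with the monotonicity inequality for pseudoholomorphic curves, and then to upgrade the resulting weak limit to a genuine $\bar{J}$-holomorphic current using the local structure theory of such curves (equivalently, target-local Gromov compactness \cite{Fish11}; see also \cite{DW21} and \cite[\S$5$]{Prasad23a}). Each $\cC_k$ is a finite $\bZ_{>0}$-combination of proper $\bar{J}_k$-holomorphic curves, hence a rectifiable $2$-current with $\operatorname{mass}(\cC_k) = \operatorname{Area}_{\bar{g}_k}(\cC_k) \leq c$, so by Banach--Alaoglu a subsequence converges weakly to a $2$-current $\cC$ with $\operatorname{mass}(\cC) \leq \liminf_k \operatorname{mass}(\cC_k) \leq c$. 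This already gives condition (i) in the definition of geometric convergence. It remains to (a) show the supports converge in the Hausdorff topology with limit $\operatorname{supp}(\cC)$, and (b) show $\cC$ is represented by a $\bar{J}$-holomorphic current.

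For (a), I would fix a compact exhaustion $\{K_m\}$ of $\cI \times Y$ and use that, since $\bar{J}_k \to \bar{J}$ in $C^\infty$, there are uniform constants $r_m, c_m > 0$ with $\operatorname{Area}_{\bar{g}_k}(\cC_k \cap B_r(z)) \geq c_m r^2$ for every $z \in \operatorname{supp}(\cC_k) \cap K_m$ and every $r < r_m$ with $B_r(z) \Subset \cI \times Y$ (the curves are boundaryless, so there is no boundary term; this is the usual monotonicity inequality, uniform in $k$). Hence a maximal $r$-separated subset of $\operatorname{supp}(\cC_k) \cap K_m$ has at most $c\, c_m^{-1} r^{-2}$ points, so $\{\operatorname{supp}(\cC_k) \cap K_m\}_k$ is precompact in $\cK(K_m)$; a diagonal subsequence makes $\operatorname{supp}(\cC_k)$ converge locally in the Hausdorff sense to a closed set $Z$. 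If $\beta$ is a $2$-form compactly supported off $Z$, it is supported off $\operatorname{supp}(\cC_k)$ for large $k$, so $\cC(\beta) = \lim_k \cC_k(\beta) = 0$; thus $\operatorname{supp}(\cC) \subseteq Z$. Conversely, for $z \in Z$ take $z_k \in \operatorname{supp}(\cC_k)$ with $z_k \to z$; after passing to a further subsequence the mass measures $\|\cC_k\|$ converge weakly-$*$ to a measure $\mu$ with $\|\cC\| \leq \mu$, and monotonicity gives $\mu(\overline{B_r(z)}) \geq c_m r^2$ for small $r$. Together with (b) — which identifies $\cC$ with a holomorphic current, so that $\|\cC\| = \mu$ near $Z$ — this yields $z \in \operatorname{supp}(\cC)$, hence $\operatorname{supp}(\cC) = Z$ and condition (ii).

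For (b), near any $z \in Z$ I would choose a small ball $B \Subset \cI \times Y$ and apply target-local Gromov compactness \cite{Fish11} to the pieces $\cC_k \cap B$, whose areas are uniformly bounded by monotonicity: after a subsequence these converge with multiplicities to a $\bar{J}$-holomorphic curve whose image is a finite union of embedded $\bar{J}$-holomorphic disks through points of $Z \cap B$. Patching these descriptions shows $Z$ is a $\bar{J}$-holomorphic subvariety (smooth locus a $\bar{J}$-holomorphic submanifold, singular locus discrete). Decomposing $Z$ into irreducible components $C_i$, the local multiplicities from the Gromov limits are locally constant on each smooth locus and the singular locus does not disconnect it, so they assemble into positive integers $n_i$; taking proper holomorphic parameterizations $u_i$ of the normalizations gives $\cC = \sum_i n_i\, u_i$ as $2$-currents, and $\operatorname{Area}_{\bar{g}}(\cC) = \operatorname{mass}(\cC) \leq c$ by the lower-semicontinuity already used.

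The main obstacle is step (b): showing the weak limit is genuinely holomorphic, with integer multiplicities constant on irreducible components and with $\operatorname{supp}(\cC) = Z$ rather than a proper subset. This is precisely the content of \cite[Proposition $3.3$]{Taubes98}, whose proof is entirely local in the target and relies only on the monotonicity inequality and elliptic regularity; all the relevant estimates are uniform under $C^\infty$-convergence $(\bar\eta_k, \bar{J}_k) \to (\bar\eta, \bar{J})$ in $\cD(\cI \times Y)$, so Taubes' argument applies with no change. Alternatively, since geometric convergence and the conclusion are local on $\cI \times Y$, one may just treat a relatively compact piece at a time — each such piece embeds symplectically into a closed symplectic $4$-manifold, where the cited result applies directly; see also \cite[Remark $5.20$]{Prasad23a}.
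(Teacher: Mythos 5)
Your proposal is correct and takes essentially the same route as the paper: both reduce the statement to Taubes' Proposition~3.3, observing that its local monotonicity-and-regularity arguments are uniform under the $C^\infty$-convergence $(\bar\eta_k,\bar J_k)\to(\bar\eta,\bar J)$, and both obtain the area bound from lower semicontinuity of mass (the paper cites \cite[Lemma~5.5]{Prasad23a} for this). One caveat on your expanded sketch of step (b): target-local Gromov compactness \cite{Fish11} requires an a priori genus bound, which the curves constituting $\cC_k$ need not satisfy, so that intermediate device should be replaced by the current-compactness arguments themselves (as in Taubes, or \cite[Remark~5.20]{Prasad23a}); since you ultimately defer to Taubes' proof for exactly this point, the overall argument stands.
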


\begin{proof}
Repeat the proof in \cite[Proposition $3.3$]{Taubes98} to extract a subsequence that geometrically converges to a $\bar{J}$-holomorphic current $\cC = \{(u_i: C_i \to \cI \times Y, n_i)\}$. By the lower semicontinuity of mass \cite[Lemma $5.5$]{Prasad23a}, we have the area bound $\operatorname{Area}_{\bar{g}}(\cC) \leq \limsup_{k \to \infty} \operatorname{Area}_{\bar{g}_k}(\cC_k)$. 
\end{proof}

The next lemma asserts that holomorphic currents with zero action are cylinders over closed orbit sets. 

\begin{lem}\label{lem:zero_action_currents}
Fix an open interval $\cI$ and any $s \in [-\delta_3, \delta_3]$. If $\cC$ is a $J^s$-holomorphic current on $\cI \times Y$ such that (i) $\cC(\theta \cdot \omega^s) = 0$ for any compactly supported function $\theta: \cI \times Y \to [0, \infty)$ and (ii) $\operatorname{Area}_{g^s}(\cC) < \infty$, then there exists a closed $R^s$-orbit set $\cO$ such that $\cC = \cI \times \cO$.
\end{lem}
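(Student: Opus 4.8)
The statement is the ``current'' analogue of Lemma~\ref{lem:zero_action}: a finite-area $J^s$-holomorphic current whose $\omega^s$-pairing against nonnegative test functions vanishes must be a cylinder over a closed $R^s$-orbit set. The plan is to write $\cC = \{(u_i: C_i \to \cI \times Y, n_i)\}$ and analyze each irreducible piece $u_i$ separately; since a finite positive-integer combination is a cylinder over an orbit set if and only if each constituent curve is, and since the multiplicities $n_i$ simply carry through, it suffices to treat a single irreducible, non-compact, proper $J^s$-holomorphic curve $u: C \to \cI \times Y$ with $\operatorname{Area}_{g^s}(u) < \infty$ and $\int_C (\theta \circ u)\, u^*\omega^s = 0$ for every compactly supported $\theta \geq 0$ on $\cI \times Y$.

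First I would extract the pointwise consequence of the hypothesis. Because $J^s$ is $\eta^s$-adapted, $u^*\omega^s$ is a nonnegative $2$-form on $C$ (this is the $\widehat\omega \geq 0$ fact recorded just before Lemma~\ref{lem:zero_action}); combined with $\int_C (\theta \circ u) u^*\omega^s = 0$ for all nonnegative compactly supported $\theta$, I get $u^*\omega^s \equiv 0$ identically on $C$. By the characterization in \S\ref{subsec:j_curves_symplectization}, at every point $\zeta \in C$ either $\zeta$ is a critical point of $u$ or $Du(T_\zeta C) = \operatorname{Span}(\partial_a, R^s)$. The set of critical points is discrete (or the curve is constant, which cannot happen for a proper map from a non-compact surface into $\cI \times Y$ — its image would be a single point, not proper), so on the dense open set $C \setminus \operatorname{Crit}(u)$ the tangent plane of the image is everywhere the ``vertical'' plane $\operatorname{Span}(\partial_a, R^s)$. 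This is exactly the setup of the vertical foliation introduced in Step~2 of the proof of Proposition~\ref{prop:intersection_argument}: the planes $\bR \times \Gamma_y(\bR)$, parameterized by the $J^s$-holomorphic immersions $P_z$, foliate $\bR \times Y$, and $u$ is tangent to this foliation wherever it is immersed.

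Next I would run the standard ``holonomy/leaf'' argument: a connected $J$-holomorphic curve everywhere tangent (off a discrete set) to a foliation by $J$-holomorphic leaves must have image contained in a single leaf. Concretely, pick a non-critical point $\zeta_0$ with $u(\zeta_0) = (a_0, y_0)$; the image near $\zeta_0$ lies in the leaf $\bR \times \Gamma_{y_0}(\bR)$. The subset of $C$ mapping into this leaf is closed; it is also open, since near any point of it $u$ either is critical (and by continuity/the local structure of $J$-holomorphic maps stays in the closure of the leaf, hence in the leaf as it is a closed immersed submanifold away from... here one uses that $\Gamma_{y_0}$ is either an embedded line or a circle, and that the leaf is a closed subset $\bR \times \overline{\Gamma_{y_0}(\bR)}$) or is immersed tangent to the foliation and hence locally stays in the unique leaf through that point, which must again be $\bR \times \Gamma_{y_0}(\bR)$ by connectedness of leaves. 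By connectedness of $C$, the entire image $u(C)$ lies in $\bR \times \Gamma_{y_0}(\bR)$. Now the finite-area hypothesis forces $\Gamma_{y_0}$ to be a \emph{closed} orbit: if $\Gamma_{y_0}$ were a non-periodic trajectory, its closure would still have the leaf $\bR \times \overline{\Gamma_{y_0}(\bR)}$ of infinite area in any fundamental domain, and since $u$ is proper and non-constant its image must exhaust an infinite-area portion (the leaf is either all of $\bR \times \Gamma_{y_0}(\bR)$, whose area over any bounded $a$-window is infinite when $\Gamma_{y_0}$ is non-closed because the trajectory has infinite length, or... ); more carefully, a proper non-constant map into $\bR \times \Gamma_{y_0}(\bR)$ covers all of it, and $\operatorname{Area}_{g^s}$ of $\cI \times \Gamma_{y_0}(\bR)$ with $\cI$ bounded is $|\cI| \cdot \operatorname{length}(\Gamma_{y_0})$, infinite unless $\Gamma_{y_0}$ is a closed orbit. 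Hence $\gamma := \Gamma_{y_0}(\bR)$ is a closed $R^s$-orbit, and $u(C) \subseteq \bR \times \gamma$; since $u$ is proper, $u(C) = \cI \times \gamma$, and $u$ is a (possibly branched, degree $d_i \geq 1$) covering of $\cI \times \gamma$. Reassembling: $\cC = \sum_i n_i (\text{degree-}d_i \text{ cover of } \cI \times \gamma_i)$, which as a $2$-current equals $\cI \times \cO$ for the closed $R^s$-orbit set $\cO := \{(\gamma_i, n_i d_i)\}$ (merging repeated geometric orbits).

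\textbf{Main obstacle.} The routine-looking but genuinely delicate point is the open-closed argument of the third paragraph in the presence of critical points of $u$ and of orbits $\gamma$ that are not embedded (a closed orbit can have a self-intersection in $Y$, so $\bR \times \gamma$ is only an immersed surface). I would handle this by pulling back the foliation through $u$: on $C \setminus \operatorname{Crit}(u)$ the composition $u$ is an immersion into leaves, so locally there is a well-defined ``which leaf'' map; near a critical point, since $u^*\omega^s \equiv 0$ the differential $Du$ has rank $\leq 1$ everywhere and the local normal form for $J$-holomorphic maps shows $u$ maps a punctured neighborhood into a single leaf of the \emph{pulled-back} foliation, hence the ``which leaf'' map extends continuously (indeed holomorphically, via the parameterization $P_z$) across the critical point. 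This upgrades ``tangent to the foliation'' to ``image in one leaf'' rigorously. The finiteness argument ruling out non-closed $\Gamma_{y_0}$ should then be packaged cleanly using the area formula $\operatorname{Area}_{g^s}(\cI \times \gamma') = |\cI|\cdot \ell(\gamma')$ for a trajectory segment, noting that a non-closed trajectory accumulates and the properly-embedded-image constraint together with finite area is contradictory; I would cite the analogous reasoning already invoked in Lemma~\ref{lem:zero_action} and in \cite[\S 5]{Prasad23a} rather than re-derive it.
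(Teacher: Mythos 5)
Your proposal is correct in substance, but note that the paper itself disposes of this lemma in one line, by citing the arguments of \cite[Lemmas $5.27$, $5.28$, $5.29$]{Prasad23a}; those lemmas carry out essentially the analysis you wrote out (zero action forces each irreducible component into a leaf of the vertical foliation, and finite area forces the underlying orbit to be periodic). So what you have produced is a self-contained proof of the cited input rather than a genuinely different route, which is a reasonable thing to do here.

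Two points in your ``main obstacle'' paragraph should be repaired, though neither is fatal. First, the claim that $u^*\omega^s \equiv 0$ forces $Du$ to have rank $\leq 1$ everywhere is false: $Du$ is complex-linear, so at every non-critical point it has rank $2$, with image exactly $\operatorname{Span}(\partial_a, R^s)$. The cleaner observation, which makes any separate treatment of critical points unnecessary, is that for every tangent vector $v$ one has $u^*\omega^s(v, jv) = \omega^s(\pi_{\xi}Du(v), J^s\pi_{\xi}Du(v)) \geq 0$, with equality if and only if $\pi_{\xi}Du(v) = 0$; hence $u^*\omega^s \equiv 0$ is equivalent to $Du$ taking values in the integrable plane field $\operatorname{Span}(\partial_a, R^s)$ at \emph{every} point, critical or not, and a smooth map from a connected surface whose differential lies in an integrable distribution has image in a single leaf. (Your foliation-chart version also works: the transverse coordinate of $u$ is locally constant off the discrete critical set and continuous, hence locally constant.) Second, closed orbits of a flow are embedded circles, so there is no self-intersection issue for the leaf over a periodic orbit; the only non-embedded leaves come from non-periodic orbits, which is precisely the case your finite-area step excludes. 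That step is sound as you set it up: the image is closed in $\cI \times Y$ by properness and open in the leaf by the open mapping theorem applied to the lift through $P_z$, hence equals the full leaf portion over $\cI$, whose $g^s$-area is the length of $\cI$ times the length of the orbit (note $|R^s|_{g^s} = 1$), which is infinite for a non-periodic orbit, contradicting hypothesis (ii). The final bookkeeping — each $u_i$ is then a proper, hence finite-degree branched, cover of $\cI \times \gamma_i$, so $\cC$ equals $\cI \times \cO$ with multiplicities $n_id_i$ merged over repeated orbits — is exactly right and matches the paper's definition of the current $\cI \times \cO$.
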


\begin{proof}
The lemma follows from the arguments in \cite[Lemmas $5.27$, $5.28$, $5.29$]{Prasad23a}. 
\end{proof}

The key to Proposition~\ref{prop:closed_orbits_quantitative} is the following estimate, which controls the area of $\cC_{s,d,k}$ by the action and by the integral $E_{d,k,\lambda}(s)$. 

\begin{lem}\label{lem:lambda_currents}
There exists a constant $c_6 \geq 1$ such that the following bound holds for any $d$, $k$, and $s \in (-\delta_5, \delta_5)$:
$$\operatorname{Area}_{\widehat{g}_k^{a_{s,k}}}(\cC_{s,d,k}) \leq c_6(E_{d,k,\lambda}(s) + \int_{C_{s,d,k}} \widehat{u}_{d,k}^*\widehat\omega_k).$$
\end{lem}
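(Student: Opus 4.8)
\emph{Plan.} I would deduce the estimate from the Fish--Hofer exponential area bound (Proposition~\ref{prop:fh_area_bound}), applied to the curve $\widehat u_{d,k}$ in the translated data $(\widehat\eta_k^{a_{s,k}},\widehat J_k^{a_{s,k}})\in\cD([-8,8]\times Y)$. The only real content is to convert the \emph{smeared} quantity $E_{d,k,\lambda}(s)$ — a weighted average of $\widehat\lambda_k$-integrals over a whole one-parameter family of level sets — into control of the $\widehat\lambda_k$-integral of a \emph{single} level set, which is exactly the input Proposition~\ref{prop:fh_area_bound} asks for.

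First I would unwind the definitions. Write $\phi:=a\circ\widehat u_{d,k}$. Since $\cC_{s,d,k}$, as a current on $(-2,2)\times Y$, is carried by the image under $\tau_{a_{s,k}}$ of the part of $\widehat u_{d,k}$ over $\phi\in(a_{s,k}-2,a_{s,k}+2)$, naturality of pullback under $\tau_{a_{s,k}}$ (together with $\widehat\lambda_k^{a_{s,k}}=(\tau_{a_{s,k}}^{-1})^*\widehat\lambda_k$, similarly for $\widehat\omega_k^{a_{s,k}}$) gives
\begin{equation*}
\operatorname{Area}_{\widehat g_k^{a_{s,k}}}(\cC_{s,d,k})=\int_{\phi^{-1}\big((a_{s,k}-2,\,a_{s,k}+2)\big)}\widehat u_{d,k}^*(da\wedge\widehat\lambda_k+\widehat\omega_k),
\end{equation*}
while $\int_{C_{s,d,k}}\widehat u_{d,k}^*\widehat\omega_k$ is the $\widehat\omega_k$-action over the larger slab $\phi\in(a_{s,k}-4,a_{s,k}+4)$. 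Setting $\ell(\sigma):=\int_{\phi^{-1}(\sigma)}\widehat u_{d,k}^*\widehat\lambda_k$ for regular values $\sigma$ (oriented by the coarea convention), the coarea formula for $\phi$ — legitimate because $r_{s,k}'$ is supported in a compact subinterval of $(a_{s,k}-2,a_{s,k}+2)$, so the integrand lives over a compact part of $\widehat C_{d,k}$ — gives $E_{d,k,\lambda}(s)=\int r_{s,k}'(\sigma)\,\ell(\sigma)\,d\sigma$. As in the discussion of Lemma~\ref{lem:zero_action} one checks that $\widehat u_{d,k}^*(da\wedge\widehat\lambda_k)$ is pointwise non-negative (evaluating on a $j$-positive frame $(e_1,je_1)$ yields $\alpha^2+\beta^2$ with $\alpha=da(D\widehat u_{d,k}e_1)$, $\beta=\widehat\lambda_k(D\widehat u_{d,k}e_1)$, using $\widehat\lambda_k(\partial_a)=0$ and $\widehat J_k\partial_a=\widehat R_k$), hence, integrating over sublevel slabs, $\ell\geq 0$ a.e.\ and $E_{d,k,\lambda}(s)\geq 0$.

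Next comes the averaging-and-splitting step. Since $r_{s,k}'\geq 0$ with $\int r_{s,k}'(\sigma)\,d\sigma=r(2)-r(-2)=:\kappa>0$ (a constant of the fixed profile $r$), the measure $\kappa^{-1}r_{s,k}'\,d\sigma$ is a probability measure on $\operatorname{supp}(r_{s,k}')$, so there is a regular value $\sigma_0\in(a_{s,k}-2,a_{s,k}+2)$ of $\phi$ with $0\le\ell(\sigma_0)\le\kappa^{-1}E_{d,k,\lambda}(s)$, and removing a null set we may take $\phi^{-1}(\sigma_0)$ to miss all nodal points. By Sard I pick further regular values $a_-\in(a_{s,k}-3,a_{s,k}-2)$, $a_+\in(a_{s,k}+2,a_{s,k}+3)$ of $\phi$ with nodal-point-free fibres; then $a_-<\sigma_0<a_+$, $[a_-,a_+]\subset(a_{s,k}-4,a_{s,k}+4)$, $\sigma_0-a_-<5$, $a_+-\sigma_0<5$, and $[a_{s,k}-8,a_{s,k}+8]\subseteq[-\widehat L_k,\widehat L_k]$ since $|a_{s,k}|<\delta_5^2<1$ and $\widehat L_k\ge 16k$, so the translated data is legitimately in $\cD([-8,8]\times Y)$. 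I then apply Proposition~\ref{prop:fh_area_bound} (the bound \eqref{eq:fh_area_bound2}) twice, to $\tau_{a_{s,k}}\circ\widehat u_{d,k}$ restricted to $\phi^{-1}([a_-,\sigma_0])$ and to $\phi^{-1}([\sigma_0,a_+])$ — both compact, $\widehat J_k^{a_{s,k}}$-holomorphic, and landing in $(-3,3)\times Y\subset[-8,8]\times Y$ after translation. In each case the boundary minimum $\min\{\ell(\cdot),\ell(\sigma_0)\}$ is $\le\ell(\sigma_0)\le\kappa^{-1}E_{d,k,\lambda}(s)$, the $\widehat\omega_k$-action of the sub-slab is $\le\int_{C_{s,d,k}}\widehat u_{d,k}^*\widehat\omega_k$, and the exponential factor is $\le e^{5c_3}-1$, where $c_3$ is the stable constant of Proposition~\ref{prop:fh_area_bound}; by Remark~\ref{rem:stable_constants} and the precompactness of $\{(\widehat\eta_k^a,\widehat J_k^a)\}$ in $\cD([-8,8]\times Y)$ (Lemma~\ref{lem:geometric_convergence}), $c_3$ is independent of $d,k,s$. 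Adding the two estimates and using $\widehat u_{d,k}^*(da\wedge\widehat\lambda_k+\widehat\omega_k)\ge 0$, so that the area over $(a_{s,k}-2,a_{s,k}+2)$ is at most the area over $\phi^{-1}([a_-,a_+])=\phi^{-1}([a_-,\sigma_0])\cup\phi^{-1}([\sigma_0,a_+])$, yields the lemma with $c_6:=\max\{1,\ 2\kappa^{-1}c_3(e^{5c_3}-1),\ 2(e^{5c_3}-1)+2\}$, automatically $\ge 1$ since $c_3\ge 1$.

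The main obstacle is precisely the mismatch flagged at the start: $E_{d,k,\lambda}(s)$ averages the $\widehat\lambda_k$-integrals of a continuum of slices, while Proposition~\ref{prop:fh_area_bound} only accepts the $\widehat\lambda_k$-integral of a single bounding slice. The averaging argument extracts one good interior level $\sigma_0$, and then — this is the point — the height-four slab is split into two slabs \emph{each having $\sigma_0$ as one of its two bounding levels}, so that the controlled quantity $\ell(\sigma_0)$ can be fed to the Fish--Hofer bound on both pieces. The only other thing needing a word is the uniformity of $c_3$ (hence $c_6$) in $d,k,s$, which is immediate from the precompactness of the translated geometric data.
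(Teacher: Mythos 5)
Your proof is correct and is essentially the paper's argument: both use the coarea formula to rewrite $E_{d,k,\lambda}(s)$, an averaging/pigeonhole step to extract a single level whose $\widehat\lambda_k$-integral is bounded by a fixed multiple of $E_{d,k,\lambda}(s)$ (you average against the probability measure $\kappa^{-1}r'_{s,k}\,d\sigma$, the paper takes the infimum over the set where $r'>1/2$), and then the Fish--Hofer exponential area bound (Proposition~\ref{prop:fh_area_bound}) with the $d,k,s$-independent stable constant from Remark~\ref{rem:stable_constants}. Your explicit splitting of the slab into two pieces sharing the controlled level $\sigma_0$ as a boundary slice is just spelling out what the paper's one-line invocation of \eqref{eq:fh_area_bound2} leaves implicit, so no substantive difference.
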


\begin{proof}
Let $\cR \subset (a_{s,k} - 2, a_{s,k} + 2)$ denote the set of regular values of $a \circ \widehat{u}_{d,k}$ in the interval $(a_{s,k} - 2, a_{s,k} + 2)$. By the co-area formula \cite[Lemma $4.13$]{FH23}, we have
\begin{equation}\label{eq:lambda_currents2}E_{d,k,\lambda}(s) = \int_{\cR} r'(t - a_{s,k})\Big(\int_{(a \circ \widetilde{u}_{d,k})^{-1}(t)} \widehat{u}_{d,k}^*\widehat{\lambda}_k\Big) dt.\end{equation}

Let $\cE \subseteq \cR$ denote the subset of $t$ such that $r'(t - a_{s,k}) > 1/2$. Then $\cE$ is non-empty and open. Write $|\cE|$ for its Lebesgue measure. Note that $|\cE|$ does not depend on $s$ or $k$. Define 
$$L := \inf_{t \in \cR}  \int_{(a \circ \widetilde{u}_{d,k})^{-1}(t)} \widehat{u}_{d,k}^*\widehat{\lambda}_k.$$

It follows from \eqref{eq:lambda_currents2} that $E_{d,k,\lambda}(s) \geq \frac{1}{2}L|\cE|$. Re-arrange this bound to show $L \leq 2|\cE|^{-1} E_{d,k,\lambda}(s)$. Using the area bound from Proposition~\ref{prop:fh_area_bound} and Remark~\ref{rem:stable_constants}, we find
\begin{equation}
\label{eq:lambda_currents3}
\begin{split}
\operatorname{Area}_{\widehat{g}_k^{a_{s,k}}}(\cC_{s,d,k}) &\leq \operatorname{Area}_{\widehat{u}_{d,k}^*\widehat{g}_k}(C_{s,d,k}) \leq c_*(L + \int_{C_{s,d,k}} \widehat{u}_{d,k}^*\widehat\omega_k) \leq c_*(2|\cE|^{-1}E_{d,k,s} + \int_{C_{s,d,k}} \widehat{u}_{d,k}^*\widehat\omega_k)
\end{split}
\end{equation}
for some $s$-, $d$-, and $k$-independent constant $c_* \geq 1$. The second inequality uses Proposition~\ref{prop:fh_area_bound} and Remark~\ref{rem:stable_constants}. The third inequality uses the bound on $L$. By \eqref{eq:lambda_currents3}, the lemma holds with $c_6 = c_* \cdot \max(2|\cE|^{-1}, 1)$. 
\end{proof}

We now prove Proposition~\ref{prop:closed_orbits_quantitative}. 

\begin{proof}[Proof of Proposition~\ref{prop:closed_orbits_quantitative}]
Fix $d$, $\bfk$, and $s_j \to s$ as in the statement of the proposition. We simplify our notation as follows. For each $j$, write $a_j := a_{s_j,k_j}$, $C_j := C_{s_j,d,k_j}$, $\cC_j := \cC_{s_j, d, k_j}$, and $E_j := E_{d,k_j,\lambda}$. Write $u_j := \widehat{u}_{d,k_j}$, $\omega_j := \widehat{\omega}_{k_j}^{a_j}$, and $g_j := \widehat{g}_{k_j}^{a_j}$ for each $j$. Recall that $\liminf_{j \to \infty} E_{d,k_j,\lambda}(s_j) < \infty$. Since $s \not\in \widehat\bfs_\omega(d; \bfk)$, we have $\lim_{j \to \infty} \int_{C_j} u_j^*\widehat\omega_{k_j} = 0$. Combine these two bounds with Lemma~\ref{lem:lambda_currents}. We deduce that $\liminf_{j \to \infty} \operatorname{Area}_{g_j}(\cC_j) < \infty$. 

By Proposition~\ref{prop:currents_compactness} and Corollary~\ref{cor:geometric_convergence_adiabatic}, after passing to a subsequence, the currents $\cC_j$ geometrically converge to a $J^s$-holomorphic current $\cC$ with finite area. Next, we prove that $\cC(\theta\omega^s) = 0$ for any compactly supported smooth function $\theta: (-2, 2) \times Y \to [0, \infty)$. Using the triangle inequality and Corollary~\ref{cor:geometric_convergence_adiabatic}, we have
\begin{equation*}
\begin{split}
|\cC(\theta\omega^s)| &= \lim_{j \to \infty} |\cC_j(\theta\omega^s)| \leq \limsup_{j \to \infty} |\cC_j(\theta\omega_j)| + \limsup_{j \to \infty} \operatorname{Area}_{g_j}(\cC_j) \cdot \|\theta(\omega_j - \omega^s)\|_{g_j} \\
&\leq \limsup_{j \to \infty} |\cC_j(\theta\omega_j)| \leq \limsup_{j \to \infty} \int_{C_j} u_j^*\widehat{\omega}_{k_j} = 0.
\end{split}
\end{equation*}

Since $\cC$ has zero action and finite area, the proposition follows from Lemma~\ref{lem:zero_action_currents}. 
\end{proof}

\subsubsection{Completing the proof} We prove Proposition~\ref{prop:adiabatic_limit} using Lemma~\ref{lem:adiabatic_limit_set_conn}, Lemma~\ref{lem:pigeonhole_adiabatic}, Lemma~\ref{lem:pigeonhole_adiabatic2}, Lemma~\ref{lem:pigeonhole_blowup}, Proposition~\ref{prop:almost_cylinders_adiabatic}, and Proposition~\ref{prop:closed_orbits_adiabatic}. 

\begin{proof}[Proof of Proposition~\ref{prop:adiabatic_limit}]
Fix any $n > 1$. By Lemma~\ref{lem:adiabatic_limit_set_conn}, Lemma~\ref{lem:pigeonhole_adiabatic}, and Lemma~\ref{lem:pigeonhole_adiabatic2}, there exists a sequence $\bfk = \{k_j\}$ such that (i) the bounds \eqref{eq:at_bounds_adiabatic} are satisfied for $\epsilon = \epsilon_{d,2n}$ and $b = b_{2n}$, (ii) $\bfs_\omega(d; \bfk)$ is countable for any $d \geq 1$, and (iii) for any $d$ and any closed, connected interval $\cJ$ of positive length, the set $\pi_{\bR}^{-1}(\cJ) \subseteq \widehat\cX_d(\bfk)$ is connected. Set $\widehat\cX_d := \widehat\cX_d(\bfk)$ for any $d \geq 1$. 

For any $d$, let $\cQ_{d,n}$ denote the set of $s \in (-\delta_5, \delta_5)$ such that the interval $(s - d^{-2}, s + d^{-2})$ does not intersect $\widehat{\bfs}_\omega(d, \epsilon_{d,2n}; \bfk)$ or $\widehat{\bfs}_\chi(d, b_{2n}; \bfk)$. We will show that, when $d$ is sufficiently large, $\cQ_{d,n}$ satisfies the assertions of the proposition. The proof will take $5$ steps. 

\noindent\textbf{Step $1$:} This step shows that $\cQ_{d,n}$ has measure at least $(2 - 2^{-n})\delta_5$ when $d$ is sufficiently large. Let $c_5 \geq 1$ denote the constant from Lemma~\ref{lem:pigeonhole_adiabatic}. It follows from \eqref{eq:at_bounds_adiabatic} that
$$\#(\widehat{\bfs}_\omega(d, \epsilon_{d,2n}) \cup \widehat{\bfs}_\chi(d, b_{2n})) \leq (2^{-4n} + O(d^{-1/2}))\delta_5 d^2.$$

It follows that, when $d$ is sufficiently large, the complement of $\cQ_{d,n}$ has measure at most $2^{-4n+1}\delta_5 \leq 2^{-n-1}\delta_5$. This implies that $\cQ_{d,n}$ has measure at least $(2 - 2^{-n-1})\delta_5$. 

\noindent\textbf{Step $2$:} Fix any $s \in \cQ_{d,n}$ and define the interval $\cJ = (s - d^{-2}, s + d^{-2})$. This step shows that $\cJ$ satisfies Proposition~\ref{prop:adiabatic_limit}(a). Recall that the curves $\widehat{u}_{d,k}$ satisfy point constraints on the levels $\widehat{\bfa}_{d,k}$ from \S\ref{subsec:geom_setup_adiabatic}. The set $\delta_5 k^{-1} \cdot \widehat{\bfa}_{d,k}$ is, for any $k$, a set of $2d^2 + 1$ equally spaced points in $(-\delta_5, \delta_5 )$. Therefore, any interval of length $2d^{-2}$ must contain at least one of these points. Proposition~\ref{prop:adiabatic_limit}(a) follows. 

\noindent\textbf{Step $3$:} This step shows that $\cJ$ satisfies Proposition~\ref{prop:adiabatic_limit}(b). By Property (ii) of the sequence $\bfk$ and Lemma~\ref{lem:pigeonhole_blowup}, the set $\widehat\bfs_\omega(d; \bfk) \cup \widehat\bfs_\lambda(d; \bfk)$ has Lebesgue measure zero. It follows that $\cJ$ intersects the complement of this set. Then Proposition~\ref{prop:adiabatic_limit}(b) follows from Proposition~\ref{prop:closed_orbits_adiabatic}. 

\noindent\textbf{Step $4$:} This step shows that $\cJ$ satisfies Proposition~\ref{prop:adiabatic_limit}(c) when $d$ is sufficiently large. Any $s' \in \cJ$ satisfies $|s' - s| \leq d^{-2}$. As $s' \to s$, we have convergence $(\eta^{s'}, J^{s'}) \to (\eta^s, J^s)$. It follows that if $d$ is sufficiently large, for any $s' \in \cJ$ and any $\Xi$ that is a $1/2n$-almost cylinder with respect to $(\eta^{s'}, J^{s'})$, we have that $\Xi$ is a $1/n$-almost cylinder with respect to $(\eta^s, J^s)$. By construction, $\cJ$ does not intersect $\widehat{\bfs}_\omega(d, \epsilon_{d,2n}; \bfk)$ or $\widehat{\bfs}_\chi(d, b_{2n}; \bfk)$. By Proposition~\ref{prop:almost_cylinders_adiabatic}, if we take $d$ sufficiently large, then for $s' \in \cJ$ and any $(\Xi, s') \in \widehat\cX_d$, we have that $\Xi$ is a $1/2n$-almost cylinder with respect to $(\eta^{s'}, J^{s'})$. Therefore, $\Xi$ is a $1/n$-almost cylinder with respect to $(\eta^s, J^s)$. This proves Proposition~\ref{prop:adiabatic_limit}(c). 

\noindent\textbf{Step $5$:} This step shows that $\cJ$ satisfies Proposition~\ref{prop:adiabatic_limit}(d). This follows from Property (iii) of $\bfk$ because $\cJ$ has positive length. 
\end{proof}

\subsection{Other $4$-manifolds}\label{subsec:other_manifolds_adiabatic} Let $\bM$ be any symplectic $4$-manifold that embeds into a closed symplectic $4$-manifold $\bW$ such that $b^+ = 1$ and the symplectic form $\Omega$ has rational cohomology class. Theorem~\ref{thm:main2} holds for any smooth function $H: \bM \to \bR$. To generalize the proof to this case, we replace Proposition~\ref{prop:closed_curves} with the more general Proposition~\ref{prop:closed_curves_general}.

\section{Quantitative almost-existence}\label{sec:periodic_orbits}

In this section, we prove Theorems~\ref{thm:main3} and \ref{thm:main4}. The proof of Theorem~\ref{thm:main3} relies heavily on the ideas, language, and results in \S\ref{sec:lcy}. The proof of Theorem~\ref{thm:main4} requires no additional background of a reader who is willing to accept Theorem~\ref{thm:main2} as true. 

\subsection{Proof of Theorem~\ref{thm:main3}}\label{subsec:main3_proof}

We prove Theorem~\ref{thm:main3} by proving the following equivalent result. 

\begin{thm}\label{thm:main3_local}
Let $H: \bR^4 \to \bR$ be a smooth function. Then for any $s_0 \in \cR_c(H)$, there exists some $\delta_5 > 0$ and a full measure subset $\cQ \subseteq \cR_c(H) \cap (s_0 - \delta_5, s_0 + \delta_5)$ such that, for any $s \in \cQ$, the level set $H^{-1}(s)$ contains at least two closed $X_H$-orbits. 
\end{thm}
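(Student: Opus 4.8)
The plan is to argue by contradiction, using the adiabatic limit set $\widehat{\cX}_d$ and the tension between two counts: an \emph{upper count}, showing that for $d$ large all but $O(d)$ of the relevant levels behave like a single closed orbit, and a \emph{lower count}, namely that the curves $\widehat{u}_{d,k}$ carry point constraints at $\sim d^2$ levels. As usual, shift so that $s_0 = 0$, set $Y = H^{-1}(0)$, and adopt the adiabatic neck-stretching setup of \S\ref{subsec:geom_setup_adiabatic}, which fixes a constant $\delta_5 > 0$. Suppose, for contradiction, that the bad set
$$\cB := \{\, s \in \cR_c(H) \cap (-\delta_5, \delta_5) \;:\; H^{-1}(s) \text{ has at most one closed } X_H\text{-orbit} \,\}$$
has positive Lebesgue measure. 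Discarding the null set of levels carrying no closed orbit (almost-existence theorem), we may assume that each $s \in \cB$ carries a unique closed orbit $\gamma_s$. A slight refinement of $\cB$ (Lemma~\ref{lem:bad_subset}) lets us take $\cB$ compact and arrange that, for each large $d$, the $\sim d^2$ point-constraint levels of the curves $\widehat{u}_{d,k}$ can be placed inside $\cB$; this does not disturb the pigeonhole estimates of \S\ref{subsec:adiabatic_lim_proof}, which depend only on the \emph{number} $\sim d^2$ of constraints and the degree $\sim md$ of the curves, not on the location of the constraints.

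Fix a finite $\bfp \subset Y$, put $m := \#\bfp$, and run the construction of \S\ref{subsec:almost_cyl_adiabatic_proof} with the point-constraint levels placed in $\cB$: this produces curves $\widehat{u}_{d,k}$ of degree $4md$ and the adiabatic limit set $\widehat{\cX}_d(\bfk)$. For the \emph{upper count}: by Lemma~\ref{lem:pigeonhole_adiabatic} (applied with $\epsilon$ a fixed constant and $b \sim d$), the accumulation sets $\widehat{\bfs}_\omega(d,\epsilon;\bfk) \cup \widehat{\bfs}_\chi(d,b;\bfk)$ contain only $O(d)$ levels. The decisive new step, using the one-orbit hypothesis, is to show that the $\widehat\lambda$-blowup set contributes no more levels to $\cB$: if $E_{d,k,\lambda}$ blew up at some $s \in \cB$ then, since bounded-action slices near $s$ converge (Proposition~\ref{prop:closed_orbits_quantitative}) to a cylinder over $\gamma_s$, the blowup would force the limiting current to have unbounded area, and by the exponential area bound Proposition~\ref{prop:fh_area_bound} together with the action quantization Proposition~\ref{prop:fh_quantization} this forces \emph{action} to accumulate at $s$; thus $\widehat{\bfs}_\lambda(d;\bfk) \cap \cB$ is contained in $\widehat{\bfs}_\omega(d;\bfk)$, again of size $O(d)$ (this is the content of Lemma~\ref{lem:finite_accumulation} and Lemma~\ref{lem:blowup_implies_accumulation}). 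Combining this with Proposition~\ref{prop:closed_orbits_adiabatic} and Proposition~\ref{prop:closed_orbits_quantitative}: for $d$ sufficiently large, all but $O(d)$ of the $\sim d^2$ point-constraint levels $s \in \cB$ lie off every accumulation and blowup set, and at each such level the slice through the point constraint converges to a cylinder over a closed $R^s$-orbit set whose support, by the one-orbit hypothesis, is exactly $\gamma_s$.

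For the \emph{lower count}, since $O(d) < d^2$ for $d$ large, there is a point-constraint level $s^* \in \cB$ of the above kind; since the curves pass through the (transported) constraint set $f_{s^*}^{-1}(\bfp)$, the unique orbit $\gamma_{s^*}$ passes through every point of $f_{s^*}^{-1}(\bfp)$. Here $\bfp \subset Y$ was arbitrary and the conclusion holds for all large $d$; letting $\bfp$ exhaust a countable dense subset of $Y$, diagonalizing in $d$, and using compactness of $\cB$, of $\cK(Y)$, and the almost-cylinder structure of Proposition~\ref{prop:almost_cylinders_adiabatic}, one extracts a level $s \in \cB$ whose unique closed orbit would have to be dense in $H^{-1}(s)$. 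This is impossible, since a closed orbit is a compact $1$-dimensional --- hence nowhere dense --- subset of the $3$-manifold $H^{-1}(s)$. The contradiction proves the theorem.

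The main obstacle is the blowup control in the upper count: upgrading the merely-\emph{null} bound of Lemma~\ref{lem:pigeonhole_blowup} for $\widehat{\bfs}_\lambda$ to the $O(d)$-\emph{count} estimate $\widehat{\bfs}_\lambda(d;\bfk) \cap \cB \subseteq \widehat{\bfs}_\omega(d;\bfk)$ under the one-orbit hypothesis, which is precisely what makes the inequality ``$O(d) < d^2$'' decisive. A secondary technical point is the bookkeeping that places the $\sim d^2$ point constraints at levels of $\cB$ while preserving the estimates of \S\ref{subsec:adiabatic_lim_proof} --- this requires interleaving the choice of constraint levels with the subsequence extraction producing $\bfk$, and re-running Lemma~\ref{lem:pigeonhole_adiabatic} and Lemma~\ref{lem:pigeonhole_blowup} in that setting.
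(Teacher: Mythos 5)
Your global strategy is indeed the paper's: contradict by playing $\sim d^2$ point-constraint levels placed in the bad set $\cB$ against an $O(d)$ bound on the levels of $\cB$ where the adiabatic limit fails to be a cylinder over the unique closed orbit. But the two places where you locate (or wave away) the difficulty are exactly where the proposal has genuine gaps. First, your upper count is not established. Lemma~\ref{lem:pigeonhole_adiabatic} with a fixed $\epsilon$ only bounds $\widehat\bfs_\omega(d,\epsilon;\bfk)$; what Proposition~\ref{prop:closed_orbits_quantitative} requires at a level $s$ is $s \notin \widehat\bfs_\omega(d;\bfk) = \bigcup_{\epsilon>0}\widehat\bfs_\omega(d,\epsilon;\bfk)$, i.e.\ \emph{no} action accumulation at all, and a priori this full set is only countable (Lemma~\ref{lem:pigeonhole_adiabatic2}). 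Upgrading it to $O(d)$ on $\cB$ is precisely the content of Lemma~\ref{lem:finite_accumulation}, and it is not a pigeonhole statement: it uses the one-orbit hypothesis through machinery your sketch never touches --- a primitive $\nu$ of $\Omega$ on $\bW\setminus\bD$, the functions $e_\lambda, e_\nu$ on the bad set made continuous by Lusin's theorem (Lemma~\ref{lem:bad_subset}), the strict positivity $e_\nu \geq c_7^{-1}$ obtained by Stokes (Lemma~\ref{lem:nu_lower_bound}), and the observation that across any accumulation level the integer multiplicity of the limiting cylinder must jump by at least $1$, which by Stokes costs at least $c_7^{-1}/2$ of a total symplectic area that is only $O(d)$. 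Without some argument of this kind your claim ``all but $O(d)$ of the constraint levels lie off every accumulation set'' is unsupported.

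Second, your mechanism for the blowup control is not valid as stated. Unbounded $E_{d,k,\lambda}$ does \emph{not} force action to accumulate: the height-four slices can converge to trivial cylinders over higher and higher covers of $\gamma_s$, which have zero action but arbitrarily large $\widehat\lambda$-integral, so no appeal to Proposition~\ref{prop:fh_quantization} or Proposition~\ref{prop:fh_area_bound} rules this out. What actually excludes it is again the $e_\nu$ bookkeeping: the multiplicity is bounded by $O(d)$ because $m\, e_\nu(s)$ is at most the total symplectic area and $e_\nu \geq c_7^{-1}$, so (using Lemma~\ref{lem:lambda_currents}) the possible limit values of $E_{d,k_j,\lambda}$ near $s$ lie in the bounded discrete set $\{1,\dots,O(d)\}\cdot e_\lambda(s)$, and an intermediate value argument then forbids divergence --- this is Lemma~\ref{lem:blowup_implies_accumulation}, whose proof is in fact the easier of the two counting lemmas, not the ``main obstacle'' you identify. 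Finally, a smaller remark on the endgame: the paper avoids your density-of-the-orbit argument altogether by choosing, at each selected level, a single constraint point \emph{off} the unique closed orbit, so that one good level already yields a contradiction; your exhaustion-by-dense-$\bfp$ scheme can probably be made to work, but it needs uniform period bounds (continuity of $e_\lambda$ on the compact refinement of $\cB$) and care with how the constants and the degree $4md$ depend on $\#\bfp$, none of which is addressed.
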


For the remainder of the section, we fix a smooth function $H: \bR^4 \to \bR$ and $s_0 \in \cR_c(H)$. Assume without loss of generality that $s_0 = 0$ by replacing $H$ with $H - s_0$. Set $Y := H^{-1}(0)$. Assume for the sake of contradiction that Theorem~\ref{thm:main3_local} is false. We will begin with some geometric setup. 

\subsubsection{Recollections from \S\ref{sec:lcy}} We repeat the geometric setup from \S\ref{subsec:geom_setup_adiabatic}. Notation will often be used without any reminders. 

\subsubsection{Fixing a primitive} The symplectic form $\Omega$ is exact on $\bW\,\setminus\,\bD$. Fix a smooth $1$-form $\nu$ such that $d\nu$ coincides with $\Omega$ to $\bW\,\setminus\,\bD$. Define $\widehat\nu := \iota^*\nu$. Observe that $d\widehat\nu = \widehat\Omega$. 

For any $s \in [-\delta_1, \delta_1]$, define a $1$-form $\nu^s$ on $Y$ to be the pullback of $\widehat\nu$ by the map $y \mapsto (s, y)$.  Define a $1$-form $\bar{\nu}^s$ on $[-8, 8] \times Y$ to be the unique $1$-form such that $\bar{\nu}^s(\partial_a) \equiv 0$ and such that $\bar{\nu}^s$ restricts to $\nu^s$ on each level set $\{a\} \times Y$.  
For any $k$, define $\widehat\nu_k := (\widehat{f}_k)_*\nu$. We have $d\widehat\nu_k = \widehat\Omega_k$ for every $k$. A convergence lemma similar to Corollary~\ref{cor:geometric_convergence_adiabatic} holds for $\nu$. Fix any $k$ and any $a$ such that $[a - 8, a + 8] \subseteq [-\widehat{L}_k, \widehat{L}_k]$. Let $\widehat{\nu}_k^a$ be the restriction of $\tau_{a_k}^*\widehat{\nu}_k$ to $[-8, 8] \times Y$. 

\begin{lem}\label{lem:geometric_convergence_nu}
Fix any sequence $\{a_k\}$ such that $a_k \in (-k, k)$ for each $k$ and such that $\delta_5 k^{-1}a_k \to s$. Then, we have $\widehat{\nu}_k^{a_k} \to \bar\nu^s$.
\end{lem}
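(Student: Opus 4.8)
The plan is to reduce the statement, exactly as in the proof of Lemma~\ref{lem:geometric_convergence} and its specialization Corollary~\ref{cor:geometric_convergence_adiabatic}, to the $C^\infty$-convergence of a sequence of reparametrizing diffeomorphisms. First I would record that, on $[-8,8]\times Y$,
$$\widehat{\nu}_k^{a_k} = (\iota^{-1}\circ\widehat{f}_k^{-1}\circ\tau_{-a_k})^*\widehat{\nu}.$$
This is proved verbatim as the corresponding identity $\bar\lambda_k = (\iota^{-1}\circ f_k^{-1}\circ\tau_{-a_k})^*\widehat\lambda$ in Step $3$ of the proof of Lemma~\ref{lem:geometric_convergence}, using $\widehat{\nu}_k = (\widehat{f}_k)_*\nu$ and $\widehat{\nu} = \iota^*\nu$ in place of $\widehat\lambda_k = (f_k)_*\widecheck\lambda$ and $\widehat\lambda = \iota^*\widecheck\lambda$; the only point to check is that $[a_k-8,a_k+8]\times Y$ lies inside the neck, which holds for large $k$ since $a_k\in(-k,k)$ and $\widehat{L}_k \geq 16k$. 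By the description of $\widehat{f}_k\circ\iota$ as $(s,y)\mapsto(\widehat\Phi_k(s),y)$ from \S\ref{subsubsec:stretched_manifolds}, the composite $\Psi_k := \iota^{-1}\circ\widehat{f}_k^{-1}\circ\tau_{-a_k}$ is the map $\Psi_k(a,y) = (\widehat{s}_k(a),y)$, where $\widehat{s}_k(a) := \widehat\Phi_k^{-1}(a_k + a)$.

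Next I would realize the limiting form as a pullback as well. Let $\Psi_\infty\colon [-8,8]\times Y \to (-\delta_1,\delta_1)\times Y$ denote the constant-in-$a$ map $(a,y)\mapsto(s,y)$, which makes sense because $s\in[-\delta_5,\delta_5]$ and $\delta_5<\delta_1$. Since $d\Psi_\infty(\partial_a) = 0$, the form $\Psi_\infty^*\widehat{\nu}$ annihilates $\partial_a$, and its restriction to each slice $\{a\}\times Y$ is the pullback of $\widehat\nu$ along $y\mapsto(s,y)$, i.e. $\nu^s$. By the uniqueness clause defining $\bar\nu^s$, this gives $\bar\nu^s = \Psi_\infty^*\widehat{\nu}$.

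It then suffices to prove $\Psi_k^*\widehat{\nu} \to \Psi_\infty^*\widehat{\nu}$ in $C^\infty$, and this follows from the $C^\infty$-convergence of smooth maps $\Psi_k\to\Psi_\infty$ into the fixed target $(-\delta_1,\delta_1)\times Y$ together with continuity of pullback. The convergence $\Psi_k\to\Psi_\infty$ is equivalent to $\widehat{s}_k\to\widehat{s}\equiv s$ in $C^\infty([-8,8])$, which is precisely the fact established in the proof of Corollary~\ref{cor:geometric_convergence_adiabatic} from the hypothesis $\delta_5 k^{-1}a_k\to s$ and the property $\widehat\Phi_k^{-1}(a) = \delta_5 k^{-1}a$ near the origin; this also shows $\widehat{s}_k([-8,8])$ eventually lies in a fixed compact subinterval of $(-\delta_1,\delta_1)$, so the target genuinely stays fixed. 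I do not expect a real obstacle here, since the argument is essentially a transcription of the $\widehat\lambda$ case; the only thing requiring care is matching the sign and direction conventions for $\tau_{-a_k}$ and the ``translation by $-a$'' used to define $\widehat\nu_k^a$, so that the reparametrization $\Psi_k$ carries the correct argument $a_k+a$.
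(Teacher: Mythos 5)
Your proposal is correct and follows essentially the same route as the paper: both reduce the statement to the $C^\infty$-convergence $\widehat{s}_k \to \bar{s}\equiv s$ of the reparametrizing functions (as in Corollary~\ref{cor:geometric_convergence_adiabatic}) via the pullback identity $\widehat{\nu}_k^{a_k} = (\iota^{-1}\circ\widehat{f}_k^{-1}\circ\tau_{-a_k})^*\widehat{\nu}$. The only difference is packaging: the paper checks the slice-restrictions and the $\partial_a$-component separately (the latter via $(\widehat{f}_k^{-1})_*\partial_a = \delta_5 k^{-1}\partial_s$), whereas you absorb both into the single statement $\Psi_k \to \Psi_\infty$ in $C^\infty$ with $\Psi_\infty$ constant in $a$, which is equivalent.
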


\begin{proof}
The proof is similar to Steps $2$ and $3$ of the proof of Lemma~\ref{lem:geometric_convergence}. Write $\bar\nu_k := \widehat{\nu}_k^{a_k}$ for each $k$. Define a smooth, form-valued function $\bar\nu^*$ on $[-8, 8]$ by defining $\bar\nu^*(a)$ to be the pullback of $\bar\nu_k$ by $y \mapsto (a, y)$. To show $\bar\nu_k \to \widehat{\nu}^s$, it suffices to show
\begin{equation}\label{eq:geometric_convergence_nu}
\bar\nu_k^* \to \nu^s,\quad \bar\nu_k(\partial_a) \to 0,
\end{equation}
in the $C^\infty$ topology. In the first assertion of \eqref{eq:geometric_convergence_nu}, we regard $\nu^s$ as a constant form-valued function on $[-8, 8]$. The convergence $\bar\nu_k^* \to \nu^s$ follows from an explicit computation. Consider the smooth functions $\widehat{s}_k: [-8, 8] \to [-\delta_3, \delta_3]$ defined by $a \mapsto \widehat{\Phi}_k^{-1}(a_k + a)$. Since $\delta_5^{-1}k a_k \to s$, the sequence $\{\widehat{s}_k\}$ $C^\infty$-converges to the constant function $\bar{s}(a) = s$. We compute $\bar\nu_k^*(a) = \nu^{s_k(a)}$ for each $a$. Since $s_k \to \bar{s}$, we have $\bar\nu_k^* = \nu^* \circ s_k \to \nu^* \circ \bar{s} = \nu^s$, proving the first item in \eqref{eq:geometric_convergence_nu}. 

Next, we verify the second assertion. The map $\widehat{f}_k^{-1}: \bW_k \to \bW$ restricts on $(-k, k) \times Y$ to the map $(a, y) \mapsto (\delta_5 k^{-1}a, y)$. Therefore, on $(-k, k) \times Y$, we have $(\widehat{f}_k^{-1})_*(\partial_a) = \delta_5 k^{-1} \partial_s$. Thus, we have
\begin{equation}\label{eq:geometric_convergence_nu3}
\begin{split}
\bar\nu_k(\partial_a) &= \widehat{\nu}((\tau_{a_k} \circ \widehat{f}_k^{-1})_*(\partial_a)) = \delta_5 k^{-1}\widehat{\nu}(\partial_s).
\end{split}
\end{equation}

The second assertion of \eqref{eq:geometric_convergence_nu} follows from \eqref{eq:geometric_convergence_nu3}. 
\end{proof}

\subsubsection{The bad subset} The assumption that Theorem~\ref{thm:main3_local} is false implies that a positive measure subset of levels contain at most one closed orbit. After some modifications, this subset satisfies several other properties. Lemma~\ref{lem:bad_subset} below contains all of this. To state the lemma, we introduce some additional notation. 

Recall the constant $\delta_5$ defined in \S\ref{subsubsec:neck_stretching_adiabatic}. Recall that every $s \in (-\delta_5, \delta_5)$ is a regular value of $H$. For any integer $i$, let $\cT_i \subseteq (-\delta_5, \delta_5)$ denote the set of $s$ such that $H^{-1}(s)$ contains exactly $i$ closed orbits. For any $s \in \cT_1$, let $\widecheck{\gamma}_s \subset H^{-1}(s)$ denote the unique closed orbit. Define functions $e_\lambda: \cT_1 \to \bR$ sending $s \mapsto \int_{\widecheck\gamma_s} \widecheck\lambda$ and $e_\nu: \cT_1 \to \bR$ sending $s \mapsto \int_{\widecheck\gamma_s} \nu$. Now, we state and prove the promised lemma. 

\begin{lem}\label{lem:bad_subset}
Then there exists a compact subset $\cB_3 \subseteq (-\delta_5, \delta_5)$ with positive Lebesgue measure such that:
\begin{enumerate}[(a)]
\item For each $s \in \cB_3$, the hypersurface $H^{-1}(s)$ contains exactly one closed orbit.
\item The function $e_\lambda$ is continuous on $\cB_3$. 
\item The function $e_\nu$ is continuous on $\cB_3$. 
\end{enumerate}
\end{lem}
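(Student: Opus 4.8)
The plan is to start from the negation of Theorem~\ref{thm:main3_local}, extract a positive-measure set of levels carrying at most one closed orbit, and then successively refine it so that the continuity conditions (b) and (c) hold. First I would observe that the negation of Theorem~\ref{thm:main3_local} means that for every $\delta_5 > 0$, the set of $s \in (-\delta_5,\delta_5)$ with $H^{-1}(s)$ containing at most one closed orbit has positive measure; fixing our $\delta_5$, call this set $\cB_0$. By the almost-existence theorem (applied on $(-\delta_5,\delta_5)$, using that all these levels are regular and, after compactification, bounded), almost every level carries \emph{at least} one closed orbit, so after discarding a null set we may assume $\cB_0 \subseteq \cT_1$, i.e. $H^{-1}(s)$ contains \emph{exactly} one closed orbit $\widecheck\gamma_s$ for each $s \in \cB_0$. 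This gives property (a) on a positive-measure set; the remaining work is to shrink $\cB_0$ to a compact set on which $e_\lambda$ and $e_\nu$ are continuous while keeping positive measure.

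For the continuity, the key point is that $e_\lambda$ and $e_\nu$ are \emph{measurable} functions of $s$ on $\cB_0$. To see measurability, note that the closed orbit $\widecheck\gamma_s$ varies measurably with $s$: for instance, the period of $\widecheck\gamma_s$ (with respect to the vector field $\widecheck{X}$ or $X_H$) is a measurable function of $s$, because for each $s$ the orbit is the unique minimal set other than... more carefully, one can express $e_\lambda(s)$ and $e_\nu(s)$ as pointwise limits of measurable quantities built from the flow: e.g. using that $\widecheck\gamma_s$ is the Hausdorff limit as $T \to \infty$ of $\{\,\text{points with } \operatorname{dist}(\phi^T y, y) \le \epsilon\,\}$-type constructions, or simply invoking that the assignment $s \mapsto \widecheck\gamma_s \in \cK(H^{-1}(s))$ has measurable graph (its graph is a Borel subset of $(-\delta_5,\delta_5)\times\bW$, being the set of $(s,y)$ such that $y$ lies on a closed $X_H$-orbit in $H^{-1}(s)$, intersected with $\cB_0\times\bW$). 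Once measurability of $e_\lambda, e_\nu : \cB_0 \to \bR$ is established, Lusin's theorem gives a compact subset $\cB_1 \subseteq \cB_0$ of positive measure on which $e_\lambda$ is continuous; applying Lusin again inside $\cB_1$ yields a compact $\cB_2 \subseteq \cB_1$ of positive measure on which $e_\nu$ is also continuous. Taking $\cB_3 := \cB_2$ (a compact positive-measure subset of $(-\delta_5,\delta_5)$) gives all three properties (a), (b), (c) simultaneously, since $\cB_3 \subseteq \cB_0 \subseteq \cT_1$.

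I expect the main obstacle to be the measurability of $e_\lambda$ and $e_\nu$ — more precisely, verifying that $s \mapsto \widecheck\gamma_s$ is a measurable selection, so that the integrals $\int_{\widecheck\gamma_s}\widecheck\lambda$ and $\int_{\widecheck\gamma_s}\nu$ depend measurably on $s$. The cleanest route is probably to use uniqueness: on $\cT_1$, the set $G := \{(s,y) : s \in \cT_1,\ y \text{ lies on the closed } X_H\text{-orbit in } H^{-1}(s)\}$ is Borel (being a countable intersection/union of conditions on the flow $\phi^t$ of $X_H$, e.g. $y$ is periodic of period $\le n$ for some $n$), and for $s \in \cT_1$ the slice $G_s$ is exactly $\widecheck\gamma_s$. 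Then the period $P(s)$ is Borel in $s$ (one can write $P(s) = \inf\{t > 0 : \phi^t(y_s) = y_s\}$ for a Borel selection $y_s$ of $G_s$, which exists by the Jankov–von Neumann uniformization theorem since $G$ is Borel with compact nonempty slices), and $e_\lambda(s) = \int_0^{P(s)} \widecheck\lambda(X_H)(\phi^t(y_s))\,dt$ is Borel, similarly for $e_\nu$. The rest of the argument — two applications of Lusin's theorem and shrinking to a compact positive-measure set — is routine.
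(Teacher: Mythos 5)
Your proposal is correct in outline and shares the paper's skeleton (negation of Theorem~\ref{thm:main3_local} gives a positive-measure set of levels with at most one orbit, almost-existence upgrades this to exactly one orbit, and Lusin's theorem produces the compact set $\cB_3$), but it diverges from the paper at the one genuinely nontrivial point, namely how measurability of $e_\lambda$ and $e_\nu$ is obtained. The paper argues geometrically and in two stages: it first shows $e_\lambda$ is lower semicontinuous on $\cB_1$ (any convergent sequence of the unique closed orbits either escapes or subconverges to a cover of the limit orbit, so $e_\lambda(s) \leq \liminf e_\lambda(s_k)$), applies Lusin to get $\cB_2$, and only then handles $e_\nu$: on $\cB_2$ the continuity of $e_\lambda$ gives uniform period bounds along convergent sequences, which is what makes $e_\nu^+ = \max(0,e_\nu)$ lower semicontinuous and $e_\nu^- = \min(0,e_\nu)$ upper semicontinuous, whence measurable, and a second Lusin application gives $\cB_3$. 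Note that $e_\nu$ is not directly semicontinuous without this a priori period control, which is why the ordering matters in the paper's argument. Your route instead establishes measurability of both functions at once via a Borel-uniformization/measurable-selection argument on the graph $G$ of the unique orbits and the formula $e_\lambda(s) = \int_0^{P(s)}\widecheck\lambda(X_H)(\phi^t(y_s))\,dt$ (similarly for $\nu$); this is heavier machinery (Jankov--von Neumann, or the Borel uniformization theorem for Borel sets with compact slices), but it avoids the two-stage structure and treats $e_\nu$ on the same footing as $e_\lambda$, after which one Lusin application to the pair $(e_\lambda,e_\nu)$ suffices. Two small points you should make explicit to match the paper's level of care: the negation of the theorem only gives positive \emph{outer} measure for $\cB_0$, so you need the measurability of the set of levels carrying at most one closed orbit (the paper's Step $1$: the set of levels with at least two orbits is a countable union of compact sets $\cS_\ell$, hence $F_\sigma$), and your definition of $G$ presupposes that $\cT_1$ is Borel, which follows from the same kind of countable-union-of-compact-conditions argument and should be recorded rather than assumed.
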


\begin{proof}[Proof of Lemma~\ref{lem:bad_subset}]
The proof of the lemma will take $4$ steps. 

\noindent\textbf{Step $1$:} Let $\cB_0 := \cT_0 \cap \cT_1$ denote the set of levels $s$ for which $H^{-1}(s)$ contains at most one closed orbit. 

This step proves that $\cB_0$ is measurable. Observe that $\cB_0$ is measurable if and only if the complement $\bigcup_{i \geq 2} \cT_i$ is measurable. For any integer $\ell \geq 1$, let $\cS_\ell$ denote the set of levels $s \in [-\delta_5, \delta_5]$ such that $H^{-1}(s)$ contains two closed orbits that (i) have $X_H$-period at most $\ell$ and (ii) have Hausdorff distance at least $\ell^{-1}$ with respect to the Euclidean metric. The set $\cS_\ell$ is compact. We have 
$$\bigcup_{i \geq 2} \cT_i = \bigcup_{\ell \geq 1} \cS_\ell \cap (-\delta_5, \delta_5).$$
Therefore, $\bigcup_{i \geq 2} \cT_i$ is measurable. 

\noindent\textbf{Step $2$:} By assumption, $\cB_0$ has positive outer Lebesgue measure. This step refines $\cB_0$ to satisfy Property (a) of the lemma. By the almost-existence theorem \cite{HZ87, Rabinowitz87, Struwe90}, there exists a full measure subset $\cQ_0 \subseteq (-\delta_5, \delta_5)$ such that $s \not\in \cT_0$\footnote{This can be deduced using adiabatic neck stretching of holomorphic spheres, following \cite{FH22} or \S\ref{sec:lcy}}. The intersection $\cB_1 := \cB_0 \cap \cQ_0$ has positive Lebesgue measure and is contained in $\cT_1$. 

\noindent\textbf{Step $3$:} This step refines $\cB_1$ to satisfy Property (b) of the lemma. We claim that $e_\lambda$ is lower semicontinuous on $\cB_1$. Fix any convergent sequence $s_k \to s$. Any subsequence of the closed orbits $\widecheck\gamma_{s_k}$ either has no limit or converges to a cover of $\widecheck\gamma_s$. Therefore,
$$e_\lambda(s) = \int_{\widecheck\gamma_s} \widecheck\lambda \leq \liminf_{k \to \infty} \int_{\widecheck{\gamma}_{s_k}} \widecheck\lambda = \liminf_{k \to \infty} e(s_k).$$ 

Since $e_\lambda$ is lower semicontinuous, it is measurable. By Lusin's theorem, $e_\lambda$ is continuous on a compact subset $\cB_2 \subseteq \cB_1$ of positive Lebesgue measure. 

\noindent\textbf{Step $4$:} This step refines $\cB_2$ to satisfy Property (c) of the lemma. Define functions $e_\nu^+ := \max(0, e_\nu)$ and $e_\nu^- := \min(0, e_\nu)$ on $\cB_2$. We claim that $e_\nu^+$ is lower semicontinuous and that $e_\nu^-$ is upper semicontinuous. Choose a convergent sequence $s_k \to s$. Since $e_\lambda$ is continuous on $\cB_2$, the orbits $\widecheck\gamma_{s_k}$ have uniformly bounded $X_H$-period. After passing to a subsequence, they converge to a cover of $\widecheck\gamma_s$. Our claim follows. Then, by Lusin's theorem, there exists a compact subset $\cB_3 \subseteq \cB_2$ of positive Lebesgue measure on which both $e_\nu^+$ and $e_\nu^-$ are continuous. Therefore, $e_\nu = e_\nu^+ + e_\nu^-$ is continuous on $\cB_3$. The subset $\cB_3$ satisfies Properties (a--c). 
\end{proof}

\subsubsection{Closed curves with point constraints} Repeat the construction from \S\ref{subsubsec:closed_curves_adiabatic}. Thus, we have a family $\{\widehat{u}_{d,k}\}$ of holomorphic curves for $d \geq 1$, $k \geq 1$. We make only one modification. We replace the point constraints $\widehat{\bfw}_{d,k}$ with finite subsets $\widecheck{\bfw}_{d,k} \subset \bW_k$. We will specify $\widecheck{\bfw}_{d,k}$ when we complete the proof of Theorem~\ref{thm:main3_local}. None of the intermediate results below depend on the choice of $\widecheck{\bfw}_{d,k}$. 

\subsubsection{Computation of $e_\nu$ using holomorphic curves} 

We prove a technical lemma, Lemma~\ref{lem:nu_bound} below, computing $e_\nu$ using the curves $\{\widehat{u}_{d,k}\}$. 

We need some additional notation regarding closed orbits. Fix $s \in \cB_3$. Recall that $\widecheck{\gamma}_s$ deontes the unique closed orbit of $X_H$ in $H^{-1}(s)$. Define $\widehat{\gamma}_s \subset Y$ to be the projection to $Y$ of the closed loop $\iota^{-1}(\widecheck\gamma_s) \subset \{s\} \times Y$. Note that $\widehat{\gamma}_s$ is the unique closed orbit of the vector field $\widehat{R}_s$. For any integer $m \geq 1$, we let $m\widehat{\gamma}_s$ denote the closed $R^s$-orbit set $\{(\widehat{\gamma}_s, m)\}$. 

We also recall some notation from \S\ref{subsubsec:capped_acc_blowup}. For each $d$, $s$, $k$, write $a_{s,k} = \delta_5^{-1}k s$, $C_{s,d,k} = (a \circ \widehat{u}_{d,k})^{-1}( (a_{s,k} - 2, a_{s,k} + 2) )$, and $\cC_{s,d,k}$ for the $2$-current defined by 
$$\cC_{s,d,k}(\beta) = \int_{C_{s,d,k}} (\tau_{a_{s,k}} \circ \widehat{u}_{d,k})^*\beta.$$

We also recall the function $r: [-2,2] \to [0, 1]$ and the function
$$E_{d,k,\lambda}(s) = \cC_{s,d,k}(r'(a)da \wedge \widehat\lambda_k).$$

The following result is a consequence of Proposition~\ref{prop:closed_orbits_quantitative}. 

\begin{cor}\label{cor:countable_periodic_orbits}
Fix any $d \geq 1$, any sequence of point constraints $\{\widecheck{\bfw}_{d,k}\}$, and any sequence $\bfk$. Then, for any countable subset 
$$\bfs \subset \cB_3\,\setminus\, (\widehat\bfs_\lambda(d; \bfk) \cup \bfs_\omega(d; \bfk))$$
there exists a subsequence $\bfk' = \{k_j\}$ such that the following holds. For each $s \in \bfs$, there exists some integer $m \geq 1$ such that the sequence $\{\cC_{s,d,k_j}\}$ geometrically converges to $(-2, 2) \cdot m \cdot \widehat{\gamma}_s$. 
\end{cor}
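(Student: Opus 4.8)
The plan is to run Proposition~\ref{prop:closed_orbits_quantitative} one level at a time, use the fact that on $\cB_3$ every level carries a \emph{unique} closed orbit to identify the limiting current, and then diagonalize over the countable set $\bfs$ to obtain a single subsequence that works for all levels at once.

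Fix $s\in\bfs$. Since $s\notin\widehat\bfs_\lambda(d;\bfk)=\bigcap_{A>0}\widehat\bfs_\lambda(d,A;\bfk)$, the definition of $\widehat\bfs_\lambda(d,A;\bfk)$ produces a sequence $s_j\to s$ with $\liminf_j E_{d,k_j,\lambda}(s_j)<\infty$; as $s\notin\widehat{\bfs}_\omega(d;\bfk)$ as well, Proposition~\ref{prop:closed_orbits_quantitative} yields, after passing to a subsequence, a closed $R^s$-orbit set $\cO$ with $\cC_{s_j,d,k_j}\to(-2,2)\times\cO$ geometrically. Now invoke $s\in\cB_3$: by Lemma~\ref{lem:bad_subset}(a) the hypersurface $H^{-1}(s)$ carries exactly one closed orbit, hence $R^s$ has a unique closed orbit $\widehat\gamma_s$, and so $\cO=m\widehat\gamma_s$ for some integer $m\ge0$. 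That $m\ge1$ follows because the curves $\widehat u_{d,k_j}$ are connected and cross the entire neck — their images meet both components of $\widehat\bW_{k_j}\setminus[-\widehat L_{k_j},\widehat L_{k_j}]\times Y$, since (as arranged in \S\ref{subsec:main3_proof}, in the manner of \S\ref{subsubsec:capped_accumulation}) the point constraints $\widecheck\bfw_{d,k_j}$ include a point in each — so $\operatorname{supp}(\cC_{s_j,d,k_j})$ always meets $\{0\}\times Y$, and geometric convergence forces $\operatorname{supp}\big((-2,2)\times\cO\big)$ to meet $\{0\}\times Y$, whence $\cO\ne\emptyset$. Write $m_s:=m\ge1$.

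It remains to pass from the auxiliary currents $\cC_{s_j,d,k_j}$ to the fixed-level currents $\cC_{s,d,k_j}$. These two families differ only by the $a$-translation carrying $a_{s_j,k_j}$ to $a_{s,k_j}$, that is, by a shift of $\delta_5^{-1}k_j(s-s_j)$ on the blown-up neck. Choosing $s_j\to s$ so that $\delta_5^{-1}k_j|s-s_j|$ stays bounded — this is where one combines the $L^1$-estimate $\int_{-\delta_5}^{\delta_5}E_{d,k,\lambda}(s)\,ds=O(md)$, which follows from Corollary~\ref{cor:closed_curves_adiabatic} and Lemma~\ref{lem:j_tame}, with the openness of the sets $\widehat\bfs_\lambda(d,A;\bfk)$ from Lemma~\ref{lem:pigeonhole_blowup2} — and then passing to a further subsequence along which these shifts converge, the invariance of the cylinder $(-2,2)\times m_s\widehat\gamma_s$ under $a$-translations gives $\cC_{s,d,k_j}\to(-2,2)\times m_s\widehat\gamma_s$. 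Finally, enumerate $\bfs=\{s^{(1)},s^{(2)},\dots\}$ and perform this extraction successively — for $s^{(1)}$, then for $s^{(2)}$ along the subsequence produced at the previous stage, and so on; the diagonal subsequence $\bfk'$ then has the asserted property at every $s^{(\ell)}\in\bfs$.

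The main obstacle is this last transfer step. Proposition~\ref{prop:closed_orbits_quantitative} is phrased for an auxiliary sequence $s_j\to s$ rather than for the fixed level $s$, and on the stretched neck the slabs $C_{s_j,d,k_j}$ and $C_{s,d,k_j}$ are a priori a translation by the possibly large quantity $\delta_5^{-1}k_j(s-s_j)$ apart, so there is no automatic relation between them. What rescues the argument is precisely that, because $s\in\cB_3$, the geometric limit is forced to be a cylinder over a single closed orbit and is therefore translation-invariant; the remaining delicate point is to keep the relevant translations bounded, and this is exactly where the quantitative area bounds for the curves $\widehat u_{d,k}$ and the openness statement for $\widehat\bfs_\lambda$ are used.
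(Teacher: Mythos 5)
Your overall skeleton is the same as the paper's: apply Proposition~\ref{prop:closed_orbits_quantitative} at each level of the countable set $\bfs$, use Lemma~\ref{lem:bad_subset}(a) (uniqueness of the closed orbit on $H^{-1}(s)$, hence of $\widehat\gamma_s$ for $R^s$) to identify the limiting orbit set as $m\widehat\gamma_s$, and take a diagonal subsequence over an enumeration of $\bfs$. The paper's proof consists of exactly these three moves (nested subsequences $\bfk^\ell$, uniqueness of $\widehat\gamma_{s_\ell}$ with $m_\ell\ge 1$, diagonal) and contains no analogue of your ``transfer step'': it applies the proposition so as to conclude directly for the fixed-level currents $\cC_{s_\ell,d,k_j}$. (As a side remark, your argument for $m\ge 1$ leans on the constraints containing points $w_\pm$ in both ends; in \S\ref{subsec:main3_proof} the constraints $\widecheck\bfw_{d,k}$ are arbitrary, so this justification is not literally available, though the paper asserts $m_\ell\ge1$ with the same implicit crossing property.)

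The genuine problem is the transfer step itself, which you correctly flag as the crux of your argument but do not actually establish. First, the claim that one can choose $s_j\to s$ with $\liminf_j E_{d,k_j,\lambda}(s_j)<\infty$ \emph{and} with $\delta_5^{-1}k_j|s-s_j|$ bounded does not follow from the ingredients you cite: the $L^1$-type bound $\int_{-\delta_5}^{\delta_5}E_{d,k,\lambda}\,ds=O(md)$ only controls the measure of $\{E_{d,k,\lambda}>A\}$ globally (size $O(md/A)$), which is useless on the set of levels within a bounded rescaled shift of $s$, an interval of length $O(k_j^{-1})$; and the openness of $\widehat\bfs_\lambda(d,A;\bfk)$ from Lemma~\ref{lem:pigeonhole_blowup2} carries no rate relating $|s_j-s|$ to $k_j^{-1}$. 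Nothing available at this stage excludes the scenario where $E_{d,k_j,\lambda}$ blows up uniformly on a window of rescaled width tending to infinity around $s$, while some sequence with rescaled shift tending to infinity keeps $E$ bounded; ruling out such behavior at fixed levels is essentially the content of Lemma~\ref{lem:blowup_implies_accumulation}, which is proved \emph{after} and \emph{using} this corollary, so it cannot be invoked here. Second, even granting bounded nonzero shifts, translation invariance of the limit cylinder is not enough: convergence of $\cC_{s_j,d,k_j}$ only controls the curve on windows centered at $a_{s_j,k_j}$ and says nothing about the portion of the fixed window around $a_{s,k_j}$ lying outside them; to conclude one must still propagate $\lambda$-length and area control across the offset (e.g.\ via Proposition~\ref{prop:fh_area_bound} together with the vanishing action, which does work for bounded offsets) and rerun the compactness argument, rather than quote invariance of the limit. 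The route consistent with the paper is to apply Proposition~\ref{prop:closed_orbits_quantitative} at the members of $\bfs$ with the fixed-level windows themselves, not to shuttle between different window positions; if you insist on non-constant auxiliary sequences, the bounded-shift claim needs an actual proof, which your proposal does not supply.
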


\begin{proof}
Label the elements of $\bfs$ as $\{s_\ell\}_{\ell \geq 1}$. Applying Proposition~\ref{prop:closed_orbits_quantitative} successively for each $\ell$ gives the following result. There exists a sequence $\{\bfk^\ell\}_{\ell \geq 1}$ of nested subsequences
$$\ldots \subseteq \bfk^2 \subseteq \bfk^1 \subseteq \bfk$$
such that the following holds for each $\ell \geq 1$. Expanding $\bfk^\ell = \{k_j^\ell\}$, there exists a closed $\widehat{R}^{s_\ell}$-orbit set $\cO_\ell$ such that $\{\cC_{s_\ell, d, k_j^\ell}\}$ converges geometrically to $(-2, 2) \times \cO_\ell$ as $\ell \to \infty$. The set $\cO_\ell$ must equal $\{(\widehat{\gamma}_{s_\ell}, m_\ell)\}$ for some $m_\ell \geq 1$ because $\widehat{\gamma}_{s_\ell}$ is the only closed orbit of $\widehat{R}^{s_\ell}$. A diagonal subsequence $\bfk'$ then satisfies the conditions of the corollary. 
\end{proof}

The next lemma computes $e_\nu(s)$ using the holomorphic curves $\{\widehat{u}_{d,k}\}$.

\begin{lem}\label{lem:nu_bound}
Fix any collection of point constraints $\{\widecheck\bfw_{d,k}\}$. Fix any $s \in \cB_3$. Suppose that there exists $d \geq 1$, a sequence $s_j \to s$, and a sequence $\bfk = \{k_j\}$ such that the currents $\{\cC_{s_j,d,k_j}\}$ geometrically converge to $(-2, 2) \times m\cdot \widehat{\gamma}_s$ as $j \to \infty$. Then, there exists a sequence of real numbers $\{\bar{a}_j\}$ with the following properties.
\begin{enumerate}[(a)]
\item $\bar{a}_j$ is a regular value of $a \circ \widehat{u}_{d,k_j}$ for every $j$.
\item $\bar{a}_j \in (a_{s_j,k_j} - 2, a_{s_j,k_j} + 2)$ for every $j$. 
\item We have
$$\lim_{j \to \infty} \int_{(a \circ \widehat{u}_{d,k_j})^{-1}(\bar{a}_j)} \widehat{u}_{d,k_j}^*\widehat{\nu}_{k_j} = m \cdot e_\nu(s).$$
\end{enumerate}
\end{lem}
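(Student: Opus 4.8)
The plan is to use the geometric convergence $\cC_{s_j,d,k_j} \to (-2,2)\times m\cdot\widehat\gamma_s$ together with the convergence $\widehat\nu_{k_j}^{a_j}\to\bar\nu^s$ from Lemma~\ref{lem:geometric_convergence_nu} to reduce the flux integral $\int_{(a\circ\widehat u_{d,k_j})^{-1}(\bar a_j)}\widehat u_{d,k_j}^*\widehat\nu_{k_j}$ to the integral of $\nu^s$ over $m$ copies of $\widehat\gamma_s$, which equals $m\cdot e_\nu(s)$ by the definition of $e_\nu$ and the fact that $\nu^s$ is the pullback of $\widehat\nu$ to $\{s\}\times Y$. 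The key point is that over the slice at height $\bar a_j$, the boundary of the capped piece $C_{s_j,d,k_j}$ (or rather the sub-piece cut off below $\bar a_j$) is $\widehat\omega_{k_j}$-exact in a controlled sense, so by Stokes' theorem the flux is essentially independent of which regular value $\bar a_j\in(a_{s_j,k_j}-2,a_{s_j,k_j}+2)$ we pick, up to the action contribution $\int u^*\widehat\omega_{k_j}$, which goes to zero because $s\notin\widehat\bfs_\omega(d;\bfk)$.

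First I would set up the Stokes argument. For a regular value $\bar a\in(a_{s_j,k_j}-2,a_{s_j,k_j}+2)$, let $C^{<\bar a}_j := (a\circ\widehat u_{d,k_j})^{-1}((a_{s_j,k_j}-2,\bar a])$. Since $d\widehat\nu_{k_j}=\widehat\Omega_{k_j}$ and $\widehat\Omega_{k_j}=da\wedge\widehat\lambda_{k_j}+\widehat\omega_{k_j}$ on the neck, Stokes gives
\begin{equation*}
\int_{(a\circ\widehat u_{d,k_j})^{-1}(\bar a)}\widehat u_{d,k_j}^*\widehat\nu_{k_j} - \int_{(a\circ\widehat u_{d,k_j})^{-1}(a_{s_j,k_j}-2+\epsilon)}\widehat u_{d,k_j}^*\widehat\nu_{k_j} = \int_{C^{<\bar a}_j} \widehat u_{d,k_j}^*\widehat\Omega_{k_j},
\end{equation*}
modulo the choice of a fixed lower reference level. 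This shows the flux varies with $\bar a$ only by $\int u^*(da\wedge\widehat\lambda_{k_j}) + \int u^*\widehat\omega_{k_j}$. The second term is at most the total action $\int_{C_{s_j,d,k_j}}\widehat u_{d,k_j}^*\widehat\omega_{k_j}\to 0$. The first term is the obstacle: I would choose $\bar a_j$ carefully, using the coarea formula (as in \cite[Lemma~4.13]{FH23}) to pick a regular value where $\int_{(a\circ\widehat u_{d,k_j})^{-1}(\bar a_j)}\widehat u_{d,k_j}^*\widehat\lambda_{k_j}$ is small — averaging the $\widehat\lambda_{k_j}$-flux over a unit interval and using that the integral of $da\wedge\widehat\lambda_{k_j}$ over that region is controlled by $E_{d,k_j,\lambda}(s_j)$ plus action, which is bounded since the currents converge to a cylinder over closed orbits (finite area). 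Actually, the cleanest route: geometric convergence of currents already implies $\operatorname{supp}(\cC_{s_j,d,k_j})\cap K \to (-2,2)\times m\widehat\gamma_s\cap K$, so for large $j$ the slice $(a\circ\widehat u_{d,k_j})^{-1}(\bar a_j)$, after shifting by $\tau_{a_j}$, Hausdorff-converges to $m$ copies of $\{0\}\times\widehat\gamma_s$ as a set; combined with the area bound and the current convergence paired against $\widehat\nu_{k_j}^{a_j}$, the flux through the slice converges to $\int_{m\widehat\gamma_s}\nu^s = m\cdot e_\nu(s)$.

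So the concrete steps are: (1) fix a reference lower level $\ell_j := a_{s_j,k_j}-2+\epsilon_j$ (regular, with $\epsilon_j\to 0$) and record that $\tau_{a_j}$-shifted slices near the bottom and top of the window both Hausdorff-converge to $m\widehat\gamma_s$; (2) use the coarea formula to select a regular value $\bar a_j\in(a_{s_j,k_j}-1,a_{s_j,k_j}+1)$ where the slice-length $\int\widehat u_{d,k_j}^*(da\wedge\widehat\lambda_{k_j})$ over a unit neighborhood is no more than twice its average, hence controlled; (3) apply the convergence $\widehat\nu_{k_j}^{a_j}\to\bar\nu^s$ (Lemma~\ref{lem:geometric_convergence_nu}) together with geometric convergence of $\cC_{s_j,d,k_j}$ to identify $\lim_j\int_{(a\circ\widehat u_{d,k_j})^{-1}(\bar a_j)}\widehat u_{d,k_j}^*\widehat\nu_{k_j}$ with $((-2,2)\times m\widehat\gamma_s)$ paired against a bump-times-$\bar\nu^s$ localized near height $0$, which evaluates to $m\cdot e_\nu(s)$; (4) verify properties (a) and (b) hold by construction. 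The main obstacle is Step (3): making the passage from "current converges and test form converges" to "the flux through a single codimension-one slice converges" rigorous, since a slice is not directly a continuous functional on currents. I expect to handle this by testing $\cC_{s_j,d,k_j}$ against $2$-forms of the shape $\rho(a)\,da\wedge\beta$ where $\beta$ is a closed $1$-form on $Y$ representing a relevant cohomology class and $\rho$ is a bump function, turning the slice flux (via Stokes and the coarea formula) into a current pairing that genuinely is continuous, and then letting the bump concentrate; the finite-area bound from $s\notin\widehat\bfs_\omega(d;\bfk)$ and $s\notin\widehat\bfs_\lambda(d;\bfk)$ is what prevents mass from escaping during this concentration.
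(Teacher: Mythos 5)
Your skeleton---turning the slice flux into a pairing of the current against a form of the shape $\rho(a)\,da\wedge\nu$ via Stokes' theorem and passing to the limit using geometric convergence together with Lemma~\ref{lem:geometric_convergence_nu}---is the same as the paper's route (which uses the fixed cutoff $r$, the test forms $r'(a)\,da\wedge\widehat\nu_{k_j}^{a_j}$, and takes $\bar a_j$ equal to a single level $\widetilde a$ near $2$). But the step you yourself flag as the main obstacle is not closed by what you propose, and your preliminary Stokes discussion contains an error that propagates. The identity $\widehat\Omega_{k_j}=da\wedge\widehat\lambda_{k_j}+\widehat\omega_{k_j}$ on the neck is false: Lemma~\ref{lem:omega_computation} holds only along $Y$, and after pushing forward to the stretched neck the $dH\wedge\widecheck\lambda$ part carries the adiabatic factor $\widehat\phi_{k_j}$. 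The Stokes correction you must control is $\int u_j^*\bigl(\rho(a)\,d\widehat\nu_{k_j}\bigr)=\int u_j^*\bigl(\rho(a)\,\widehat\Omega_{k_j}\bigr)$, a two-dimensional integral over the whole window; your coarea selection of a level $\bar a_j$ with small slice $\widehat\lambda$-flux does nothing for it (averaging a bounded quantity over a unit interval yields a bounded, not small, value, and the term is not a slice integral in any case). What actually makes it vanish is that in the shifted frame $d\widehat\nu_{k_j}^{a_j}\to\omega^s$ (a consequence of Lemma~\ref{lem:geometric_convergence_nu}), while the limit current $(-2,2)\times m\widehat\gamma_s$ pairs to zero with $\omega^s$; this is the paper's estimate \eqref{eq:nu_bound11} and is absent from your plan. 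Relatedly, a \emph{closed} $1$-form $\beta$ on $Y$ cannot detect $e_\nu$, since $\nu^s$ is not closed ($d\nu^s=\omega^s$); you must test against $\widehat\nu_{k_j}$ itself and then face exactly this correction term.

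Second, the bounds you lean on---$s\notin\widehat\bfs_\omega(d;\bfk)$, $s\notin\widehat\bfs_\lambda(d;\bfk)$, finiteness of $E_{d,k_j,\lambda}(s_j)$---are not hypotheses of the lemma; its only input is the geometric convergence $\cC_{s_j,d,k_j}\to(-2,2)\times m\widehat\gamma_s$. Both the uniform weighted mass bound needed to trade $\bar\nu^s$ for $\widehat\nu_{k_j}^{a_j}$ and the vanishing of the $\widehat\omega$-action over the window must be extracted from that hypothesis, e.g.\ by comparing $\cC_j\bigl(r'(a)(da\wedge\bar\lambda_j+\bar\omega_j)\bigr)$ with the pairing against the fixed form $r'(a)(da\wedge\lambda^s+\omega^s)$ as in \eqref{eq:nu_bound5}, and by pairing with a cutoff times $\omega^s$. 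Finally, letting the bump concentrate is unnecessary and only creates extra work: the limit current is a translation-invariant cylinder, so any fixed profile with total integral $1$ evaluates to $m\cdot e_\nu(s)$, which is precisely why the paper can take $\bar a_j$ to be a constant level $\widetilde a$ with $r(\widetilde a)=1$ and $r(-\widetilde a)=0$.
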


\begin{proof}[Proof of Lemma~\ref{lem:nu_bound}]
The proof will take $4$ steps. 

\noindent\textbf{Step $1$:} This step fixes several items of notation and then reduces the lemma to another claim. For each $j$, write $a_j := a_{s_j,k_j}$, $C_j := C_{s_j,d,k_j}$, and $\cC_j := \cC_{s_j,d,k_j}$. Write $\bar\nu_j := \widehat{\nu}_{k_j}^{a_j}$, $\bar\lambda_j := \widehat{\lambda}_{k_j}^{a_j}$, $\bar\omega_j := \widehat{\omega}_{k_j}^{a_j}$, and $\bar\Omega_j := d\bar{\nu}_j$. Finally, define $u_j := \tau_{a_j} \circ \widehat{u}_{d,k_j}$. The lemma is equivalent to the claim that there exists a sequence of real numbers $\{\widetilde{a}_j\}$ such that
\begin{enumerate}[(a)]
\item $\widetilde{a}_j$ is a regular value of $a \circ u_j$ for every $j$.
\item $\widetilde{a}_j \in (-2, 2)$ for every $j$. 
\item We have
$$\lim_{j \to \infty} \int_{(a \circ u_j)^{-1}(\widetilde{a}_j)} u_j^*\bar{\nu}_j = m \cdot e_\nu(s).$$
\end{enumerate}
To see why, observe that $\{\widetilde{a}_j\}$ satisfies (a,b,c) above if and only if the sequence $\{\bar{a}_j := \widetilde{a}_j + a_{s,k_j}\}$ satisfies Lemma~\ref{lem:nu_bound}(a,b,c). 

\noindent\textbf{Step $2$:} This step recovers $m \cdot e_\nu(s)$ as a limit of $\{\cC_j(\beta^s)\}$ for some fixed test form $\beta^s$. Define a compactly supported $2$-form $\beta^s := r'(a)da \wedge \bar\nu^s$. By the geometric convergence $\cC_j \to (-2, 2) \times m\widecheck{\gamma}_s$, we have 
\begin{equation} \label{eq:nu_bound1}
\begin{split}
\lim_{j \to \infty} \cC_j(\beta^s) &= m\int_{(-2, 2) \times \widecheck\gamma_s} r'(a)da \wedge \bar\nu^s = m\Big(\int_{-2}^2 r'(a) da\Big)\Big(\int_{\widecheck\gamma_s} \nu^s\Big) \\
&= m (r(2) - r(-2))e_\nu(s) = m\cdot e_\nu(s).
\end{split}
\end{equation}

The last identity follows because $r(2) = 1$ and $r(-2) = 0$. 

\noindent\textbf{Step $3$:} For each $j$, write $\bar\eta_j := (\bar\lambda_j, \bar\omega_j)$, $\bar{J}_j := \widehat{J}_{k_j}^{a_j}$, and $\bar{g}_j$ for the metric associated to $(\bar\eta_j, \bar{J}_j)$. Then, define $\beta_j := r'(a)da \wedge \bar\nu_j$. This step shows that
\begin{equation} \label{eq:nu_bound2} 
\lim_{j \to \infty} \cC_j(\beta^s) = \lim_{j \to \infty} \cC_j(\beta_j).
\end{equation}

We prove \eqref{eq:nu_bound2} using the geometric convergence of $\{\cC_j\}$, Corollary~\ref{cor:geometric_convergence_adiabatic}, and Lemma~\ref{lem:geometric_convergence_nu}.  We have
\begin{equation}\label{eq:nu_bound3}
\begin{split}
&\lim_{j \to \infty} \cC_j(\beta^s - \beta_j) \\
&\quad = \int_{C_j} u_j^*(\beta^s - \beta_j) \leq \limsup_{j \to \infty} \|\bar\nu^s - \bar\nu_j\|_{\bar{g}_j} \cC_j(r'(a)(da \wedge \bar\lambda_j + \bar\omega_j)).
\end{split}
\end{equation}

The inequality uses the identity $u_j^*(da \wedge \bar\lambda_j + \bar\omega_j) = \operatorname{dvol}_{u_j^*\bar{g}_j}$. By Corollary~\ref{cor:geometric_convergence_adiabatic}, the Riemannian metrics $\bar{g}_j$ converge in the $C^\infty$ topology. By Lemma~\ref{lem:geometric_convergence_nu}, we have
\begin{equation}\label{eq:nu_bound4}
\lim_{j \to \infty} \|\bar\nu^s - \bar\nu_j\|_{\bar{g}_j} = 0.
\end{equation}

We have
\begin{equation}\label{eq:nu_bound5}
\begin{split}
&\limsup_{j \to \infty} \cC_j(r'(a)(da \wedge \bar\lambda_j + \bar\omega_j)) \\
&\quad \leq \limsup_{j \to \infty} \cC_j(r'(a)(da \wedge \lambda^s + \omega^s)) \\
&\quad\quad + \limsup_{j \to \infty} (\|\lambda^s - \bar\lambda_j\|_{\bar{g}_j} + \|\omega^s - \bar\omega_j\|_{\bar{g}_j})\cC_j(r'(a)(da \wedge \bar\lambda_j + \bar\omega_j)).
\end{split}
\end{equation}

By Corollary~\ref{cor:geometric_convergence_adiabatic}, we have $\|\lambda^s - \bar\lambda_j\|_{\bar{g}_j} \to 0$ and $\|\omega^s - \bar\omega_j\|_{\bar{g}_j} \to 0$. It follows from this and the geometric convergence of $\{\cC_j\}$ that the left-hand side of \eqref{eq:nu_bound5} is finite. Combined with \eqref{eq:nu_bound4}, the right-hand side of \eqref{eq:nu_bound3} must equal $0$. This proves \eqref{eq:nu_bound2}. 

\noindent\textbf{Step $4$:} This step completes the proof of the lemma. Choose some $\widetilde{a} \in (0, 2)$ very close to $2$ such that:
\begin{enumerate}[(\roman*)]
\item For each $j$, $\widetilde{a}$ and $-\widetilde{a}$ are regular values of $a \circ u_j$. 
\item $r(\widetilde{a}) = 1$ and $r(-\widetilde{a}) = 0$. 
\item $r'(a) = 0$ for any $a$ such that $|a| \geq \widetilde{a}$. 
\end{enumerate} 

It follows from (iii) that 
\begin{equation}\label{eq:nu_bound8}\int_{(a \circ u_j)^{-1}([-\widetilde{a}, \widetilde{a}])} u_j^*\beta_j = \cC_j(\beta_j)\end{equation}
for each $j$. Using Stokes' theorem and (ii), we compute
\begin{equation}\label{eq:nu_bound10}
\begin{split}
&\int_{(a \circ u_j)^{-1}([-\widetilde{a}, \widetilde{a}])} u_j^*\beta_j \\
&\quad = \int_{(a \circ u_j)^{-1}([-\widetilde{a}, \widetilde{a}])} u_j^*(r'(a)da \wedge \bar\nu_j) = \int_{(a \circ u_j)^{-1}([-\widetilde{a}, \widetilde{a}])} d(u_j^*(r(a)\bar\nu_j)) - u_j^*(r(a)\bar\Omega_j) \\
&\quad = \int_{(a \circ u_j)^{-1}(\widetilde{a})} u_j^*\bar\nu_j - \int_{(a \circ u_j)^{-1}([-\widetilde{a}, \widetilde{a}])} u_j^*(r(a)\bar\Omega_j).
\end{split}
\end{equation}

We now control the second term on the right-hand side of \eqref{eq:nu_bound10}. Let $\chi: (-2, 2) \to [0, 1]$ be a compactly supported smooth function such that $\chi(a) = 1$ whenever $|a| \leq \widetilde{a}$. Then, we have
\begin{equation}\label{eq:nu_bound11}
\begin{split}
&\limsup_{j \to \infty} \int_{(a \circ u_j)^{-1}([-\widetilde{a}, \widetilde{a}])} u_j^*(r(a)\bar\Omega_j)
\\
&\quad \leq  \limsup_{j \to \infty} \cC_j(\chi(a)r(a)\bar\Omega_j)\\
&\quad \leq \limsup_{j \to \infty} \cC_j(\chi(a)r(a)\omega^s) + \limsup_{j \to \infty} \|\omega^s - \bar\Omega_j\|_{\bar{g}_j}\limsup_{j \to \infty} \cC_j(\chi(a)(da \wedge \bar\lambda_j + \bar\omega_j)) = 0.
\end{split}
\end{equation}

The last line uses three facts. First, it uses the geometric convergence of $\{\cC_j\}$. Second, it uses the convergence $\|\omega^s - \bar\Omega_j\|_{\bar{g}_j} \to 0$, which follows from Lemma~\ref{lem:geometric_convergence_nu}. Third, it uses the bound $\limsup_{j \to \infty} \cC_j(\chi(a)(da \wedge \bar\lambda_j + \bar\omega_j)) < \infty$, which follows from an identical argument to the one proving that the left-hand side of \eqref{eq:nu_bound5} is finite. Now, putting \eqref{eq:nu_bound1}, \eqref{eq:nu_bound2}, \eqref{eq:nu_bound8}, \eqref{eq:nu_bound10}, and \eqref{eq:nu_bound11} together, we have
$$\lim_{j \to \infty} \int_{(a \circ u_j)^{-1}(\widetilde{a})} u_j^*\bar\nu_j = m \cdot e_\nu(s).$$

Therefore, the sequence $\{\widetilde{a}_j\}$, where $\widetilde{a}_j := \widetilde{a}$ for each $j$, satisfies each of the conditions (a--c) from Step $1$. 
\end{proof}

\subsubsection{Positive lower bound on $e_\nu$}

We refine $\cB_3$ so that $e_\nu$ is positive. The proof uses Lemma~\ref{lem:nu_bound} and Stokes' theorem. 

\begin{lem}\label{lem:nu_lower_bound}
There exists a compact subset $\cB_4 \subseteq \cB_3$ of positive Lebesgue measure and a constant $c_7 \geq 1$ such that $e_\nu(s) \geq c_7^{-1}$ for every $s \in \cB_4$. 
\end{lem}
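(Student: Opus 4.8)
\textbf{Proof strategy for Lemma~\ref{lem:nu_lower_bound}.} The plan is to use $e_\nu$ to detect the ``enclosed symplectic area'' on each side of the closed orbit $\widecheck\gamma_s$, and to argue that this area is bounded away from zero on a positive-measure subset of $\cB_3$. Concretely, for $s \in \cB_3$ the hypersurface $H^{-1}(s)$ bounds a region $U_s^- := H^{-1}( (-\infty, s) )$ (or, after compactifying, $U_s^- \subset \bW\,\setminus\,\bD$) of definite symplectic volume. The curves $\widehat{u}_{d,k}$ pass through both $w_+ \in \bW_+$ and $w_- \in \bW_-$, so their slices genuinely cross the neck. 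Using Lemma~\ref{lem:nu_bound}, for $s$ outside the null set $\widehat\bfs_\lambda(d;\bfk) \cup \bfs_\omega(d;\bfk)$ (which, after passing to the subsequence from Corollary~\ref{cor:countable_periodic_orbits}, meets $\cB_3$ in a set of the same positive measure) there is a regular level $\bar a_j$ of $a \circ \widehat u_{d,k_j}$ at which $\int u_{d,k_j}^*\widehat\nu_{k_j}$ converges to $m \cdot e_\nu(s)$ with $m \geq 1$. I will combine this with Stokes' theorem applied to the part of the curve below $\bar a_j$: writing $\Sigma_j^-$ for $(a\circ\widehat u_{d,k_j})^{-1}( (-\widehat L_{k_j}, \bar a_j])$ together with the piece mapping into $\bW_-$, we get
$$\int_{(a\circ\widehat u_{d,k_j})^{-1}(\bar a_j)} \widehat u_{d,k_j}^*\widehat\nu_{k_j} = \int_{\Sigma_j^-} \widehat u_{d,k_j}^*\widehat\Omega_{k_j} + (\text{boundary terms near } w_-).$$

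\textbf{Key steps.} First, I would make precise the identification, via $f_k$, of $\int_{\Sigma_j^-} \widehat u_{d,k_j}^*\widehat\Omega_{k_j}$ with $\int u_{d,k_j}^*\Omega$ over the preimage of the component of $\bW\,\setminus\,H^{-1}(s)$ below $s$, using that $f_k$ is a symplectomorphism away from the stretching region and $\widehat\Omega_k = (f_k)_*\Omega$; this is where the positivity enters, since the $\Omega$-integral over that component is positive (the curve is non-constant and $\widehat J_k$ is $\Omega$-tame, so the $\Omega$-area of any nonempty sub-piece of a curve that enters the region strictly containing $w_-$ is bounded below by a monotonicity constant depending only on $H$, $\Omega$, and the distance of $w_-$ from $H^{-1}(s)$, exactly as in the proof of Lemma~\ref{lem:quantization} Step 2). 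Second, I would extract from this a lower bound of the form $m \cdot e_\nu(s) \geq c_*^{-1}$ for a constant $c_*$ independent of $s \in \cB_3$, where $c_*$ comes from the monotonicity inequality \cite[Proposition~3.4]{Fish11}; since $m \geq 1$, this gives $e_\nu(s) \geq c_*^{-1}$. Third, I would note that the exceptional null set and the passage to a subsequence $\bfk'$ only discard a measure-zero subset of $\cB_3$, so the inequality holds on a compact subset $\cB_4 \subseteq \cB_3$ of positive Lebesgue measure (compactness is arranged by intersecting with a suitable compact subset, as in the earlier refinements using Lusin's theorem); set $c_7 := c_*$.

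\textbf{Main obstacle.} I expect the delicate point to be the control of the boundary/capping terms in Stokes' theorem near $w_-$ and the legitimacy of the ``$\Omega$-area over a region is positive and bounded below'' step: one must ensure that the sub-curve $\Sigma_j^-$ really does reach a point at definite distance from $H^{-1}(s)$ on the negative side (it does, since it passes through the fixed point $w_-$), and that the relevant distances and monotonicity constants can be taken uniform over $s \in \cB_3$ (which follows since $\cB_3$ is compact and bounded away from the endpoints, so the sets $H^{-1}( (-\infty, s])$ vary continuously and $w_-$ stays at uniformly positive distance). A secondary subtlety is bookkeeping the factor $m$: a priori $e_\nu(s)$ could be negative while $m\cdot e_\nu(s)$ is a limit of $\widehat\nu$-periods — but the Stokes' argument pins $m \cdot e_\nu(s)$ equal to a genuine $\Omega$-area plus a vanishing term, forcing it positive, and then $m \geq 1$ transfers positivity (with the same lower bound) to $e_\nu(s)$ itself. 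Once these uniformity statements are in place, the conclusion is immediate.
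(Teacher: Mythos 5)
Your overall mechanism is the same as the paper's: pick $s$ outside the null set $\widehat\bfs_\lambda \cup \widehat\bfs_\omega$ (Lemmas~\ref{lem:pigeonhole_adiabatic2} and \ref{lem:pigeonhole_blowup}), get geometric convergence to $(-2,2)\times m\widehat\gamma_s$ via Proposition~\ref{prop:closed_orbits_quantitative}, compute $m\cdot e_\nu(s)$ as a limit of $\widehat\nu$-integrals over regular slices via Lemma~\ref{lem:nu_bound}, and turn these into $\widehat\Omega$-areas of the lower half of the curve by Stokes' theorem. Your monotonicity observation at $w_-$ is also a legitimate (indeed helpful) way to see that this area stays bounded away from zero, so that the limit $m\cdot e_\nu(s)$ is strictly positive rather than merely nonnegative.

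However, there is a genuine gap in how you extract the uniform constant. From $m\cdot e_\nu(s)\geq c_*^{-1}$ and $m\geq 1$ you conclude $e_\nu(s)\geq c_*^{-1}$; the inequality goes the wrong way — you only get $e_\nu(s)\geq c_*^{-1}/m$, and at this stage of the argument there is no a priori upper bound on the multiplicity $m$. (The paper's bound $m \leq 4c_7 d$, used later in Lemma~\ref{lem:finite_accumulation}, is derived \emph{from} the conclusion $e_\nu \geq c_7^{-1}$ of the present lemma, so invoking anything of that kind here would be circular; bounding $m$ via $\lambda$-periods runs into the same circularity, since the a priori $\lambda$-bounds on the curves grow with $k$.) The paper avoids this entirely: it settles for the pointwise statement $e_\nu(s)>0$ on a full-measure subset of $\cB_3$, and then obtains the uniform constant $c_7$ not from monotonicity but from the continuity of $e_\nu$ on $\cB_3$ (Lemma~\ref{lem:bad_subset}(c), secured earlier by Lusin's theorem) together with compactness of a positive-measure compact subset $\cB_4\subseteq\cB_3\setminus(\widehat\bfs_\lambda\cup\widehat\bfs_\omega)$, on which a positive continuous function attains a positive minimum. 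If you replace your final quantitative step by this continuity-plus-compactness argument (keeping your Stokes/monotonicity step only to certify strict positivity of the limit), your proof closes; as written, the claimed bound $c_7:=c_*$ does not follow.
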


\begin{proof}
Fix arbitrary point constraints $\{\widecheck{\bfw}_{1,k}\}$. The proof will take $2$ steps. 

\noindent\textbf{Step $1$:} This step shows that, for any sequence $\bfk$ and any $s \in \cB_3\,\setminus\,(\widehat\bfs_\lambda(1; \bfk) \cup \widehat\bfs_\omega(1; \bfk)$, we have $e_\nu(s) > 0$. 

By Proposition~\ref{prop:closed_orbits_quantitative}, there exists a subsequence $\bfk' = \{k_j\}$ and a sequence $s_j \to s$ such that the currents $\{\cC_{s_j,1,k_j}\}$ geometrically converge to $(-2, 2) \times m\widehat\gamma_s$ for some integer $m \geq 1$. By Lemma~\ref{lem:nu_bound}, there exists a sequence of regular values $\{\bar{a}_j\}$ of $a \circ \widehat{u}_{1,k_j}$ such that
\begin{equation}\label{eq:nu_lower_bound} \lim_{j \to \infty} \int_{(a \circ \widehat{u}_{1,k_j})^{-1}(\bar{a}_j)} \widehat{u}_{1,k_j}^*\widehat\nu_{k_j} = m \cdot e_\nu(s).\end{equation}

For each $j$, set $\Gamma_j := (a \circ \widehat{u}_{1,k_j})^{-1}(\bar{a}_j) \subset \widehat{C}_{1,k_j}$. The submanifold $\Gamma_j$ separates $\widehat{C}_{d,k_j}$ into two halves $\Sigma_{j,\pm}$ intersecting $\bW_+$ and $\bW_-$, respectively. It follows from Stokes' theorem that
\begin{equation}\label{eq:nu_lower_bound1}\int_{\Gamma_j} \widehat{u}_{1,k_j}^*\widehat\nu_{k_j} = \int_{\Sigma_{j,-}} \widehat{u}_{1,k_j}^*\widehat\Omega_{k_j} > 0.\end{equation}

By \eqref{eq:nu_lower_bound} and \eqref{eq:nu_lower_bound1}, we have $e_\nu(s) > 0$. 

\noindent\textbf{Step $2$:} This step finishes the proof. By Lemma~\ref{lem:pigeonhole_adiabatic2} and Lemma~\ref{lem:pigeonhole_blowup}, there exists a sequence $\bfk$ such that the set $\widehat\bfs_\lambda(1; \bfk) \cup \widehat\bfs_\omega(1; \bfk)$ has Lebesgue measure zero. Choose a compact subset $\cB_4 \subseteq \cB_3\,\setminus\,(\widehat\bfs_\lambda(1; \bfk) \cup \widehat\bfs_\omega(1; \bfk))$ of positive Lebesgue measure. By Step $1$, $e_\nu$ is positive on $\cB_4$. The lemma follows because $\cB_4$ is compact and $e_\nu$ is continuous (Lemma~\ref{lem:bad_subset}(c)). 
\end{proof}

\subsubsection{Restrictions on accumulation and blowup} Recall the adiabatic limit sets $\{\widehat\cX_d(\bfk)\}$ from \S\ref{subsubsec:adiabatic_limit_set_construction}. Recall the sets $\widehat\bfs_\omega(d; \bfk)$ and $\widehat\bfs_\lambda(d; \bfk)$ from \S\ref{subsubsec:capped_acc_blowup}. Let $\cB_5 \subseteq \cB_4$ be the full measure subset of points with Lebesgue density $1$. The following lemma proves that, after passing to a subsequence, the set $\widehat\bfs_\omega(d; \bfk) \cap \cB_5$ has $O(d)$ points. This is quite strong since the set $\widehat\bfs_\omega(d; \bfk)$ could itself be infinite. 

\begin{lem}\label{lem:finite_accumulation}
There exists a constant $c_8 \geq 1$ such that the following holds for any $d \geq 1$, any point constraints $\{\widecheck{\bfw}_{d,k}\}$, and any sequence $\bfk$. There exists a subsequence $\bfk' \subseteq \bfk$ such that 
$$\#(\widehat\bfs_\omega(d; \bfk') \cap \cB_5) \leq c_8 d.$$
\end{lem}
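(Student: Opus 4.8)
The plan is to control the ``$\nu$-flux'' of the curves $\widehat u_{d,k_j}$ across the level sets of $a\circ\widehat u_{d,k_j}$. The key point will be that each level lying in $\widehat\bfs_\omega(d;\bfk')\cap\cB_5$ forces this flux to jump by at least a fixed multiple of $c_7^{-1}$, where $c_7\geq 1$ is the constant from Lemma~\ref{lem:nu_lower_bound}, while the total flux equals the fixed number $4md$, the total $\widehat\Omega_{k_j}$-action of the curve (Corollary~\ref{cor:closed_curves_adiabatic}). Since $m$ is fixed, this bounds the number of such levels by $O(d)$.

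\emph{Flux function and extraction.} I extend $a$ to a smooth function on $\widehat\bW_{k}$ that is large on $\bW_+$ and small on $\bW_-$, as in the proof of Lemma~\ref{lem:spheres}. For a neck regular value $\bar a$ of $a\circ\widehat u_{d,k}$, let $\Sigma^-(\bar a)\subseteq\widehat C_{d,k}$ be the part of the curve with $a\circ\widehat u_{d,k}\leq\bar a$. Since $H>0$ on $\bD$, the divisor $\bD$ lies in $\bW_+$, so $\Sigma^-(\bar a)$ is disjoint from $\bD$, and as $d\widehat\nu_k=\widehat\Omega_k$ on $\widehat\bW_k\,\setminus\,\bD$, Stokes' theorem gives
$$G_k(\bar a)\;:=\;\int_{(a\circ\widehat u_{d,k})^{-1}(\bar a)}\widehat u_{d,k}^*\widehat\nu_k\;=\;\int_{\Sigma^-(\bar a)}\widehat u_{d,k}^*\widehat\Omega_k\;\in\;[0,4md],$$
a non-decreasing function of $\bar a$ (by positivity of $\widehat u_{d,k}^*\widehat\Omega_k$). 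Arguing by contradiction as in Step~$1$ of the proof of Lemma~\ref{lem:pigeonhole_omega}, I suppose $\#(\widehat\bfs_\omega(d;\bfk')\cap\cB_5)\geq N:=\lfloor 8c_7md\rfloor+1$ for \emph{every} subsequence $\bfk'\subseteq\bfk$. Using Lemma~\ref{lem:pigeonhole_adiabatic2} and Lemma~\ref{lem:pigeonhole_blowup} (to arrange that $\widehat\bfs_\omega(d;\cdot)$ is countable and $\widehat\bfs_\lambda(d;\cdot)$ is null) and then the inductive extraction of Lemma~\ref{lem:pigeonhole_omega} (adapted to allow a distinct threshold at each point), I pass to a subsequence $\bfk'=\{k_j\}$ admitting $N$ levels $s_1<\dots<s_N$ in $\cB_5$ and, for each $i$, a constant $\epsilon_i>0$ and intervals $\cL^{(i)}_j\to\{s_i\}$ with $\int_{\widehat C^{\cL^{(i)}_j}_{d,k_j}}\widehat u_{d,k_j}^*\widehat\omega_{k_j}>\epsilon_i$ for all $j$; along $\bfk'$ the set $\widehat\bfs_\omega(d;\bfk')\cup\widehat\bfs_\lambda(d;\bfk')$ is still null.

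\emph{Bracketing each $s_i$ by good levels.} Fix $i$. Since $s_i\in\cB_5$ is a Lebesgue density point of $\cB_4$, the set $\widehat\bfs_\omega(d;\bfk')\cup\widehat\bfs_\lambda(d;\bfk')$ is null, and $e_\nu$ is continuous on $\cB_4$ (Lemma~\ref{lem:bad_subset}(c), as $\cB_4\subseteq\cB_3$), I can choose $s_i^-<s_i<s_i^+$ in $\cB_4\,\setminus\,(\widehat\bfs_\omega(d;\bfk')\cup\widehat\bfs_\lambda(d;\bfk'))$, as close to $s_i$ as desired, with the intervals $(s_i^-,s_i^+)$ pairwise disjoint and $|e_\nu(s_i^\pm)-e_\nu(s_i)|$ as small as desired. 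For each $s_i^\pm$, Proposition~\ref{prop:closed_orbits_quantitative} applies (its hypotheses hold because $s_i^\pm\notin\widehat\bfs_\omega(d;\bfk')$, while $s_i^\pm\notin\widehat\bfs_\lambda(d;\bfk')$ provides a sequence of levels on which $E_{d,k_j,\lambda}$ stays bounded); passing to one further subsequence, there are $s_j^\pm\to s_i^\pm$ and integers $m_i^\pm\geq 1$ with $\cC_{s_j^\pm,d,k_j}\to(-2,2)\times m_i^\pm\widehat\gamma_{s_i^\pm}$. Lemma~\ref{lem:nu_bound} then produces regular values $\bar a_{i,j}^\pm$ of $a\circ\widehat u_{d,k_j}$, within distance $2$ of $\delta_5^{-1}k_js_j^\pm$, with $\lim_j G_{k_j}(\bar a_{i,j}^\pm)=m_i^\pm e_\nu(s_i^\pm)$. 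Since $G_{k_j}\leq 4md$ and $e_\nu\geq c_7^{-1}$ on $\cB_4$, this yields the \emph{uniform} bound $m_i^\pm\leq 4c_7md$, independent of how $s_i^\pm$ were chosen.

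\emph{A uniform jump, then telescoping.} For $j$ large the levels $\bar a_{1,j}^-<\bar a_{1,j}^+<\dots<\bar a_{N,j}^+$ are ordered and each $\widehat C^{\cL^{(i)}_j}_{d,k_j}$ lies over $[\bar a_{i,j}^-,\bar a_{i,j}^+]$ (because $s_i^-<s_i<s_i^+$ strictly and $k_j\to\infty$), so, using $\widehat u_{d,k_j}^*\widehat\Omega_{k_j}\geq\tfrac12\widehat u_{d,k_j}^*\widehat\omega_{k_j}$ on the neck (Lemma~\ref{lem:j_tame}),
$$G_{k_j}(\bar a_{i,j}^+)-G_{k_j}(\bar a_{i,j}^-)\;\geq\;\tfrac12\int_{\widehat C^{\cL^{(i)}_j}_{d,k_j}}\widehat u_{d,k_j}^*\widehat\omega_{k_j}\;>\;\tfrac{\epsilon_i}{2},$$
hence in the limit $m_i^+e_\nu(s_i^+)-m_i^-e_\nu(s_i^-)\geq\epsilon_i/2$. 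Choosing $s_i^\pm$ close enough to $s_i$ that $(m_i^++m_i^-)\,|e_\nu(s_i^\pm)-e_\nu(s_i)|<\epsilon_i/4$ — possible by the uniform bound $m_i^\pm\leq 4c_7md$ and continuity of $e_\nu$ — forces $(m_i^+-m_i^-)e_\nu(s_i)\geq\epsilon_i/4>0$, so $m_i^+\geq m_i^-+1$, and therefore $m_i^+e_\nu(s_i^+)-m_i^-e_\nu(s_i^-)\geq e_\nu(s_i)-\epsilon_i/4\geq c_7^{-1}-\epsilon_i/4$. Combined with the previous bound, the jump is at least $\max(\epsilon_i/2,\ c_7^{-1}-\epsilon_i/4)\geq\tfrac{1}{2}c_7^{-1}$ for every $\epsilon_i$. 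Now monotonicity of $G_{k_j}$ and the ordering of the levels give $4md\geq\sum_{i=1}^{N}\bigl(G_{k_j}(\bar a_{i,j}^+)-G_{k_j}(\bar a_{i,j}^-)\bigr)$; letting $j\to\infty$ yields $4md\geq N/(2c_7)$, i.e.\ $N\leq 8c_7md$, contradicting $N=\lfloor 8c_7md\rfloor+1$. Thus some subsequence $\bfk'$ has $\#(\widehat\bfs_\omega(d;\bfk')\cap\cB_5)\leq 8c_7md$, so the lemma holds with $c_8:=8c_7m$. The main obstacle is precisely this uniform jump estimate: $\omega$-accumulation alone only gives a jump $>\epsilon_i/2$ with $\epsilon_i$ possibly tiny, and upgrading it to a universal lower bound relies on both the continuity and positivity of $e_\nu$ on $\cB_4$ and the a~priori bound $m_i^\pm\leq 4c_7md$, which together force the multiplicity of the limiting orbit cylinder to strictly increase across $s_i$.
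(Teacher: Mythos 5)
Your argument is correct and follows essentially the same route as the paper's proof: assume $O(d)$ fails, bracket each accumulation level in $\cB_5$ by nearby levels outside the $\omega$- and $\lambda$-bad sets, use Proposition~\ref{prop:closed_orbits_quantitative} and Lemma~\ref{lem:nu_bound} to identify the $\nu$-flux there as $m_{\pm}e_\nu(s_{\pm})$, force $m_+ \geq m_- + 1$ via continuity and positivity of $e_\nu$ on $\cB_4$ together with the a priori bound $m_{\pm} = O(c_7 d)$, and sum the resulting uniform jumps of size at least $c_7^{-1}/2$ against the total symplectic area of the curves. The only cosmetic deviations are that you telescope via a monotone flux function instead of disjoint intervals plus Fatou, and you apply Proposition~\ref{prop:closed_orbits_quantitative} directly at the finitely many bracketing levels rather than via Corollary~\ref{cor:countable_periodic_orbits} on a countable dense subset.
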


The next lemma asserts that $\lambda$-integrals do not blow up if there is no $\omega$-accumulation. 

\begin{lem}\label{lem:blowup_implies_accumulation}
Fix any $d \geq 1$, any point constraints $\{\widecheck{\bfw}_{d,k}\}$, and any sequence $\bfk = \{k_j\}$. Then, for any $s \in \cB_5$ that does not lie in $\widehat\bfs_\omega(d; \bfk)$, we have 
$$\liminf_{j \to \infty} E_{d,k_j,\lambda}(s) < \infty.$$
\end{lem}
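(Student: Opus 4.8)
\emph{Proof proposal.} The plan is to argue by contradiction. Fix $d$, the point constraints, and the sequence $\bfk=\{k_j\}$, and suppose there is some $s\in\cB_5\setminus\widehat\bfs_\omega(d;\bfk)$ with $E_{d,k_j,\lambda}(s)\to\infty$. I will play two global constraints against each other. The first is a \emph{first--moment bound}. By the co-area formula, exactly as in the proof of Lemma~\ref{lem:lambda_currents}, $E_{d,k,\lambda}(s)\le\int_{a_{s,k}-2}^{a_{s,k}+2}\big(\int_{(a\circ\widehat{u}_{d,k})^{-1}(t)}\widehat{u}_{d,k}^*\widehat\lambda_k\big)\,dt$; integrating over $s\in(-\delta_5,\delta_5)$, changing variables from $s$ to the neck coordinate $a=a_{s,k}$, and using $\int r'=1$ gives
$$\int_{-\delta_5}^{\delta_5}E_{d,k,\lambda}(s)\,ds\;\le\;C\delta_5 k^{-1}\int_{(a\circ\widehat{u}_{d,k})^{-1}((-k,k))}\widehat{u}_{d,k}^*(da\wedge\widehat\lambda_k)\;\le\;Cmd,$$
where the last step is \eqref{eq:pigeonhole_blowup2} (a consequence of Corollary~\ref{cor:closed_curves_adiabatic} and Lemma~\ref{lem:j_tame}); crucially this is independent of $k$. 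The second is a \emph{propagation estimate}: applying the exponential area bound \eqref{eq:fh_area_bound1} (with stable constants made uniform via Remark~\ref{rem:stable_constants}) to the restriction of $\widehat{u}_{d,k_j}$ over an $a$-window about $a_{s,k_j}$, I get $E_{d,k_j,\lambda}(s')\ge e^{-c_3|a_{s',k_j}-a_{s,k_j}|-C}E_{d,k_j,\lambda}(s)-c_3W_j$, where $W_j$ is the $\widehat\omega_{k_j}$-action of $\widehat{u}_{d,k_j}$ over that window. Integrating this over a shrinking window $[s-\rho_j,s+\rho_j]$ with $\rho_j\to 0$ but $\rho_jk_j\to\infty$: the corresponding window of \emph{levels} collapses to $\{s\}$ so $W_j\to 0$ because $s\notin\widehat\bfs_\omega(d;\bfk)$, while the $a$-window has length $\asymp\rho_jk_j\to\infty$, so the Laplace-type integral $\int e^{-c_3|u|}du$ saturates and one gets $Cmd\ge\int_{s-\rho_j}^{s+\rho_j}E_{d,k_j,\lambda}\gtrsim k_j^{-1}E_{d,k_j,\lambda}(s)$, i.e. the a priori bound $E_{d,k_j,\lambda}(s)\lesssim md\,k_j$.

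To close the gap between "$O(k_j)$" and "bounded along a subsequence" I would show that, over this long thin neighborhood, $\widehat{u}_{d,k_j}$ is uniformly close to a union of cylinders over closed orbits of $R^{s'}$ with a well-defined, locally constant multiplicity $m_j$. Once $W_j$ drops below the quantization threshold $\hbar$, Proposition~\ref{prop:fh_quantization} rules out interior $a$-extrema on every component of the restricted curve; combined with the vanishing action this should pin the curve down (a Fish--Hofer/Prasad-type argument, the analogue of Step 1 in the proof of Proposition~\ref{prop:low_action_almost_cyl}) to $\bR\times(\text{closed orbits})$ up to small error, with multiplicity unchanged across the window. Then $E_{d,k_j,\lambda}(s')\asymp m_j\,e_\lambda(s')$ for $s'$ in the window, and here I use $s\in\cB_5\subseteq\cB_4$: by continuity of $e_\lambda$ on $\cB_4$ (Lemma~\ref{lem:bad_subset}(b)), $e_\lambda$ is bounded above and below near $s$, so the first--moment bound becomes $m_j\,\rho_j\lesssim md$. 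Since $m_j\lesssim md\,k_j=o(k_j^2)$, choosing $\rho_j\asymp m_j^{-1/2}$ (legitimate: then $\rho_jk_j\to\infty$) forces $m_j\rho_j=m_j^{1/2}\to\infty$, contradicting $m_j\rho_j\lesssim md$. Hence $\liminf_j E_{d,k_j,\lambda}(s)<\infty$.

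The main obstacle is precisely the curve-identification step: over the region in question the $da\wedge\widehat\lambda_{k_j}$-part of the area of $\widehat{u}_{d,k_j}$ is exactly what is diverging, so one \emph{cannot} invoke the holomorphic-current compactness theorem (Proposition~\ref{prop:currents_compactness}) the way Proposition~\ref{prop:closed_orbits_quantitative} does. The divergence has to be tracked quantitatively, with the quantization bound (no interior $a$-extrema), the exponential area bound (controlling how the $\widehat\lambda$-length of slices varies with the level), and the homological first--moment identity \eqref{eq:pigeonhole_blowup2} together playing the role that current compactness plays elsewhere; the density-point hypothesis on $s$ is used only to keep the orbit-period factors $e_\lambda(s')$ uniformly comparable to $e_\lambda(s)$ over a genuine, non-degenerate window of nearby levels.
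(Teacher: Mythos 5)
Your first--moment bound is fine (it is essentially the covering argument behind \eqref{eq:pigeonhole_blowup2} rewritten via Fubini), and the exponential propagation step gives, at best, the a priori bound $E_{d,k_j,\lambda}(s)=O(k_j)$ that you state. But the heart of your argument is the curve-identification claim -- that over a window of levels of width $\rho_j$ the curve is close to a cylinder over $\widehat\gamma_{s'}$ with a well-defined, level-independent multiplicity $m_j$, and that $E_{d,k_j,\lambda}(s')\asymp m_j e_\lambda(s')$ throughout the window -- and this is exactly where the proposal has a genuine gap, which you acknowledge but do not fill. In the regime you are in, every compactness tool in the paper is unavailable: Proposition~\ref{prop:currents_compactness} needs an area bound, which is what diverges; Proposition~\ref{prop:local_area_bound} and the Step~1 argument of Proposition~\ref{prop:low_action_almost_cyl} need a bound on $\chi$ of the relevant components, and the lemma makes no assumption keeping $s$ (let alone the whole window) away from the $\chi$-accumulation set; and Proposition~\ref{prop:fh_quantization} only constrains components whose boundary avoids the window, which does not by itself pin down anything like a multiplicity. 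Moreover ``multiplicity'' is a current-level notion: without a limit current there is no $m_j$ to speak of, and there is no Stokes identity for $\widehat\lambda$ (it is not closed), so constancy of the putative multiplicity across the window cannot be extracted from $\lambda$-integrals; in the paper that kind of bookkeeping is done with the exact primitive $\nu$ and the function $e_\nu$ (Lemma~\ref{lem:nu_lower_bound}, and the jump argument in Lemma~\ref{lem:finite_accumulation}), not with $\lambda$.

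For contrast, the paper's proof avoids any quantitative identification at the bad level and is entirely soft. It (1) picks a countable dense set of good nearby levels $s'$ where, by Corollary~\ref{cor:countable_periodic_orbits}, $\cC_{s',d,k_j}\to(-2,2)\times m\widehat\gamma_{s'}$, so $E_j(s')\to m\,e_\lambda(s')\le 4c_7c_9d$ (the bound on $m$ coming from the $\nu$-Stokes argument and Lemma~\ref{lem:bad_subset}); this already produces a sequence $s_j\to s$ with bounded $E_j(s_j)$. It then (2) shows that any \emph{finite} limit point of $E_j(s_j)$ along any $s_j\to s$ must lie in the discrete set $\bZ\cdot e_\lambda(s)$, because finiteness restores the area bound via Lemma~\ref{lem:lambda_currents} and lets Proposition~\ref{prop:closed_orbits_quantitative} apply. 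Finally (3), if $E_j(s)\to\infty$, the intermediate value theorem applied to the continuous functions $E_j$ produces $s_j'\to s$ with $E_j(s_j')\equiv E'$ for a value $E'\notin\bZ\cdot e_\lambda(s)$, contradicting (2). If you want to salvage your approach, you would essentially have to prove a quantitative substitute for Proposition~\ref{prop:closed_orbits_quantitative} valid with unbounded area and unbounded topology; the paper's IVT trick is designed precisely to sidestep that.
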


\subsubsection{Deriving a contradiction} We defer the proofs of Lemmas~\ref{lem:finite_accumulation} and \ref{lem:blowup_implies_accumulation} and first give the proof of Theorem~\ref{thm:main3}. Let $c_8 \geq 1$ be the constant from Lemma~\ref{lem:finite_accumulation}. Fix $d > c_8$. Fix a set $\bfw = \{w_{d,i}\}_{i=1}^{d^2} \subset \bW$ as follows. Choose a subset $\{s_{d,i}\}_{i=1}^{d^2} \subseteq \cB_5$. Then, for each $i$, let $w_{d,i}$ be any point in $H^{-1}(s_{d,i})$ that does not lie on $\widecheck\gamma_{s_{d,i}}$. For each $k$, define point constraints $\widecheck{\bfw}_{d,k} := \widehat{\Phi}_k(\bfw)$. 

We use Lemmas~\ref{lem:finite_accumulation} and \ref{lem:blowup_implies_accumulation} to prove the following claim. We claim that there exists $d \geq 1$ and $i \in \{1, \ldots, d^2\}$ such that the hypersurface $H^{-1}(s_{d,i})$ contains a closed orbit passing through the point $w_{d,i}$. By construction, there are no closed orbits passing through $w_{d,i}$. Therefore, once the claim is established, we arrive at a contradiction. 

\begin{proof}[Proof of claim]
By Lemma~\ref{lem:finite_accumulation} and the inclusion $\{s_{d,i}\} \subset \cB_5$, there exists a sequence $\bfk = \{k_j\}$ such that $\widehat\bfs_\omega(d; \bfk) \cap \{s_d^i\}_{i=1}^{d^2}$ contains $c_8 d< d^2$ points. Thus, there exists $i \in \{1, \ldots, d^2\}$ such that $s_{d,i} \not\in \widehat\bfs_\omega(d; \bfk)$. Let $y \in Y$ be such that $\iota(s_{d,i}, y) = w_{d,i}$. Let $R := R^{s_{d,i}}$ denote the Hamiltonian vector field of $\eta^{s_{d,i}}$. Recall that $R$ is conjugated by the map $\iota(s_{d,i}, -)$ to a reparameterization of $X_H$ on the hypersurface $H^{-1}(s_{d,i})$. By Lemma~\ref{lem:blowup_implies_accumulation}, we have $\liminf_{j \to \infty} E_{d,k_j,\lambda}(s_{d,i}) < \infty.$. By Proposition~\ref{prop:closed_orbits_quantitative}, setting $a_j := \delta_5^{-1}k_j s_{d,i}$, a subsequence of the slices 
$$\tau_{a_j} \cdot \Big(\widehat{u}_{d,k_j}(\widehat{C}_{d,k_j}) \cap [a_j - 1/2, a_j + 1/2] \times Y\Big)$$
converges in the Hausdorff topology to $[-1/2, 1/2] \times \gamma$, where $\gamma \subset Y$ is a closed orbit of $R$. Each slice contains $(0, y)$, so $\gamma$ contains $y$. Thus, $\iota(s_{d,i}, \gamma)$ is a closed orbit of $X_H$ passing through $w_{d,i}$. 
\end{proof}

\subsubsection{Proof of Lemma~\ref{lem:finite_accumulation}} The argument proving Lemma~\ref{lem:finite_accumulation} is technical and has no analogue in prior works, so we provide a sketch. Suppose there is positive accumulation of action at a level $s \in \cB_5$. By Stokes' theorem, for large $k$, the $\nu$-integrals on the level sets of $\widehat{u}_{d,k}$ will make a positive jump at $s$. As $k \to \infty$, the $\nu$-integrals cluster near a discrete set with gap at least $c_7/2$, where $c_7$ is the constant from Lemma~\ref{lem:nu_bound}. We use this to prove that the positive jump has size at least $c_7/2$, so at least $c_7/2$ action must accumulate at $s$. By Lemma~\ref{lem:pigeonhole_adiabatic}, it follows that there are only $O(d)$ points where action accumulates. We now provide a detailed proof. 

\begin{proof}
Let $c_7$ be the constant from Lemma~\ref{lem:nu_lower_bound}. Define $c_8 := 8c_7$. Assume for the sake of contradiction that for any subsequence $\bfk' \subseteq \bfk$, we have $\#(\widehat\bfs_\omega(d; \bfk') \cap \cB_5) > c_8 d$. The proof will take $6$ steps. 

\noindent\textbf{Step $1$:} This step finds a well-behaved subsequence of $\bfk$. By Lemma~\ref{lem:pigeonhole_adiabatic2} and Lemma~\ref{lem:pigeonhole_blowup}, there exists a subsequence $\bfk^1$ such that $\widehat\bfs_\lambda(d; \bfk^1) \cup \widehat\bfs_\omega(d; \bfk^1)$ has Lebesgue measure $0$. Choose a countable dense subset $\bfs \subseteq \cB_5\,\setminus\,(\widehat{\bfs}_\lambda(d, \bfk^1) \cup \widehat\bfs_\omega(d; \bfk^1))$ as follows. Let $\{U_i\}$ denote the collection of open intervals in $\bR$ that have rational endpoints and intersect $\cB_5$. Since each point in $\cB_5$ has Lebesgue density $1$, $U_i$ intersects $\cB_5$ if and only if $U_i$ intersects $\cB_5\,\setminus\,(\widehat{\bfs}_\lambda(d, \bfk^1) \cup \widehat\bfs_\omega(d; \bfk^1))$. Choose a point 
$$s_i \in U_i \cap (\cB_5\,\setminus\,(\widehat{\bfs}_\lambda(d, \bfk^1) \cup \widehat\bfs_\omega(d; \bfk^1)))$$
for each $i$ and let $\bfs$ be the collection of all $s_i$. Note that the sets $U_i \cap \cB_5$ form a countable basis of relatively open sets for $\cB_5$, so $\bfs$ is dense in $\cB_5$.  

By Corollary~\ref{cor:countable_periodic_orbits}, there exists a further subsequence $\bfk^2 \subseteq \bfk^1$ such that for each $s \in \bfs$, the currents $\{\cC_{s,d,k}\}_{k \in \bfk^2}$ geometrically converge to $(-2, 2) \times m_s\widehat{\gamma}_s$ for some $m_s \geq 1$. Next, set $N := \lfloor c_8 d \rfloor + 1$. After passing to at most $N$ successive subsequences of $\bfk^2$, there exists a subsequence $\bfk' = \{k_j\}$ of $\bfk^2$ and a set of $N$ distinct levels $s_1, \ldots, s_N$ in $\cB_5$ with the following property. There exists a positive constant $\epsilon \in (0, 1)$ and, for each $i$, a sequence of intervals $\{\cL_{j,i} = [s_{j,i,-}, s_{j,i,+}]\}$ converging to $s_i$, such that
\begin{equation}\label{eq:finite_accumulation1}\limsup_{j \to \infty} \int_{\widehat{C}_{d,k_j}^{\cL_{j,i}}} \widehat{u}_{d,k_j}^*\widehat\omega_{k_j} > 0.\end{equation}

We simplify the notation. For each $j$, write $u_j := \widehat{u}_{d,k_j}$, $\widehat{C}_{j,i} : = \widehat{C}_{d,k_j}^{\cL_{j,i}}$, $\nu_j := \widehat{\nu}_{k_j}$, and $\Omega_j := \widehat{\Omega}_{k_j}$. By Lemma~\ref{lem:j_tame} and \eqref{eq:finite_accumulation1}, there exists $\epsilon \in (0,1)$ such that, for each $i$, we have
\begin{equation}\label{eq:finite_accumulation2}\limsup_{j \to \infty} \int_{\widehat{C}_{j,i}} u_j^*\Omega_j \geq \epsilon.\end{equation}

\noindent\textbf{Step $2$:} This step proves that, for each $i$, there exists a pair $s_{i,\pm} \in \bfs$ such that:
\begin{enumerate}[(\roman*)]
\item $s_{i,-} < s_i < s_{i,+}$; 
\item $|e_\nu(s_{i,-}) - e_\nu(s_{i,+})| < \epsilon(4c_7 d)^{-2}$.
\end{enumerate}

We also require 
\begin{enumerate}[(\roman*)]
\setcounter{enumi}{2}
\item the intervals $[s_{i,-}, s_{i,+}]$ are pairwise disjoint. 
\end{enumerate}

By Lemma~\ref{lem:bad_subset}, $e_\nu$ is continuous on $\cB_5$. Therefore, there exists $\delta > 0$ such that the intervals $(s_i - \delta, s_i + \delta)$ are pairwise disjoint and, for any $i$ and any pair $s_\pm \in (s_i - \delta, s_i + \delta) \cap \cB_5$, we have $|e_\nu(s_+) - e_\nu(s_-)| < \epsilon(4c_7 d)^{-2}$. Since $s$ has Lebesgue density $1$ in $\cB_5$ and $\cB_5$ has positive Lebesgue measure, there exist open intervals $U_{+} \subseteq (s_i, s_i + \delta)$ and $U_{-} \subseteq (s_i - \delta, s_i)$ with rational endpoints such that $U_\pm$ both intersect $\cB_5$. Choose $s_{i,+} \in \bfs \cap U_{+}$ and $s_{i,-} \in \bfs \cap U_{-}$. 

\noindent\textbf{Step $3$:} This step recalls how to compute $e_\nu(s_{i,\pm})$ using Lemma~\ref{lem:nu_bound}. Apply Lemma~\ref{lem:nu_bound} to $s_{i,+}$ and $s_{i,-}$. There exists two sequences $\{\bar{a}_{j,i,+}\}$ and $\{\bar{a}_{j,i,-}\}$ such that:
\begin{enumerate}[(a)]
\item $\bar{a}_{j,i,+}$ and $\bar{a}_{j,i,-}$ are regular values of $a \circ u_j$ for every $j$.
\item $\bar{a}_{j,i,+} \in (a_{s_{i,+}, k_j} - 2, a_{s_{i,+}, k_j} + 2)$ and $\bar{a}_{j,i,-} \in (a_{s_{i,-}, k_j} - 2, a_{s_{i,-}, k_j} + 2)$ for every $j$. 
\item There exists positive integers $m_{i,+}$ and $m_{i,-}$ such that
$$\lim_{j \to \infty} \int_{(a \circ u_j)^{-1}(\bar{a}_{j,i,+})} u_j^*\nu_j = m_{i,+}e_\nu(s_{i,+}),\quad \lim_{j \to \infty} \int_{(a \circ u_j)^{-1}(\bar{a}_{j,i,-})} u_j^*\nu = m_{i,-}e_\nu(s_{i,-}).$$
\end{enumerate}

We simplify our notation as follows. Recall that $s_{j,i,\pm}$ denote the endpoints of $\cL_{j,i}$. For each $i$ and each $j$, define $a_{j,i,-} := \delta_5^{-1}k_js_{j,i,-}$ and $a_{j,i,+} := \delta_5^{-1}k_js_{j,i,+}$. We have $(a \circ u_j)(\widehat{C}_{j,i}) \subseteq [a_{j,i,-} - 2, a_{j,i,+} + 2]$ for each $j$ and $i$. Since $s_{j,i,+} \to s_i$ and $s_{j,i, -} \to s_i$, it follows that $[a_{j,i,-} - 2, a_{j,i,+} + 2]$ lies inside $[a_{s_{i,-}, k_j} + 2, a_{s_{i,+}, k_j} - 2]$ for sufficiently large $j$. It follows from Property (b) above that $[a_{j,i,-} - 2, a_{j,i, +} + 2]$ lies inside $[\bar{a}_{j,i, -}, \bar{a}_{j,i,+}]$ for sufficiently large $j$. Apply this to obtain the following symplectic area bound:.
\begin{equation} \label{eq:finite_accumulation3}
\begin{split}
\limsup_{j \to \infty} \int_{\widehat{C}_{j,i}} u_j^*\Omega_j &\leq \limsup_{j \to \infty} \int_{(a \circ u_j)^{-1}([\bar{a}_{j,i, -}, \bar{a}_{j,i,+}])} u_j^*\Omega_j \\
&= \limsup_{j \to \infty} \Big( \int_{(a \circ u_j)^{-1}(\bar{a}_{j,i,+})} u_j^*\nu_j - \int_{(a \circ u_j)^{-1}(\bar{a}_{j,i,-})} u_j^*\nu_j\Big) \\
&= m_{i,+}e_\nu(s_{i,+}) - m_{i,-}e_\nu(s_{i,-}) \\
\end{split}
\end{equation}

The second line uses Stokes' theorem. The third line uses Property (c) above. 

\noindent\textbf{Step $4$:} This step proves that $m_{i,+} \geq m_{i,-} + 1$ for each $i$. First, we prove
\begin{equation}\label{eq:finite_accumulation4} m_{i,+} \leq 4c_7 d,\qquad m_{i,-} \leq 4c_7 d. \end{equation}

Observe that
\begin{equation}\label{eq:finite_accumulation5}
\begin{split}
m_{i,+}e_\nu(s_{i,+}) &= \lim_{j \to \infty} \int_{(a \circ u_j)^{-1}(\bar{a}_{j,+})} u_j^*\nu_j \leq \limsup_{j \to \infty} \int_{\widehat{C}_{d,k_j}} u_j^*\Omega_j \leq 4d.
\end{split}
\end{equation}

The first inequality follows from Stokes' theorem. The second follows from Corollary~\ref{cor:closed_curves_adiabatic}. Plug the bound $e_\nu(s_{i,+}) \geq c_7^{-1}$ into \eqref{eq:finite_accumulation5}. Re-arrange to obtain the first bound in \eqref{eq:finite_accumulation4}. The second bound in \eqref{eq:finite_accumulation4} follows from an identical argument. By \eqref{eq:finite_accumulation2} and \eqref{eq:finite_accumulation3}, we have
$$(m_{i,+} - m_{i,-})e_\nu(s_{i,+}) + m_{i,-}(e_\nu(s_{i,+}) - e_\nu(s_{i,-})) = m_{i,+}e_\nu(s_{i,+}) - m_{i,-}e_\nu(s_{i,-}) \geq \epsilon.$$

Apply \eqref{eq:finite_accumulation4} to bound the left-hand side from above. We have
$$(m_{i,+} - m_{i,-})e_\nu(s_{i,+}) + 4c_7 d |e_\nu(s_{i, +}) - e_\nu(s_{i,-})| \geq \epsilon.$$

Since $|e_\nu(s_{i,+}) - e_\nu(s_{i,-})| < (4c_7 d)^{-1}\epsilon$ and $e_\nu(s_{i,+}) > 0$ (Lemma~\ref{lem:nu_lower_bound}), the bound $m_{i,+} - m_{i,-} > 0$ follows. Since both of $m_{i,\pm}$ are integers, we have $m_{i,+} \geq m_{i,-} + 1$. 

\noindent\textbf{Step $5$:} This step shows that, for any $i$, we have the bound
\begin{equation}\label{eq:finite_accumulation6}\liminf_{j \to \infty} \int_{(a \circ u_j)^{-1}([\bar{a}_{j,i, -}, \bar{a}_{j,i,+}])} u_j^*\Omega_j \geq c_7^{-1}/2. \end{equation}

We have
\begin{equation}\label{eq:finite_accumulation7}
\begin{split}
m_{i,+}e_\nu(s_{i,+}) - m_{i,-}e_\nu(s_{i,-}) &\geq e_\nu(s_{i,-}) - m_{i,+}|e_\nu(s_{i,+}) - e_\nu(s_{i,-})| \\
&\geq c_7^{-1} - m_{i,+}|e_\nu(s_{i,+}) - e_\nu(s_{i,-})| \geq c_7^{-1}/2. 
\end{split}
\end{equation}

The first inequality uses Lemma~\ref{lem:nu_lower_bound} and the bound $m_{i,+} \geq m_{i,-} + 1$ from Step $4$. The second inequality uses Lemma~\ref{lem:nu_lower_bound}. The third inequality uses \eqref{eq:finite_accumulation4} and the bound $|e_\nu(s_{i,+}) - e_\nu(s_{i,-})| < (4c_7 d)^{-2}$. Now \eqref{eq:finite_accumulation6} follows from \eqref{eq:finite_accumulation3} and \eqref{eq:finite_accumulation7}. 

\noindent\textbf{Step $6$:} This step completes the proof. By Property (iii) in Step $2$, the intervals $[\bar{a}_{j,i,-}, \bar{a}_{j,i,+}]$ are pairwise disjoint for sufficiently large fixed $j$. Sum \eqref{eq:finite_accumulation6} over all $i$ and apply Corollary~\ref{cor:closed_curves_adiabatic}. We have
\begin{equation}\label{eq:finite_accumulation8}
\begin{split}
Nc_7^{-1}/2 &\leq\sum_{i=1}^N  \liminf_{j \to \infty} \int_{(a \circ u_j)^{-1}([\bar{a}_{j,i, -}, \bar{a}_{j,i,+}])} u_j^*\Omega_j \leq \liminf_{j \to \infty} \sum_{i=1}^N \int_{(a \circ u_j)^{-1}([\bar{a}_{j,i, -}, \bar{a}_{j,i,+}])} u_j^*\Omega_j \\
&\leq \liminf_{j \to \infty} \int_{\widehat{C}_{d,k_j}} u_j^*\Omega_j \leq 4d. 
\end{split}
\end{equation}

The second inequality follows from Fatou's lemma. Recall from Step $1$ that $N > 8c_7 d$. Therefore, \eqref{eq:finite_accumulation8} implies $4d < 4d$. This is the desired contradiction. 
\end{proof}

\subsubsection{Proof of Lemma~\ref{lem:blowup_implies_accumulation}} Here is a sketch of the proof. For any $s \in \cB_5$ not in $\widehat\bfs_\omega(d; \bfk)$, we consider limit points of sequences $\{E_{d,k_j,\lambda}(s_j)\}$ where $s_j \to s$. After passing to a subsequence, at least one limit point exists. The set of limit points is discrete, since they are $\lambda$-integrals over iterates of $\widecheck\gamma_s$. After passing to a subsequence, the set of limit points is connected. It follows from these observations that no sequence $\{E_{d,k_j,\lambda}(s_j)\}$ can diverge. We now give a detailed proof. 

\begin{proof}
Assume for the sake of contradiction that for any subsequence $\bfk' \subseteq \bfk$, there exists some $s \in \cB_5$, such that $s \not\in \widehat\bfs_\omega(d; \bfk')$ and such that $\{E_{\lambda, d, k}(s)\}_{k \in \bfk'}$ diverges. The proof will take $3$ steps. 

\noindent\textbf{Step $1$:} This step is very similar to Step $1$ of the proof of Lemma~\ref{lem:finite_accumulation}. We find a well-behaved subsequence of $\bfk$. By Lemma~\ref{lem:pigeonhole_adiabatic2} and Lemma~\ref{lem:pigeonhole_blowup}, there exists a subsequence $\bfk^1$ such that $ \widehat\bfs_\lambda(d; \bfk^1) \cup \widehat\bfs_\omega(d; \bfk^1)$ has Lebesgue measure $0$. Choose a countable dense subset $\bfs \subseteq \cB_5\,\setminus\,(\widehat{\bfs}_\lambda(d, \bfk^1) \cup \widehat\bfs_\omega(d; \bfk^1))$. 

By Corollary~\ref{cor:countable_periodic_orbits}, there exists a subsequence $\bfk' = \{k_j\}$ of $\bfk^1$ such that for each $s \in \bfs$, the currents $\{\cC_{s,d,k_j}\}$ geometrically converge to $(-2, 2) \times m \cdot \widecheck{\gamma}_s$ for some $m \geq 1$. 

\noindent\textbf{Step $2$:} By assumption, there exists some $s \in \widehat\bfs_\lambda(d; \bfk') \cap \cB_5$ that does not lie in $\widehat\bfs_\omega(d; \bfk')$. For each $j$, write $E_j := E_{d,k_j,\lambda}$. This step shows that there exists a sequence $s_j \to s$ such that
$$\liminf_{j \to \infty} E_j(s_j) < \infty.$$

For each $s' \in \bfs$, the currents $\cC_{s',d,k_j}$ converge geometrically as $j \to \infty$ to a current $(-2, 2) \times m\widehat\gamma_{s'}$ for some integer $m \geq 1$. It follows that $\lim_{j \to \infty} E_j(s') = me_\lambda(s')$. 

The same argument proving \eqref{eq:finite_accumulation4} shows that $m \leq 4c_7d$. By Lemma~\ref{lem:bad_subset}(b), there exists a constant $c_9 \geq 1$ such that $e_\lambda \leq c_9$. It follows that
$$\lim_{j \to \infty} E_j(s') \leq 4c_7c_9 d$$
for any $s' \in \bfs$. Since $\bfs$ is dense in $(-\delta_5, \delta_5)$, there exists a sequence $s_j \to s$ such that $\liminf_{j \to \infty} E_j(s_j) \leq 4c_7c_9 d$. 

\noindent\textbf{Step $2$:} This step proves that, for any $s \in \cB_5$ and any sequence $s_j \to s$, any limit point of the sequence $\{E_j(s_j)\}$ lies in the set $\bZ \cdot e_\lambda(s) \subset \bR$. Let $E$ be any such limit point. Fix a subsequence $\{j_\ell\}$ such that $E_{j_\ell}(s_{j_\ell}) \to E$. To simplify notation, write $s_\ell' := s_{j_\ell}$, $k_\ell' := k_{j_\ell}$, $a_\ell' := a_{s_\ell', k_\ell'}$, $C_\ell' := C_{s_\ell', d, k_\ell'}$, and $\cC_\ell' := \cC_{s_\ell', d, k_\ell'}$. Write $u_\ell' := \widehat{u}_{d,k_\ell'}$, $\lambda_\ell' := \widehat\lambda_{k_\ell'}^{a_\ell'}$, and $\omega_\ell' := \widehat\omega_{k_\ell'}$. Since $s \not\in \widehat\bfs_\omega(d; \bfk')$, Proposition~\ref{prop:closed_orbits_quantitative} shows that after passing to a subsequence, $\cC_\ell' \to (-2, 2) \times m \cdot \widecheck{\gamma}_s$ for some integer $m \geq 1$. The claim then follows from a computation:
\begin{equation*}
\begin{split}
E &= \lim_{\ell \to \infty} E_{j_\ell}(s_\ell') = \lim_{\ell \to \infty} \cC_\ell'(r'(a)da \wedge \lambda_\ell') = \lim_{\ell \to \infty} \cC_\ell'(r'(a)da \wedge \lambda^s) \\
&= m\Big(\int_{-2}^2 r'(a)\Big)e_\lambda(s) = m \cdot e_\lambda(s).
\end{split}
\end{equation*}

The third equality requires two facts to prove. First, since $s \not\in \widehat{\bfs}_\omega(d; \bfk')$, Lemma~\ref{lem:lambda_currents} implies that $\operatorname{Area}(\cC_\ell')$ is uniformly bounded. Second, we have $\lambda_\ell' \to \lambda^s$ by Corollary~\ref{cor:geometric_convergence_adiabatic}. The fourth equality uses the convergence $\cC_\ell' \to (-2,2) \times m\widecheck\gamma_s$. 

\noindent\textbf{Step $3$:} This step completes the proof. By Step $1$, there exists some $E \in \bR$ and a sequence $s_j \to s$ such that $\liminf_{j \to \infty} E_j(s_j) \leq E$. After passing to a subsequence in $j$, we may assume that $\limsup_{j \to \infty} E_j(s_j) < E + 1$. Choose some $E' > E + 1$ such that $E' \not\in \bZ \cdot e_\lambda(s)$. By assumption, $\lim_{j \to \infty} E_j(s) = \infty$. We have $E_j(s) > E' > E_j(s_j)$ for each sufficiently large $j$. By the intermediate value theorem, for each sufficiently large $j$, there exists $s_j'$ such that $|s_j' - s| \leq |s_j - s|$ and $E_j(s_j') = E'$. Therefore, $s_j' \to s$ and $\lim_{j \to \infty} E_j(s_j') = E'$. This contradicts Step $2$. 
\end{proof}

\subsection{Proof of Theorem~\ref{thm:main4}}\label{subsec:main4} To prove Theorem~\ref{thm:main4}, we recall some known dynamical properties of $C^\infty$-generic Hamiltonians $H: \bR^4 \to \bR$. Let $\gamma \subset H^{-1}(s)$ be a closed orbit of $X_H$. Choosing a point $x \in \gamma$. The linearized flow of $X_H$ defines a linear operator $P_\gamma: T_{x} \bR^4 \to T_{x} \bR^4$. The orbit $\gamma$ is \emph{hyperbolic} if $P_\gamma$ has a real eigenvalue with norm not equal to $1$. It is \emph{elliptic} if $P_\gamma$ has an eigenvalue of norm $1$ that is not a root of unity. These properties do not depend on the choice of $x \in \gamma$. An elliptic orbit $\gamma \subset H^{-1}(s)$ is \emph{Moser stable} if it has nondegenerate Birkhoff normal form; see Appendix~\ref{sec:generic} for details. A deep theorem of Moser \cite{Moser62} implies that a Moser stable elliptic orbit is accumulated by an infinite sequence of closed orbits in $H^{-1}(s)$. Now, we prove Theorem~\ref{thm:main4} using Theorem~\ref{thm:main2}.

\begin{proof}[Proof of Theorem~\ref{thm:main4}]
Let $\cQ_* \subseteq \cR_c(H)$ denote set of all levels $s$ such that any closed orbit $\gamma \subset H^{-1}(s)$ is either (i) hyperbolic or (ii) elliptic and Moser stable. By the assumptions of the theorem, $\cQ_*$ has full Lebesgue measure in $\cR_c(H)$. Let $\cH \subseteq \cR_c(H)$ denote the set of all $s$ such that every closed orbit in $H^{-1}(s)$ is hyperbolic. Let $\cE \subseteq \cR_c(H)$ denote the set of $s$ such that $H^{-1}(s)$ contains a Moser stable elliptic closed orbit. Let $\cA \subseteq \cR_c(H)$ denote the set of $s$ such that $H^{-1}(s)$ contains infinitely many closed orbits. Our goal is to show that $\cA$ has full Lebesgue measure in $\cR_c(H)$. A Moser stable orbit is accumulated by an infinite sequence of closed orbits, so 
\begin{equation}\label{eq:main4_proof1} \cE \subseteq \cA \end{equation}

By Theorem~\ref{thm:main2}, there exists a full measure subset $\cQ_1 \subseteq \cR_c(H)$ such that for any $s \in \cQ_1$, the set $\overline{\cP(s)}$ is either equal to $H^{-1}(s)$ or not locally maximal. In particular, if $s \in \cQ_1$ and $H^{-1}(s)$ contains finitely many closed orbits, then at least one has to be non-hyperbolic. It follows that
\begin{equation}\label{eq:main4_proof2} \cQ_1 \cap \cH \subseteq \cA. \end{equation}

Recall that $\cQ_* \subseteq \cE \cup \cH$. By \eqref{eq:main4_proof1} and \eqref{eq:main4_proof2}, we have
\begin{equation}\label{eq:main4_proof3} \cQ_1 \cap \cQ_* \subseteq (\cQ_1 \cap \cE) \cup (\cQ_1 \cap \cH) \subseteq \cA. \end{equation}

Thus, the set $\cQ := \cQ_1 \cap \cQ_*$ is a subset of $\cR_c(H)$ with full Lebesgue measure, in which every corresponding level set has infinitely many closed orbits. 
\end{proof}

\subsection{Other $4$-manifolds}\label{subsec:other_manifolds_periodic}

Consider a smooth function $H: \bM \to \bR$ where $\bM$ is a symplectic $4$-manifold. Assume that $\bM$ symplectically embeds into a closed symplectic $4$-manifold $\bW$ with $b^+ = 1$ and rational symplectic form $\Omega$. 

\subsubsection{Almost-existence of two closed orbits} Theorem~\ref{thm:main3} extends to this situation provided that (i) the symplectic form on $\bM$ is exact and (ii) for any $s \in \cR_c(H)$, at least one component of $\mathbb{M}\,\setminus\,H^{-1}(s)$ has compact closure. It follows, for example, that Theorem~\ref{thm:main3} extends to the cases $\bM \in \{T^*\mathbb{S}^2, T^*\mathbb{T}^2\}$. As explained in \S\ref{subsec:other_manifolds_adiabatic}, the results of \S\ref{sec:lcy} go through for $H$. However, the proofs of Lemmas~\ref{lem:finite_accumulation} and \ref{lem:blowup_implies_accumulation} rely on the facts that (i) $\bR^4$ is an exact symplectic manifold and (ii) each compact regular level set of $H$ bounds a compact domain. 

\subsubsection{Generic almost-existence of infinitely many closed orbits} Theorem~\ref{thm:main4} generalizes to this situation without any exactness assumptions. This is because the theorem follows directly from Theorem~\ref{thm:main2}. Lemma~\ref{lem:generic} holds for any symplectic $4$-manifold, so Corollary~\ref{cor:main4} generalizes as well.

\appendix

\section{Existence results for closed curves}\label{sec:closed_curves}

This appendix contains a proof of Proposition~\ref{prop:closed_curves}. It also discusses a version that holds for many other closed symplectic $4$-manifolds with $b^+ = 1$, such as $\mathbb{S}^2 \times \mathbb{S}^2$. The results follow from combining some known results about Taubes' Gromov invariant. 

\subsection{Taubes' Gromov invariant} Let $\bW$ be a closed symplectic $4$-manifold with symplectic form $\Omega$.  Let $J$ be any $\Omega$-tame almost-complex structure. A \emph{$J$-holomorphic cycle} in $\bW$ is a finite set of pairs $\cC = \{(u_i, n_i)\}$. Each $u_i$ is a somewhere injective $J$-holomorphic map $C_i \to \bW$ from a closed, irreducible Riemann surface $C_i$ without any nodal points. Each $n_i$ is a positive integer. The \emph{support} of $\cC$ is the set 
$$\operatorname{supp}(\cC) := \bigcup_i u_i(C_i) \subset \bW.$$ 

The \emph{homology class} of $\cC$ is the class $[\cC] := \sum_i m_i (u_i)_*[C_i] \in H^2(\bW; \bZ)$. Taubes' Gromov invariant, constructed in \cite{Taubes96}, is an integer-valued function 
$$\operatorname{Gr}(\bW, -): H_2(\bW; \bZ) \otimes \mathbb{A}(\bW) \to \bZ$$
defined by counting $J$-holomorphic cycles satisfying incidence conditions. Fix $A \in H^2(X; \bZ)$. Define the index 
$$I(A) := \langle c_1(\bW), A \rangle + A \cdot A \in \bZ.$$ 

The index $I(A)$ is an even integer. If $I(A) < 0$, we define $\operatorname{Gr}(\bW, A) = 0$. If $I(A) \geq 0$, then $\operatorname{Gr}(\bW, A)$ is defined as follows. We denote by $\cD(A)$ the set of triples $(J, \bfw, \Gamma)$ where $J$ is a smooth $\Omega$-tame almost-complex structure, $\bfw \in \bW^{I(A)/2}$ is a set of $I(A)/2 - k$ points in $\bW$, and $\Gamma$ is a collection of . Then, $\operatorname{Gr}(\bW, A)$ is a count of $J$-holomorphic cycles $\cC$ such that $[\cC] = A$ and $\bfw \subset \operatorname{supp}(C)$, where $(J, \bfw)$ is chosen from a Baire-generic subset of $\cD(A)$. The definition of the count and the proof that it is independent of choices are quite subtle. We will only use the following consequence. 

\begin{prop}\label{prop:gr_existence}
Fix $A \in H_2(\bW; \bZ)$ and assume that $\operatorname{Gr}(\bW, A) \neq 0$. Then there exists a Baire-generic subset $\cD_0 \subseteq \cD(A)$ such that the following holds for any $(J, \bfw) \in \cD_0$. There exists a $J$-holomorphic cycle $\cC = \{(u_i, n_i)\}$ such that:
\begin{enumerate}[(a)]
\item $[\cC] = A$.
\item $\bfw \subset \operatorname{supp}(\cC)$. 
\item For each $i$, let $A_i$ be the class $(u_i)_*[C_i]$. Then we have $0 \leq I(A_i) \leq I(A)$. 
\item For each $i$, the image $u_i(C_i)$ contains exactly $I(A_i)/2$ points from $\bfw$. 
\end{enumerate}
\end{prop}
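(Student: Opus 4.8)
The plan is to deduce Proposition~\ref{prop:gr_existence} directly from the construction of $\operatorname{Gr}(\bW,-)$ and the structural description of the cycles it counts, following Taubes \cite{Taubes96}. Note first that the hypothesis $\operatorname{Gr}(\bW,A)\neq 0$ forces $I(A)\geq 0$, since $\operatorname{Gr}(\bW,A)$ is defined to be $0$ when $I(A)<0$. I would then recall the definition: for $I(A)\geq 0$ one fixes a Baire-generic pair $(J,\bfw)\in\cD(A)$ — generic precisely so that all the relevant moduli spaces are cut out transversally and compactness/gluing behave as in \cite{Taubes96} — and $\operatorname{Gr}(\bW,A)$ is a weighted count of the $J$-holomorphic cycles $\cC$ with $[\cC]=A$ and $\bfw\subset\operatorname{supp}(\cC)$. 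Let $\cD_0\subseteq\cD(A)$ be the subset of such generic pairs; it is Baire-generic by construction. Since $\operatorname{Gr}(\bW,A)\neq 0$, for every $(J,\bfw)\in\cD_0$ the set of cycles being counted is non-empty, and choosing any such $\cC=\{(u_i,n_i)\}$ immediately gives properties (a) and (b).

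It remains to establish the index constraints (c) and (d), and here I would invoke the dimension-counting and transversality analysis underlying Taubes' construction. For generic $J$, a somewhere injective $J$-holomorphic curve in a class $B$ can exist only when $I(B)\geq 0$: the parametrized index of a curve $u\colon\Sigma_g\to\bW$ in class $B$ is $2g-2+2\langle c_1(\bW),B\rangle$, and the adjunction inequality $2g-2\geq B\cdot B-\langle c_1(\bW),B\rangle$ for somewhere injective curves gives $\operatorname{ind}(u)\geq I(B)$, so a negative value of $I(B)$ makes the moduli space empty after a generic perturbation. Applying this to each underlying somewhere injective map $u_i$ yields $I(A_i)\geq 0$. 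For the point-incidence count, genericity of $\bfw$ ensures that the $I(A)/2$ points are distinct, avoid all nodes and all intersection points of the support, and each lies on exactly one component; and for the configuration $\cC$ to survive in a $0$-dimensional count after imposing these $I(A)/2$ constraints, each component $u_i$ must absorb exactly $I(A_i)/2$ of them, with multiply covered components being square-zero tori ($I(A_i)=0$) carrying none of the points. This is exactly the balancing condition from Taubes' description of $\operatorname{Gr}$, and it gives (d). Summing (d) over $i$ gives $\sum_i I(A_i)/2=I(A)/2$, and then $I(A_i)\geq 0$ for every $i$ forces $I(A_i)\leq I(A)$, which is (c). The identity $\sum_i I(A_i)=I(A)$ used here is consistent because the counted cycles have pairwise geometrically disjoint component images, so positivity of intersections kills the cross terms $A_i\cdot A_j$, and the square-zero torus components contribute nothing.

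The main obstacle is that (c) and (d) are not formal consequences of the nonvanishing of the count: they depend on the precise list of cycles that can occur for generic data, which is the technical heart of \cite{Taubes96}. In a complete write-up I would either cite the relevant lemmas there verbatim or reproduce the transversality argument showing that, after a generic perturbation of $(J,\bfw)$, every cycle $\cC$ with $[\cC]=A$ and $\bfw\subset\operatorname{supp}(\cC)$ has all components somewhere injective, of nonnegative index, with pairwise disjoint images and the stated distribution of the constraint points. I do not expect to reprove Taubes' compactness and gluing theory; the proposition is simply a repackaging of its output into the form used later in the paper.
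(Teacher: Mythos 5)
Your approach is essentially the paper's: the published proof is a one-line citation to the list $(1.4)$ in \cite{Taubes96}, i.e.\ the proposition is read off from the definition of $\operatorname{Gr}(\bW,A)$ and Taubes' structural description of the cycles counted for generic $(J,\bfw)$, which is exactly your plan of taking $\cD_0$ to be the generic set from the construction, getting (a),(b) from nonvanishing of the count, and importing (c),(d) from Taubes' transversality/structure results. One local slip in your sketch of (c): the adjunction inequality for somewhere injective curves goes the other way, $2g-2 \leq B\cdot B - \langle c_1(\bW),B\rangle$, so the correct chain is $\operatorname{ind}(u) = 2g-2+2\langle c_1(\bW),B\rangle \leq I(B)$, and it is this upper bound (not the lower bound $\operatorname{ind}\geq I(B)$ you wrote, from which emptiness does not follow) that makes the moduli space generically empty when $I(B)<0$. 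With that direction fixed, your justification of (c) and (d) matches what Taubes' list asserts, and the rest of the proposal is fine.
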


\begin{proof}
The proposition follows from the definition of $\operatorname{Gr}(\bW, A)$; see the list \cite[($1.4$)]{Taubes96}. 
\end{proof}

\subsection{Improved existence for positive classes} Let $\bW$ be a closed symplectic $4$-manifold with symplectic form $\Omega$ as above. We will deduce an improved version of Propositions~\ref{prop:gr_existence} when $A$ has certain strong positivity properties, that we now discuss. Let $\cE \subset H_2(\bW; \bZ)$ denote the set of $B \in H_2(\bW; \bZ)$ such that $B \cdot B = -1$ and, for any $\Omega$-tame almost-complex structure $J$, there exists an embedded $J$-holomorphic sphere $\mathbb{S} \subset \bW$ representing $B$. For any $A \in H_2(\bW; \bZ)$, define $g(A) := \frac{1}{2}(A \cdot A - \langle c_1(\bW), A \rangle) + 1$. Let $\cP \subseteq H_2(\bW; \bZ)$ denote the set of all classes $A \in H_2(\bW; \bZ)$ satisfying the following conditions:
\begin{itemize}
\item $A \cdot A > 0$.
\item $I(A) \geq 0$ and $g(A) \geq 0$.
\item $\langle [\Omega], A \rangle > 0$.
\item $A \cdot B \geq 0$ for all $B \in \cE$. 
\end{itemize}

Now, we state the improved existence result.  

\begin{prop}\label{prop:gr_existence_improved}
Fix $A \in \cP \subseteq H_2(\bW; \bZ)$ and assume that $\operatorname{Gr}(\bW, A) \neq 0$. There exists a Baire-generic subset $\cD_1 \subseteq \cD(A)$ such that the following holds for any $(J, \bfw) \in \cD_1$. There exists a closed, embedded $J$-holomorphic surface $C \subset \bW$ such that (i) $G(C) = g(A)$, (ii) $[C] = A$ and (iii) $\bfw \subset C$. 
\end{prop}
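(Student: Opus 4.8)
The plan is to bootstrap from Proposition~\ref{prop:gr_existence}. Fix $(J,\bfw)$ in the Baire-generic subset $\cD_0\subseteq\cD(A)$ provided there, and let $\cC=\{(u_i,n_i)\}$ be the resulting $J$-holomorphic cycle, with component classes $A_i=(u_i)_*[C_i]$. The goal is to show that under the hypothesis $A\in\cP$ the cycle must in fact be a single embedded surface of multiplicity one in class $A$ and genus $g(A)$. The argument has three main stages: (i) show each $A_i$ is ``positive enough'' (in particular lies in or near $\cE\cup\cP$); (ii) rule out multiply-covered components and multiple components using the intersection-form positivity built into the definition of $\cP$; (iii) identify the genus via adjunction.

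First I would analyze the component classes. By Proposition~\ref{prop:gr_existence}(c,d), $0\le I(A_i)\le I(A)$ and $u_i(C_i)$ carries exactly $I(A_i)/2$ of the points of $\bfw$; summing, $\sum_i I(A_i)/2 = I(A)/2$, so the incidence conditions are partitioned exactly among the components with no slack. Now I would invoke the standard structure theory for $J$-holomorphic curves in the generic setting (McDuff's results, as used in the construction of $\operatorname{Gr}$): for generic $J$ the somewhere-injective curves $u_i$ are embedded, and any component with $I(A_i)=0$ passing through zero required points but still present in the cycle must be an exceptional sphere, i.e. $A_i\in\cE$ (here one uses that $g(A_i)\ge 0$ forces, together with $A_i\cdot A_i$ controlled by the adjunction relation $I(A_i)=2g(A_i)-2+2(A_i\cdot A_i)$ hmm — more carefully, $I(A_i)=\langle c_1,A_i\rangle+A_i\cdot A_i$ and $g(A_i)=\tfrac12(A_i\cdot A_i-\langle c_1,A_i\rangle)+1$, so $A_i\cdot A_i = \tfrac12(I(A_i)+2g(A_i)-2)$; when $I(A_i)=0$ and the component is a sphere this gives $A_i\cdot A_i=-1$). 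Then $A=\sum_i n_iA_i$ with at most one ``main'' component $A_0$ having $I(A_0)=I(A)$, and the remaining pieces are (possibly multiply covered) exceptional spheres. Pairing with $A$: since $A\cdot B\ge 0$ for all $B\in\cE$, and $A\cdot A>0$, one shows the only way to write $A$ as $n_0A_0+\sum_{j\ge 1}n_jB_j$ with $B_j\in\cE$ and $I(A_0)=I(A)$ is with no exceptional pieces and $n_0=1$, $A_0=A$. This is where the hypotheses $A\cdot A>0$, $\langle[\Omega],A\rangle>0$, and $A\cdot B\ge 0$ for $B\in\cE$ are all essential — they prevent the cycle from degenerating into a connected-sum of $A$ minus exceptional spheres.

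Second, having forced a single somewhere-injective component $u_0\colon C_0\to\bW$ in class $A$ with multiplicity one, I would rule out multiple covers directly: a multiple cover would mean $A=n_0A_0$ with $n_0\ge 2$, but then $I(A)=n_0\langle c_1,A_0\rangle + n_0^2 A_0\cdot A_0$, while the incidence count through generic points forces the underlying somewhere-injective curve to pass through $I(A)/2$ points, which for generic $J$ exceeds the generic dimension $I(A_0)/2$ of its moduli space unless $n_0=1$; alternatively this is already part of Proposition~\ref{prop:gr_existence}(c) applied to $A_0$. Thus $\cC=\{(u_0,1)\}$ with $u_0$ somewhere injective, hence (generic $J$) embedded with image a closed embedded surface $C\subset\bW$, $[C]=A$, $\bfw\subset C$. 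Finally, the adjunction formula for embedded $J$-holomorphic curves gives $2g(C)-2 = A\cdot A - \langle c_1(\bW),A\rangle$, i.e. $g(C)=\tfrac12(A\cdot A-\langle c_1(\bW),A\rangle)+1=g(A)$. Taking $\cD_1\subseteq\cD_0$ to be the intersection with the generic locus where all these transversality and embeddedness statements hold (a countable intersection of residual sets, hence residual) completes the proof.

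\textbf{Main obstacle.} The delicate step is stage (i)--(ii): carefully justifying that the only components that can appear besides a single copy of the class $A$ are exceptional spheres, and then using the positivity conditions defining $\cP$ to exclude even those. This requires pinning down precisely which structural facts about $\operatorname{Gr}$-cycles (from Taubes~\cite{Taubes96} and McDuff's accompanying transversality results) one is entitled to use — in particular the statement that, for generic admissible $J$ and generic point constraints, every component is embedded and every component of index $0$ through no constraint points is an exceptional sphere. I expect the cleanest route is to cite the relevant items from \cite[(1.4)]{Taubes96} together with \cite{McDuff91} for the adjunction/positivity-of-intersections input, rather than reprove anything; the genuinely new content is only the bookkeeping with the cone condition $A\cdot B\ge 0$, $B\in\cE$.
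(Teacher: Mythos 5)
Your overall strategy — start from Proposition~\ref{prop:gr_existence} and force the cycle to be a single embedded curve in class $A$ — is the right shape, but as written it has genuine gaps, essentially because you try to reconstruct by hand the structural input that the paper simply cites. The paper's proof intersects $\cD_0$ with the generic set $\cD_2$ of Lemma~\ref{lem:current_restrictions} (Taubes, \cite[Proposition $3.4$]{Taubes11}), which supplies exactly three facts: $I(A_i)\geq 0$ for every component, the upper bound $\sum_i n_i I(A_i)\leq I(A)$, and the characterization of the equality case as a single embedded curve of genus $g(A)$ with multiplicity one. The point-count from Proposition~\ref{prop:gr_existence}(d), together with $\bfw\subset\operatorname{supp}(\cC)$, gives only the inequality $I(A)\leq \sum_i I(A_i)$ (a point of $\bfw$ may lie on several components, so your claimed equality $\sum_i I(A_i)/2=I(A)/2$ is not justified and is also not needed); squeezing against Taubes' upper bound yields equality, and the equality case finishes the proof. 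You never establish, or even cite, the crucial upper bound $\sum_i n_i I(A_i)\leq I(A)$ — this is where the hypotheses defining $\cP$ (in particular $A\cdot A>0$ and the cone condition against $\cE$) actually enter, inside Taubes' argument in the $b^+=1$ setting — and without it your bookkeeping cannot rule out extra components or multiplicities.

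Two further steps in your outline are incorrect or unsupported as stated. First, index-zero components carrying no constraint points need not be exceptional spheres: in Taubes' theory the typical multiply covered components are square-zero tori (with $\langle c_1,A_i\rangle=0=A_i\cdot A_i$, hence $I(A_i)=0$), and the condition $A\cdot B\geq 0$ for $B\in\cE$ says nothing about these, so your stage (ii) exclusion argument does not close. Second, embeddedness of the surviving curve does not follow from somewhere-injectivity plus genericity of $J$; generically such a curve is only immersed with transverse double points, and embeddedness (equivalently, equality in the adjunction inequality, which is what pins the genus to $g(A)$) is precisely part of the equality-case statement in Lemma~\ref{lem:current_restrictions}(c) that you would need to prove rather than assume. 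If you replace your stages (i)--(iii) by a citation of Taubes' restrictions and run the squeeze $I(A)\leq\sum_i I(A_i)\leq\sum_i n_i I(A_i)\leq I(A)$, your argument becomes the paper's proof.
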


The proof relies on the following lemma.

\begin{lem}\label{lem:current_restrictions}
Fix $A \in \cP$. There exists a Baire-generic subset $\cD_2 \subseteq \cD(A)$ such that the following holds for any $(J, \bfw) \in \cD_2$. Let $\cC = \{(u_i, n_i)\}$ be a $J$-holomorphic current such that $[\cC] = A$ and $\bfw \subset \operatorname{supp}(\cC)$. Then, $\cC$ satisfies the following additional restrictions:
\begin{enumerate}[(a)]
\item For each $i$, let $A_i$ denote the class represented by $u_i$. Then $I(A_i) \geq 0$. 
\item We have $\sum_i n_i I(A_i) \leq I(A)$. 
\item We have $\sum_i n_i I(A_i) = I(A)$ if and only if $\cC = \{(C, 1)\}$, where $C \subset \bW$ is a closed, embedded $J$-holomorphic surface of genus $g(A)$. 
\end{enumerate}
\end{lem}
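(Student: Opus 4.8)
\textbf{Proof sketch for Lemma~\ref{lem:current_restrictions}.}

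The plan is to combine the index-positivity and adjunction properties of somewhere injective holomorphic curves in four dimensions with a generic-transversality argument for the point constraints. First I would recall the basic facts. Any somewhere injective $J$-holomorphic curve $u_i \colon C_i \to \bW$ in class $A_i$ has a well-defined adjunction formula $2\delta(u_i) = A_i \cdot A_i - \langle c_1(\bW), A_i\rangle - 2G(C_i) + 2$, where $\delta(u_i) \geq 0$ counts singularities with multiplicity; rewriting, $G(C_i) = g(A_i) - \delta(u_i) \le g(A_i)$. Part (a) is then the statement that $I(A_i) \ge 0$ for the components that appear. This is where genericity of $J$ enters: for a Baire-generic $\Omega$-tame $J$, every somewhere injective curve is Fredholm-regular, so its moduli space has dimension $I(A_i) \ge 0$ near any such curve; if $I(A_i) < 0$ the corresponding curve cannot exist. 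One must be slightly careful because the components $u_i$ must pass through assigned points of $\bfw$, but that only makes the constraint more restrictive, and the standard argument (see the discussion around \cite[(1.4)]{Taubes96}) shows that for generic $(J, \bfw)$ every component satisfies $I(A_i) \ge 0$. The hypothesis $A \in \cP$, in particular $A \cdot B \ge 0$ for all $B \in \cE$ and $A\cdot A>0$, is what rules out the class $A$ containing a negative exceptional sphere component with a bad intersection sign, and is used to control the decomposition.

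Next I would prove (b). Since $[\cC] = A = \sum_i n_i A_i$, we expand $I(A) = \langle c_1(\bW), A\rangle + A \cdot A = \sum_i n_i \langle c_1(\bW), A_i\rangle + \sum_{i,j} n_i n_j A_i \cdot A_j$. Using positivity of intersections of distinct somewhere injective $J$-holomorphic curves \cite[Theorem~1.1]{McDuff91}, each $A_i \cdot A_j \ge 0$ for $i \ne j$, and $A_i \cdot A_i \ge -1$ with equality only for exceptional spheres; combined with the adjunction inequality $A_i \cdot A_i \ge \langle c_1(\bW), A_i\rangle - 2 + 2G(C_i) \ge \langle c_1(\bW), A_i\rangle - 2$ one gets $I(A_i) = \langle c_1(\bW), A_i\rangle + A_i\cdot A_i \le 2 A_i \cdot A_i$ in the relevant range, and more usefully $\sum_i n_i I(A_i) \le \langle c_1(\bW), A\rangle + \sum_i n_i A_i \cdot A_i \le \langle c_1(\bW), A\rangle + \sum_{i,j} n_i n_j A_i \cdot A_j = I(A)$, where the middle inequality drops the nonnegative cross terms $\sum_{i \ne j} n_i n_j A_i \cdot A_j$ and the terms $n_i(n_i-1) A_i \cdot A_i$ — here again $A \in \cP$ is used to ensure these discarded quantities are genuinely nonnegative (an exceptional-sphere component would have $A_i \cdot A_i = -1 < 0$, but $A \cdot A_i \ge 0$ forces its multiplicity and cross terms to compensate; this is the delicate bookkeeping step).

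Finally, (c): equality $\sum_i n_i I(A_i) = I(A)$ forces every discarded term above to vanish. The vanishing of $\sum_{i \ne j} n_i n_j A_i \cdot A_j$ together with positivity of intersections forces the supports to be pairwise disjoint, and in the presence of the point constraints (generic $\bfw$, with each component carrying $I(A_i)/2$ points by the dimension count) this forces a single component, so $\cC = \{(u, n)\}$. Then $n^2 A_1 \cdot A_1 = A \cdot A > 0$ and $n I(A_1) = I(A)= n\langle c_1,A_1\rangle + n^2 A_1\cdot A_1$ force $n = 1$. With $n=1$, equality in the adjunction inequality gives $\delta(u) = 0$, i.e. $u$ is an embedding, and $G(C) = g(A)$. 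I expect the main obstacle to be part (b)/(c): carefully tracking the inequality chain while allowing exceptional-sphere components, and verifying that the hypotheses packaged into $\cP$ (especially $A \cdot B \ge 0$ for $B \in \cE$ and $g(A) \ge 0$) are exactly what is needed to close it — one must also invoke the ``light curve'' / intersection-positivity refinements of \cite{McDuff91} to handle the case where a component is multiply covered or where the Gromov invariant count a priori allows a reducible configuration. The genericity statements (existence of $\cD_2$) are routine once the transversality package for somewhere injective curves with point constraints is cited.

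\medskip

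\textbf{Proof sketch for Proposition~\ref{prop:gr_existence_improved}.} Granting Lemma~\ref{lem:current_restrictions}, the proof is short. By hypothesis $\operatorname{Gr}(\bW, A) \ne 0$, so by Proposition~\ref{prop:gr_existence} there is a Baire-generic $\cD_0 \subseteq \cD(A)$ over which a $J$-holomorphic cycle $\cC = \{(u_i, n_i)\}$ with $[\cC] = A$, $\bfw \subset \operatorname{supp}(\cC)$, $0 \le I(A_i) \le I(A)$, and $u_i(C_i)$ containing exactly $I(A_i)/2$ points of $\bfw$ exists. Intersecting $\cD_0$ with the set $\cD_2$ from Lemma~\ref{lem:current_restrictions} gives a Baire-generic $\cD_1 \subseteq \cD(A)$. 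Over $\cD_1$, Proposition~\ref{prop:gr_existence}(d) gives $\sum_i n_i I(A_i) = \sum_i n_i \cdot 2 \cdot (\#(u_i(C_i) \cap \bfw)) / 1$ — more precisely, summing the point counts, $\sum_i n_i (I(A_i)/2) \ge \#\bfw = I(A)/2$ since $\bfw \subset \operatorname{supp}(\cC) = \bigcup_i u_i(C_i)$, hence $\sum_i n_i I(A_i) \ge I(A)$. Combined with Lemma~\ref{lem:current_restrictions}(b), we get equality $\sum_i n_i I(A_i) = I(A)$, and then Lemma~\ref{lem:current_restrictions}(c) says $\cC = \{(C, 1)\}$ with $C \subset \bW$ a closed embedded $J$-holomorphic surface of genus $g(A)$ and $[C] = A$; since $\operatorname{supp}(\cC) = C$ we have $\bfw \subset C$. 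This is exactly the assertion.
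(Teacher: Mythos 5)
The paper's own proof of this lemma is a one-line citation to Taubes \cite[Proposition $3.4$]{Taubes11}; you instead attempt a direct argument via adjunction, positivity of intersections, and constrained transversality. That is a legitimate alternative route in outline, but as written it has genuine gaps exactly at the points the lemma is designed to control. In (b), the discarded quantity $\sum_i n_i(n_i-1)\,A_i\cdot A_i+\sum_{i\ne j}n_in_j\,A_i\cdot A_j$ is only asserted to be nonnegative. The negative diagonal terms come from multiply covered components of negative square, and to run your compensation argument you must (i) show that for generic $J$ the only somewhere injective components of negative square are classes with $A_i\cdot A_i=-1$, $\langle c_1(\bW),A_i\rangle=1$ (this does follow from $I(A_i)\ge 0$ together with adjunction), and (ii) know that $A$ pairs nonnegatively with those classes. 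But $A\in\cP$ only gives $A\cdot B\ge 0$ for $B\in\cE$, i.e. for classes represented by embedded spheres for \emph{every} tame $J$; a $(-1)$-sphere component of your particular generic $J$ is not a priori in $\cE$, so an additional argument (or the formulation of positivity actually used by Taubes) is needed to close this step.

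The more serious gap is in (c). Vanishing of the cross terms plus the point constraints does not force a single component: a component with $I(A_i)=0$ carries no points in the dimension count and can a priori appear disjointly and with arbitrary multiplicity. For instance, a multiply covered square-zero torus with $\langle c_1(\bW),A_i\rangle=0$, disjoint from the main component, yields $\sum_i n_iI(A_i)=I(A)$ without the cycle being irreducible. Excluding such configurations needs more than you invoke: a disjoint exceptional sphere is killed by $A\cdot E\ge 0$ (disjointness would force $A\cdot E=-n_E<0$), but disjoint square-zero or positive-square pieces orthogonal to a class of positive square require the light-cone positivity available when $b^+=1$ (the setting of \cite{Taubes11}), which never appears in your sketch. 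Finally, your last step, ``equality in the adjunction inequality gives $\delta(u)=0$,'' is unsupported: once there is a single reduced component the index equality is vacuous, and embeddedness together with $G(C)=g(A)$ must instead come from the constrained transversality count --- for generic $(J,\bfw)$ a somewhere injective curve through all $I(A)/2$ points of $\bfw$ satisfies $I(A)-2\delta\ge 2\#\bfw=I(A)$, forcing $\delta=0$. With these three points repaired your direct argument would essentially reprove the cited result, but as it stands they are the substance of the lemma rather than routine details.
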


\begin{proof}
This is proved by Taubes \cite[Proposition $3.4$]{Taubes11}. 
\end{proof}

We prove Proposition~\ref{prop:gr_existence_improved} by combining Proposition~\ref{prop:gr_existence} and Lemma~\ref{lem:current_restrictions}. 

\begin{proof}[Proof of Proposition~\ref{prop:gr_existence_improved}]
Define $\cD_1 \subseteq \cD(A)$ to be the intersection of the subsets $\cD_0$ and $\cD_2$ from Proposition~\ref{prop:gr_existence} and Lemma~\ref{lem:current_restrictions}. By Proposition~\ref{prop:gr_existence}, there exists a $J$-holomorphic cycle $\cC = \{(u_i, n_i)\}$ such that:
\begin{enumerate}[(a)]
\item $[\cC] = A$.
\item $\bfw \subset \operatorname{supp}(\cC)$. 
\item Write $A_i = u_i(C_i)$ for each $i$. Then for each $i$, we have $0 \leq I(A_i) \leq I(A)$. 
\item For each $i$, the image $u_i(C_i)$ contains exactly $I(A_i)/2$ points from $\bfw$. 
\end{enumerate}

Combining (d) with Lemma~\ref{lem:current_restrictions}(a,b), we have
$$I(A) \leq \sum_i I(A_i) \leq \sum_i n_i I(A_i) \leq I(A).$$

Therefore, we have $I(A) = \sum_i n_i I(A_i)$. The proposition now follows from Lemma~\ref{lem:current_restrictions}(c). 
\end{proof}

Combining Proposition~\ref{prop:gr_existence_improved} with the Gromov compactness theorem, we obtain an existence result for all $(J, \bfw)$. 

\begin{cor}\label{cor:gr_existence_improved}
Fix any $A \in \cP$ such that $\operatorname{Gr}(\bW, A) \neq 0$ and any finite subset $\bfw \subset \bW$ of size at most $I(A)/2$. Then, for any $\Omega$-tame almost-complex structure $J$, there exists a closed, connected Riemann surface $C$ and a $J$-holomorphic curve $u: C \to \bW$ such that (i) $G_a(C) = g(A)$, (ii) $u_*[C] = A$ and (iii) $\bfw \subset u(C)$. 
\end{cor}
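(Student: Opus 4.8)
\textbf{Proof proposal for Corollary~\ref{cor:gr_existence_improved}.} The plan is to reduce the case of an arbitrary $\Omega$-tame $J$ to the Baire-generic case handled by Proposition~\ref{prop:gr_existence_improved}, using Gromov compactness together with the positivity constraints on the class $A$. First I would fix $A \in \cP$ with $\operatorname{Gr}(\bW, A) \neq 0$, an arbitrary $\Omega$-tame $J$, and a finite set $\bfw \subset \bW$ of size $\ell \leq I(A)/2$. Enlarge $\bfw$ to a set $\bfw'$ of exactly $I(A)/2$ points (this only makes the incidence conditions harder, so a curve through $\bfw'$ is in particular a curve through $\bfw$). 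Choose a sequence $J_k$ of $\Omega$-tame almost-complex structures with $J_k \to J$ in $C^\infty$ such that each pair $(J_k, \bfw')$ lies in the Baire-generic subset $\cD_1 \subseteq \cD(A)$ from Proposition~\ref{prop:gr_existence_improved}; this is possible since $\cD_1$ is dense and we may perturb $J$ while keeping $\bfw'$ fixed. For each $k$, Proposition~\ref{prop:gr_existence_improved} produces a closed, embedded $J_k$-holomorphic surface $C_k \subset \bW$ with $G(C_k) = g(A)$, $[C_k] = A$, and $\bfw' \subset C_k$.

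Next I would apply the Gromov compactness theorem to the sequence of $J_k$-holomorphic curves $C_k$. Since $[C_k] = A$ is fixed, the areas $\int_{C_k} \Omega = \langle [\Omega], A\rangle$ are uniformly bounded, and the genus is fixed at $g(A)$, so after passing to a subsequence the $C_k$ converge in the Gromov sense to a stable $J$-holomorphic map whose image is the support of a $J$-holomorphic cycle $\cC = \{(u_i, n_i)\}$ with $[\cC] = A$. Moreover, since $\bfw' \subset C_k$ for every $k$ and point constraints are closed conditions under Gromov convergence, we get $\bfw' \subset \operatorname{supp}(\cC)$, hence $\bfw \subset \operatorname{supp}(\cC)$. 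The arithmetic genus of the limit is bounded above by $g(A)$ by the standard semicontinuity of genus under Gromov limits (bubbling and nodal degeneration can only decrease it, and in fact $g_a$ of the limiting nodal configuration is controlled by $g(A)$).

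The key step — which is also where I expect the only real subtlety — is to rule out degeneration of $\cC$ into a genuinely reducible or multiply-covered configuration, i.e. to show $\cC$ is again of the form $\{(C, 1)\}$ with $C$ a connected curve representing $A$ with $G_a(C) = g(A)$. For a generic $J$ this is exactly Lemma~\ref{lem:current_restrictions}(c), but for our arbitrary limiting $J$ that genericity is lost. The resolution is that the relevant inequalities in Lemma~\ref{lem:current_restrictions} are purely homological-topological (they follow from the adjunction inequality and intersection positivity, which hold for \emph{every} tame $J$), combined with the constraint counting. Concretely: each component class $A_i$ of the limit satisfies $A_i \cdot A_i \geq -1$ and the adjunction inequality $g_a(C_i) \geq g(A_i) := \tfrac12(A_i\cdot A_i - \langle c_1, A_i\rangle) + 1$, with the sphere classes in $\cE$ being the only ones with $A_i\cdot A_i = -1$; and because $A \in \cP$ satisfies $A \cdot B \geq 0$ for all $B \in \cE$ and $A\cdot A > 0$, the only decomposition $A = \sum n_i A_i$ compatible with $\sum n_i I(A_i) = I(A)$ and $g_a \leq g(A)$ is the trivial one. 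Here $\sum n_i I(A_i) = I(A)$ is forced because the $C_k$ pass through all $I(A)/2$ points of $\bfw'$ and point constraints survive in the limit, while Lemma~\ref{lem:current_restrictions}(a,b) give $\sum_i I(A_i) \geq I(A)$ in combination with the incidence count, as in the proof of Proposition~\ref{prop:gr_existence_improved}. I would spell out this homological argument carefully, emphasizing that it uses only intersection positivity and adjunction (valid for all tame $J$) and not transversality; the conclusion is that the Gromov limit is a connected, possibly nodal $J$-holomorphic curve $u: C \to \bW$ with $G_a(C) = g(A)$, $u_*[C] = A$, and $\bfw \subset u(C)$, which is precisely the assertion of the corollary.
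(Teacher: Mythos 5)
Your overall route -- approximate the given $J$ by a sequence of generic $J_k$, invoke Proposition~\ref{prop:gr_existence_improved} to get embedded genus-$g(A)$ curves through the (enlarged) point set, and pass to a Gromov limit -- is exactly the paper's intended argument, which is stated in one line as ``Proposition~\ref{prop:gr_existence_improved} plus Gromov compactness.'' The problem is the step you yourself flag as the key one: ruling out reducible or multiply covered limit configurations. As argued, that step has a genuine gap. Lemma~\ref{lem:current_restrictions} is a statement for a Baire-generic $(J,\bfw)$, and the index/incidence counting you invoke ($\sum_i n_i I(A_i)=I(A)$ forced by the point constraints, each component through $k$ points having $I(A_i)\ge 2k$, $A_i\cdot A_i\ge -1$ with only classes in $\cE$ achieving $-1$) all rest on transversality for somewhere-injective curves, which is lost for the limiting non-generic $J$. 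For an arbitrary tame $J$ the limit cycle can perfectly well contain multiply covered components, components of negative index, or spheres of square $\le -2$; adjunction and positivity of intersections alone do not exclude these, so the homological argument you sketch does not close.

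Fortunately, the step is also unnecessary, and dropping it repairs the proof. The corollary does not assert embeddedness, irreducibility, or that the limit is a reduced cycle: in this paper a ``Riemann surface'' may have nodes, ``connected'' means connected normalization, and the conclusion only asks for a connected (possibly nodal) $J$-holomorphic curve with $G_a(C)=g(A)$, $u_*[C]=A$, and $\bfw\subset u(C)$. So instead of passing to the cycle/current picture (where genus and connectivity of the domain are forgotten and would indeed have to be painfully recovered), keep the stable-map formulation of Gromov compactness: the $C_k$ are connected closed curves of fixed genus $g(A)$ in the fixed class $A$, hence of uniformly bounded area, so a subsequence Gromov-converges to a stable $J$-holomorphic map from a connected nodal domain whose arithmetic genus is exactly $g(A)$, whose pushforward class is $A$, and whose image contains $\bfw$ because the incidence conditions are closed. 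That limit is verbatim the curve the corollary asks for, multiply covered or reducible pieces and all. (The minor point about arranging $(J_k,\bfw')\in\cD_1$ with $\bfw'$ fixed can be handled by perturbing the points as well and letting them converge back to $\bfw'$; the constraints survive the limit for the same reason.)
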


\subsection{Closed curves in $\mathbb{CP}^2$}\label{subsec:closed_curves_proof} We prove Proposition~\ref{prop:closed_curves} using Corollary~\ref{cor:gr_existence_improved}. 

\begin{proof}[Proof of Proposition~\ref{prop:closed_curves}]
Let $A \in H_2(\mathbb{CP}^2; \bZ)$ denote the Poincar\'e dual of $\Omega$. Then $A$ is represented by an embedded holomorphic sphere $\bD \subset \mathbb{CP}^2$ of self intersection $1$. Given Corollary~\ref{cor:gr_existence_improved}, it suffices to show that, for any integer $e \geq 1$, we have $eA \in \cP$ and $\operatorname{Gr}(\mathbb{CP}^2, eA) \neq 0$. The former claim follows from the computatations $eA \cdot eA = e^2$, $I(eA) = e^2 + 3e$, $g(eA) = (e-1)(e-2)/2$, $\langle \Omega, eA \rangle = e$, and the fact that $\mathbb{CP}^2$ is positive-definite and contains no surfaces of negative self-intersection. The computation $\operatorname{Gr}(\mathbb{CP}^2, eA) \neq 0$ can be done by explicitly counting $J$-curves for an integrable $J$ (see for example \cite[Theorem B.$3$]{Edtmair22}). 
\end{proof}

\subsection{Closed curves in closed symplectic $4$-manifolds with $b^+ = 1$}\label{subsec:closed_curves_gen_proof} Corollary~\ref{cor:gr_existence_improved} yields existence results for symplectic $4$-manifolds besides $\mathbb{CP}^2$. 

\begin{prop}\label{prop:closed_curves_general}
Let $\bW$ be any closed symplectic $4$-manifold such that $b^+ = 1$ and the symplectic form $\Omega$ has rational cohomology class. Let $A \in H_2(\bW; \mathbb{Q})$ denote the Poincar\'e dual of $[\Omega]$. Fix any positive integer $e > 1 + |\langle c_1(\bW), A \rangle| / (A \cdot A)$ such that $eA \in H_2(\bW; \mathbb{Z})$. Fix any set $\bfw \subset \bW$ such that 
$$\#\bfw \leq e^2(A \cdot A) + e\langle c_1(\bW),A\rangle - 1.$$ 

Then, for any $\Omega$-tame almost-complex structure $\bar{J}$, there exists a closed, connected Riemann surface $C$ and a $\bar{J}$-holomorphic curve $u: C \to \bW$ such that 
(i) $G_a(C) = \frac{1}{2}(e^2(A \cdot A) - e\langle c_1(\bW), A \rangle) + 1$, (ii) $u_*[C] = eA$ and (iii) $\bfw \subset u(C)$.
\end{prop}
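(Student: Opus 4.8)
The plan is to deduce Proposition~\ref{prop:closed_curves_general} from Corollary~\ref{cor:gr_existence_improved}, exactly as Proposition~\ref{prop:closed_curves} was deduced in \S\ref{subsec:closed_curves_proof}. First I would set $A \in H_2(\bW;\mathbb{Q})$ to be the Poincar\'e dual of $[\Omega]$ and work with the integral class $A_e := eA$, noting that $A \cdot A = \langle[\Omega],A\rangle > 0$ and that $I(A_e) = e^2(A\cdot A) + e\langle c_1(\bW),A\rangle$ while $g(A_e) = \tfrac12(e^2(A\cdot A) - e\langle c_1(\bW),A\rangle) + 1$, so that the hypotheses on $G_a(C)$ and on $\#\bfw$ are precisely ``$G_a(C) = g(A_e)$'' and ``$\#\bfw \le I(A_e)/2$''. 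Then, by Corollary~\ref{cor:gr_existence_improved}, it suffices to establish the two facts: (1) $A_e \in \cP$, and (2) $\operatorname{Gr}(\bW, A_e) \ne 0$.

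For (1) I would check the four defining conditions of $\cP$. The condition $A_e \cdot A_e = e^2(A\cdot A) > 0$ is immediate. The condition $\langle[\Omega],A_e\rangle = e\langle[\Omega],A\rangle > 0$ holds since $\langle[\Omega],A\rangle = A\cdot A > 0$. The condition $g(A_e) \ge 0$ holds trivially from its formula, and $I(A_e) \ge 0$ is exactly the hypothesis $e > 1 + |\langle c_1(\bW),A\rangle|/(A\cdot A)$ rearranged (this lower bound on $e$ forces $e^2(A\cdot A) + e\langle c_1(\bW),A\rangle > 0$, hence also $g(A_e) > 0$). For the last condition, $A_e \cdot B \ge 0$ for every $B \in \cE$: here $B$ is represented, for generic $J$, by an embedded $J$-holomorphic sphere $\mathbb{S}$, and since $A = \mathrm{PD}[\Omega]$ one has $A \cdot B = \langle[\Omega],[\mathbb{S}]\rangle = \int_{\mathbb{S}}\Omega > 0$ by positivity of symplectic area on holomorphic curves; multiplying by $e > 0$ gives $A_e \cdot B > 0$. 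This step is entirely routine once one recalls that $\mathrm{PD}[\Omega]$ pairs positively with every $J$-holomorphic class.

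For (2), the nonvanishing of the Gromov invariant, I would invoke the identification of Taubes' Gromov invariant with Seiberg--Witten theory together with the known structure of Seiberg--Witten invariants of symplectic $4$-manifolds with $b^+ = 1$. Concretely: when $b^+ = 1$ and $[\Omega]$ is rational, Taubes' theorems (\cite{Taubes96} and the $b^+ = 1$ refinements, e.g. using the small-perturbation chamber) give $\operatorname{Gr}(\bW, \mathrm{PD}[\Omega]^{\otimes e}) = \operatorname{Gr}(\bW, eA) \ne 0$ for all sufficiently large $e$, because $eA$ is the class Poincar\'e dual to the canonical-type class appearing in Taubes' ``$\mathrm{SW} = \mathrm{Gr}$'' for large multiples of the symplectic form; the lower bound on $e$ in the statement is exactly what is needed to stay in the range where $I(eA) \ge 0$ and the relevant wall-crossing argument applies. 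I expect \emph{this} step --- pinning down the precise citation and the $b^+=1$ chamber subtlety --- to be the main obstacle, whereas everything else is bookkeeping; in the final text I would state it as a lemma citing \cite{Taubes96} and the standard references for $\mathrm{SW}$ of symplectic $4$-manifolds, paralleling the remark made for $\mathbb{CP}^2$ in the proof of Proposition~\ref{prop:closed_curves}. Once (1) and (2) are in hand, Corollary~\ref{cor:gr_existence_improved} applied to $A_e$ and the given $\bfw$ produces the desired $\bar J$-holomorphic curve $u: C \to \bW$ with $G_a(C) = g(A_e)$, $u_*[C] = eA$, and $\bfw \subset u(C)$, which is the statement.
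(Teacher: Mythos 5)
Your reduction to Corollary~\ref{cor:gr_existence_improved} and your verification that $eA \in \cP$ are exactly the paper's Step~$1$ (including the observation that $A = \mathrm{PD}[\Omega]$ pairs positively with every embedded $J$-holomorphic sphere, and that the lower bound on $e$ gives $I(eA) > 0$ and $g(eA) \geq 0$). The gap is in your item (2). First, the nonvanishing $\operatorname{Gr}(\bW, eA) \neq 0$ is not a routine consequence of ``SW $=$ Gr plus wall-crossing'' that you can leave as a citation placeholder: the paper proves it by combining Li--Liu's extension of Taubes' theorem to $b^+ = 1$ \cite{LL99} with the specific nonvanishing result \cite[Lemma $3.3$]{LL01}, and your sketch (``$eA$ is the class Poincar\'e dual to the canonical-type class \ldots'') misidentifies the mechanism. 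More seriously, the blanket claim $\operatorname{Gr}(\bW,eA)\neq 0$ is not available in the stated generality: when $b_1(\bW) = 2$ and the cup product on $H^1(\bW;\bZ)$ vanishes, the argument via \cite{LL99, LL01} does not apply, and this exceptional case is precisely why the paper's proof has a separate Step~$3$. There the paper invokes \cite[Proposition $1.1$]{Taubes17}, which for generic $J'$ yields an embedded holomorphic curve in class $eA$ through only $I(eA)/2 - 1$ generic points \emph{and} through two perturbed loops generating $H_1(\bW;\bZ)/\mathrm{Torsion}$, and then recovers the statement for the given $(\bar{J}, \bfw)$ by taking $J' \to \bar{J}$, $\bfw' \to \bfw$ and applying Gromov compactness. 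Your proposal, as written, simply does not cover these manifolds, so the proof is incomplete without either adding the topological hypothesis of the paper's Step~$2$ or supplying an argument of the Step~$3$ type.

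A smaller point: you paraphrase the hypothesis on $\bfw$ as $\#\bfw \leq I(eA)/2$, which is what Corollary~\ref{cor:gr_existence_improved} needs, but it is not literally the bound in the statement; be careful to reconcile the point count you feed into the Gromov-invariant argument (and the $I(eA)/2 - 1$ count forced in the exceptional case) with the bound you assume on $\#\bfw$.
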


\begin{proof}
Let $A \in H_2(\bW; \mathbb{Q})$ denote the Poincar\'e dual of $\Omega$ and let $e$ be any positive integer larger than $1 + |\langle c_1(\bW), A \rangle|/(A \cdot A)$ such that $eA \in H_2(\bW; \mathbb{Z})$. The proof of the proposition will take $3$ steps. 

\noindent\textbf{Step $1$:} This step shows that, for any integer $e \geq 1 + |\langle c_1(\bW), A \rangle|/(A \prod A)$, we have $A \in \cP$. Note that $A \cdot A > 0$. Using the lower bound on $e$, we have
$$I(eA) = e\langle c_1(\mathbb{W}), A \rangle + e^2(A \cdot A) > e(\langle c_1(\mathbb{W}), A \rangle + |\langle c_1(\mathbb{W}), A \rangle| > 0$$
and
$$g(eA) = \frac{1}{2}(-e\langle c_1(\mathbb{W}), A \rangle + e^2(A \cdot A)) + 1 > \frac{e}{2}(-\langle c_1(\bW), A \rangle + |\langle c_1(\bW), A \rangle|) + 1 \geq 0.$$

Since $A$ is dual to $[\Omega]$, we have $\langle \Omega, eA \rangle > 0$. Since $A$ is dual to $[\Omega]$, it must pair positively with any $J$-holomorphic sphere. Thus, we have verified all the necessary conditions and conclude that $eA \in \cP$. 

\noindent\textbf{Step $2$:} This step proves the proposition with the additional assumption that either (i) $b_1(\bW) \neq 2$ or (ii) the cup product on $H^1(\bW; \bZ)$ is nonzero. We claim that $\operatorname{Gr}(\bW, eA) \neq 0$. The claim is proved as follows. In \cite{LL99}, Li--Liu showed that Taubes' ``$\operatorname{SW} = \operatorname{Gr}$'' theorem extends to the $b^+ = 1$ case, so $\operatorname{Gr}(\bW, eA)$ is equal to a corresponding Seiberg--Witten invariant. It follows from a result of the same authors, namely \cite[Lemma $3.3$]{LL01}, that this Seiberg--Witten invariant does not vanish. Now the proposition follows from Corollary~\ref{cor:gr_existence_improved}. 

\noindent\textbf{Step $3$:} This step explains how to prove the proposition in the exceptional case where $b_1 = 2$ and the cup product on $H^1(\bW; \bZ)$ is zero. Choose a pair of disjoint curves $\gamma_1$ and $\gamma_2$ whose homology classes generate $H_1(\bW; \bZ)/\operatorname{Torsion}$. In this case, as explained in \cite[Proposition $1.1$]{Taubes17}, a slightly weaker version of Proposition~\ref{prop:gr_existence_improved} holds. For a generic choice of $J'$, a generic choice of $I(eA)/2 - 1$ points $\bfw'$, and small perturbations $\gamma_1'$ and $\gamma_2'$ of the chosen loops, there exists an embedded $J'$-holomorphic surface $C \subset \bW$ such that (i) $G(C) = g(A)$, (ii) $[C] = A$, (iii) $\bfw' \subset C$ and (iv) $C$ intersects $\gamma_1'$ and $\gamma_2'$. The proposition follows from taking $J' \to J$ and $\bfw' \to \bfw$ and applying the Gromov compactness theorem. 
\end{proof}

\section{Generic $4$-dimensional Hamiltonians}\label{sec:generic}

The goal of this appendix is to sketch a proof of the following lemma. 

\begin{lem}\label{lem:generic}
There exists a Baire-generic subset $\cG \subseteq C^\infty(\bR^4)$ such that the following holds. For any $H \in \cG$, there exists a full measure subset $\cQ_* \subseteq \cR_c(H)$ such that for each $s \in \cQ_*$, every closed orbit $\gamma \subset H^{-1}(s)$ is either (i) hyperbolic or (ii) elliptic and Moser stable.
\end{lem}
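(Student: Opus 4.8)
## Proof proposal

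The plan is to establish the lemma by combining a standard Sard–Smale / transversality argument for periodic orbits on energy levels with the classical Birkhoff normal form nondegeneracy condition, treating the two generic conditions (no degenerate orbits, and degenerate-but-elliptic orbits have nondegenerate Birkhoff normal form) together. First I would fix the scheme: it suffices to produce, for each compact region $K \subset \mathbb{R}^4$ and each period bound $T$, an open dense set $\cG_{K,T} \subseteq C^\infty(\mathbb{R}^4)$ such that for $H \in \cG_{K,T}$ there is a full-measure set of $s$ for which every closed orbit $\gamma \subset H^{-1}(s) \cap K$ of period at most $T$ is either hyperbolic, or elliptic with nondegenerate Birkhoff normal form; then $\cG := \bigcap_{K,T} \cG_{K,T}$ over an exhaustion is Baire-generic, and a standard diagonal/measure argument over the countably many $(K,T)$ produces the full-measure $\cQ_* \subseteq \cR_c(H)$ in the conclusion. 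The reduction to a full-measure set of levels (rather than all levels) is exactly what allows one to use a one-parameter family of hypersurfaces and invoke Sard's theorem on the parameter; this is the standard device behind almost-existence-type genericity statements.

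Next I would set up the transversality. Near a compact regular piece of energy levels, reparametrize so that $X_H$ becomes a nonvanishing vector field and the level sets foliate a neighborhood; a closed orbit of $X_H$ on $H^{-1}(s)$ corresponds to a fixed point of the time-$\tau$ return map of a local Poincaré section, and the linearized return map is a symplectic $2\times 2$ block (the transverse linearized flow). The classical result (Robinson's equivariant Kupka–Smale theorem for Hamiltonian systems, and its refinements, e.g. by Meyer–Palmore and Markus) states that for a Baire-generic Hamiltonian, on a full-measure set of energy levels all closed orbits up to any fixed period are nondegenerate, i.e. the linearized return map has no eigenvalue equal to $1$; moreover the eigenvalue configuration is generically either a hyperbolic pair $\{\mu, \mu^{-1}\}$ with $\mu \in \mathbb{R}\setminus\{\pm 1\}$ or an elliptic pair $\{e^{\pm i\theta}\}$ with $\theta/\pi$ irrational. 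I would cite this and then add the one extra ingredient needed for Moser stability: for the elliptic orbits, the Birkhoff normal form coefficient (the "twist") is generically nonzero. This is again a transversality statement — the twist coefficient is a smooth function on the relevant jet space and one perturbs $H$ supported near the orbit to make it nonzero, using that distinct periodic orbits can be perturbed independently and that there are only countably many of them once period and region are bounded. Putting the nondegeneracy of the return map together with the nonvanishing twist gives exactly condition (ii): elliptic and Moser stable, in the sense defined in \S\ref{sec:periodic_orbits} via the nondegenerate Birkhoff normal form.

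The main obstacle, and the step I would spend the most care on, is the bookkeeping that makes the perturbations genuinely \emph{independent} and \emph{local}: when one perturbs $H$ to fix up one periodic orbit, one must not destroy the nondegeneracy already arranged for others, and one must ensure the perturbation does not create new short orbits in an uncontrolled way. The standard resolution is to work with period and region bounds, observe that the set of closed orbits of period $\le T$ in $K$ on levels in a full-measure set is, after genericity of nondegeneracy, locally finite (hence countable), enumerate them, and perform a sequence of supported perturbations that converges in $C^\infty$, using an openness argument at each stage; this is exactly the architecture of Robinson's proof, and I would import it rather than reprove it. A secondary technical point is the passage from "generic on levels in $K$ up to period $T$" to the global statement over $\mathbb{R}^4$: one exhausts $\mathbb{R}^4$ by compact sets and $\cR_c(H)$ behaves well under the small perturbations because regular compact level sets persist. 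I would therefore structure the appendix proof as: (1) reduce to fixed $K, T$ and a full-measure parameter set; (2) invoke the Hamiltonian Kupka–Smale theorem for nondegeneracy and eigenvalue type; (3) add the twist-nonvanishing transversality for elliptic orbits; (4) assemble via Baire category over the exhaustion and a countable intersection for the full-measure set of energies. No genuinely new idea is required beyond assembling these known pieces, which is why a sketch is appropriate here.
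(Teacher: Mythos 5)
Your overall architecture---bound the period and region, phrase both conditions as jet-transversality for Poincar\'e return maps, and conclude for a full-measure set of energies---is the same one the paper uses, but there is a genuine gap in your step (3), the twist condition. You assert that ``distinct periodic orbits can be perturbed independently and that there are only countably many of them once period and region are bounded.'' For Hamiltonian flows this is false: even after generic nondegeneracy, closed orbits are never isolated; they occur in one-parameter families transverse to the energy levels, so there are uncountably many closed orbits of period at most $T$ in a compact region, and a perturbation of $H$ supported near one orbit simultaneously moves an entire interval of orbits in its family (your own earlier remark about locally finite ``orbits'' is really a statement about finitely many \emph{families}). Consequently the orbit-by-orbit independent-perturbation scheme cannot arrange nonvanishing twist orbitwise, and it is not clear how it would even give the twist condition for almost every energy along a family. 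The formulation that works is the one the paper uses: for generic $H$, the one-parameter family of $2$-jets of return maps along each family of closed orbits meets each invariant real analytic subvariety (in particular the locus where the Birkhoff normal form degenerates) transversally, hence avoids it for all but a discrete set of energies; this is exactly Takens' jet perturbation theorem (Lemma~\ref{lem:takens}). Robinson's theorem, which you cite for step (2), covers only $1$-jets and therefore cannot supply the twist genericity; the twist lives in a higher jet of the return map.

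The fix is not a different proof but the paper's: choose a countable collection of invariant real analytic subvarieties of the jet space whose avoidance forces a closed orbit to be either hyperbolic or elliptic and Moser stable, apply Takens' theorem for each finite subcollection and period bound to conclude that the bad energies form a discrete (hence null) subset of $\cR_c(H)$, and then take countable intersections over the period bounds to get the Baire-generic $\cG$ and full-measure $\cQ_*$. With your step (3) replaced by this appeal to Takens, the remaining assembly in your proposal (exhaustion, countable intersection, persistence of compact regular levels) matches the paper's argument.
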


Lemma~\ref{lem:generic} follows from Takens' \cite{Takens70} perturbation theorem for Hamiltonians. We explain Moser stability, then explain Takens' result, then give the proof. 

\subsection{Moser stability}

We discuss Moser's work in \cite{Moser62}. Fix any area-preserving diffeomorphism $\phi$ of the $2$-disk such that, near $0$, we have a Birkhoff normal form
$$\phi(r, \theta) = (r, \theta + \alpha_0 + \alpha_1 r) + O(r^2)$$
where $(r, \theta)$ denote polar coordinates. Assume that the Birkhoff normal form is nondegenerate, i.e. $\alpha_0$ is irrational and $\alpha_1 \neq 0$. Thus, $\phi$ is, up to higher order terms, an integrable monotone twist map of the disk. The higher order terms, however, significantly affect the dynamics. Nevertheless, Moser proved that some integrability survives; the origin is accumulated by a positive measure family of smooth $\phi$-invariant circles. Since $\alpha_1 \neq 0$, there must exist an infinite sequence of invariant circles accumulating at $0$, with pairwise distinct rotation numbers. Thus, the Poincar\'e--Birkhoff theorem implies that $0$ is accumulated by periodic points of $\phi$.

Fix any $H \in C^\infty(\bR^4)$ and any $s \in \bR$. For any elliptic closed orbit $\gamma \subset H^{-1}(s)$ and any transverse $2$-disk $D \subset H^{-1}(s)$, the Poincar\'e return map is conjugate near $0$ to a Birkhoff normal form as above. We say $\gamma$ is \emph{Moser stable} if the Birkhoff normal form is nondegenerate. 

\subsection{Takens' theorem} Takens \cite{Takens70} proved a powerful jet perturbation theorem for Poincar\'e return maps of closed orbits of Hamiltonians, generalizing a result for $1$-jets by Robinson \cite{Robinson70}. For any $n \geq 1$, let $J^r(n)$ denote the set of $r$-jets at $0$ of smooth symplectic diffeomorphisms of $\bR^{2n}$ that fix $0 \in \bR^{2n}$. We say a subset $Q \subset J^r(n)$ is \emph{invariant} if $\sigma Q \sigma^{-1} = Q$ for all $\sigma \in J^r(n)$. The following lemma is a consequence of Takens' results. 

\begin{lem}[{\cite[Theorem A]{Takens70}}] \label{lem:takens}
Fix $n \geq 1$, $r \geq 1$, $T > 0$, and any finite collection $Q_1, \ldots, Q_N$ of invariant real analytic subvarieties of $J^r(n)$. Then, there exists a Baire-generic subset $\cG_* \subseteq C^\infty(\bR^{2n})$ such that each $H \in \cG_*$ has the following property. For any closed orbit $\gamma$ of $X_H$ with minimal period $< T$, there exists a neighborhood $U$ of $\gamma$ in which all but finitely many closed orbits of minimal period $< T$ have Poincar\'e return maps whose $r$-jet at $0$ does not lie in $\bigcup_{i=1}^N Q_i$. 
\end{lem}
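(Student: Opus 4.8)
The plan is to deduce Lemma~\ref{lem:takens} from Takens' perturbation theorem \cite{Takens70}, which generalizes Robinson's $1$-jet lemma \cite{Robinson70}, by a standard Kupka--Smale--type Baire-category argument. First I would isolate the precise consequence of Takens' work that is used. Fix $H$, a closed orbit $\gamma$ of $X_H$ of minimal period $\tau<T$, and a local transversal; the $r$-jet at $0$ of the Poincar\'e return map is well defined in $J^r(n)$ up to conjugation, so membership in an \emph{invariant} subvariety $Q_i$ is a property of $\gamma$ alone. Takens' theorem asserts that Hamiltonian perturbations supported in an arbitrarily small neighbourhood of $\gamma$, adjusted so that $\gamma$ persists with period $\tau$, realise the $r$-jet of the continued return map submersively: the corresponding jet-evaluation map is a submersion at $H$ onto a neighbourhood in $J^r(n)$. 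Consequently, for any finite collection of strata of positive codimension in $J^r(n)$, the condition that the return-map $r$-jet of $\gamma$ be transverse to each of them is realised by an arbitrarily $C^\infty$-small perturbation of $H$, and this condition is open in $H$.

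Next I would run the category argument. Give $C^\infty(\bR^{2n})$ its strong Whitney topology, so it is a Baire space, and fix a compact exhaustion $\bR^{2n}=\bigcup_\ell K_\ell$. Stratify $J^r(n)$ into real-analytic strata so that each $Q_i$, as well as the invariant subvariety detecting failure of Hamiltonian nondegeneracy (the linearised return map having $1$ as an eigenvalue of its restriction to the energy level), is a finite union of positive-codimension strata. Let $\cG_\ell$ be the set of $H\in C^\infty(\bR^{2n})$ such that every closed orbit of $X_H$ contained in $K_\ell$ with minimal period $<T$ is nondegenerate in the Hamiltonian sense and has return-map $r$-jet transverse to every stratum of every $Q_i$. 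Using Takens' submersion statement applied one orbit at a time, patched over a locally finite cover of the relevant orbits with supports shrunk so that earlier control is preserved, together with the usual compactness-and-period-stratification argument showing the relevant ``bad'' sets are closed and nowhere dense, one gets that each $\cG_\ell$ is open and dense; hence $\cG_*:=\bigcap_\ell\cG_\ell$ is Baire-generic.

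It remains to check that every $H\in\cG_*$ satisfies the conclusion. Fix such an $H$ and a closed orbit $\gamma$ of minimal period $<T$, and pick $\ell$ with $\gamma\subset K_\ell$. By Hamiltonian nondegeneracy and the implicit function theorem applied to the fixed-point equation for the return map, the closed orbits of $X_H$ of minimal period $<T$ in a small neighbourhood $U$ of $\gamma$ form a finite union of embedded $C^\infty$ one-parameter families, parametrised by the value of $H$; finiteness of the number of families uses compactness of this set of orbits and nondegeneracy, which forbids their accumulation. Transversality of each family to the positive-codimension strata of each $Q_i$ forces the orbits in $U$ whose $r$-jet lies in $\bigcup_i Q_i$ to form a discrete subset, hence, after shrinking $U$, a finite one. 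This is exactly the assertion of the lemma.

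The main obstacle I anticipate is the globalisation in the second paragraph: Takens' theorem perturbs near a single orbit, whereas membership in $\cG_\ell$ constrains all closed orbits of period $<T$ in $K_\ell$ simultaneously, and a priori these orbits --- in particular the degenerate ones --- may accumulate before any perturbation is made, so they cannot simply be enumerated. The standard remedy is to stratify by period bounded below and argue inductively on compact pieces, making the supports of successive perturbations shrink fast enough that the transversality already achieved is not destroyed; checking that the Hamiltonian nondegeneracy produced in the first stage is open and survives the later transversality perturbations, and that real-analyticity and conjugation-invariance of the $Q_i$ make ``transverse to $Q_i$'' both generic in Takens' sense and open, is the technical heart. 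The remaining ingredients --- the Baire property of $C^\infty(\bR^{2n})$, the implicit-function description of nondegenerate closed orbits, and the fact that a curve transverse to a positive-codimension subvariety meets it in a discrete set --- are routine.
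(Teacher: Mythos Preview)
Your proposal is correct and aligns with the paper's approach. The paper does not give a proof of this lemma; it cites \cite[Theorem A]{Takens70} directly and follows the statement with a one-paragraph informal explanation whose content is exactly the skeleton you flesh out: for generic $H$ the closed orbits live in smooth one-parameter families, Takens' theorem guarantees that the associated families of $r$-jets meet each $Q_i$ transversely, and transversality to a positive-codimension subvariety forces the intersection to be finite. Your sketch supplies the standard Kupka--Smale packaging (compact exhaustion, Baire intersection, the submersion statement for the jet-evaluation map) and correctly flags the globalisation step as the only genuinely technical point, which the paper simply absorbs into the citation of Takens.
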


Let us give an informal explanation of how Lemma~\ref{lem:takens} follows from Taken's results. For generic $H$, the closed orbits live in smooth $1$-parameter families. Therefore, we cannot expect every $r$-jet of every closed orbit in a $1$-parameter family to avoid the subvarieties $Q_i$. However, Takens proved that, for generic $H$, each $1$-parameter family of $r$-jets will intersect each $Q_i$ transversely. Therefore, all but finitely many $r$-jets in each family will avoid $\bigcup_{i=1}^N Q_i$. 

\subsection{Moser stability is generic} We sketch a proof of Lemma~\ref{lem:generic}. 

\begin{proof}
There exists a countable collection of invariant real analytic subvarieties $\{Q_i\} \subset J^2(2)$ such that the following holds. If $\gamma$ is a closed orbit of $X_H$ such that the $2$-jet of the Poincar\'e map avoids each $Q_i$, then $\gamma$ is either (i) hyperbolic or (ii) elliptic and Moser stable. 

Fix any $N \geq 1$ and let $\cQ_{N} \subseteq \cR_c(H)$ denote the set of all levels $s$ such that, for each closed orbit $\gamma \subset H^{-1}(s)$ of minimal period $< N$, the $2$-jet of its Poincar\'e return map avoids $Q_1, \ldots, Q_N$. By Lemma~\ref{lem:takens}, there exists a Baire-generic subset $\cG_N$ such that if $H \in \cG_N$, then only a discrete set of closed orbits have $2$-jets intersecting $\bigcup_{i=1}^N Q_i$. It follows that if $H \in \cG_N$, then the complement of $\cQ_N$ in $\cR_c(H)$ is discrete. Therefore, if $H \in \cG_N$, the set $\cQ_N$ has full Lebesgue measure in $\cR_c(H)$. 

Define $\cG_* := \bigcap_{N \geq 1} \cG_N$. If $H \in \cG_*$, then $\cQ_* := \bigcap_{N \geq 1} \cQ_N$ has full Lebesgue measure in $\cR_c(H)$. For any $s \in \cQ_*$, every closed orbit $\gamma \subset H^{-1}(s)$ is either (i) hyperbolic or (ii) elliptic and Moser stable. 
\end{proof}

\bibliographystyle{alpha}
\bibliography{r4.bib}

\end{document}